\newcommand{\cz}{\textup{\small \cursive z}}
\newcommand{\Rd}{{\mathsf{R}}^{\raisebox{0.5mm}{$\scriptscriptstyle \bullet$}}}
\newcommand{\C}{\mathbb{C}}
\newcommand{\Gm}{\mathbb{G}_\mathbf{m}}
\newcommand{\Ga}{\mathbb{G}_\mathbf{a}}
\newcommand{\Ct}{\mathbb{C}^\times}
\newcommand{\Q}{\mathbb{Q}}
\newcommand{\A}{\mathbb{A}}
\newcommand{\F}{\mathbb{F}}
\newcommand{\I}{\mathbb{I}}
\newcommand{\Z}{\mathbb{Z}}
\newcommand{\R}{\mathbb{R}}
\newcommand{\bT}{\mathsf{T}}
\newcommand{\bA}{\mathsf{A}}
\newcommand{\bG}{\mathsf{G}}
\newcommand{\bH}{\mathsf{H}}
\newcommand{\bGt}{{\widetilde{\mathsf{G}}}}
\newcommand{\sP}{\mathsf{P}}
\newcommand{\bC}{\mathsf{C}}
\newcommand{\bCh}{\widehat{\mathsf{C}}}
\newcommand{\bY}{\mathsf{Y}}
\newcommand{\bM}{\mathsf{M}}
\newcommand{\bU}{\mathsf{U}}
\newcommand{\bB}{\mathsf{B}}
\newcommand{\bX}{\mathsf{X}}
\newcommand{\bK}{\mathsf{K}}
\newcommand{\bP}{\mathbb{P}}
\newcommand{\cT}{\mathscr{T}}
\newcommand{\cL}{\mathscr{L}}
\newcommand{\cR}{\mathscr{R}}
\newcommand{\cP}{\mathscr{P}}
\newcommand{\cU}{\mathscr{U}}
\newcommand{\cQ}{\mathscr{Q}}
\newcommand{\cF}{\mathscr{F}}
\newcommand{\cV}{\mathscr{V}}
\newcommand{\cA}{\mathscr{A}}
\newcommand{\cB}{\mathscr{B}}
\newcommand{\La}{{}^{\raisebox{2pt}{\textup{\tiny\textsf{L}}}}}
\newcommand{\uA}{\underline{\bA}}
\newcommand{\sH}{\mathbf{sH}}
\newcommand{\bmu}{\boldsymbol{\mu}}
\newcommand{\bpsi}{\boldsymbol{\psi}}
\newcommand{\bPi}{\boldsymbol{\Pi}}
\newcommand{\bc}{\mathsf{c}}
\newcommand{\bbch}{\widehat{\bbc}}
\newcommand{\bbc}{\mathbf{c}}
\newcommand{\bPhi}{\boldsymbol{\Phi}}
\newcommand{\cH}{\mathscr{H}}
\newcommand{\ft}{\mathfrak{t}}
\newcommand{\fg}{\mathfrak{g}}
\newcommand{\fp}{\mathfrak{p}}
\newcommand{\fh}{\mathfrak{h}}
\newcommand{\fn}{\mathfrak{n}}
\newcommand{\fu}{\mathfrak{u}}
\newcommand{\fb}{\mathfrak{b}}
\newcommand{\fc}{\mathfrak{c}}
\newcommand{\rd}{/\!\!/\!\!/\!\!/}
\newcommand{\bS}{\mathsf{S}}
\newcommand{\bbS}{\mathbf{S}}
\newcommand{\bbSt}{\widetilde{\bbS}}
\newcommand{\eqdef}{\overset{\tiny\textup{def}}=}
\newcommand{\cO}{\mathscr{O}}
\newcommand{\Hd}{{H}^{\raisebox{0.5mm}{$\scriptscriptstyle \bullet$}}}
\newcommand{\Sd}{{\bS}^{\raisebox{0.5mm}{$\scriptscriptstyle \bullet$}}}
\newcommand{\vir}{\textup{vir}}
\newcommand{\Langl}{\textup{Langl}}
\newcommand{\fa}{\mathfrak{a}}
\newcommand{\Attr}{\mathsf{Attr}}
\newcommand{\sThom}{\mathsf{Thom}}
\newcommand{\comp}{\textup{cmp}}
\newcommand{\uni}{\textup{uni}}
\newcommand{\red}{\textup{red}}
\newcommand{\an}{\textup{an}}
\newcommand{\Tau}{\mathscr{T}}
\newcommand{\bAut}{\mathsf{Aut}}
\newcommand{\tama}{\textup{\raisebox{-1mm}{\includegraphics[height=10pt]{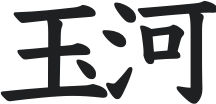}}}}
\DeclareMathOperator{\Coh}{Coh}
\DeclareMathOperator{\Hom}{Hom}
\DeclareMathOperator{\Ext}{Ext}
\DeclareMathOperator{\Ker}{Ker}
\DeclareMathOperator{\Aut}{Aut}
\DeclareMathOperator{\Lie}{Lie}
\DeclareMathOperator{\sgn}{sgn}
\DeclareMathOperator{\Res}{Res}
\DeclareMathOperator{\Ind}{Ind}
\DeclareMathOperator{\const}{const}
\DeclareMathOperator{\chr}{char}
\DeclareMathOperator{\pt}{pt}
\DeclareMathOperator{\rk}{rk}
\DeclareMathOperator{\Pic}{Pic}
\DeclareMathOperator{\tr}{tr}
\DeclareMathOperator{\Spec}{Spec}
\DeclareMathOperator{\Span}{span}
\DeclareMathOperator{\Fix}{Fix}
\DeclareMathOperator{\spec}{Spec}
\DeclareMathOperator{\supp}{supp}
\DeclareMathOperator{\Thom}{Thom}
\DeclareMathOperator{\codim}{codim}
\DeclareMathOperator{\ad}{ad}
\DeclareMathOperator{\hht}{ht}
\DeclareMathOperator{\Slice}{Slice}
\DeclareMathOperator{\DD}{\mathbf{D}}
\DeclareMathOperator{\DDa}{\mathbf{D}_a}
\DeclareMathOperator{\vol}{vol}
\newcommand{\Eis}{\textup{Eis}}
\newtheorem{Theorem}{Theorem}
\newtheorem{Lemma}{Lemma}[section]
\newtheorem{Proposition}[Lemma]{Proposition}
\newtheorem{Corollary}[Lemma]{Corollary}
\theoremstyle{definition}
\newtheorem{Remark}{Remark}[section]
\begin{document}

\title{On the unramified Eisenstein spectrum} 
\author{David Kazhdan and Andrei Okounkov}
\maketitle

\abstract{For a split reductive group $\La \bG$ over a global field,
  we determine the spectrum of the spherical Hecke algebra coming from
  the unramified Eisenstein series for the minimal parabolic $\La
  \bB$. This is done using a certain decomposition of the Springer
  stack $T^*(\bB\backslash \bG/\bB)$ for the Langlands dual group
  $\bG$ in the additive cobordism group of cohomologically proper
  derived quotient stacks.}

\setcounter{tocdepth}{2}
\tableofcontents

\section{Introduction}

\subsection{Introduction to introduction}

\subsubsection{}

Let $\F$ be a global field, that is, a finite extension of the field
$\Q$ of rational
numbers or a finite extension of the field $\Bbbk(t)$  of rational functions in one variable with coefficients in a
finite field $\Bbbk$. We denote by $\A$ the ring of adeles for
$\F$. For a reductive $\F$-group $\bG$, the quotient
$\bG(\A)/\bG(\F)$ has a natural measure which yields a unitary representation $\rho$ of $\bG(\A)$  in $L^2( \bG(\A)/
\bG(\F))$. The study of  automorphic forms, that is the decomposition
of the representation $\rho$, has been at the center of the progress
of Number Theory in the last 50 years.

As explained by R.~Langlands in his fundamental book \cite{L}, the study of
automorphic forms for $\bG$ naturally breaks up into two parts.
The first part is the study of cuspidal automorphic forms $\Phi$ for
Levy subgroups $\bM \subset \bG$. From
every such form $\bPhi$ on $\bM\ne \bG$, one can construct
a family of automorphic forms for $\bG$, known as the Eisenstein
series, see \eqref{eq:97bis}. The spectral analysis of the span of
these 
Eisenstein series constitutes the second part of the strategy
from \cite{L}. 

Our paper deals with the simplest, but already highly non-trivial case
of this second part in the case when 
\begin{itemize}
\item the group $\bG$ is split, 
\item $\bM=\bT$ is the  torus,  
\item $\bPhi = 1$, and 
\item we restrict our attention to the subspace 
of $K$-invariant vectors, where $K\subset \bG(\A)$ is the standard
maximal compact subgroup. \end{itemize}

\subsubsection{}

While the real complexities of the problem can be appreciated only for
groups $\bG$ with an interesting root system, certain basic features
of the problem, together with the different approaches to their
analysis, may already be illustrated in the most basic example
when $\bG=SL(2)$ and $\F=\Q$. In this case
\begin{align}
  \label{eq:182}
 K \backslash \bG(\bA) / \bG(\F) & = \left\{ \textup{lattices
                                   $\Gamma\subset \R^2$, 
                                   $\vol(\R^2/\Gamma) =1$} \right\}
                                   /\, \textup{isometry}\,, \\
&= \left\{ \textup{upper half-plane} \right\} / SL(2,\Z) \,.    \notag                        
\end{align}
In \eqref{eq:182}, we associate the lattice
\[
\Gamma = \Z \tfrac{1}{\sqrt{y}}  \oplus \Z \tfrac{\tau}{\sqrt{y}}
\]
to a point $\tau=x+iy$, $y>0$, in the upper half plane. 
One is interested in the spectrum of the Laplace operator
$$
\Delta  = - y^2 \left( \frac{\partial^2}{\partial x^2}
  +\frac{\partial^2}{\partial y^2}
  \right) \,, 
$$
and commuting 
Hecke operators acting in $L^2$-functions on $ K \backslash \bG(\bA) /
\bG(\F)$.

Note that the volume of \eqref{eq:182} is finite, hence the constant
function is a trivial eigenfunction of $\Delta$. Other $L^2$-eigenfunctions of
$\Delta$ are more mysterious. They are known as the Maa\ss\ forms and their
study is the study of cuspidal automorphic forms for the whole group
$\bG$.

There is only one nontrivial Levy subgroup in $\bG$, namely the group $\bM \cong GL(1)$ of
diagonal matrices. Starting
from the trivial representation of $\bM$, one constructs the following
classical Eisenstein series
\begin{equation}
E(\Gamma,s) = \tfrac12 \sum_{\textup{primitive $\gamma \in \Gamma$}}
\|\gamma\|^{-s-1} \,.\label{eq:171}
\end{equation}
This converges for $\Re s>1$ and has a meromorphic analytic
continuation in $s$. The point $s=1$ is a simple pole for this
analytic continuation with residue the constant function
$\frac{6}{\pi}$.


The sum \eqref{eq:171} may be interpreted as
the count of 
$\F$-rational points of $\bP^1 = \bG/\bB$ according the height induced
by the metric on $\Gamma$. Here $\bB$ is the parabolic subgroup
corresponding to $\bM$, that is, the group of upper-triangular
matrices. Parallel interpretation exists for general Eisenstein
series.

It is easy to see that
\begin{equation}
  \label{eq:184}
  \Delta E(s) = \left(\frac{1}{4} - \frac{s^2}{4} \right) E(s)
  \,, 
\end{equation}
but it is also easy to see that $E(s)
\notin L^2$ for any value of $s$. Still, the Eisenstein sereis 
contribute to the spectrum of $\Delta$, and this
contribution can be analysed as follows.

\subsubsection{}


Consider linear combinations of the Eisenstein series of the following
form
\begin{equation}
  \label{eq:185}
  E_{f^\vee} (\Gamma) = \tfrac{1}{4\pi}
  \int_{\Re s = s_0 \gg 0} f(s) \, 
  E(\Gamma,s) \, ds = \tfrac12 \sum_{\gamma}
f^\vee(\|\gamma\|) \,,
\end{equation}
where the functions $f$ and $f^\vee$ are related by the Mellin
transform in the variable $s$ and $ds$ is the standard Lebesgue
measure on the vertical line. We can choose $f^\vee$ to be
with compact support in $\R_{>0}$ and infinitely
differentiable. This makes $f(s)$ a Paley-Wiener function, in the
sense that $f(s)$ is an entire function that grows at most
exponentially as $\Re s \to \infty$ and decays faster than any
polynomial as $\Im s \to \infty$. The commutative diagram
\begin{equation}
  \label{eq:186}
  \xymatrix{\textup{PW-functions $f(s)$} \ar[d]_{\frac{1}{4} - \frac{s^2}{4}}\ar[rr]^{\qquad E_{f^\vee}} && L^2\ar[d]^{\Delta}\\
  \textup{PW-functions $f(s)$} \ar[rr]^{\qquad E_{f^\vee}}&& L^2
  } 
\end{equation}
conjugates the operator $\Delta$ to 
multiplication by a function.
We denote by $L^2_\Eis$ the closure of the image in \eqref{eq:186}.
It is the orthogonal complement of the span of the cuspidal
eigenfunction of $\Delta$.

While \eqref{eq:186} may at first look like a spectral theorem for
$\Delta$ in $L^2_\Eis$, the pullback of the $L^2$-norm via the map $E_f$
does \emph{not} have the form required in the
spectral theorem. For this pullback, there is a beautiful
general formula \eqref{eq:100} going back to R.~Langlands.
In this particular instance, it 
takes the form  
\begin{equation}
\| E_{f^\vee}\|^2 = \tfrac{1}{4\pi} \int_{\Re s = s_0 \gg 0} \left(f(s) \bar f(-s) +
f(s) \bar f(s) \frac{\xi(s)}{\xi(-s)} \right) \, ds \,,  \label{eq:187}
\end{equation}
where $\bar f(s) = \overline{f(\bar s)}$ and 
\begin{equation}
  \label{eq:188}
  \xi(s) = \pi^{-s/2} \Gamma(s/2) \zeta(s) \,, \quad \xi(1-s) = \xi(s)
  \,,
\end{equation}
is the completed $\zeta$-function of $\Q$.



One can draw a precise analogy
between the coefficients in \eqref{eq:187} and scattering for
$(1+1)$-dimensional
Schr\"odinger equation. In particular, the pole at $s=1$ in the
coefficient 
$\frac{\xi(s)}{\xi(-s)}$ correspond to the
constant eigenfunction of $\Delta$. See e.g.\  \cite{Takh} for a detailed
recent survey. Similarly, the products of $\xi$-functions that appear in
\eqref{eq:100} are analogous to S-matrices in integrable
$(1+1)$-dimensional many-body systems. 

The classical strategy for turning \eqref{eq:187} into a standard
$L^2$-norm required in the spectral theorem is to move
the contour of integration, picking up the residues. This
approach is also classical in quantum mechanics, see \cite{Takh}. We get
\begin{equation}
  \label{eq:182_}
  \| E_{f^\vee}\|^2 = \tfrac{3}{\pi} \, |f(1)|^2 + \tfrac{1}{2\pi}
  \int_{i\R} 
  |\cP_{\mathfrak{sl}_2} f(s)|^2 \, ds \,, 
\end{equation}
where the spectral projector
\begin{equation}
  \label{eq:190}
  \cP_{\mathfrak{sl}_2} f(s) = \frac12 \left( f(s) +
    \frac{\xi(-s)}{\xi(s)} f(-s)
    \right)
  \end{equation}
reflects the functional equation $E(-s) =  \frac{\xi(-s)}{\xi(s)}
E(s)$. Via \eqref{eq:184}, the two terms in 
\eqref{eq:182_} correspond to the zero eigenvalue and
the continuous spectrum filling $[\frac14,\infty)$ with
multiplicity one. 

For groups of higher rank, the analysis of residues in multivariate
integrals meets with considerable problems, which one may describe as
either combinatorial or conceptual, depending on one's point of view.
For instance, one can open pages 181--195 of Langlands'
monumental treatise \cite{L} to see his analysis of the 
integral for the group $G_2$. One could also recommend the
exposition in \cites{Labesse,MW} and the state-of-the-art
discussion in \cite{DHO2}. 

\subsubsection{}

In this paper, we use entirely different 
\emph{topological} ideas for the analysis of formulas like \eqref{eq:187}.
To get started, let us rewrite it as follows
\begin{equation}
  \label{eq:191}
  ( E_{f_1^\vee}, E_{{f^*_2}^\vee})_{L^2}= \underbrace{\tfrac{1}{4\pi} \int_{\Re s
    = s_0 \gg 0} f_1(s) ds\phantom{\Bigg|}}_{-T^*\La\bA}\, 
\underbrace{\,\,\frac{1}{\xi(-s)} \phantom{\Bigg|} }_{-T^*\La\bU}
\underbrace{ \,\left(f_2(-s) \xi(s) + 
f_2(s) \xi(-s) \right)\phantom{\Bigg|}}_{T^* (\La \bG/\La \bB)} \,, 
\end{equation}
where $f^*(s) = \bar f(-s)$. 
The three parts of this formula have the following topological
meaning.

The innermost term in \eqref{eq:191} is the equivariant
localization formula for a certain integral in
$(\La\bA\times \Ct_q)$-equivariant cohomology of 
\begin{equation}
T^*\bP^1 = T^* \big(\La \bG/\La \bB\big)\,, \quad \La \bG  = PGL(2,\C)
\,. \label{eq:196}
\end{equation}
The $\Ct_q$-factor acts by scaling the cotangent fibers with weight
$-1$. We will evaluate all formulas at the point $1\in \Lie \Ct_q$,
which is a special point that
replaces in the number field situation the cardinality of the finite
field $\Bbbk$. 
We denote by $0 \in \bP^1$ the origin of $\La \bG/\La \bB$, and
by $\infty \in \bP^1$ the other $\La\bA$-fixed point, corresponding
to the opposite Borel subgroup $\La\bB_{-}$.

The variable $s$ is the positive root of $PGL(2)$, which
means that tangent space $T_0\bP^1$  has weight $-s$.
Taking into account the $\Ct_q$-action, the weight of the cotangent
direction at 0 is thus $-1+s$. At $\infty$, we get
the weights $s,-1-s$, respectively. 

Given a function $\psi(s)$, we can define a genus $\psi(M)$ of any
manifold $M$ by
multiplying the values of $\psi$ over Chern roots of the tangent
bundle of $M$. Concretely, $\psi(T^*\bP^1)$ is an equivariant cohomology class whose
restriction to $0$ and $\infty$ equals $\psi(-s) \psi(-1+s)$ and
$\psi(s) \psi(-1-s)$, respectively. By equivariant localization,
\begin{equation}
  \label{eq:192}
  \int_{T^*\bP^1} f_2(c_1)  \psi(T^*\bP^1)  =
  \frac{f_2(-s) \psi(-s) \psi(-1+s)}{s(1-s)}  \, + \, \left(\textup{same
    with $s\mapsto -s$} \right) \,, 
\end{equation}
where $c_1$ is the 1st Chern class of the tangent bundle to $\bP^1$. 
Since $\xi(s)$ has poles at $s=0,1$ and satisfies $\xi(s) = \xi(1-s)$,
one can find an entire function $\psi$ such that \eqref{eq:192}
is equal to the innermost sum in \eqref{eq:191}. In \eqref{eq:192} we
have the integral of an \emph{analytic} cohomology class. These
are obtained by extension of scalars from equivariant cohomology of a
point, and therefore obey the usual rules of integration. 

The rest of the RHE in \eqref{eq:191} has to do with taking the
symplectic reduction by the opposite Borel subgroup $\La \bB_{-}$.
Referring to the bulk of the paper for details, we mention that this
can be done in stages. First, the reduction by unipotent radial
$\La \bU_{-}$ amounts to a combination of a Koszul and
Chevalley-Eilenberg complexes and leads to division by
$$
\xi(-s) = \int_{T^* \Lie \La \bU_{-}} \psi(T^* \Lie \La \bU_{-}) \,. 
$$
This subtracts the bundle $T^*\La\bU_{-}$ from the tangent bundle of
$T^*\bP^1$, whence the notation in \eqref{eq:191}. 

Finally, the symplectic reduction by the torus $\La\bA$ leads to
integration along a translate of the Lie algebra $i\R$ of the compact
torus. This is related to how K-theory integrals over quotients by a
reductive groups are related to invariants,
and hence to Riemann integrals over 
a maximal compact subgroup. See
Section \ref{s_geom} for details. Before we take the quotient by
$\La\bA$, we are free to multiply by any class in $\Hd_{\La\bA}(\pt)$,
that is, by any function of $s$. This is the meaning of $f_1(s)$ in
\eqref{eq:191}.

In the end, \eqref{eq:191} is a certain characteristic
class of the stack 
\begin{equation}
\cT = T^*(\La \bB \backslash \La \bG / \La \bB) \,.\label{eq:193}
\end{equation}
The gain in casting \eqref{eq:191} into a topological language is that
it shifts the focus from the particular functions and
contours of integration entering the formula to the geometry of the space
\eqref{eq:193} itself. In particular, the classical map
\begin{equation}
T^*(\La \bG / \La \bB)  \to \textup{nilpotent elements in $\Lie \La
  \bG$}\label{eq:195}
\end{equation}
induces the map of stacks 
\begin{equation}
  \label{eq:194}
  \cT  \to \left. \left\{\textup{nilpotent elements in $\Lie \La
  \bG$}\right\}\right/ \La
  \bG \,. 
\end{equation}
Any characteristic class in the source in \eqref{eq:194} can be pushed
forward to the target in \eqref{eq:194}. Since there are only two nilpotent
conjugacy classes in $\mathfrak{sl}(2)$, this means that the
integral of any characteristic class of $\cT$ is a sum of two terms.
Not surprisingly, for \eqref{eq:191} these two terms are exactly
the two terms in \eqref{eq:182_}. 

While this result is compatible with the one obtained by residue
calculus, it is logically fully independent of any contour
integral computations. One can compare and contrast the two approaches
by looking at the $G_2$-example, which we consider from our points of
view in Section \ref{sectG2}. The difference between the two
approaches becomes very noticeable for
more complicated groups. The map \eqref{eq:195} exists in
full generality and it is known as the Springer resolution. Using the 
induced map \eqref{eq:194} one can write any characteristic class of
$\cT$, in particular, the Langlands' formula \eqref{eq:100} as a
sum over nilpotent conjugacy classes in $\Lie \La \bG$.
As we show in this paper, the spectral decomposition of 
$L^2_\Eis$ for general $\bG$ is parametrized by
precisely this data.

\subsection{The automorphic form setup}

\subsubsection{}

While the main idea explored in this paper may be generalized in a multitude
of directions, some of its key features are already very interesting in
the following most basic situation. Further
generalizations are discussed
in \cites{KO,KO2}, see also Section \ref{s_future} below. 

\subsubsection{}

We denote by $\F$, $\A$, and $\I$ a global field, its adeles, and
ideles, respectively. We consider a split reductive group over $\F$,
which we denote $\La\bG$ because the bulk of our computations will be
with the Langlands dual complex reductive group $\bG$. On both sides,
we denote by the letters $\bA$, $\bB$, and $\bU$
(resp. $\La\bA,\La\bB,\La\bU$) a maximal torus, a
Borel subgroup containing it, and its unipotent radical. We have
\[
\bA = \Lambda \otimes_\Z \Ct \,,
\quad
\La \bA = \Lambda^\vee \!\otimes_\Z \Gm \,, 
\]
for pair of dual lattices $\Lambda$ and $\Lambda^\vee$. 

\subsubsection{}

Let $\cQ_\F \subset \R_{>0}$ denote the image of
the norm map $\I \to \R_{>0}$. We have
\begin{equation}
  \cQ_\F =
\begin{cases}
  q^\Z \,, &\F = \Bbbk(\textup{curve})\,, |\Bbbk|=q\,,\\
  \R_{>0}\,, & \chr \F = 0 \,. 
\end{cases}\label{eq:182-7}
\end{equation}
For uniformity in formulas, one should interpret $q>1$ as the generator
of this group, including the infinitesimal generator $t\frac{d}{dt} \in \Lie \R_{>0}$,  in the number field case.

We have the dual map 
\[
  \C \cong\Hom(\R_{>0},\Ct)  \to \Hom(\cQ_\F,\Ct)
\overset{\textup{\tiny def}}=\cQ_\F^\vee \,,
\]
where the first isomorphism sends $t\in \R_{>0}$ to
$t^s$, $s\in \C$. In other words,
\begin{equation}
\cQ_\F^\vee =
\begin{cases}
  \C\left/\frac{2\pi i}{\ln q} \, \Z \right. \,, &\chr \F >0 \,,\\
  \C\,, & \chr \F = 0 \,. 
\end{cases}\label{eq:116}
\end{equation}
In both cases we have a well-defined map
\[
  \Re: \cQ_\F^\vee \to \R 
\]
which takes the real part.

\subsubsection{}\label{s_Eis_def}

The characters of $\La\bA(\A)$ that factor through the norm map
\begin{equation}
  \label{eq:96}
  \xymatrix{
    \La\bA(\A) \ar[rr]^{\|a\|^\lambda} \ar[dr]^{\|a\|}&& \Ct \\
    & \Lambda^\vee \otimes \cQ_\F
    \ar[ru] 
    }
\end{equation}
are parametrized by
\[
  \lambda \in \Lambda \otimes \cQ_\F^\vee \cong
\begin{cases}
  \bA \,, &\chr \F >0 \,,\\
  \Lie \bA\,, & \chr \F = 0 \,. 
\end{cases}
\]
Given such $\lambda$, one defines the Eisenstein series
\begin{equation}
E(\lambda,g) = \sum_{\gamma \in \La\bG(\F)/\La\bB(\F)}
\| a(g \gamma)\|^{\lambda+\rho} \,,\label{eq:97}
\end{equation}
where $a(g)\in \La\bA$ is the torus part in the Iwasawa
$\La\mathsf{K} \La\bA \La\bU$ decomposition of $g$, and $\rho\in \Lambda$ is the half sum
of the roots in $\La\bB$. The series \eqref{eq:97}
converges for $\Re \lambda$ inside the positive cone of
$\Lambda \otimes \R$, which we express by the
notation $\Re \lambda > 0$. 

\subsubsection{}
More generally, given a
function
\[
  \phi^\vee: \Lambda^\vee \otimes \cQ_\F \to \C
\]
with compact support, one may consider the corresponding
pseudo-Eisenstein series
\begin{equation}
E_{\phi^\vee}(g) = \sum_{\gamma \in \La\bG(\F)/\La\bB(\F)}
\phi^\vee(\| a(g \gamma)\|) \,.\label{eq:98}
\end{equation}
The series \eqref{eq:97} and \eqref{eq:98} are related by the Mellin
transform on the group $\Lambda^\vee \otimes \cQ_\F$.
If $\phi$ is a function on $\Lambda \otimes \cQ_\F^\vee$ which is
the Mellin transform of $\phi^\vee$ then
\[
  E_{\phi^\vee}(g) = \int_{\Re \lambda = \lambda_0 \gg 0}
E(\lambda,g) \phi(\lambda) d \lambda  \,.
\]

\subsubsection{}

The following fundamental result is due to R.~Langlands

\begin{Theorem}[\cite{L}] \label{t_Langl} 
  The Hermitian product
  \begin{equation}
    \label{eq:99}
    (f_1,f_2)_\Eis \overset{\textup{\tiny def}}=
    (E_{f_1^\vee}, E_{f_1^\vee})_{L^2\left(\La\bG(\A)\,/\,\La\bG(\F)\right)}
  \end{equation}
  is given by the following
  formula
  \begin{equation}
    \label{eq:100}
   (f_1,f_2)_\Eis =
  \int_{\Re \lambda = \lambda_0 \gg 0}
  d\lambda \sum_{w\in W} f_1(\lambda) \, \overline{f_2}(- w
    \cdot \lambda) \!\!
  \prod_{\substack{\alpha > 0 \\ w\cdot \alpha <0 }}
  \frac{\xi_\F(\alpha(\lambda))}{\xi_\F(1+\alpha(\lambda))} \,,
\end{equation}
where $\overline{f}(x) = \overline{f(\overline{x})}$,
$\{\alpha\}$ are the roots of $\bG$, and
$\xi(x)$ is the completed $\zeta$-function of $\F$.
\end{Theorem}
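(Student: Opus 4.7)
The plan is to unfold the double Eisenstein sum in \eqref{eq:99} and reduce the computation to the constant term of $E(\lambda,g)$ along $\La\bB$, which is then evaluated via the Gindikin--Karpelevich formula.

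First, I would replace one pseudo-Eisenstein series by its defining sum and apply the standard unfolding
$$
(E_{f_1^\vee}, E_{f_2^\vee})_{L^2} = \int_{\La\bB(\F) \backslash \La\bG(\A)} f_1^\vee(\|a(g)\|)\, \overline{E_{f_2^\vee}(g)}\, dg.
$$
Passing to the Iwasawa decomposition $g = kau$ with $k \in \La\mathsf{K}$, $a \in \La\bA(\A)$, $u \in \La\bU(\A)$, and using that $f_1^\vee(\|a(g)\|)$ depends only on the torus part, this reduces to an integral of $f_1^\vee(\|a\|)$ over $\La\bA(\F) \backslash \La\bA(\A)$ paired against the $\La\bU$-constant term of $\overline{E_{f_2^\vee}}$ (the $\La\mathsf{K}$-average is trivial in the spherical setting).

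Second, invert the Mellin transform to write $E_{f_2^\vee}(g) = \int_{\Re\mu = \mu_0} E(\mu, g) f_2(\mu)\, d\mu$, and invoke the constant term identity
$$
\int_{\La\bU(\F) \backslash \La\bU(\A)} \!\! E(\mu, ug)\, du = \sum_{w \in W} M(w, \mu)\, \|a(g)\|^{w\mu + \rho}, \qquad M(w,\mu) = \!\!\prod_{\substack{\alpha > 0 \\ w\alpha < 0}}\!\! \frac{\xi_\F(\alpha(\mu))}{\xi_\F(1+\alpha(\mu))}.
$$
This is the classical Gindikin--Karpelevich calculation: decompose the integral along Bruhat cells, reduce each contribution to a rank-one integral over the root subgroup, and recognize the local factor at each place of $\F$ as a ratio of local $L$-factors whose Euler product completes to $\xi_\F$. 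Substituting this constant term back and carrying out the remaining Mellin transform in the torus variable pairs $f_1(\lambda)$ against the complex conjugate $\overline{f_2}(-w \cdot \lambda)$, the sign and conjugation coming from $\overline{\|a\|^{w\mu+\rho}}$ together with the reality conventions of Mellin inversion. The result is precisely \eqref{eq:100}.

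The main technical obstacle is the justification of the contour manipulations near the boundary of convergence. The defining sum for $E(\mu,g)$ converges only for $\Re\mu$ deep in the positive cone, while the individual summands $M(w,\mu)\,\|a\|^{w\mu+\rho}$ with $w\neq 1$ carry poles along the root hyperplanes $\alpha(\mu)\in\{0,1\}$ and are not even individually square-integrable. One must therefore take $\Re\mu = \mu_0$ sufficiently dominant so that every step above converges absolutely, keep the two contours $\lambda_0$ and $\mu_0$ distinct during the computation, and then verify that the compact support of the $f_i^\vee$ (equivalently, the Paley--Wiener decay of $f_i$) provides enough decay to let the final formula be independent of the chosen contour. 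Making this rigorous --- and showing that no residues contribute --- is the technical heart of Langlands' original argument.
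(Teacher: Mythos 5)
The paper does not prove Theorem~\ref{t_Langl}: it is imported verbatim from Langlands \cite{L}, and the only commentary the authors give is the one--sentence remark immediately after the statement that the $W$--sum comes from the Bruhat decomposition and the $\xi$--ratios are the spherical matrix elements of intertwining operators, together with a pointer to Appendix~\ref{s_pT3} for the normalization of measures. Your sketch is a correct outline of precisely that classical argument --- unfold one pseudo--Eisenstein series, reduce to the $\La\bU$--constant term of the other over the torus quotient, insert the spherical constant--term formula (Gindikin--Karpelevich / Langlands' Euler--product computation of the intertwining operator), and finish by Mellin inversion --- and you correctly flag the contour and convergence issues as the technical core, which is where Langlands' original treatment spends most of its effort.

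Two small gaps worth noting, which you would need to fill to fully match the equality \eqref{eq:100}. First, the unfolding of the Iwasawa integral over $\La\bG(\A)/\La\bB(\F)$ introduces a modular Jacobian factor $\|a\|^{-2\rho}$ (see \eqref{eq:164}); this factor is what cancels the $\|a\|^{\rho}$ twists coming from both Eisenstein series and produces the clean pairing $f_1(\lambda)\,\overline{f_2}(-w\cdot\lambda)$ with no residual $\rho$--shift. Second, the measure $d\lambda$ in \eqref{eq:100} is not the raw Lebesgue measure but the Tamagawa normalization worked out in Appendix~\ref{s_pT3} (built from $\Res_{s=1}\xi_\F$, \eqref{eq:162}--\eqref{eq:163}); the constant of proportionality between the unfolded $L^2$--integral and the contour integral is absorbed there, and your proof is silent on it. Neither of these affects the validity of your approach, but both must be tracked to recover the formula exactly as stated.
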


\noindent
The summation over $W$ in \eqref{eq:100} 
comes from the Bruhat decomposition of the group
$\La\bG(\F)$ and 
the ratios of $\zeta$-functions appear as the
spherical matrix elements of intertwining operators
associated to $w\in W$.
See Appendix \ref{s_pT3} for a discussion of the normalization of the
Lebesgue measure in \eqref{eq:100}.


\subsubsection{}

Our object of interest in this paper is
\begin{equation}
  \label{eq:117}
  \cH_\Eis = \overline{\Span\left\{E_{\phi^\vee}(g)\right\}} \subset
  L^2\!\left(\,\La\bG(\A)\,\big/\,\La\bG(\F)\right) \,. 
\end{equation}
This is a Hilbert space with an action of a large commutative $*$-algebra
generated by the
spherical Hecke operators at all nonarchimedian places (and also
invariant differential operators at Archimedean places). Since
Eisenstein series are joint eigenfunctions of all these operators, we
may realize a dense (in the sense of having the same commutant)
subalgebra $\sH$ of this algebra concretely as
\begin{align}
  \mathbf{sH}  &= \textup{$W$-invariant functions of $\lambda$} \notag \\
                     & = \C \left[ \uA\,/\,W\right] \,, \label{eq:118}
\end{align}
where
\begin{equation}
  \label{eq:110}
  \uA =  \Lambda \otimes \cQ_\F^\vee \cong
\begin{cases}
  \bA \,, &\chr \F >0 \,,\\
  \Lie \bA\,, & \chr \F = 0 \,. 
\end{cases}
\end{equation}
The problem that we solve in this paper is to determine the spectrum
of $\sH$ in $\cH_\Eis$.

\subsubsection{}\label{s_sl2}
As envisioned by Langlands, $\cH_\Eis$  breaks up into a direct sum, the pieces in which are indexed by the conjugacy
classes of nilpotent elements $e\in \fg = \Lie \bG$. Here $\bG$ is the
Langlands dual group.

Recall that there are only finitely many conjugacy classes of
nilpotent elements in $\fg$. Further, 
by the Jacobson-Morozov theorem, every $e$ has the form
\begin{equation}
e = \phi \left(
  \begin{bmatrix}
    0 & 1 \\ 0 & 0 
  \end{bmatrix}
\right)\label{eq:119}
\end{equation}
for a homomorphism $\phi: \mathfrak{sl}_2 \to \fg$. The element
\begin{equation}
h = \phi \left(
  \begin{bmatrix}
    1 & 0 \\ 0 & -1
  \end{bmatrix}
\right)\,, \label{eq:124}
\end{equation}
known as the \emph{characteristic} of $e$, satisfies
\[
  [h,e]=2e\,.
\]
Note this includes the case
\[
  (e,h) = (0,0)
\]
which corresponds to $\phi=0$. 
The nilpotent element $e$
and its characteristic $h$ determine each other up to the action of
the respective centralizers. 

We denote by
\[
  \bC_e \supset \bC_\phi 
\]
the centralizers of $e$ and of the pair $\{e,h\}$. The group $\bC_\phi$ is
reductive and is a maximal reductive subgroup of $\bC_e$. We denote by
$\bC_{\phi,\comp} \subset \bC_\phi$ a maximal compact subgroup. It is
also a maximal compact subgroup of $\bC_e$.

\subsubsection{}
Given $\phi$, we define
\begin{equation}
  \label{eq:122}
  \bbSt_e =
\begin{cases}
  q^{h/2} \bC_{\phi,\comp}\,, & \chr \F  > 0 \,, \\
  \frac12 h + \Lie \bC_{\phi,\comp} \,, & \chr \F  =  0 \,. 
\end{cases}
\end{equation}
These subsets of $\bG$, resp.\ $\Lie \bG$, are determined by $e$ 
up-to conjugation. 
Since
\[
  \uA \, / \, W =
\textup{conjugacy classes of semisimple elements in}
\begin{cases}
 \bG\,, & \chr \F  > 0 \,, \\
  \Lie \bG \,, & \chr \F  =  0 \,, 
\end{cases}
\]
we have a well-defined real-algebraic subvariety 
\begin{equation}
  \label{eq:123}
  \bbS_e = \textup{Image} \left(\bbSt_e  \to \uA \, / \, W \right)
  \,. 
\end{equation}
We equip $\bbS_e$ with the push-forward of the Haar measure
$d_\textup{Haar}$ from $\bbSt_e$, which we denote by the same symbol. 

\subsubsection{}
The following is our main result. It represents the next step in a very long development of the
subject, see  in particular \cites{MW,Labesse} for a discussion of
prior results. 

\begin{Theorem}\label{t_spectral} 
  We have the decomposition
  \begin{equation}
    \cH_\Eis = \bigoplus_{\textup{nilpotents $e\in \Lie \bG$}} L^2\left( \bbS_e, \bmu_e\right) \,,
    \label{e_spectral}
    \end{equation}
    consistent with the inclusion $\bS_e \subset \spec \sH$, 
    where $\bmu_e$ is a measure with a density with
    respect to $d_\textup{Haar}$. 
\end{Theorem}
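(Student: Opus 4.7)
My plan is to derive \eqref{e_spectral} from Langlands' explicit inner-product formula \eqref{eq:100} by a contour-shift argument, organized by the geometric decomposition of the Springer stack advertised in the abstract. First I would deform the contour from $\Re \lambda = \lambda_0 \gg 0$ toward $\Re \lambda = 0$. Along the way the integrand picks up residues on the affine hyperplanes where $\xi_\F(\alpha(\lambda))$ has its pole, namely $\alpha(\lambda) = 1$ for positive roots $\alpha$ (zeros of the denominator $\xi_\F(1+\alpha(\lambda))$ do not intervene, since $\xi_\F$ has no zeros in $\Re s > 1$). Iterating, the residues accumulate on intersections of such hyperplanes, and the key combinatorial fact, essentially M\oe{}glin--Waldspurger's classification of residual subspaces, is that each surviving stratum has the form $\tfrac12 h + \Lie \bC_\phi$, resp.\ $q^{h/2}\bC_\phi$, for some Jacobson--Morozov triple as in \eqref{eq:119}--\eqref{eq:124}. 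The tempered remainder on $\Re \lambda = 0$ provides the $e = 0$ summand, and each residual stratum provides one of the summands $L^2(\bbS_e, \bmu_e)$.

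On each stratum the kernel in \eqref{eq:100}, after cancellation of the matched $\xi_\F$ factors responsible for the pole, should factor into a Plancherel-type density for the reductive centralizer $\bC_\phi$, times a $W$-invariant function on the stratum. The image of the stratum in $\uA/W$ is precisely $\bbS_e$, the remaining density supplies $\bmu_e$, and the Haar measure on $\bC_{\phi,\comp}$ assembles the $L^2$ structure. I would then verify that different $\bbS_e$ are essentially disjoint in $\uA/W$, so the blocks are orthogonal, and that the Mellin transforms $\phi$ arising from pseudo-Eisenstein data restrict to a dense subspace of each $L^2(\bbS_e, \bmu_e)$ (a standard density statement for Paley--Wiener functions). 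Together these observations yield the isometric identification of the $e$-block with $L^2(\bbS_e, \bmu_e)$.

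The principal obstacle is the combinatorial bookkeeping of iterated residues. A naive contour shift produces a thicket of residues labelled by flags of affine subspaces, and one needs a canonical way to collect them into blocks indexed by nilpotent orbits. This is what the Springer-stack decomposition buys: regarding the Bruhat sum over $w \in W$ in \eqref{eq:100} as a sum over cells of the Steinberg/Springer stack $T^*(\bB \backslash \bG / \bB)$ of the Langlands-dual group $\bG$, I would apply the announced decomposition of this stack, in the additive group of cobordisms of cohomologically proper derived quotient stacks, into pieces indexed by nilpotent orbits. Equivariant localization (or its function-field analogue when $\chr \F > 0$) then extracts from each piece exactly the residue one expects on the automorphic side. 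Constructing this cobordism decomposition and matching its strata term-by-term to the residues of Langlands' kernel is the hard part; once that is done, positivity of each $\bmu_e$ follows automatically from the positivity of the Hermitian form \eqref{eq:99}.
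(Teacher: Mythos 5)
Your proposal is essentially the classical contour-shift-and-residues approach of Langlands, M\oe{}glin--Waldspurger, and Heiermann--De Martino--Opdam, which the paper explicitly distances itself from (``In this paper, we do something rather different''). The paper does not shift any contours and computes no iterated residues. Instead it interprets $\Psi_\Eis$ directly as the pushforward $\psi_\F(\cT)$ of a genus over the Springer stack $\cT = T^*(\bB\backslash\bG/\bB)$ (Theorem \ref{t_T3}), and then \emph{replaces} the residue analysis with a single geometric identity: the decomposition of $[\cT]$ in cohomologically proper bordism, Theorem \ref{t_proper}. That decomposition is established using the smoothness of the dCLP strata of Springer fibers, attracting manifolds for $q^{h/2}$, and local-cohomology scissor relations --- none of which involve poles of $\xi_\F$. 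You invoke the same Springer-stack decomposition, but as a bookkeeping device to ``canonically collect'' residues from a contour shift; this is a different and weaker use of the same object, and it leaves you with exactly the bookkeeping problem (``matching strata term-by-term to residues'') that the paper's argument bypasses rather than solves. That problem is in fact the one DHO solved with computer assistance for exceptional groups, so you should not expect the stack decomposition to make it disappear for free.

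Concretely, several steps in your plan do not match what actually has to be proved. (1) You say positivity of $\bmu_e$ ``follows automatically from the positivity of the Hermitian form,'' but this requires knowing in advance that the blocks are orthogonal; the paper instead proves positivity of the density directly (Proposition \ref{p_even}) using self-duality of the $\bC_\phi$-modules $\fg_i^e$ and the symplectic structure on $\fg_1$. (2) You dismiss the influence of zeros of $\xi_\F$ by noting $\xi_\F$ has no zeros for $\Re s > 1$; but the pushforward $\psi(\cT)$ involves $\psi$ evaluated at Chern roots lying inside the critical annulus, and the paper devotes all of Section \ref{s_zeros} (via the complex $\Omd_c\cT$ and Lemma \ref{l_free}) to proving that zeros of $\psi$ in the critical annulus do not break the decomposition --- this is not a free observation. (3) Your density argument for surjectivity (Paley--Wiener functions restricting densely to each block) is replaced in the paper by an $\sH$-module argument: since the injection of Theorem \ref{t_L2} is $\sH$-linear, it suffices to check it is nonzero on each piece, which follows from the geometric meaning of $\cP_+$ as integration along Springer fibers together with the fact that $(\cB^{e,h})^g\ne\varnothing$ for all $g\in \bC_{\phi,\comp}$. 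These are genuinely different inputs; the paper's are all geometric, yours are analytic, and you would still need to supply them.
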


\noindent
We also give an explicit formula for spectral projectors and for the
measure $\bmu_e$, see Theorem \ref{t_L2}. The proof of Theorem \ref{t_spectral}  is
completed in Section \ref{s_proof}.

In the special case when $\F =\Bbbk(t)$ is the field of rational functions
with coefficients in a finite field $\Bbbk$,
the results of our paper follow from the work of Prasad
\cite{Prasad}. The focus of Prasad's work are representations that
have Iwahori ramifications at $t=0,\infty$. The everywhere unramified
representations considered in this paper constitute a very simple special
case of those.

\subsection{Large-scale structure of the argument}

\subsubsection{}

In his fundamental work \cite{L}, R.~Langlands introduced a contour
shift and the analysis of residues as a way to describe the discrete
spectrum of $\cH _{\Eis}$ as an $\sH$-module. In \cite{L}, he
computed the discrete spectrum for groups for groups of rank 2.
In the work \cites{Moe1,Moe2} of C.~M\oe glin and J.-L.~Waldspurger this approach
was used to describe the discrete  spectrum of $\cH _{\Eis}$ for all 
classical group. In recent work  of Volker Heiermann, Marcelo de Martino and Eric Opdam \cite{DHO} 
this approach was extended to general split groups over number fields (computer 
aided in the exceptional cases). The most recent progress in that
direction is reflected in \cite{DHO2}.


In this paper, we do something rather different, and our starting point
is the following geometric interpretation of the pairing 
\eqref{eq:100}. In an equivalent and slightly more convenient
language, we will start with a geometric interpretation of 
the linear functional, or distribution,  $\Psi_\Eis$
defined by 
\begin{equation}
  \label{eq:81}
   (f_1,f_2)_\Eis = (f_1 \boxtimes f_2^*, 
   \Psi_\Eis)  \,. 
\end{equation}
Here
\[
  f^* (\lambda) = \overline{f} (-\lambda) \,, \quad \lambda\in \uA
\,,
\]
where minus refers to the group structure of $\uA$. See Section
\ref{s_distr} for a discussion of what we mean by distributions in
this paper. 

\subsubsection{}

Recall the group $\cQ_\F^\vee$ defined in \eqref{eq:116} 
and note this is a 1-dimensional connected affine
algebraic group over
$\C$. To such an object one can associate an equivariant complex-oriented
cohomology theory. This is a contravariant
functor, which we denote by
\begin{equation}
  \label{eq:112}
  \begin{matrix}
  \textup{topological spaces} \\
  \textup{with an action of} \\
  \textup{a compact group}
\end{matrix} \quad 
\xrightarrow{\quad \Hd_{\textup{group}}\left(\textup{space}\big|  \,
    \cQ_\F^\vee \right)\quad} \quad 
\begin{matrix}
  \textup{supercommutative} \\
  \textup{algebras over $\C$} 
\end{matrix} \quad. 
\end{equation}
Concretely, this is ordinary equivariant cohomology if $\chr \F=0$ and
equivariant 
topological K-theory if $\chr \F > 0$. 

In particular, we have 
\begin{equation}
  \label{eq:111}
  \uA = \Spec \Hd_{\bA_\comp} \left(\pt\big| \, \cQ_\F^\vee\right) 
  \,, 
\end{equation}
where $\uA$ was defined in \eqref{eq:110} 
and $\bA_\comp \subset \bA$ is the compact torus.

\subsubsection{}\label{s_limit}

While there is a fundamental unity between equivariant K-theory and
equivariant cohomology, there are also differences in details. 
To avoid considering cases all the time and to
streamline the exposition,  we will focus on
the more difficult case of
K-theory, the group law for which we write multiplicatively.
Equivariant cohomology we treat as the $q\downarrow 1$
limit, in which we substitute
\[
  x_{\textup{old}} = q^{x_{\textup{new}}}
  \]
  for all other variables, see Section \ref{s_limitH} for details. 
In practice, this amounts to the following
  instruction: 
\begin{enumerate}
\item[(1)] replace the multiplicative group law by the additive group
  law, 
\item[(2)] replace $1$ by $0$, $q$ by $1$, and $1-x^{-1}$ by $x$,
\item[(3)] define $\psi_{\textup{new}}(s) = \lim_{q \to 1}
  \psi_{\textup{old}}(q^s;q)$, where $q^s$ is the argument of $\psi_{\textup{old}}$
  and $q$ is a parameter on which $\psi_{\textup{old}}$ is allowed to
  depend. 
\end{enumerate}
see Table \ref{tab1} in Section \ref{s_limitH} for more on this.

\subsubsection{}
The cohomology theory \eqref{eq:112} has pushforwards for
proper equivariant complex oriented maps
\[
  f: \bX \to \bY \,.
\]
For instance, if $f$ is a holomorphic map between complex manifolds, then
$f$ is automatically complex oriented. As long as $f$ is proper,
this setup can be extended  without difficulty
to suitable maps between complex algebraic stacks,
as we recall in Section \ref{s_geom} below. 

In particular, if $f$ is a proper $\bA$-equivariant map and 
 $\bY$ is a point then $f_*$ takes values in
 polynomial function on $\uA$. Remarkably, if one weakens the
 properness assumptions to \emph{cohomological properness}, which is a
notion recalled from \cite{DHLloc} in
 Appendix \ref{a_proper}, then
 $f_*$ is still defined with values in distributions on $\uA$. This is
 completely analogous to the fact that the character of an
 infinite-dimensional representation, when defined, is typically a
 distribution on the group rather than a function.

 The geometric interpretation of the
 distribution $\Psi_\Eis$ will be in these terms. Namely, the
 distribution $\Psi_\Eis$ on $\uA \times \uA$ 
 will be interpreted as the
 pushforward under the diagonal map in 
 %
 %
 %
 \begin{equation}
   \xymatrix{
     \textup{Springer stack } \cT \ar[rrd] \ar[d] \\
    \pt\big/\left(\bA \times \bA
 \right)\ar[rr] && \pt}\label{eq:82}
 \end{equation}
 of a certain equivariant genus $\psi$ related to the completed
 $\zeta$-function $\xi_\F(s)$ of the field $\F$. See \eqref{eq:86}
below  for the definition of $\cT$. 

 \subsubsection{} 

As we recall in Section \ref{s_geom} below, 
genera are usually considered for either smooth compact stably complex
manifolds or smooth proper algebraic varieties and involve the choice of an
arbitrary function $\psi(x)\in \C\left[\cQ_\F^\vee\right]$, such that
\begin{equation}
\psi( \textup{origin} ) =1 \,.\label{eq:113}
\end{equation}
If $x$ is the
equivariant Chern class of a line bundle, then the origin $x=0$ is the
Chern class of a trivial line bundle. 

For manifolds with an
action of a group $\bA$, genera give ring homomorphisms
\begin{equation}
  \label{eq:189}
  \left\{
    \begin{matrix}
      \textup{$\bA$-equivariant} \\
      \textup{bordism classes $[\bX]$}
    \end{matrix}
  \right\}
  \xrightarrow{\quad \psi(\bX) \quad}
       \C[\uA] \,,
\end{equation}
given by 
\[
  \psi(\bX) = (\bX \to \pt)_* \, \psi(T\bX) \,. 
\]
Here 
\[
  \psi( \textup{a vector bundle $V$}) = \prod_{\textup{Chern roots $v_i$}}
\psi(v_i) \in \Hd_\bA\left( \bX \big|  \,
    \cQ_\F^\vee \right) \,. 
\]
This may by
extended by continuity to much larger classes of functions
$\psi(x)$, see Section \ref{s_K_an}. Also, the condition \eqref{eq:113} may be relaxed if we
work with manifolds of a given dimension, or stacks of given virtual
dimension.

\subsubsection{}

We define the Springer stack as
\begin{equation}
\cT = T^*( \bB \backslash \bG /\bB)\,, \label{eq:86}
\end{equation}
see Section \ref{s_stack}, 
and give it an action of the group $\Ct$ scaling cotangent directions
with weight $-1$. The element of this multiplicative group $\Ct$ will
be denoted by $q$. We will eventually evaluate it to a prime power in the function
field case. In the number field case, we will take $1\in \Lie \Ct$ as
a distinguished element, as already mentioned in Section
\ref{s_limit}.

Since $\cT$ is presented as a quotient by $\bB \times \bB$, it
comes equipped with the vertical map in \eqref{eq:82}. 

\subsubsection{}

The starting point of our analysis is the following reformulation of
\eqref{eq:100}. 

\begin{Theorem}\label{t_T3} 
  The stack $\cT$ is cohomologically proper and 
  \begin{equation}
  \Psi_\Eis =\psi_\F(\cT) \,, \quad \cT = T^*( \bB \backslash \bG
  /\bB)\,, 
\label{eq:68}
\end{equation}
  where the function $\psi_\F(x)$ satisfies the equation
  \begin{equation}
  \xi_\F(s) = \begin{cases}
  \dfrac{\psi_\F(q^{-s}) \psi_\F(q^{-1+s})}{(1-q^s)(1-q^{1-s})} \,,
  &\cQ_\F = q^\Z \,, \vspace{3mm}\\ 
 \dfrac{\psi_\F(-s) \psi_\F(-1+s)}{s(1-s)} \,, & \chr \F = 0 \,. 
\end{cases}\label{eq:108}
\end{equation}
\end{Theorem}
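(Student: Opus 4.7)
The plan is to split $\cT$ along the Bruhat decomposition $\bG = \bigsqcup_{w \in W} \bB w \bB$ into locally closed substacks
$$\cT_w = T^*(\bB\backslash \bB w \bB / \bB),$$
compute the $\psi_\F$-genus of each $\cT_w$ separately, and sum over $W$ to match \eqref{eq:100}. The identity \eqref{eq:108} will be forced as the unique condition on $\psi_\F$ that makes this matching work.

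First I would verify cohomological properness. Localizing with respect to a generic pair of cocharacters of $\bA \times \bA \subset \bB \times \bB$ (combined with the cotangent-scaling $\Ct$ of weight $-1$), the fixed locus of $\cT$ is the disjoint union over $W$ of classifying stacks $\pt/\bH_w$, where $\bH_w = \bB \cap w \bB w^{-1}$ is the $\bB\times\bB$-stabilizer of $w \in \bG$. Each attracting stratum fibers over these fixed points as an affine space, via the explicit identification $\bB w \bB \cong \bU_w \cdot w \cdot \bB$ with $\bU_w \cong \A^{\ell(w)}$, together with the splitting of the cotangent fiber into attracting and repelling directions. The attractor/repeller criterion recalled from \cite{DHLloc} in Appendix \ref{a_proper} then yields cohomological properness of $\cT$ and of each $\cT_w$.

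Next, for each $w$ the stratum $\cT_w$ retracts equivariantly onto $\pt/\bH_w$, and its tangent complex there decomposes as a sum of $\bH_w$-weight spaces indexed by roots of $\bG$. The composition $\pt/\bH_w \to \pt/(\bA \times \bA)$ identifies the two copies of $\bA$ via the $w$-twisted diagonal, so the $\psi_\F$-genus pushforward of $\cT_w$ to $\pt/(\bA \times \bA)$ is a distribution supported on $\{\lambda_2 = - w\cdot \lambda_1\}$ times an explicit root product: a $\psi_\F$-factor for each cotangent weight and a $(1 - q^{\alpha(\lambda)})^{-1}$-factor for each tangent weight, with the tangent/cotangent split governed by the sign of $w^{-1}\alpha$ for positive $\alpha$.

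Comparing with Langlands' formula \eqref{eq:100}, the $\delta$-distribution supplies the pair $f_1(\lambda)\overline{f_2}(-w\cdot\lambda)$, and pairing each positive root $\alpha$ with $-\alpha$ on the opposite side of the $w^{-1}\alpha > 0$ / $w^{-1}\alpha < 0$ dichotomy reduces the remaining root product to a single-root identity. That single-root identity is exactly \eqref{eq:108}, and it determines $\psi_\F$ from $\xi_\F$ uniquely. The main obstacle will be making the per-stratum distributional pushforwards rigorous and verifying that gluing the strata back together introduces no boundary contributions; this is precisely the role of the cohomological-properness framework of \cite{DHLloc} and of the Haar-measure normalization discussed in Appendix \ref{s_pT3}.
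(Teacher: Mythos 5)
Your overall plan — expressing $\psi(\cT)$ as a sum over $W$ of root-products and matching term-by-term with Langlands' formula~\eqref{eq:100} to extract~\eqref{eq:108} — is the same as the paper's (Theorem~\ref{t_f1f2} plus Corollary~\ref{c_T3}), and the reduction to a single-root identity at the end is correct. However, there is a concrete error in your intermediate description: the locally closed derived substack of $\cT$ sitting over the Bruhat cell $\bB w\bB$ is \emph{not} $T^*(\bB\backslash\bB w\bB/\bB)$. You can check this on virtual dimensions: $\cT$ has constant virtual dimension $-2r$ (see~\eqref{eq:51}), while $T^*(\pt/\bH_w)$ for $\bH_w = \bB \cap w\bB w^{-1}$ has virtual dimension $-2\dim\bH_w = -2r - 2(\ell(w_0) - \ell(w))$, which depends on $w$. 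The correct fiber over a point of the $w$-cell is $\fn \cap \fn'$ (with derived structure from the non-transversal moment map), and the tangent complex at the $w$-indexed fixed point is the one in~\eqref{eq:51}, not that of a shifted cotangent of a classifying stack. The paper sidesteps this issue entirely: it does not stratify $\cT$ by Bruhat cells at all in this step, but localizes at the isolated $\bA$-fixed points $p_w$ on the smooth resolution $M = T^*\cB$ (Section~\ref{s_charT}), where the computation of the virtual tangent complex is straightforward.

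The second divergence is in how cohomological properness is established. You propose a direct attractor stratification of $\cT$ for a generic cocharacter of $\bA\times\bA\times\Ct_q$. To carry that through you would have to justify that the Bia\l{}ynicki-Birula decomposition actually exhausts the singular, non-proper derived stack $\cT$ — i.e.\ that every point flows to a fixed component. The paper avoids that by factoring through the diagram~\eqref{TaunB}: the vertical maps (built from the Springer resolution $\mu: T^*\cB\to\fg$) are proper, so the pushforward can be computed on $\fn/\bB$, and then properness reduces to the elementary observation that $\fn$ is a linear space with $\fn^\bB = \{0\}$ and all normal weights $q^{-1}x^\alpha$ attracting on the contour $\varsigma A_\comp$. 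This is both more rigorous and shorter than a direct BB argument on $\cT$, and it is also what determines the integration cycle in Theorem~\ref{t_f1f2}.

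So: same strategy, same final formula~\eqref{eq:108}, but (a) fix the identification of the $w$-piece — you want the preimage of the Bruhat cell under $\cT\to\bB\backslash\bG/\bB$ (or better, just the localization residue at $p_w\in T^*\cB$, as in~\eqref{eq:51}), not $T^*$ of the cell; and (b) route properness through the Springer map and $\fn/\bB$ rather than a BB decomposition of $\cT$.
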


\noindent
Note that since the zeros and poles of $\xi_\F(s)$ are symmetric with
respect to $s \mapsto 1-s$, a function $\psi_\F(x)$ solving these
equations can always be found by the Weierstra\ss\ factorization
theorem.

For instance, in the function field case, we have
\begin{equation}
  \zeta_\F(x) = \frac{\prod_{i=1}^{g} (1- \alpha_i/ x) (1-
    \alpha_i^{-1} q / x)}
  {(1-1/x)(1-q/x)}\,,
\label{eq:159}
\end{equation}
where we use the variable $x=q^{s}$ instead of the more conventional
$t=q^{-s}$, and where $\{\alpha_i\}$ are half of the Frobenius
eigenvalues, $|\alpha_i|=q^{1/2}$. Thus
\begin{equation}
  \label{eq:161}
  \xi_\F(x) = x^{g-1} \zeta_\F(x) = \frac{(-1)^{g-1}}{\prod \alpha_i} 
  \frac{\prod_{i=1}^{g} (\alpha_i - x     
    ) }{(1-x)}
  \frac{\prod_{i=1}^{g} (\alpha_i - q/x) }{(1-q/x)} 
\end{equation}
gives the desired factorization \eqref{eq:114}, in which we can
choose either square root of the prefactor. 

\subsubsection{}

In general, the function
\begin{equation}
  \label{eq:114}
Z_\psi(x) = \Psi(x^{-1}) \Psi(x/q) \,, \quad \Psi(x) = \frac{\psi(x)}{1-x^{-1}} \,.
\end{equation}
plays a special role in the computation of genera for manifolds $\bX$ having
a polarization $T^{1/2}$ of the tangent bundle $T\bX$. By definition, a
polarization is a solution of the equation
\[
  (T^{1/2})^* + q^{-1} T^{1/2} = T\bX
\]
in equivariant K-theory of $\bX$. A polarization makes the Chern roots
of $T\bX$ appear in pairs $\{x_i^{-1}, x_i/q\}$.

Theorem \ref{t_T3}  is an easy consequence of Theorem
\ref{t_Langl} and Theorem \ref{t_f1f2} , see Corollary \ref{c_T3}.

\subsubsection{}

The next key point is that the spectral decomposition for
Eisenstein series corresponds to a certain decomposition of $\cT$
in the additive group of equivariant cohomologically proper
bordism classes.

In equivariant cohomologically proper situation, there are certain
convenient scissor relations, which look as follows. 

\subsubsection{}

To fix ideas, let
\[
  f: \bY \to \bX
\]
be an equivariant
embedding of a complex subvariety into a complex manifold. We now
stress that, while $\bX
\setminus \bY$ is not proper, it may very well be cohomologically
proper and so have a well-defined bordism class $[\bX \setminus \bY]$. We then define
\[
  \Thom(\bY \xrightarrow{\,\, f \,\,} \bX) = [\bX] - [\bX \setminus \bY] \,, 
\]
which we will usually abbreviate to just $\Thom(\bY)$, the rest of the
data being understood. Clearly, it depends only on the neighborhood of
$\bY$ in $\bX$, and only on the normal bundle if $\bY$ is smooth. 

Concretely, by the long exact sequence for local cohomology, we have 
\begin{align}
 \psi(\Thom(\bY)) 
  & = \sum (-1)^i H^i_\bY(\psi(TX))\,,  \label{eq:101}
\end{align}
in  equivariant K-theory,
were $H^i_\bY$ denotes cohomology with support in $\bY$.

\subsubsection{}
We denote by $\cB = \bG/\bB$ the flag variety of $\bG$. 
It is easy to see from the definitions that, set-theoretically
(in the sense of set-theoretic support of the structure sheaf),
we have 
\begin{equation}
\cT = \bigsqcup_{e}
\left(\cB^e \times \cB^e\right) \big/ \bC_e\,,\label{eq:115}
\end{equation}
  where the sum is over the conjugacy classes of nilpotent
  elements $e\in
  \fg = \Lie \bG$,
  \begin{align*}
    \cB^e & = \textup{the Springer fiber of $e$} \\
    & = \textup{fixed locus of $e$ acting on $\cB$}\,, 
  \end{align*}
and $\bC_e$ is the
centralizer of $e$. For instance, the piece corresponding to
the regular nilpotent $e$ is open in $\cT$, while the piece
corresponding to $e=0$ is zero section
\[
  \bB \setminus \bG / \bB \subset \cT \,.
\]
Needless to say,
Springer fibers are objects of paramount importance in geometric
representation theory, and it is not surprising that they also play
a key role in our work. 

\subsubsection{}
We prove the following cobordism upgrade of the decomposition
\eqref{eq:115}. In equation \eqref{eq:120}, $[q^{-1}]$ denotes the
$\Ct_q$-equivariant cobordism class of $\C$, where the action is
by $q^{-1}$. By our assumptions $\Psi(q^{-1})$ is invertible, and
it is convenient to introduce
the inverse of the class $[q^{-1}]$ itself. 

\begin{Theorem}\label{t_proper}  For each $e$, the
  $\Ct_q$-action on the space 
  \begin{align}
 \sThom(e) & \eqdef \Thom \left( \left(\cB^e \times
             \cB^e\right) \big/ \bC_e  \to \cT \right) \notag \\
    &= \frac{1}{[q^{-1}]^{2r}} \left([\cB^e] \times [\cB^e] \times \Thom(0 \to \Slice_e
      \fg)\right)
      \big/ \bC_e 
\label{eq:120}
  \end{align}
  is cohomologically proper,  and we
  have the decomposition
\begin{equation}
[\cT] = \bigsqcup_{e}
\left[\sThom (e) \right]\,.\label{eq:116_Thom}
\end{equation}
\end{Theorem}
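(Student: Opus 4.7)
The plan is to upgrade the set-theoretic stratification \eqref{eq:115} to the level of equivariant cobordism by opening up the nilpotent orbits one at a time and iterating the scissor relation \eqref{eq:101}.

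First, I would organize the nilpotent orbits by the closure order: write $e' \preceq e$ when $\overline{\cO_{e'}} \subseteq \overline{\cO_e}$, so that the regular orbit is maximal and $0$ is minimal. Pulling back the corresponding filtration of the nilpotent cone modulo $\bG$ along the natural map from $\cT$ (induced by the moment map on each copy of $T^*\cB$) produces a $\Ct_q \times \bA \times \bA$-invariant filtration of $\cT$ by closed substacks whose successive strata are precisely the pieces $\cT_e = (\cB^e \times \cB^e)/\bC_e$ appearing in \eqref{eq:115}. Applying \eqref{eq:101} at each step of this filtration and telescoping would then give
$$
[\cT] = \sum_e \sThom(\cT_e \hookrightarrow \cT),
$$
so that \eqref{eq:116_Thom} is reduced to a local normal-bundle computation, provided every complement arising in the telescope is cohomologically proper.

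Second, to identify each $\sThom(\cT_e)$ I would use a Slodowy-type transversal. The slice $\Slice_e \fg$ to $\cO_e$ in $\fg$ at $e$ is $\bC_\phi$-invariant and, via the $\mathfrak{sl}_2$-triple containing $e$, carries a grading by $\tfrac12 \ad h$ with strictly positive weights. A $\bC_e$-equivariant étale slice argument then identifies a neighborhood of $\cT_e$ in $\cT$ with an associated bundle of the shape $(\cB^e \times \cB^e \times \Slice_e\fg)/\bC_e$; the prefactor $[q^{-1}]^{-2r}$ in \eqref{eq:120} corrects for the pair of cotangent directions along the $\bG$-orbit of $e$ that are collapsed when passing from $T^*\cB \times T^*\cB$ to its Hamiltonian-reduced quotient. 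Combined with \eqref{eq:101} this produces the product formula \eqref{eq:120}.

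The main obstacle is cohomological properness. The Springer fibers $\cB^e$ are proper and $\bC_e$ is reductive, but $\Slice_e\fg$ is not compact, and neither are the intermediate complements $\cT \setminus \cT_{\preceq e}$ that appear along the telescope. What rescues the argument is that the cotangent-scaling $\Ct_q$-action, combined with the $\tfrac12 h$-grading on $\Slice_e\fg$, assembles into a $\Ct$-action that contracts the slice to the origin; the criterion recalled from \cite{DHLloc} in Appendix \ref{a_proper} then yields cohomological properness of each $\sThom(\cT_e)$. Verifying this contraction uniformly across the closure order, so that every application of \eqref{eq:101} takes place inside the additive group of cohomologically proper cobordism classes, is the essential technical point; granted it, the identity \eqref{eq:116_Thom} follows formally by induction on $\preceq$.
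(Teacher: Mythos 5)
Your overall skeleton is consistent with the paper's argument: organize the nilpotent orbits by closure order, apply the local-cohomology scissor relation \eqref{eq:101} telescopically, identify each Thom contribution through a Slodowy-type transversal giving \eqref{eq:120}, and reduce cohomological properness to the contracting nature of the $(q,q^{h/2})$-action. However, the step you flag at the end as ``the essential technical point'' and then take for granted is precisely where the real content of the paper's proof lives, and your sketch of it is not correct. In particular, you assert that $\bC_e$ is reductive; it is not --- the paper explicitly notes that $\bC_\phi$ is only a maximal \emph{reductive subgroup} of $\bC_e$, which generically has a nontrivial unipotent radical. So the criteria recalled in Appendix~\ref{a_proper}, which govern affine maps with reductive (or $\Ga$) kernel, do not directly apply to $\bigl(\cB^e \times \cB^e \times \Thom(0 \to \Slice_e\fg)\bigr)/\bC_e$, and ``the slice contracts to the origin'' is not on its own a sufficient criterion for cohomological properness of that quotient stack.

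The missing ingredient, which the introduction to the paper signals and the actual proof invokes, is the De~Concini--Lusztig--Procesi theorem (Theorem~\ref{t_dCLP}): the attracting manifolds of the $q^{h/2}$-action on $\cB^e$ are \emph{smooth}, despite $\cB^e$ itself being singular (indeed only a derived scheme with virtual tangent bundle). The paper stratifies both copies of $\cB^e$ by these dCLP attracting manifolds; each stratum is then a genuine smooth variety, so the normal-bundle bookkeeping of Proposition~\ref{l_lc} applies; Lemma~\ref{l_slice} shows the slice directions are repelling, the fiber directions are repelling by construction, and the fixed loci $\cB^{h,e}_w$ are proper, hence cohomologically proper. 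Only then does the telescope of scissor relations go through. Without the dCLP smoothness input, you cannot apply the Section~\ref{s_lc_s} machinery to the singular $\cB^e$, so the inductive ``grant me the contraction'' at the end of your proposal is not a small verification --- it is the theorem. You should replace the appeal to a generic contraction criterion with an explicit invocation of Theorem~\ref{t_dCLP} and the resulting refined stratification.
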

\noindent
Here $q\in \Ct_q$ acts on $\fg$ by
\[
  \xi \mapsto q^{-1} \ad(q^{h/2}) \xi\,, \quad \xi \in \fg\,, 
\]
and by $q^{h/2}$ on $\cB^e$. This action fixes $e$ and preserves
$\cB^e$. Since the weights of this action are negative on
$\Slice_e \fg$, it is important for our formulas that $|q|>1$ in the
function field case and that $q \downarrow 1$ in the number
field case.

In \eqref{eq:120}, $[\cB^e]$ denotes the bordism class of $\cB^e$ with its
virtual structure, namely that of the zero locus of a vector
field: 
\begin{equation}
  \label{eq:142}
  [\cB^e] = \textup{Euler class of $q^{-1} T\cB$}  \,. 
\end{equation}
In particular, it is defined in $\bG$-equivariant
bordism and is independent of the choice of $e$.

Our proof of Theorem \ref{t_proper} uses the stratification
of $\cB^e$ studied by De Concini, Lusztig, and Procesi in
\cite{dCLP}.

\subsubsection{}

Theorem \ref{t_proper} results in the following geometric description
of the spectral projectors and the measure $\bmu_e$ in Theorem
\ref{t_spectral}.

It was noted by Langlands in \cite{L} that by the functional
equation for $\xi_\F$ the operator
\begin{equation}
\cP_\Langl = \frac{1}{|W|} \sum_{w \in W} \prod_{\substack{\alpha > 0 \\ w^{-1} \alpha <0 }}
\frac{\xi_\F(\alpha(\lambda))}{\xi_\F(1+\alpha(\lambda))} \,  w
\label{eq:118_cPlang}
\end{equation}
is a projection operator, acting on meromorphic functions.
In terms of the function $\Psi(x)$, this
fact may be 
explained as follows. In the K-theory case, for instance, we have
\begin{equation}
  \cP_\Langl = \bPi_-^{-1} \,  \left(\frac{1}{|W|} \sum_{w \in W} w \right)\,
  \bPi_- \,,
  \quad \bPi_{\mp} = \prod_{\alpha \gtrless 0}
  \frac{\Psi(x^\alpha)}{\Psi(q^{-1} x^{\alpha})} \,, 
\label{eq:1182}
\end{equation}
where the operator in the middle is the standard projector onto
$W$-invariant functions. We define
\begin{equation}
  \label{eq:141}
  \cP_{\pm} = \left(\sum_{w \in W} w \right)\,
  \bPi_\pm \,. 
\end{equation}
These take functions on $\bA$
to $W$-invariant functions and are
proportional to a projection operator.
In contrast to $\cP_\Langl$,
these operators do not introduce undesirable poles.

Viewed as operators
\begin{equation}
  \label{eq:143}
  K_\bG(\bB) \xrightarrow{\quad \cP_\pm \quad}  K_\bG(\pt) \,, 
\end{equation}
The operators $\cP_\pm$ have the following clear
geometric meaning
\[
  \cP_\pm \cdot f = \chi\left( f \otimes \psi\left(\big[\textup{Euler class of
  $q^{-1} T(\bG/\bB_\pm)$}\big]\right)\right)  \,, 
\]
the difference between $\pm$-choices being the
identification of $K_\bG(\bB)$ with functions on $\bA$.

\subsubsection{}

For a nilpotent element $e$, we define
\begin{equation}
  \label{eq:18}
  \Psi_e = \frac1{\Psi(q^{-1})^{2r}}
  \, \Psi \left( \Thom\left(0 \to \Slice_e
      \fg)\right)
      \big/ \bC_e \right) \,, 
\end{equation}
which can be seen to be convergent on $q^{h/2}
\bC_{\phi,\comp}$ for $|q|>1$. Very explicit formulas 
for the spectral measure can be found in Proposition
\ref{p_spectral_Z} below. 

With some additional analysis of the effect of the zeros of the
function $\psi(x)$, Theorem \ref{t_proper} implies
the following generalization of the 
decomposition \eqref{e_spectral}. It explains
the role of $\cP_+$ as spectral projectors and the role
of $\left|\Psi_e\right|$ as the spectral measure. It applies
to an arbitrary function $\psi(x)$ satisfying the hypothesis
of the theorem, not just functions of the form \eqref{eq:108}. 

\begin{Theorem}\label{t_L2} 
  Suppose $q>1$ and that the function $\psi(x)$ satisfies:
  \begin{enumerate}
  \item[(1)] it is holomorphic
  in $\Ct$ and nonvanishing outside the critical strip $1/q<|x|<1$; 
  \item[(2)] the function $- \psi(x^{-1}) \psi(x/q)$ is real and positive 
    for $x> q$.  
  \end{enumerate}
Then 
  \begin{equation}
    \label{eq:19}
    \left(f \boxtimes f^*, \psi(\cT)\right) =
      \sum_{e} 
    \int_{\bbS_e} \left| \cP_+ f \right|^2 
    \left|\Psi_e \right|\, 
      d_\textup{Haar}  \,, 
    \end{equation}
    The conclusion \eqref{eq:19} remains valid in cohomology provided: 
\begin{enumerate}
  \item[(1$'$)] $\psi(s)$ is entire with no zeroes outside $-1 < \Re s
    <0$; 
  \item[(2$'$)] the function $-\psi(-s) \psi(s-1)$ is real and positive 
    for $s>1$.  
  \end{enumerate}
  \end{Theorem}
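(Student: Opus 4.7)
The plan is to combine the cobordism decomposition of Theorem~\ref{t_proper} with the multiplicativity of the genus $\psi$ and the geometric description \eqref{eq:143} of $\cP_+$. I first apply $\psi$ to the scissor identity $[\cT] = \bigsqcup_e [\sThom(e)]$ to obtain a decomposition
\begin{equation*}
  \psi(\cT) = \sum_e \psi(\sThom(e))
\end{equation*}
of distributions on $\uA\times\uA$. Pair both sides against $f \boxtimes f^*$ and reduce the problem to analyzing one nilpotent class $e$ at a time. The right-hand side of \eqref{eq:19} suggests an exactly analogous stratum-by-stratum decomposition, so matching it term-by-term should suffice.

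For a fixed $e$, unpack the product structure in \eqref{eq:120}. Since $\psi$ is a ring homomorphism on equivariant bordism and $\sThom(e)$ is presented as the $\bC_e$-quotient of a product whose two $\cB^e$-factors are acted upon by the two separate copies of $\bA \subset \bB$ defining $\cT$, the pushforward to $\uA \times \uA$ factors as
\begin{equation*}
  \psi(\sThom(e)) = \frac{1}{\psi(q^{-1})^{2r}} \Bigl(\psi([\cB^e]) \boxtimes \psi([\cB^e])\,\cdot\,\psi(\Thom(0 \to \Slice_e\fg))\Bigr)\Big/\bC_e.
\end{equation*}
The two $\cB^e$-factors depend on the two $\bA$-arguments independently, while the slice factor depends only on the $\bC_e$-directions parametrized by $\bbSt_e$.

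Next, I identify each $\cB^e$-factor with $\cP_+$. Equation \eqref{eq:143} expresses $\cP_+ f$ as the $\bG$-equivariant pushforward $\chi(f\otimes\psi(\text{Euler of }q^{-1}T\cB))$, and \eqref{eq:142} reads the virtual class $[\cB^e]$ as the Euler class of $q^{-1}T\cB$ restricted along the Springer fiber. Pairing the resulting class against $f$ produces exactly the restriction of $\cP_+ f$ to the image of $\bbSt_e$ in $\uA/W$. The symmetric pairing against $f^*$ in the second factor yields $\overline{\cP_+ f}$ via the involution $\lambda \mapsto -\lambda$ built into the definition of $f^*$, so the joint pairing yields $|\cP_+ f|^2$ provided the positivity hypothesis (2) is in force.

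Finally, I identify the remaining $\bC_e$-quotient of the slice factor with integration against $|\Psi_e|$ on $\bbS_e$. By the definition \eqref{eq:18}, $\Psi_e$ is, up to the normalization $\Psi(q^{-1})^{-2r}$, precisely the genus of the $\bC_e$-quotient of the slice Thom class; and the Haar pushforward along $\bbSt_e \to \bbS_e$ in \eqref{eq:123} produces the measure $d_\textup{Haar}$. The real-positivity clause (2), via the factorization $Z_\psi(x)= -\psi(x^{-1})\psi(x/q)\cdot\text{(positive)}$ on $\bbSt_e = q^{h/2}\bC_{\phi,\comp}$, is exactly what converts the formal signed density $\Psi_e$ into its absolute value $|\Psi_e|$ as required. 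The principal obstacle I anticipate is analytic rather than algebraic: one must verify that the distribution $\psi(\cT)$ admits stratum-by-stratum pairing against the contour $\Re\lambda\gg 0$ at which \eqref{eq:100} converges, and that the deformation of this contour to the real spectral loci $\bbS_e$ collects no residues beyond those already visible in the decomposition \eqref{e_spectral}. Hypotheses (1) (no zeros outside the critical strip) and (2) (positivity on the real strata) are precisely what ensures this analytic continuation is clean, and checking that the cohomological variant (1$'$),(2$'$) behaves identically in the $q\downarrow 1$ limit of Section~\ref{s_limit} completes the argument.
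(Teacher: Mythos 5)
Your skeleton is aligned with the paper's: apply $\psi$ to the scissor decomposition of Theorem~\ref{t_proper}, factor each term into two $\cB^e$-pieces and a slice, identify the $\cB^e$-pushforwards with $\cP_\pm$ (Lemma~\ref{l_We} and (\ref{e_cPbB})) and the slice with $\Psi_e$, then pass from $\bbSt_e$ to $\bbS_e$. But you treat hypothesis (1) as if it were a nonvanishing assumption that rules out poles. It does the opposite: it \emph{permits} zeros of $\psi$ inside the critical annulus $1/q<|x|<1$, and this is precisely the case in which the machinery developed in Sections~\ref{s_K_an}, \ref{s_psi_vir}, and Corollary~\ref{c_ac} (which all require $\psi$ nonvanishing on all of $\C^\times$ or at least in a large annulus) no longer applies directly. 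The paper spends the whole of Section~\ref{s_zeros} bridging this: it introduces the twisted de Rham complex $\Omd_c\cT$ in (\ref{eq:105}), proves Lemma~\ref{l_free} and Proposition~\ref{p_check_attr} to show the insertion does not destroy cohomological properness on each stratum, and only then obtains Corollary~\ref{c_crit}, from which Theorem~\ref{t_L2} is a two-line pushforward argument. Your sentence ``Hypotheses (1) ... and (2) ... are precisely what ensures this analytic continuation is clean'' asserts exactly what requires proof, and your framing in terms of ``deforming the contour ... collects no residues beyond those already visible'' slides back into the Langlands-style residue calculus that the paper deliberately replaces with a stratum-by-stratum cobordism identity.

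A second, smaller gap: you assert that pairing the second $\cB^e$-factor against $f^*$ yields $\overline{\cP_+ f}$ and that positivity of the density then follows from hypothesis (2). This is Proposition~\ref{p_even} in the paper, and it is not an automatic consequence of (2): the identity $\cP_-^Z f^*=\overline{\cP_+^Z f}$ on $q^{h/2}\bC_{\phi,\comp}$ needs the conjugacy $\bar x\sim x^{-1}$ there, and $Z_e\ge 0$ requires the self-duality of $\fg_i^e$, the symplectic structure on $\fg_1$ (Lemma~\ref{l_selfdual}), and a parity argument for the possible negative factors $Z^1(\pm q^{i/2})$. You should either carry out this computation or at least flag it as a nontrivial lemma; as written the positivity is assumed rather than derived.
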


  For the $\zeta$-functions \eqref{eq:108}, the above positivity
  means the positivity for positive real arguments in the region of
  convergence, which is immediate.

  \subsubsection{Remark}
\label{sec:remark_growth}

  In cohomology, the finiteness of the integral in \eqref{eq:19} may
  be guaranteed for Paley--Wiener functions $f$ by at most polynomial
  growth of the ratio $Z_\psi(x)/Z_\psi(x\pm 1)$ in the imaginary
  direction. Such bounds for $\xi_\F$-functions are classical, see for
  instance Chapter XIII.5 in \cite{Lang}. If the function
  $Z_\psi(x)/Z_\psi(x\pm 1)$ grows faster then one should
  demand faster decay from the test functions $f$. In this text, 
  we make the simplifying assumption that the functions
  $Z_\psi(x)^{\pm 1}$ grow at most
  exponentially in the imaginary direction. This hypothesis is
  satisfied by the $\xi_\F$-functions of global fields (for which the exponential
  factors come from the Archimedean places). Faster growing
  functions $Z_\psi(x)^{\pm 1}$ may, in principle, be handled by modifying the
  cutoff function in the proof of Proposition \ref{p_convolve}. 

\subsection{Example}

\subsubsection{}

To get a better feeling for statement of our main result, we
consider the following example.
Let $V$ be a representation of $\mathfrak{sl}(2,\C)$ of the form
\begin{equation}
  \label{eq:1-1}
  V = S^4 \C^2 \oplus \textup{trivial 2-dimensional} \,. 
\end{equation}
Since odd-dimensional representations of  $\mathfrak{sl}(2,\C)$ are
orthogonal, this defines a map
\begin{equation}
  \label{eq:2-1}
  \phi: \mathfrak{sl}(2,\C) \to \mathfrak{so}(7,\C) 
\end{equation}
with centralizer
\begin{equation}
  \label{eq:3-1}
  \bC_\phi = S( O(1) \times O(2))  \subset SO(7,\C) \,. 
\end{equation}
A maximal compact subgroup of $\bC_\phi$ has the form 
\begin{equation}
  \label{eq:4}
  \bC_{\phi,\textup{cmp}} = \{1\} \times \{\textup{rotations in $\R^2$}\}
  \sqcup \{-1\} \times \{\textup{reflections in $\R^2$}\} \,. 
\end{equation}
By Theorem \ref{t_spectral}, in the function field case,
the representation $\phi$ contributes to the
spectrum the conjugacy classes of
\begin{equation}
  \label{eq:7-1}
  \phi\left(\left[
    \begin{matrix}
      q^{1/2} \\
      & q^{-1/2} 
    \end{matrix} \right] \right) \bC_{\phi,\textup{cmp}} \subset
SO(7,\C) \,. 
\end{equation}
In particular, the second component in \eqref{eq:4} contributes the conjugacy classes
of
\begin{equation}
- \phi\left(\left[
    \begin{matrix}
      q^{1/2} \\
      & q^{-1/2} 
    \end{matrix} \right] \right) \oplus \{\textup{reflections in
  $\R^2$}\}  \subset SO(7,\C)\label{eq:5-1} \,. 
\end{equation}
All of the elements in \eqref{eq:5-1} are conjugate in $SO(7,\C)$ with eigenvalues
\begin{equation}
(-q^2,-q,-1,-q^{-1},-q^{-2},-1,1)\label{eq:6-1}
\end{equation}
and hence produce a point in the spectrum, that is, an
$L^2$-eigenfunction of the Hecke operators. 

\subsubsection{}

This point in the spectrum is specific to function fields and has no
analog in the number field situation.

Recall that, by construction, the spectrum is a
subset of $\uA/W$, where $\uA$ was defined in \eqref{eq:110}.
In \eqref{eq:111}, we identified $\uA$ with the spectrum of
$\bA_\comp$-equivariant K-theory of a point, in the function field case, and with
the spectrum of $\bA_\comp$-equivariant cohomology of a point, in the
number field case. Those are the cohomology theories
associated to the curve $\cQ^\vee$ from \eqref{eq:11}. 

Equivariant cohomology deals only with Lie algebras and completely 
ignores finite groups. 
Since the eigenvalue \eqref{eq:6-1} comes from the nonidentity connected
component in \eqref{eq:4},  it has no analog in
the number field case.

  \subsection{Future directions}
  \label{s_future}

  \subsubsection{}

Since the focus of the current subsection is on the automorphic side, we
will make the $\bG \leftrightarrow \La \bG$ switch to shorten the
notation. To avoid dealing with  the characters of the center, we
will assume that $\bG$ is semisimple.

The general Langlands spectral decomposition of
\[
  \cH = L^2\!\left(\bG(\A)\,\big/\,\bG(\F)\right)
\]
is organized according to the data of a parabolic subgroup
\[
  \bG \supset \sP =
  \underbrace{\bM}_{\textup{Levi}} \ltimes
\underbrace{\bU}_{\textup{unipotent}}
\]
and
a cuspidal automorphic form
\begin{equation}
\bPhi: \bG(\A)\,\big/\,\sP(\F) \bU(\A) \to \C  \,,  \label{eq:167}
\end{equation}
see \cites{Labesse, L}. This means that $\bPhi(g m)$
is a cusp form on $m\in \bM$ for any $g$ and, in
particular, an eigenfunction of the center of  $\bM$.

\subsubsection{}
To construct an Eisenstein series from $\bPhi$, we consider the torus
\[
  \bA = \sP/[\sP,\sP]
  \]
  and the natural maps
  \begin{alignat}{2}
    a:&& \sP & \to \bA \\
    \| \, \cdot \, \|: && \quad \bA(\A) & \to \Lambda \otimes
    \cQ \label{eq:166_}\,. 
  \end{alignat}
 We denote by  $\Lambda^\vee$ and  $\Lambda$ the character and the
 cocharacter lattices of $\bA$,
 respectively. 

  The function $\| a(\, \cdot \, )\|$ is invariant under
  $\bK \cap \sP$, and hence extends to a unique
  function on $\bG$ via 
  the $\bG = \bK \sP$ decomposition. We denote this function
  of $\bG$ by the same symbol. The definition
  \eqref{eq:97}  generalizes as follows: 
\begin{equation}
E(\bPhi, \lambda,g) = \sum_{\gamma \in \bG(\F)/\sP(\F)}
\| a(g \gamma)\|^{\lambda+\rho} \, \bPhi(g
\gamma)\,. \label{eq:97bis} 
\end{equation}
Here $\| a(\, \cdot \,)\|^{2\rho}$ is the character coming
from the scaling of the Haar measure on $\bU$ and 
\[
  \lambda \in \La \uA \eqdef \Lambda^\vee \otimes \cQ^\vee =
  \begin{cases}
  \La \bA \,, &\chr \F >0 \,,\\
  \Lie \La \bA\,, & \chr \F = 0 \,. 
\end{cases}
\]
In the function field case, the dependence on $\lambda$ may
factor through a quotient $\La  \bA / \Fix_{\bPhi}$ by
a finite group. The group $\Fix_{\bPhi}$ is trivial in the number field
situation. 

\subsubsection{} 

As in
\eqref{eq:98}, Mellin transform in the variable $\lambda$
produces series of the form
\begin{equation}
  \label{eq:169}
  E(\bPhi, \phi^\vee,g) = \sum_{\gamma \in \bG(\F)/\sP(\F)}
\phi^\vee(\| a(g \gamma)\|)\, \bPhi(g
\gamma)\,,
\end{equation}
where $\phi^\vee$ can be taken to be a smooth compactly
supported function on the image in  \eqref{eq:166_}.

The series \eqref{eq:169} are in $\cH$ and their norm is computed by a generalization of Theorem \ref{t_Langl}, also due
to R.~Langlands, see \cite{L, Labesse}. This generalization  
involves intertwining operators $\cR_w(\lambda)$
that enter into the functional equation 
\begin{equation}
E(\bPhi, \lambda,g) = E(\cR_w(\lambda) \bPhi, w \cdot 
\lambda,g)\,, \quad w \in W\,, \label{eq:168}
\end{equation}
for the series \eqref{eq:97bis}. Recall that the ratios of the
$\xi$-function in \eqref{eq:100} is how these intertwining
operators act on 
$\bPhi(g)=1$ for $\sP=\bB$.

\subsubsection{}

In full generality, our knowledge of the singularities of the
intertwining operators $\cR_w(\lambda)$ is incomplete. However,
we can hypothesize that, in all situations, there is the following
generalization of Theorem \ref{t_T3}, see \cites{KO,KO2} for
more on this. 

General Langlands philosophy links the automorphic form $\bPhi$ to
a representation
\begin{equation}
\pi_{\bPhi}: \Gamma \to \La \bM \,, \label{eq:183}
\end{equation}
where $\Gamma$ is a group of the
form
\[
  1 \to \Gamma' \to \Gamma \to \cQ\to 1\,. 
\]
For a
function field $\F$, the group
$\Gamma$ is the Weil group of $\F$. The nature of the group $\Gamma$
in the number field case is more mysterious, see for instance
\cite{Lmarch}.

We will assume that the Zariski closure of the image 
$\pi_{\bPhi}(\Gamma)$ in $ \La \bM$ is reductive and also
irreducible, which means that the commutant of $\pi_{\bPhi}(\Gamma)$ in $\La \bM$ is finite
modulo the center $\La \bA \subset \La \bM$.

We note that, while it is extremely beneficial to be guided by correspondence with
representations \eqref{eq:183},  in practical terms, formulas of the form \eqref{eq:170} below may be stated,
proven, and used without referring to representation of a specific
group $\Gamma$. 

\subsubsection{}

We denote by
\[
  \bG^{\triangledown} = \Big(\La \bG\Big)^{\pi_{\bPhi}(\Gamma')}
\]
the commutant of the image of $\Gamma'$ in $\La \bG$. This is an
algebraic group with an action of $\cQ$ by automorphisms. It contains
a $\cQ$-stable Borel subgroup
\[
  \bG^{\triangledown} \supset \bB^{\triangledown} =
  \left(\bG^{\triangledown} \cap \La \sP \right)_0 \,, 
\]
where the subscript denotes the connected component of the identity. 
We define
\begin{equation}
  \label{eq:181}
  \cT = T^*( \bB^{\triangledown}  \backslash \bG^{\triangledown} /
  \bB^{\triangledown} )  \,. 
\end{equation}
Note that the group $\bG^{\triangledown}$ and hence the stack $\cT$
may in general be disconnected.
We make the group $\cQ\subset \R_{>0}$ acts on $\cT$
by the combination of action induced from the base with
the scaling of the cotangent fibers with the inverse of the
tautological character.

We expect the following generalization of
Theorem \ref{t_T3}
\begin{equation}
  \label{eq:170}
  \| E(\bPhi, \phi^\vee) \|_\cH^2 \overset{?} =
  \int_{\cT} \bbc(\bPhi) \cup \psi_\F(T \cT) \cup \phi \boxtimes \phi^*\, \,, 
\end{equation}
where
\begin{itemize}
\item the integral and the cup product denote the pushforward and the
  product in the cohomology
  theory corresponding to $\cQ^\vee$, 
\item $\bbc(\Phi)$ is a certain characteristic class that involves,
  among other things, automorphic $L$-functions for $\bM$,
\item the test function $\phi \boxtimes \phi^*$ is pulled back via
  the composed map
  \[
  \cT \to \pt/ \left(\bA^\triangledown\right)^2 \to \pt/ \left(\La \bA/
      \Fix_{\bPhi}\right)^2\,, 
  \]
  where $\bA^\triangledown = \bB^{\triangledown}/[\bB^{\triangledown},
 \bB^{\triangledown}]$ and the map $\bA^\triangledown \to \bA/
      \Fix_{\bPhi}$ takes an element $a$ to the conjugacy class of
      $qa$, where $q\in \cQ$ is the generator. 
\end{itemize}

As before, the stack $\cT$ maps to the stack of nilpotent elements in
$\Lie \bG^\triangledown$ and the basic logic of our paper is fully
applicable to the analysis of distributions of the form \eqref{eq:170}. 

\subsubsection{}

In the unramified function field situation, the characteristic class
$\bbc(\bPhi)$ may be identified with what we call the L-genus
\cite{KO} of $\cT$, viewed as the fixed locus of $\Gamma'$ inside
a certain ambient stack. The spectral analysis in that situation is
the subject of the paper \cite{KO2}.

Beyond the unramified situation, particularly
attractive geometric formulas for the intertwining operators
$\cR_w(\lambda)$ are available when $\sP=\bB$ and the
function $\bPhi$ is invariant under an Iwahori subgroup.

A formula of the form \eqref{eq:170} is equivalent to a formula for the constant 
term of the the Eisenstein series
\eqref{eq:97bis}.  
Going beyond the constant term, we expect to find more involved
geometric formulas for all Fourier coefficients of the Eisenstein series
\eqref{eq:97bis}.  Their shape will be discussed elsewhere.

\subsection{Acknowledgements}

\subsubsection{}

We are grateful to R.~Bezrukavnikov, M.~Kapranov, I.~Krichever,
I.~Loseu, H.~Nakajima, and especially A.~Braverman for the discussions
we had during our work on this project. 

\subsubsection{}

 The paper was
completed while A.O.\ was working at the Kavli IPMU, University of
Tokyo. He  thanks all Japanese colleagues for warm hospitality,
friendship,
and support.

D.K.\ thanks ERC grant 669655 for financial support. A.O.\ thanks
the Simons Foundation for being supported as Simons Investigator.

\section{Integrals and residues from K-theory point of view}\label{s_geom}

\subsection{Equivariant pushforwards as distributions}

\subsubsection{} 

Let an algebraic group $\bG$, not necessarily
connected or reductive, act on an algebraic variety  $\bX$ and let $\cF$ be a
$\bG$-equivariant coherent sheaf on $\bX$. For our goals in this paper,
it is enough to consider this situation over the field $\C$ of
complex numbers. We will further assume that $\bX$
is quasiprojective and the $\bG$-action on $X$ is linearized, that is,
we will assume an existence of an equivariant embedding
\begin{equation}
  \label{eq:37}
  \iota_\bX : \, \bX \hookrightarrow \bP(V)\,, 
\end{equation}
where $V$ is a linear representation of $\bG$. Below, we will meet
generalizations in which $\bX$ can be a scheme, a
derived scheme, or a stack. These are technical, but convenient
if one wants to broaden the applicability of the geometric tools reviewed
in this section.

Many basic ideas discussed in this section are also discussed, from a
different angle, in our paper \cite{KO}. Some readers may prefer that
exposition. 
\subsubsection{} 

Our main interest will be in the situation when $\chi(\bX,\cF)$, while
possibly 
infinite-dimensional, has finite multiplicities of irreducible
$\bG$-modules. This is conveniently formalized by the concept of
cohomologically proper actions, see  \cite{DHLloc} and
Appendix \ref{a_proper}.

In what follows we assume that the action of $\bG$ on $\bX$ is
cohomologically proper. 

\subsubsection{}

For $V \in K_\bG(\pt)$ we define 
\begin{equation}
(V, \chi(\bX,\cF))  = \chi(\bG, V\otimes \chi(\bX,\cF)) 
\,, \label{eq:57}
\end{equation}
see Appendix \ref{a_coh} for our notational conventions related to
group cohomology.

Since we are in characteristic zero, we have the analog
of the Levi-Malcev decomposition
\begin{equation}
\bG = \bG_\red \ltimes \bG_\uni\,,\label{Most}
\end{equation}
due to G.~Mostow \cite{Mostow}. The irreducibles for $\bG$ are the
same as irreducibles for $\bG_\red$. We will view
\[
  \C[\bG_\red]^{\bG_\red} = K_\bG(\pt) \otimes_\Z \C
\]
as our supply of \emph{test functions} and will realize $\chi(\bX,\cF)$
as a conjugation-invariant distributions on
$\bG_{\textup{red}}$ via the pairing \eqref{eq:57}. 

The tools of equivariant K-theory impose many
geometric relations on the totality of pushforwards $\chi(\bX,\cF)$.
These can be thus translated to relations among distributions.

\subsubsection{}

More generally, we may have an action of a larger group 
\begin{equation}
  \label{eq:173}
  \bGt = \bG \times \bAut
\end{equation}
on $\bX$, in which case the Euler characteristic \eqref{eq:57}
takes values in $K_{\bAut}(\pt)$, that is, in 
finite-dimensional virtual representations of $\bAut$. 

An example of such group $\bAut$ relevant to us
is the scaling action of $q\in \Ct$
that appears everywhere in this paper. Since all constructions
in this paper will be equivariant with respect to this scaling action,
such extra equivariance will always be understood.

\subsubsection{}

In the discussion of cohomological properness, it is important to pay
attention to both the space and the group. In the current discussion,
$\bX/\bG$ is assumed to be cohomologically proper and
distributions \eqref{eq:57}  take values in functions on $\Aut$, or in numbers
after evaluation at $q\in \Aut$. This is the
setting in Theorem \ref{t_T3} with $\Aut=\Ct_q$.

A weaker hypothesis is the cohomological properness of $\bX/\bGt$.
In this case, $\chi(\bG, V\otimes \chi(\bX,\cF))$
is typically an infinite-dimensional $\Aut$-module. Its trace may converge
for certain $q\in \Aut$, but, in general, it should be interpreted as a
distribution on $\Aut$. This is the setting in Theorem \ref{t_proper}, where
the assumption $|q|>1$ is important for the convergence of the trace
in \eqref{eq:18}. Evaluated at $q$ in the region of convergence,
\eqref{eq:57} defines a $\C$-valued distribution on $\bG_\red$. 

\subsubsection{}\label{s_distr}
More precisely, the word 
\emph{distribution} 
means the following in this paper. 
For basic reasons of finite generation, the function
\[
  \mu \mapsto \tr q \Big|_{\Hom(V^\mu,
  \chi(\bX,\cF))}\,, \quad q\in \bAut\,, 
\]
where $V^\mu$ is an irreducible representation with highest weight
$\mu$, grows at most exponentially, and the exponential growth
is the typical situation.

Therefore, all test functions in this paper will be functions on
$\bG_\red$ analytic in a certain sufficiently large (depending on the
geometric problem and on the norm of 
$q\in \bAut$) neighborhood of a maximal compact subgroup.
Similarly, for a Lie algebra we will consider functions analytic in
a neighborhood of the Lie algebra of the compact form. Analyticity
of the test functions means that the usual rules of operating
with contour integrals apply to distributions given by such integrals
with an analytic density.

Support of distributions will be also interpreted in the sense of
analytic functions. Namely, the support of a distribution $\chi$ is the
reduced subvariety defined by largest ideal annihilated by $\chi$.

\subsubsection{}
We observe that
\begin{equation}
  K_\bGt(\bX)
  \xrightarrow{\quad \chi \quad} \Hom(K_\bG(\pt),K_{\bAut}(\pt)) \label{eq:38}
\end{equation}
is a map of $K_\bG(\pt)$-modules. Whence the following 

\begin{Lemma}
  \begin{equation}
    \label{eq:41}
    \supp \chi(\bX,\cF) \subset \{g \in \bG_{\textup{red}}, \left(\supp
      \cF\right)^g \ne \varnothing \} \,. 
  \end{equation}
\end{Lemma}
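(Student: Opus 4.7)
The plan is to exploit the $K_\bG(\pt)$-linearity of $\chi$ just observed in \eqref{eq:38}. Adopting the analytic convention for support from Section \ref{s_distr}, it suffices to show that for each $g_0 \in \bG_\red$ with $(\supp \cF)^{g_0} = \varnothing$ one can produce a function $f$, analytic near $g_0$ and nonvanishing at $g_0$, that annihilates $\chi(\bX,\cF)$. By the module-map property,
$$
f \cdot \chi(\bX,\cF) = \chi(\bX,\, \pi^* f \otimes \cF)\,, \qquad \pi : \bX \to \pt\,,
$$
so the question reduces to finding $f$ whose pullback to $\bX$ kills $\cF$ in an appropriate completion of $K_\bG(\bX)$.

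First I would construct such an $f$ by dévissage. Locally, $\cF$ admits an equivariant filtration whose associated graded pieces are structure sheaves of orbit closures $\overline{\bG \cdot z} \subset \supp \cF$ tensored with representations of the stabilizer $\bG_z$. On each such piece, $\pi^* f$ acts by the restricted character $f|_{\bG_z} \in R(\bG_z)$. Since $(\supp \cF)^{g_0} = \varnothing$ forces $g_0$ to be not $\bG_\red$-conjugate to any element of any such $\bG_z$, any $\bG_\red$-invariant analytic function $f$ vanishing on the closed invariant locus $\{g : (\supp \cF)^g \neq \varnothing\}$ with $f(g_0) \neq 0$ will annihilate each graded piece, and hence $\cF$ itself in the relevant completion.

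A conceptually cleaner alternative is to invoke the concentration theorem for cohomologically proper actions from \cite{DHLloc}: the germ of $\chi(\bX,\cF)$ at $g_0$ depends only on the restriction $\cF|_{\bX^{g_0}}$, which is identically zero because $\supp \cF \cap \bX^{g_0} = (\supp \cF)^{g_0} = \varnothing$. Either route yields the vanishing of $\chi(\bX,\cF)$ in a neighborhood of $g_0$, that is, $g_0 \notin \supp \chi(\bX,\cF)$.

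The main obstacle will be the dévissage step: in the cohomologically proper but non-proper setting a global equivariant filtration of $\cF$ of the above form need not exist. The remedy is to work after passing to the analytic neighborhood of $g_0$ in $\bG_\red$, equivalently to a formal neighborhood of the fixed locus $\bX^{g_0}$ in $\bX$. It is precisely the concentration theorem of \cite{DHLloc} that legitimizes this reduction and guarantees compatibility with the pushforward $\chi$.
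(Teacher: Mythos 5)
Your concentration-theorem route is essentially the paper's own argument: the paper first replaces $\bX$ by the reduced support of $\cF$ and then invokes the fact that, by localization in equivariant K-theory, the $K_\bG(\pt)$-module $K_{\bGt}(\bX)$ is supported over those $g$ admitting a fixed point on $\bX$, which together with the module-map property \eqref{eq:38} gives the claim. Your initial d\'evissage route is an unnecessary detour (and, as you note, problematic without further input), but since you discard it in favor of the concentration argument, the proof is correct and matches the paper's.
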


\begin{proof}
Without loss of generality, we can assume that $\bX$ is the reduced
support of $\cF$. By localization in equivariant K-theory, the module $K_\bGt(\bX)$
is supported over the points $g\in \Spec K_\bG(\pt)$ that have a fixed
point on $\bX$, which proves the lemma. 
\end{proof}

\subsubsection{}\label{s_finite}
In this paper, we compute Euler characteristics. Therefore, without
loss of generality, we can replace $K_\bG(\bX)$, defined algebraically
or topologically, with its \emph{numerical version}, where one mods
out by $\cF$ such that
\[
  \chi(\bX, \cF \otimes \cV) = 0
\]
for all equivariant vector bundles $\cV$ on $\bX$. Even in the algebraic
K-theory situation, this quotient is often finitely generated over 
$K_\bG(\pt)$. 

\begin{Lemma}
If $\bX$ is cohomologically proper
and its numerical K-theory is finitely generated over $K_\bG(\pt)$ then the distributions
$\chi(\bX,\cF)$ span a finitely generated module over functions on
$\bG_\red$. 
\end{Lemma}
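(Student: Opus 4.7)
The plan is to factor the pushforward map through numerical K-theory and exploit the hypothesized finite generation. Concretely, I would show that the assignment $\cF \mapsto \chi(\bX,\cF)$ descends to a $K_\bG(\pt)$-linear map
\begin{equation*}
K_\bG(\bX)_\num \,\longrightarrow\, \{\textup{distributions on } \bG_\red\}\,,
\end{equation*}
whose image therefore inherits finite generation from the source.

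First I would verify the $K_\bG(\pt)$-linearity. For $W\in K_\bG(\pt)$ and any test module $V\in K_\bG(\pt)$, the projection formula combined with \eqref{eq:57} yields
\begin{equation*}
(V,\,\chi(\bX,\cF\otimes \pi^* W)) \,=\, (V\otimes W,\,\chi(\bX,\cF))\,,
\end{equation*}
where $\pi\colon \bX\to\pt$ is the structure map. All Euler characteristics involved are finite as formal $\bG_\red$-characters by cohomological properness. In distribution terms, this identity says that tensoring $\cF$ by $W$ multiplies the associated distribution by the character of $W$, so the $K_\bG(\pt)$-action on K-theory intertwines correctly with the character action on distributions.

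Next, by the very definition of numerical K-theory recalled in Section \ref{s_finite}, the kernel of $\cF\mapsto\chi(\bX,\cF\otimes\cV)$ running over all equivariant $\cV$ is killed in $K_\bG(\bX)_\num$. Since the characters of equivariant vector bundles span the algebra $K_\bG(\pt)\otimes\C$ of regular class functions on $\bG_\red$, a class $\cF$ mapping to zero in $K_\bG(\bX)_\num$ indeed produces the zero distribution. Thus the pushforward factors as claimed, and if $\cF_1,\dots,\cF_n$ generate $K_\bG(\bX)_\num$ over $K_\bG(\pt)$, then the distributions $\chi(\bX,\cF_1),\dots,\chi(\bX,\cF_n)$ generate the image as a $K_\bG(\pt)$-module. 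Since the character embedding $K_\bG(\pt)\otimes\C\hookrightarrow \C[\bG_\red]$ lands inside the ring of analytic class functions used in Section \ref{s_distr}, finite generation over $K_\bG(\pt)$ a fortiori gives finite generation over functions on $\bG_\red$.

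The main point requiring attention is the compatibility of the two module structures on distributions in the analytic category of Section \ref{s_distr}: multiplication of a distribution by an analytic class function must be well-defined and agree, on characters, with the algebraic action via tensor product. This is a bookkeeping matter rather than a substantive obstacle, since the test functions are taken to be analytic in a neighborhood of the maximal compact subgroup, and densely many of them are characters of actual representations. Everything else in the argument is a formal consequence of the factorization through $K_\bG(\bX)_\num$.
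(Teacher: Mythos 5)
Your argument is correct, and it supplies the natural proof the paper omits: the Lemma appears in Section \ref{s_finite} without a proof environment and is treated there as an immediate consequence of the definitions. Two small remarks. First, the appeal to characters spanning $K_\bG(\pt)\otimes\C$ in showing that the pushforward factors through numerical K-theory is more than you need; if $\cF$ vanishes in $K_\bG(\bX)_\num$ then taking $\cV=\cO_\bX$ in the defining condition already gives $\chi(\bX,\cF)=0$ directly. Second, the closing ``a fortiori'' step is really an equality: the paper explicitly identifies its supply of test functions on $\bG_\red$ with $\C[\bG_\red]^{\bG_\red}=K_\bG(\pt)\otimes_\Z\C$, so the ring over which you prove finite generation and the ring named in the Lemma coincide (up to passing from polynomials to their analytic completion, which only helps). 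Neither point affects the validity of the argument.
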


\noindent 
For concrete $\bX$ of importance to us in this paper, the
algebraic, topological, and numerical 0th K-groups all coincide and
are finitely generated over $K_\bG(\pt)$.

\subsubsection{Example}\label{s_eGH} 

If $\bX=\bG/\bH$, with $\bG$ and $\bH$ reductive then 
\[
  \chi(\cO_{\bG/\bH}) = \delta_{\bH_\comp} \,,
\]
restricted to conjugation-invariant functions, where
\[
  (f, \delta_{\bH_\comp}) = \int_{\bH_\comp} f \, d_{\textup{Haar}}
\]
is the integral with respect to the probability Haar measure.
Indeed, by the Peter-Weyl theorem, we have
\begin{equation}
  \label{PW}
  \chi(\cO_{\bG}) = \bigoplus_{\lambda} \left(V^\lambda\right)^* \boxtimes
  V^\lambda \,, 
\end{equation}
as $\bG\times \bG$-module, where $V^\lambda$ ranges over irreducibles.
Denoting by $\chi^\lambda$ the character of $V^\lambda$, we have 
\[
  (\chi^\lambda, \chi(\cO_{\bG/\bH})) = \dim (V^\lambda)^\bH 
= \int_{\bH_\comp} \chi^\lambda \, d_{\textup{Haar}}\,.
\]
Note that this distribution is indeed supported on elements that have
a fixed point on $\bX$, that is, elements conjugate to elements of
$\bH$.   

\subsubsection{Example}\label{s_eGHY} 

Extending Example \ref{s_eGH} , suppose 
\[
  \bX = \bG \times_\bH \bY
\]
where $\bG$ is reductive, $\bH \subset \bG$ is an \emph{arbitrary}
algebraic subgroup, and $\bY$ is a $\bH$-variety. An $\bH$-equivariant
coherent sheaf $\cF$ induces a $\bG$-equivariant coherent sheaf on
$\bX$ which we denote $\Ind \cF$. Since $\bX$ is a free quotient by
$\bH$, 
we have from \eqref{PW} and \eqref{eq:12} 
\[
  \chi(\bX,\Ind \cF) = \sum_{\lambda} 
\left(V^\lambda\right)^*  \boxtimes \chi(\bH,  V^\lambda \otimes
\chi(\bY,\cF))\,. 
\]
Therefore, 
\[
  (f, \chi(\bX,\Ind \cF))_{K_\bG(\pt)} = (\Res f, \chi(\bY,
\cF))_{K_\bH(\pt)}
\]
where $\Res$ denotes restriction of functions from $\bG$ to
$\bH$. Denoting by $\Ind$ the linear map on distributions that is dual
to restriction of functions, we obtain 
\begin{equation}
  \label{eq:1}
  \chi(\bX,\Ind \cF) = \Ind \chi(\bY, \cF) \,. 
\end{equation}
It is instructive to see what this formula says for line bundles on
$\bG/\bB$, where $\bB$ is the Borel subgroup. In this case, $\bY$ is a
point and $\cF$ is a $1$-dimensional representation of $\bB$. 

\subsubsection{Example}\label{ex_C1} 

Consider the defining action of $\bG=\Ct$ on $\bX=\C$. We have
\begin{equation}
\tr_{\chi(\cO_\C)} x  = 1 + x^{-1} + x^{-2} + \dots \,, \quad  x\in
\Ct\,, \label{eq:53}
\end{equation}
and this converges for $|x|> 1$. Thus
\begin{equation}
  \label{eq:43}
\left(f(x), \chi(\cO_\C)\right) = \int_{|x|=\varsigma }
\frac{f(x)}{1-x^{-1}} \, \frac{dx}{2\pi i x}  \,, \quad \varsigma > 1
\,, 
\end{equation}
which we can write as 
\begin{equation}
  \label{eq:2}
  \chi(\cO_\C) = \frac{1}{1-x^{-1}}\delta_{\varsigma U(1)} \,, \quad |\varsigma| > 1 \,. 
\end{equation}

\subsection{Characteristic classes}\label{sGdqs}

\subsubsection{}

If $\cV$ is a locally free sheaf on $\bX$ then we can take its 
characteristic classes, that is, tensor functors as the integrand
$\cF$ in \eqref{eq:57}. 
 More precisely, given a univariate Laurent polynomial 
 \[
   \psi(x) \in R[x,x^{-1}] 
\]
with coefficients in a ring $R$, we define
\[
  \psi(\cV) = \prod \psi(x_i) \in K_\bG(X) \otimes_\Z R \,, 
\]
where $x_i$ are the Chern roots of $\cV$.
\subsubsection{}

In particular, if $\bX$ is smooth, we can take $\cV = T\bX$ and define 
\[
  \psi_\bG(\bX) = \chi(\bX, \psi(T\bX)) \in \Hom_\Z(K_\bG(\pt),R) \,. 
\]
While integrality is always a very important consideration, for the
particular application we have in mind it will be enough to take
$R=\C$, the field of complex numbers, or $R= K_{\Aut}(\pt) \otimes \C$
in the presence of automorphisms. 

\subsubsection{}

For $\bX$ smooth and  proper,
the Hirzebruch-Riemann-Roch formula gives 
\[
  \psi_\bG(\bX) = \int_\bX \prod \frac {\xi_i}{1-e^{-\xi_i}} \,
\psi(e^{\xi_i}) \,,
\]
where $\{\xi_i\}$ are the cohomological Chern roots of $T\bX$. 
From this it is clear that the totality of $\psi_\bG(\, \cdot \,)$ differs by
reindexing only from the usual equivariant complex genera and is a
function of the complex cobordism class of $\bX$. 



\subsubsection{Example}\label{ex_C2}

Continuing with Example \ref{ex_C1}, we have
\begin{equation}
  \label{eq:45}
  \psi_{\Ct}(\C)  = 
\Psi(x) \, \delta_{\varsigma U(1) }   \,, \quad |\varsigma| > 1 \,, 
\end{equation}
where the function 
\begin{equation}
  \label{eq:46}
  \Psi(x) = \frac{\psi(x)}{1-x^{-1}}
\end{equation}
has a simple pole at $x=1$. Since $\Ct$-action on $\C$ is the
fundamental building block on the whole theory, it will be convenient
to use $\Psi$ instead of $\psi$ in many formulas below.

\subsubsection{}

Note that \eqref{eq:45} makes sense and depends analytically on
$\psi(x)$, as long as it is analytic in some neighborhood of $|x|=1$ that
contains $\varsigma \, U(1)$.

In the same way, in what follows, we can always replace a Laurent
polynomial $\psi(x)$ by a function analytic in a suitable neighborhood
of $|x|=1$. This fits into the following general definition. 

\subsubsection{Definition}\label{s_K_an} 

Let $\cU \in \Spec K_\bG(\pt)$ be an open set and consider
the diagram
\begin{equation}
  \label{eq:172}
  \xymatrix{
    \pi^{-1}(\cU) \ar[rr] \ar[d] && \Spec K_\bG(\bX) \ar[d]^\pi \\
    \cU  \ar[rr] && \Spec K_\bG(\pt)
    }\,, 
\end{equation}
in which the natural projection $\pi$ is assumed to be
a finite map as in Section \ref{s_finite}. We define the sheaf of 
\emph{analytic K-theory classes} by
\begin{equation}
K_\bG(\bX)_{\an} \Big|_{\pi^{-1}(\cU)}= K_\bG(\bX) \otimes_{ K_\bG(\pt)}
\cO_{\cU,\an} \,. \label{eq:174}
\end{equation}
Since $\pi$ is finite, this is the same as analytic functions on
$\pi^{-1}(\cU)$.

Suppose that $\cU$ is so large that  $\chi(\cF)$ is a well-defined linear functional
on $\cO_{\cU,\an}$ for all $\cF \in K_\bG(\bX)$, as in Section
\ref{s_distr}. 
Then by \eqref{eq:174} pushforward of
$K_\bG(\bX)$ uniquely extends to 
all $\cF\in K_\bG(\bX)_{\cU,\an}$.

In addition to analyticity, one may also impose growth conditions on
analytic functions. For instance, in equivariant cohomology, that is,
the number field situations, one very natural
space to consider is
\begin{equation}
  \label{eq:177}
  \Hd_\bG(\bX)_{PW^*} \Big|_{\pi^{-1}(\Re^{-1}(\cU))}= \Hd_\bG(\bX) \otimes_{\Hd_\bG(\pt)}
PW^*_{\Re^{-1}(\cU)} \,, 
\end{equation}
where the real part $\Re$ is the projection along the Lie algebra of the
compact form of $\bG$, $\cU$ is an open set in the target of this
projection, and analytic functions in $PW^*$ are required
to grow at most polynomially along the fibers of $\Re$. That makes
their pairing with the Paley--Wiener functions
well-defined.

\subsubsection{}\label{s_psi_vir}

Let $\psi(x)$ be an entire function. We define 
$\psi(\cV)$ also for virtual
vector bundles $\cV = \cV_1 - \cV_2$ by
\[
  \psi(\cV_1 - \cV_2) = \frac{\psi(\cV_1)}{\psi(\cV_2)} \,, 
\]
provided the denominator is invertible on an open dense subset
of $\Spec K_\bG(\bX)$. This is an example of a meromorphic $K$-theory
class.

In what follows, we will assume that $\psi(x)$ is analytic and
nonvanishing for all $x\in \Ct$, until these assumptions are
explicitly relaxed in Section \ref{s_zeros}. With this assumption, $\psi(\cV)$
is an analytic K-theory class for virtual vector bundles.

When $\psi(x)$ does have zeroes, the pushforward of
$\psi(\cV_1-\cV_2)$
should be interpreted as a product of two distributions 
(or as the Fourier transform of their convolutions, since we define
pushforwards in terms of the Fourier transforms).
This product may or may not be well-defined.

In Section \ref{s_zeros}, for our concrete stack $\cT$, we prove
that the equivariant genus $\psi(\cT)$ is well-defined as a product
of two distributions provided all zeros of $\psi(x)$ lie in a certain critical
annulus (respectively, strip). Further, we show that this product is
given by the same formulas as in the case when $\psi(x)$ is nowhere
vanishing.

\subsubsection{} 

Many kinds of spaces considered in algebraic geometry have a natural
virtual tangent bundle. For instance, we may
consider a subvariety $\bX' \subset \bX$ cut out by a section $s$ of a vector
bundle $V$. In this case,
\[
  T \bX' = \left(T\bX - V \right)|_{\bX'} \,.
\]
If $s$ is transverse to the zero section then $\bX'$ is smooth, and this
reduces to the previous formula.

It is more intrinsic to view $\bX'$ as a derived scheme corresponding
the Koszul complex DGA
\begin{equation}
  \dots \xrightarrow{\,\, \iota_s \,\,} \Lambda^{2} V^* \otimes \cO_\bX
  \xrightarrow{\,\, \iota_s \,\,} V^* \otimes \cO_\bX\xrightarrow{\,\, \iota_s \,\,}
  \cO_\bX \,, \label{Koszul} 
\end{equation}
in which the differential is the contraction $\iota_s$ with the
section $s$. When the section $s$ is regular, this complex is acyclic
except in the final $\cO_{\bX'}$ term. Another extreme is when the
section $s$ vanishes. Note, however, that $\psi_{\bG}(\bX')$ does not depend on
$s$ if $\bX$ is proper. 

\subsubsection{}
A particularly important example for us are the Spinger fibers
$\cB^e$, which are the scheme-theoretic fixed loci of a vector field
$e$ on the flag manifold $\cB$, see Appendix \ref{a_Lie}.
{}From the above, we see that their bordism classes $[\cB^e]$ are
independent of $e$ and satisfy \eqref{eq:142}. 

\subsection{Quotient stacks} 

\subsubsection{}

Another natural way to weaken the smoothness requirement
is to replace $\bX$ with a quotient stack
$\bX/\bH$, where $\bX$ is smooth with an action of an extension
\[
  1 \to \bH \to \widetilde{\bG} \xrightarrow{\,\, \pi_\bH \, \,} \bG \to
1 \,. 
\]
Here we denote by $\pi_\bH$ the quotient by $\bH$. From the definition of a
quotient stack, 
\begin{equation}
\chi(\bX/\bH, \cF) = \chi(\bH, \chi(\bX, \cF)) \,,\label{eq:12}
\end{equation}
for any $\cF \in K_\bH(\bX)$. 
Further, 
\[
  T \bX/\bH = T \bX - \Lie \bH \,, 
\]
where the second term is a trivial bundle with a nontrivial action of
$\widetilde{\bG}$.

\subsubsection{}

\begin{Proposition}\label{p_stack} For any $\cF \in K_{\widetilde{\bG}}(\bX)$, we
  have 
  \[
    \chi(\bX/\bH, \cF) = \pi_{\bH,*} \, \chi(\bX, \cF) \,,
  \]
  where $\pi_{\bH,*}$ denotes push-forward of distributions under
  $\pi_{\bH}$. 
\end{Proposition}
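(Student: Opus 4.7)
The plan is to unfold the definition of $\pi_{\bH,*}$ on distributions and reduce the identity to the Hochschild--Serre / composition-of-invariants formula for the extension $1 \to \bH \to \widetilde{\bG} \to \bG \to 1$, combined with the projection formula for $\chi(\bH,-)$. By definition, $\pi_{\bH,*}$ is the transpose of pullback on test functions, so the statement to prove is: for every test function $f$ on $\bG_{\red}$ (in the analytic sense discussed in Section \ref{s_distr}),
\begin{equation*}
\bigl(f,\, \chi(\bX/\bH,\cF)\bigr)_\bG \;=\; \bigl(\pi_\bH^* f,\, \chi(\bX,\cF)\bigr)_{\widetilde{\bG}}.
\end{equation*}

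First, I would expand the right-hand side using the definition \eqref{eq:57}: it equals $\chi\bigl(\widetilde{\bG},\, \pi_\bH^* f \otimes \chi(\bX,\cF)\bigr)$. Next I would invoke the composition-of-functors identity $\chi(\widetilde{\bG},M) = \chi\bigl(\bG,\chi(\bH,M)\bigr)$ for any $\widetilde{\bG}$-module $M$, which holds in characteristic zero because higher derived invariants of a reductive group vanish and, for non-reductive $\bH$, because $\chi(\bH,-)$ is by convention the derived Euler characteristic on the $K_\bG(\pt)$-valued level (see Appendix \ref{a_coh} of the paper). Applying this with $M = \pi_\bH^* f \otimes \chi(\bX,\cF)$ gives
\begin{equation*}
\chi\bigl(\widetilde{\bG},\, \pi_\bH^* f \otimes \chi(\bX,\cF)\bigr) \;=\; \chi\Bigl(\bG,\, \chi\bigl(\bH,\, \pi_\bH^* f \otimes \chi(\bX,\cF)\bigr)\Bigr).
\end{equation*}

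Now comes the key step: since $\pi_\bH^* f$ is by construction $\bH$-invariant, the projection formula for $\chi(\bH,-)$ lets us pull it out,
\begin{equation*}
\chi\bigl(\bH,\, \pi_\bH^* f \otimes \chi(\bX,\cF)\bigr) \;=\; f \otimes \chi\bigl(\bH, \chi(\bX,\cF)\bigr),
\end{equation*}
where on the right $\pi_\bH^* f$ has been identified with $f$ via descent from $\widetilde{\bG}$ to $\bG$. Substituting \eqref{eq:12} then yields $\chi(\bG,\, f \otimes \chi(\bX/\bH,\cF)) = (f,\, \chi(\bX/\bH,\cF))_\bG$, which is the left-hand side.

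The main subtlety to pin down is not algebraic but analytic: one must check that each of the Euler characteristics in the chain above is a genuine distribution in the sense of Section \ref{s_distr}, so that the formal manipulations with $\chi(\bH,-)$ and $\chi(\bG,-)$ really correspond to pushforward under $\pi_\bH$ and pairing with analytic test functions on the compact real forms. This comes down to checking that the cohomological properness of the $\widetilde{\bG}$-action on $\bX$ is inherited by the $\bG$-action on the stack $\bX/\bH$, which follows from the definition in Appendix \ref{a_proper} together with the fact that the fiber $\bH$ of $\pi_\bH$ is affine, so that multiplicities transform by the character of $\chi(\bH,-)$ without disturbing finite generation over $K_\bG(\pt)$. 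With this in hand, both sides live in the same space of conjugation-invariant distributions on $\bG_\red$ and the identity above completes the proof.
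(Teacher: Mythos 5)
Your proof is correct and is essentially the same argument as the paper's: the same three ingredients (the definition \eqref{eq:12} of $\chi(\bX/\bH,\cF)$, linearity of $\chi(\bH,-)$ over $\bH$-invariants, and the composition identity \eqref{eq:7}), read from the right-hand side to the left rather than left to right. The closing analytic paragraph is harmless but unnecessary here — cohomological properness of $\bX/\bH$ over $\pt/\bG$ and of $\bX$ over $\pt/\widetilde{\bG}$ are the same condition by \eqref{eq:7}, so there is nothing extra to check beyond the standing hypothesis.
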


\begin{proof}
  For $V \in K_\bG(\pt)$ we have
  \begin{align}
  (V, \chi(\bX/\bH, \cF))_{K_\bG(\pt)} &=
                                         \chi(\bG, V \otimes \chi(\bH,
                                         \chi(\bX, \cF))) \notag\\
    &=
      \chi(\bG, \chi(\bH, \pi_\bH^*(V) \otimes  \chi(\bX, \cF))) \label{eq:10b} \\
    &=
      \chi(\widetilde{\bG}, \pi_\bH^*(V) \otimes  \chi(\bX, \cF)) \,, 
\label{eq:10}
  \end{align}
where \eqref{eq:10b} uses linearity of cohomology
over invariants and \eqref{eq:10} uses \eqref{eq:7}. 
\end{proof}

\subsubsection{}

It is often convenient to factor \eqref{eq:12} as follows
\begin{equation}
  \label{eq:50}
  \chi(\bH, \chi(\bX, \cF)) = \chi(\bH_\red, \chi(\bH_\uni,\chi(\bX, \cF)))
\end{equation}
where 
\[
  1 \to \bH_\uni \to \bH \to \bH_\red \to 1 \,, 
\]
are the unipotent radical and the reductive part of $\bH$.
Correspondingly, we have an exact sequence
\begin{equation}
  \label{eq:55}
  1 \to \bH_\red \to \widetilde{\bG}' \xrightarrow{\,\, \pi_\red \, \,} \bG \to
1 \,, 
\end{equation}
where $\widetilde{\bG}'=\widetilde{\bG}/ \bH_\uni$.



\subsubsection{}\label{s_push_uni} 
We
denote by $\fh_\uni$ and $\fh_\red$ the Lie algebras of $\bH_\uni$ and
$\bH_\red$,  respectively. Both have well-defined classes in
$K_{\widetilde{\bG}'}(\pt)$. 

\begin{Proposition}
 We have 
  \begin{equation}
    \label{eq:48}
    \chi(\bX/\bH, \psi(T \bX/\bH)) = \pi_{\red,*}  \left( \frac{\chi(\bX, \psi(
                             T \bX))}{\psi(\fh_\red) \Psi(\fh_\uni)}
                         \right) 
                           \,. 
                         \end{equation}
\end{Proposition}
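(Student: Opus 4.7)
The plan is to combine the factorization \eqref{eq:50} of $\chi(\bH,-)$ through the Mostow decomposition with the multiplicativity of $\psi$ on virtual bundles (Section \ref{s_psi_vir}), and identify the unipotent pushforward via a Chevalley--Eilenberg / Koszul computation.

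First I would write $T(\bX/\bH) = T\bX - \fh_\red - \fh_\uni$, where $\fh_\red$ and $\fh_\uni$ are viewed as trivial bundles on $\bX$ with the natural adjoint $\widetilde{\bG}$-action. Since $\psi$ is defined on virtual bundles by $\psi(\cV_1-\cV_2)=\psi(\cV_1)/\psi(\cV_2)$ whenever $\psi$ is analytic and nonvanishing near the relevant Chern roots, and the classes of $\fh_\red$, $\fh_\uni$ are pulled back from $K_{\widetilde{\bG}}(\pt)$, we obtain the identity
\[
\chi\bigl(\bX,\psi(T\bX/\bH)\bigr) \;=\; \frac{\chi(\bX,\psi(T\bX))}{\psi(\fh_\red)\,\psi(\fh_\uni)}
\]
as an element of (distributions on) $\widetilde{\bG}$, since scalars from $K_{\widetilde{\bG}}(\pt)$ pull out of Euler characteristics.

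Next I apply Proposition \ref{p_stack}, which gives $\chi(\bX/\bH,\cF)=\pi_{\bH,*}\chi(\bX,\cF)$, and I factor $\pi_\bH$ through \eqref{eq:55} as $\pi_\bH=\pi_\red\circ\pi_\uni$, corresponding to the iterated cohomology \eqref{eq:50}. The heart of the argument is then the computation of $\pi_{\uni,*}$, i.e.\ of $\chi(\bH_\uni,-)$ applied to the fraction above. Here I would use that $\bH_\uni$ is unipotent, so its equivariant cohomology is computed by the Chevalley--Eilenberg (Koszul) complex
\[
\cO_{\widetilde{\bG}'} \otimes \Lambda^{\raisebox{0.3mm}{$\scriptscriptstyle\bullet$}} \fh_\uni^{*} \;\longrightarrow\; \cO_{\widetilde{\bG}'},
\]
giving in equivariant K-theory
\[
\chi(\bH_\uni, V) \;=\; V \cdot \prod_j \bigl(1-y_j^{-1}\bigr),
\]
where $y_j$ run through the Chern roots of $\fh_\uni$. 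The point is that any $\bH_\uni$-equivariant structure on $V$ is cohomologically equivalent to the trivial one (unipotent groups act trivially on their equivariant K-theory up to filtration), so the action of $\bH_\uni$ on the coefficient module $V$ does not contribute. Applying this with $V=\chi(\bX,\psi(T\bX))/[\psi(\fh_\red)\psi(\fh_\uni)]$ and using the definition $\Psi(x)=\psi(x)/(1-x^{-1})$ converts $\psi(\fh_\uni)/\prod(1-y_j^{-1})$ into $\Psi(\fh_\uni)$.

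Finally, applying $\pi_{\red,*}=\chi(\bH_\red,-)$ (which is just Proposition \ref{p_stack} for the reductive quotient) yields
\[
\chi(\bX/\bH,\psi(T\bX/\bH)) \;=\; \pi_{\red,*}\!\left(\frac{\chi(\bX,\psi(T\bX))}{\psi(\fh_\red)\,\Psi(\fh_\uni)}\right),
\]
as desired. The main technical obstacle is justifying the Chevalley--Eilenberg formula for $\chi(\bH_\uni,V)$ when $V$ carries a nontrivial $\bH_\uni$-action: one needs that, at the level of equivariant K-theory classes, the unipotent part of the action is detected only by the $\Lambda^{\raisebox{0.3mm}{$\scriptscriptstyle\bullet$}}\fh_\uni^{*}$ factor. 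This is where one has to be careful that cohomological properness of the larger quotient $\bX/\bH$ is needed so that the resulting $\Psi(\fh_\uni)$ expression makes sense as a distribution, i.e.\ so that the poles of $\Psi$ are compatible with the support of the pushforward distribution from $\bX$.
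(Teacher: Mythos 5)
Your proof is correct and follows essentially the same route as the paper: pull $\psi(\fh)=\psi(\fh_\red)\psi(\fh_\uni)$ out of the Euler characteristic because it is a multiple of the identity in $K_{\bH_\uni}(\bX)$, replace group cohomology of $\bH_\uni$ by the Chevalley--Eilenberg complex of $\fh_\uni$ to produce the $\sum(-1)^i\Lambda^i\fh_\uni^*=\prod_j(1-y_j^{-1})$ factor, and combine it with $\psi(\fh_\uni)$ in the denominator to form $\Psi(\fh_\uni)$ before applying $\pi_{\red,*}$. Your remark that the $\bH_\uni$-action on $V$ does not affect the Euler characteristic is precisely the content of passing from the Chevalley--Eilenberg complex to its K-theory class, which is what the paper's proof uses at \eqref{eq:234}--\eqref{eq:235}.
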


\begin{proof}
  Denoting
  \[
    V=\chi(\bX, \psi(TX)) \,, 
  \]
  we
  have the following chain of equalities in $K_{\widetilde{\bG}'}(\pt)$
  \begin{align}
    \chi(\bH_\uni,\chi(\bX, \psi(TX-\fh))) &= \psi(-\fh) \otimes
                                             \chi(\bH_\uni,V) \label{eq:236}\\
    &= \psi(-\fh) \otimes
                                             \chi(\fh_\uni,V) \label{eq:234}
    \\
     &= \psi(-\fh) \otimes V \otimes \sum
       (-1)^i \Lambda^i \fh_\uni^* \label{eq:235} \\
    &= V \otimes \psi(-\fh_\red) \otimes \Psi(-\fh_\uni) \notag\,. 
  \end{align}
  Here \eqref{eq:236} follows from $\fh$ being a multiple of the
  identity in $K_{\bH_\uni}(\bX)$, \eqref{eq:234} follows from
  \eqref{eq:8}, and \eqref{eq:235} uses the K-theory class of the 
  Chevalley-Eilenberg complex \eqref{ChE}. 
\end{proof}

\subsubsection{}

Now suppose $\bH_\red$ is connected and has a $\bG$-invariant maximal
torus $\bT$. Then we have a diagram
\begin{equation}
  \label{eq:56}
  \xymatrix{
   1 \ar[r] & \bH_\red \ar[r] & \widetilde{\bG}' \ar[rr]^{\pi_\red} && \bG \ar[r] &
   1 \\
   1 \ar[r] & \bT \ar[r] \ar[u] & \widetilde{\bG}'' \ar[rr]^{\pi_\bT} \ar[u] && \bG \ar[r] \ar@{=}[u]&
   1}\,, 
\end{equation}
and we can compute $\chi(\bX, \psi(
                             T \bX))$ in
                             $\widetilde{\bG}''$-equivariant
                             K-theory without loss of information. 

\begin{Proposition} If $\bH_\red$ is connected and has a $\bG$-invariant maximal
torus $\bT$ then 
  \begin{equation}
    \label{eq:49a}
   \chi(\bX/\bH, \psi(T \bX/\bH)) = \frac{1}{ \psi(1)^{\rk} |W|} \, \pi_{\bT,*} 
                           \left( \frac{\chi(\bX, \psi(
                             T \bX))}{\Psi(\fh-\ft)}
                           \right) 
\,. 
                         \end{equation}
where $\rk$ is the rank, and $|W|$ is the order of the Weyl group. 
\end{Proposition}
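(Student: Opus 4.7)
The plan is to bootstrap from the previous proposition \eqref{eq:48} via a fiberwise application of the Weyl integration formula, which converts $\pi_{\red,*}$ into $\pi_{\bT,*}$. The hypothesis that $\bT$ is $\bG$-invariant lets us realize $\widetilde{\bG}''$ concretely inside $\widetilde{\bG}'$ (e.g.\ as the centralizer of $\bT$) so that the square \eqref{eq:56} commutes, and restriction of equivariant K-theory classes along the inclusion $\widetilde{\bG}'' \hookrightarrow \widetilde{\bG}'$ is simply the forgetful restriction.

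The key geometric ingredient I will use is the following K-theoretic Weyl integration formula: for any conjugation-invariant analytic class $g$ on $\widetilde{\bG}'$,
\[
\pi_{\red,*}(g) \;=\; \frac{1}{|W|}\; \pi_{\bT,*}\!\bigl(g \cdot e(\fh_\red - \ft)\bigr),
\]
where $e(\fh_\red - \ft) = \prod_{\alpha \in \Phi}(1 - t^{-\alpha})$ is the K-theoretic Euler class of the roots $\bT$-module $\fh_\red/\ft$. On the compact form $\bH_{\red,\comp}$ this is the classical fact that integration of conjugation-invariant functions equals $\frac{1}{|W|}$ times integration over $\bT_\comp$ against the Jacobian $\prod_\alpha(1-t^{-\alpha})$. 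Within the distributional setup of Section \ref{s_distr} and the analytic K-theory of Section \ref{s_K_an}, this identity transcribes directly from the compact form.

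Applying this with $g = \chi(\bX,\psi(T\bX))/\bigl(\psi(\fh_\red)\,\Psi(\fh_\uni)\bigr)$, the $\fh_\red$-factors combine, using $\Psi(V) = \psi(V)/e(V)$, as
\[
\frac{e(\fh_\red - \ft)}{\psi(\fh_\red)} \;=\; \frac{\psi(\fh_\red/\ft)/\Psi(\fh_\red/\ft)}{\psi(\ft)\,\psi(\fh_\red/\ft)} \;=\; \frac{1}{\psi(\ft)\,\Psi(\fh_\red/\ft)}.
\]
Since $\bT$ acts trivially on its own Lie algebra, $\psi(\ft) = \psi(1)^{\rk}$; and the $\bT$-module identity $\fh - \ft = \fh_\red/\ft + \fh_\uni$ gives $\Psi(\fh-\ft) = \Psi(\fh_\red/\ft)\,\Psi(\fh_\uni)$. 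Substituting into \eqref{eq:48} yields the claim \eqref{eq:49a}.

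The main obstacle is not the algebra but justifying the K-theoretic Weyl integration identity for the distribution-valued push-forwards introduced in Section \ref{s_distr}. Once one accepts the interpretation of such push-forwards as analytic densities against contour measures, the classical Weyl formula applies verbatim on the compact form; the only subtlety is checking that the denominators $\psi(\fh_\red)$ and $\Psi(\fh_\uni)$ do not introduce singularities on the relevant contour, which is ensured by the running hypotheses on $\psi$ from Section \ref{s_psi_vir}.
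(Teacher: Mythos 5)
Your proof is correct and follows the same route as the paper, which simply cites \eqref{eq:48} together with the Weyl integration formula; you have merely written out the algebra combining the Jacobian $\prod_{\alpha}(1-t^{-\alpha}) = e(\fh_\red/\ft) = \psi(\fh_\red/\ft)/\Psi(\fh_\red/\ft)$ with the factor $\psi(\fh_\red)=\psi(\ft)\,\psi(\fh_\red/\ft)$ in the denominator of \eqref{eq:48}, which indeed produces $\psi(1)^{\rk}$ and $\Psi(\fh-\ft)=\Psi(\fh_\red/\ft)\Psi(\fh_\uni)$. Your supplementary remarks about the transcription of the Weyl formula to distribution-valued push-forwards and about avoiding denominator singularities on the contour are appropriate and consistent with the framework set up in Sections \ref{s_distr} and \ref{s_psi_vir}.
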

\begin{proof}
This follows from \eqref{eq:48} and 
Weyl integration formula. 
\end{proof}

\subsubsection{Definition}

There is, obviously, only a loss of information in pushing forward
distributions from $\widetilde{\bG}$ to $\bG$. In what follows, we
will remember the action of the automorphism in computations on
quotient stack.

Concretely, we define the $\psi$-genus of a quotient stack by
\begin{equation}
\psi_\bG(\bX/\bH) = (\bX/\bGt \to \pt/\bGt)_*\, 
\psi(T\bX/\bH) \,. \label{eq:175}
\end{equation}
Among other things, this is a convenient tool to include
characteristic classes of principal vector bundles over $\bX$ in our
computations. Indeed, any principal bundle gives a presentation of
$\bX$ as a free quotient by the corresponding group.



\subsection{Localization}

\subsubsection{}

We begin by recalling how equivariant localization is typically used
for computation of Euler characteristics. 

Suppose $\bX$ is smooth and let $g\in \bG$ be a semisimple element. Recall that
the smoothness of $\bX$ implies the smoothness of $\bX^g$. We decompose
\begin{equation}
T\bX\big|_{\bX^g} = T\bX^g + N_{\bX/\bX^g}\in K(\bX^g) \label{TMhN}\,, 
\end{equation}
where 
$N_{\bX/\bX^g}$ the normal bundle to the fixed locus. Localization in equivariant K-theory implies the following

\begin{Proposition}
  If $\bX^g$ is proper and $\Psi(x)$ is analytic at the weights
  of $g$ in the normal bundle $N_{\bX/\bX^g}$ then $\chi(\bX,\psi(T\bX))$ is analytic at $g$ and 
\begin{equation}
  \label{QMloc}
  \chi(\bX, \psi(T\bX))(g) = \chi(\bX^g, \psi(T \bX^g) \otimes \Psi(N_{\bX/\bX^g}))\,. 
\end{equation}
\end{Proposition}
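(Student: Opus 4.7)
The plan is to invoke the Thomason--Atiyah--Segal localization theorem in $\bG$-equivariant K-theory. For a semisimple element $g \in \bG_\red$, this theorem asserts that the pullback $\iota^* \colon K_\bG(\bX) \to K_\bG(\bX^g)$ along the embedding $\iota \colon \bX^g \hookrightarrow \bX$ becomes an isomorphism after localization at the prime ideal $\mathfrak{p}_g \subset K_\bG(\pt)$ cut out by $g$. Its inverse is given by dividing the pushforward $\iota_*$ by the K-theoretic Euler class $\sum_i (-1)^i \Lambda^i N^* = \prod_i (1-x_i^{-1})$ of the conormal bundle, where $\{x_i\}$ are the Chern roots of $N = N_{\bX/\bX^g}$. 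This class becomes invertible at $g$ because, by definition of the fixed locus, the specialization of each weight $x_i$ at $g$ differs from $1$.

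Applied to an arbitrary $\cF \in K_\bG(\bX)$, this gives the identity $\cF = \iota_*\bigl(\iota^*\cF\,\big/\!\prod_i(1-x_i^{-1})\bigr)$ in the localized K-theory. Applying $\chi(\bX,-)$ to both sides and using the properness of $\bX^g$ (which makes $\chi(\bX^g,-)$ a function rather than a distribution near $g$), I would obtain
\[
\chi(\bX,\cF)(g) \;=\; \chi\!\left(\bX^g,\, \frac{\iota^*\cF}{\prod_i(1-x_i^{-1})}\right)(g).
\]
I would then specialize to $\cF = \psi(T\bX)$: by the K-theoretic splitting \eqref{TMhN} we have $\iota^*\psi(T\bX) = \psi(T\bX^g)\cdot \psi(N)$, and factoring over Chern roots combined with the definition \eqref{eq:46} of $\Psi$ gives
\[
\frac{\psi(N)}{\prod_i(1-x_i^{-1})} \;=\; \prod_i \frac{\psi(x_i)}{1-x_i^{-1}} \;=\; \prod_i \Psi(x_i) \;=\; \Psi(N),
\]
yielding the formula \eqref{QMloc}.

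The main obstacle is the passage from the algebraic identity in the localized K-theory to the claimed \emph{analytic} equality in a neighborhood of $g$. A priori, $\chi(\bX,\psi(T\bX))$ is only a distribution on $\bG_\red$, and one must argue that its restriction to a neighborhood of $g$ coincides with the analytic function produced on the right-hand side. To handle this, I would invoke the setup of Section \ref{s_finite}: replacing $K_\bG(\bX)$ by its numerical version makes $\chi(\bX,-)$ a $K_\bG(\pt)$-linear map into a finitely generated module, so the analytic localization $K_\bG(\bX)_\an$ of Section \ref{s_K_an} is available. The hypothesis that $\Psi$ is analytic at the weights of $g$ on $N$ means precisely that $\psi(T\bX^g)\otimes \Psi(N)$ lies in $K_\bG(\bX^g)_\an$ on some neighborhood $\cU$ of $g$; its pushforward over the proper stack $\bX^g$ is then an analytic function on $\cU$. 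By the localization identity above, this function agrees with $\chi(\bX,\psi(T\bX))$ as sections of the analytic localization at $g$, which is the desired analyticity statement.
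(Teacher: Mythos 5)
Your proposal is correct and matches what the paper intends: the authors state the result as an immediate consequence of localization in equivariant K-theory without giving an explicit proof, and the Thomason--Atiyah--Segal argument you spell out (with the Euler class $\prod_i(1-x_i^{-1})$ of the conormal bundle producing $\Psi = \psi/(1-x^{-1})$ on the normal directions) is precisely the standard derivation they have in mind. Your discussion of the analyticity passage via the numerical K-theory of Section~\ref{s_finite} and the analytic sheaf $K_\bG(\cdot)_\an$ of Section~\ref{s_K_an} fills in the one point the paper leaves implicit, and is consistent with how those sections set up the distribution-to-function transition near a semisimple element.
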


\subsubsection{} 
The decomposition \eqref{TMhN} also 
  makes sense also for a virtual bundle $T\bX$. If
  \[
    N_{\bX/\bX^g} = V_+ - V_-\,,
  \]
  where $V_i$ are vector bundles on $\bX^g$, then we need to require
  that $\Psi(x)^{\pm 1}$ is regular at the weights of $V_\pm$.

\subsubsection{}
A fundamental property of localization formulas like \eqref{QMloc} is
that they are regular as functions of $g$ if $\bX$ itself is proper.
Note that terms in corresponding to connected components of $\bX^g$ have
poles in $g$, due to the pole of $\Psi(x)$ at $x=1$. However, these
poles cancel when we take all components of $\bX^g$ into account.

\subsubsection{}\label{s_C}
If $\psi(x)$ is meromorphic and $\bX^g$ is proper for some $g$ in the identity
component of $\bG$ then \eqref{QMloc} has a meromorphic extension to
$\bG$ and 
\begin{equation}
  \label{eq:49}
  \psi_\bG(\bX) \equiv \chi(\bX^g, \psi(T \bX^g) \otimes
  \Psi(N_{\bX/\bX^g})) \, 
   \delta_\gamma \! \mod \textup{torsion} \,, 
\end{equation}
where
\begin{itemize}
\item[$\bullet$] torsion refers to the torsion submodule for the action of
  $K_\bG(\pt)$,
\item[$\bullet$] the integration cycle $\gamma$ is a deformation of
$\delta_{\bG_\comp}$ 
that avoids the poles of the
integrand.
\end{itemize}
Here $\bG_\comp \subset \bG$ is a maximal compact
subgroup. 
Note that all choices of $\gamma$ are equivalent modulo
torsion.

Dropping the properness assumption on $\bX^g$ in \eqref{eq:49} as well as
specifying the torsion components of $\psi_\bG(\bX)$ requires a
certain care, in particular, attention to the
sign of the weights in the normal bundle to
$\bX^g$.

This leads to the analysis of the attracting and
repelling directions for the action of $g$ in $N_{\bX/\bX^g}$
in Sections \ref{s_Attr} and \ref{s_lc_s} below, respectively.

\subsection{Attracting manifolds}\label{s_Attr}

\subsubsection{} \label{s_sigmaG}
Consider the following model situation. Let
\begin{equation}
\sigma: \Ct \to \bG \label{sigma}
\end{equation}
be a 1-parameter subgroup such that there exists an action-preserving
fibration 
\begin{equation}
\pi: (\bG,\bX) \xrightarrow{\quad (\pi_\bG,\pi_\bX) \quad}
(\bG^\sigma,\bX^\sigma)\label{eq:54}
\end{equation}
with
\[
  \pi = \lim_{z\to 0} \, (\textup{conjugation by $\sigma(z)$},
\sigma(z)) \,. 
\]
Note this implies that $\ker \pi_\bG$ lies in the unipotent radical of
$\bG$ and that $\pi_\bX$ is a fibration in affine spaces if $\bX$ is
smooth.

Also note that since the semisimple conjugacy classes are closed,
every semisimple conjugacy class of $\bG$ intersects
$\bG^\sigma$. Put it differently, $\bG^\sigma$  contains a maximal
torus of $\bG$. Therefore, the distributions of interest to us on $\bG$ are
induced from $\bG^\sigma$. 

\subsubsection{}

A slightly more general setup is when
\begin{equation}
\widetilde{\bX} = \widetilde{\bG} \times_{\bG} \bX\,, \quad \sigma:  \Ct \to
\bG\,,\label{eq:62}
\end{equation}
and such that $(\bG,\bX)$ have a fibration as in \eqref{eq:54}.  This reduces to the situation from Section \ref{s_sigmaG}
as discussed in Section \ref{s_eGHY}.

\subsubsection{}

Let $\eta$ be a distribution on $\bG^\sigma/\sigma$. We define its
\emph{attracting lift} $\eta^\wedge$ to a distribution on $\bG^\sigma$ as follows. By construction, we
have a principal bundle
\begin{equation}
  \label{eq:60}
  \xymatrix{
    \Ct \ar@{^{(}->}[r]  & \bG^\sigma \ar[d] \\
    & \bG^\sigma/\sigma } \,, 
\end{equation}
for which we can choose a local trivialization with coordinates $x$ in
the base and $z$ along the fiber. We denote by $\eta_x$ the
distribution $\eta$ in the base coordinates and define 
\[
  (f(x,z), \eta^\wedge) = \int_{|z|=\const \gg 0} (f(x,z),\eta_x)
\frac{dz}{2\pi i z} \,.
\]
In other words, this extracts the constant term in $z$ in the $z\to
\infty$ expansion of $(f(x,z),\eta_x)$. Note this distribution is
invariant under rescaling of $z$, and therefore its definition does
not depend on the local trivialization. This procedure is illustrated
in Figure \ref{lift_eta}. 

Example \ref{ex_C1} is the most basic example of attracting lift
to $\bG^\sigma = \sigma = \Ct$.

\begin{figure}[!h]
  \centering
  \def\svgwidth{7cm}
\begingroup%
  \makeatletter%
  \providecommand\color[2][]{%
    \errmessage{(Inkscape) Color is used for the text in Inkscape, but the package 'color.sty' is not loaded}%
    \renewcommand\color[2][]{}%
  }%
  \providecommand\transparent[1]{%
    \errmessage{(Inkscape) Transparency is used (non-zero) for the text in Inkscape, but the package 'transparent.sty' is not loaded}%
    \renewcommand\transparent[1]{}%
  }%
  \providecommand\rotatebox[2]{#2}%
  \newcommand*\fsize{\dimexpr\f@size pt\relax}%
  \newcommand*\lineheight[1]{\fontsize{\fsize}{#1\fsize}\selectfont}%
  \ifx\svgwidth\undefined%
    \setlength{\unitlength}{443.5915354bp}%
    \ifx\svgscale\undefined%
      \relax%
    \else%
      \setlength{\unitlength}{\unitlength * \real{\svgscale}}%
    \fi%
  \else%
    \setlength{\unitlength}{\svgwidth}%
  \fi%
  \global\let\svgwidth\undefined%
  \global\let\svgscale\undefined%
  \makeatother%
  \begin{picture}(1,0.91325595)%
    \lineheight{1}%
    \setlength\tabcolsep{0pt}%
    \put(0,0){\includegraphics[width=\unitlength,page=1]{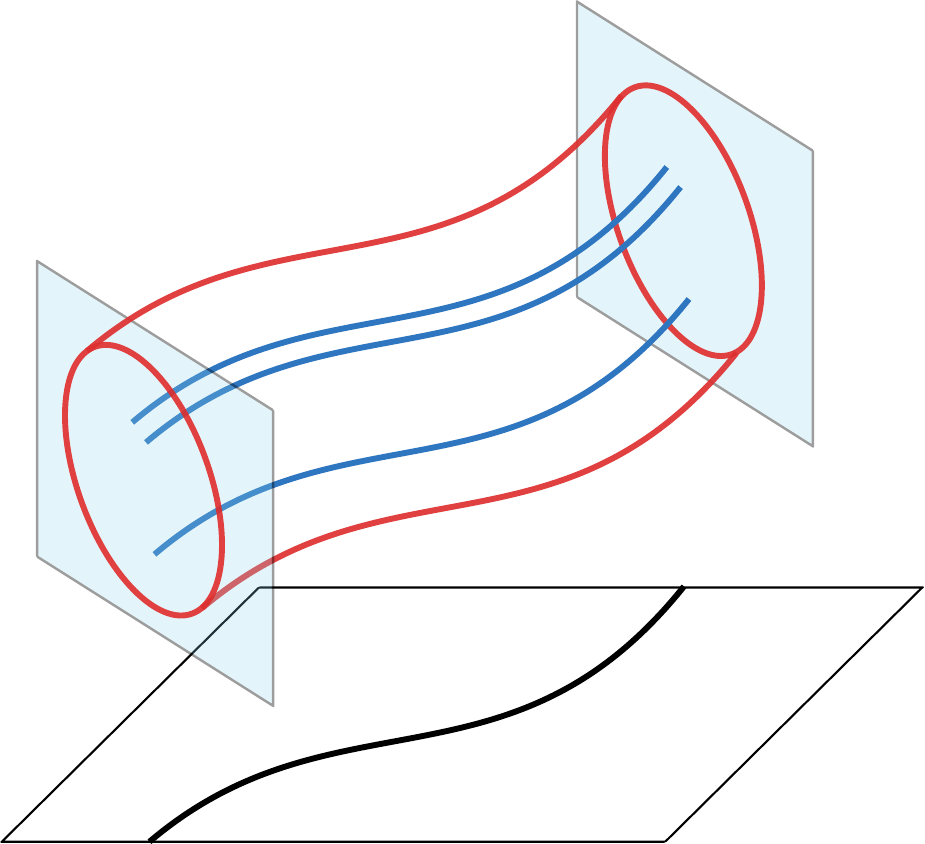}}%
    \put(0.17464669,0.7435186){\makebox(0,0)[lt]{\lineheight{1.25}\smash{\begin{tabular}[t]{l}$\bG^\sigma$\end{tabular}}}}%
    \put(0.77370981,0.20126326){\makebox(0,0)[lt]{\lineheight{1.25}\smash{\begin{tabular}[t]{l}$\bG^\sigma/\sigma$\end{tabular}}}}%
    \put(0.48192478,0.06957261){\makebox(0,0)[lt]{\lineheight{1.25}\smash{\begin{tabular}[t]{l}$\eta$\end{tabular}}}}%
    \put(0.44605684,0.72475563){\color[rgb]{0.87058824,0.18039216,0.18039216}\transparent{0.9137255}\makebox(0,0)[lt]{\lineheight{1.25}\smash{\begin{tabular}[t]{l}$\eta^\wedge$\end{tabular}}}}%
    \put(0.41478836,0.4853018){\color[rgb]{0.18431373,0.4627451,0.75294118}\transparent{0.81568629}\makebox(0,0)[lt]{\lineheight{1.25}\smash{\begin{tabular}[t]{l}{\small poles}
    \end{tabular}}}}%
  \end{picture}%
\endgroup%

  \caption{In the attracting lift $\eta^\wedge$ of a distribution $\eta$, we
    integrate around all possible singularities in the fiber
    direction in \eqref{eq:60}.}
\label{lift_eta}
\end{figure}

\subsubsection{}

For $\cF\in K_{\bG}(\bX)$ the push-forward
\begin{equation}
  \label{eq:71}
  \pi_{\bX,*} \cF \in K_{\bG^\sigma/\sigma}(\bX^\sigma)[z,z^{-1}]] 
\end{equation}
may be computed by localization and equals the expansion at
$z=\infty$ of a rational function in $z$. In \eqref{eq:71}, $z$ is the
character of the defining representation in the $\C^\times$ fiber in
\eqref{eq:60}, and $[z,z^{-1}]]$ denotes Laurent power series in
$z^{-1}$.

\begin{Proposition}\label{p_attr_lift}
Let $\widetilde{\bG}$ and $\widetilde{\bX}$ be as in \eqref{eq:62},
where the action of $\sigma$ on $\bX$ is attracting as in
\eqref{eq:54}.
If the action of $\bG^\sigma$ on
$\bX^\sigma$ is cohomologically proper then the action of $\widetilde{\bG}$ on
$\widetilde{\bX}$ is cohomologically proper and 
\[
  \chi(\widetilde{\bX}, \Ind \cF) = \Ind^{\widetilde{\bG}}_{\bG^\sigma}
\chi(\bX^\sigma,
\pi_{\bX,*} \cF)^\wedge \,. 
\]
\end{Proposition}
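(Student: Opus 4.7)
The plan is to reduce both assertions to the case $\widetilde{\bG} = \bG$, i.e.\ to proving
$$
\chi(\bX, \cF) = \Ind^\bG_{\bG^\sigma}\chi(\bX^\sigma, \pi_{\bX,*}\cF)^\wedge
$$
as distributions on $\bG$. Indeed, because $\widetilde{\bX} = \widetilde{\bG} \times_\bG \bX$ is a free quotient, the stacks $\widetilde{\bX}/\widetilde{\bG}$ and $\bX/\bG$ coincide, so cohomological properness transfers between them. Moreover, the induction formula \eqref{eq:1} of Example \ref{s_eGHY} gives $\chi(\widetilde{\bX}, \Ind \cF) = \Ind^{\widetilde{\bG}}_\bG \chi(\bX, \cF)$, and transitivity of induction along $\bG^\sigma \subset \bG \subset \widetilde{\bG}$ then produces the stated formula once the reduced statement is established.

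For the reduced statement, I would first carry out the pushforward along the attracting fibration $\pi: \bX \to \bX^\sigma$. Since its fibers are affine spaces on which $\sigma$ acts with strictly positive weights $\{w_i\}$, the fiberwise K-theoretic pushforward is the geometric series $\prod_i (1 - z^{-w_i})^{-1}$, where $z$ denotes the $\sigma$-character. Thus $\pi_{\bX,*}\cF$ is well-defined in $K_{\bG^\sigma/\sigma}(\bX^\sigma)[z,z^{-1}]]$ as a Laurent series in $z^{-1}$, and, by the cohomological properness of $\bG^\sigma/\sigma$ acting on $\bX^\sigma$, the iterated pushforward $\chi(\bX^\sigma, \pi_{\bX,*}\cF)$ is a well-defined element of $K_{\bG^\sigma/\sigma}(\pt)[z, z^{-1}]]$. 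Separating $\bG^\sigma$-isotypic components by $z$-weight then shows that each weight of $\bG$ appears with finite multiplicity in $\chi(\bX, \cF)$, giving cohomological properness of $\bG$ on $\bX$ and hence of $\widetilde{\bG}$ on $\widetilde{\bX}$.

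It remains to identify this Laurent-series character, viewed as a distribution on $\bG^\sigma$, with the attracting lift $\chi(\bX^\sigma, \pi_{\bX,*}\cF)^\wedge$, and then to induce to $\bG$. The contour formula $\int_{|z| = \varsigma}(\,\cdot\,)\, dz/(2\pi i z)$ with $\varsigma \gg 1$ from Section \ref{s_Attr} extracts precisely the constant term of the $z \to \infty$ expansion, which is the domain where each geometric series $(1-z^{-w_i})^{-1}$ converges; this is the general incarnation of the basic computation in Examples \ref{ex_C1}--\ref{ex_C2}. Finally, since $\bG^\sigma$ contains a maximal torus of $\bG$ and every semisimple conjugacy class in $\bG$ meets $\bG^\sigma$, the distribution $\chi(\bX, \cF)$ on $\bG$ is determined by induction from its restriction to $\bG^\sigma$. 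The main obstacle I foresee is controlling the exponential growth of the formal series uniformly on the analytic neighborhoods of the maximal compact subgroups where test functions live (cf.\ Section \ref{s_distr}), so that the contour $|z|=\varsigma$ may be chosen consistently across test functions and the formal-versus-analytic identification is unambiguous; this is a growth estimate that ultimately rests on the cohomological properness hypothesis.
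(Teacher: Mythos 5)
Your proof is correct and follows essentially the same route as the paper's: reduce via the free-quotient description \eqref{eq:62}, push forward along the attracting fibration $\pi_\bX$ to obtain a Laurent series in $z^{-1}$, and observe that the attracting lift is exactly the constant-term extraction matching that expansion. The paper's proof is a single sentence asserting precisely this last point, so your write-up is the same argument spelled out in full.
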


\begin{proof}
  Since the action is attracting, the expansion of the character of
  $\pi_{\bX,*}(\cF)^\wedge$ at infinity represents the character of
  the $\bG^\sigma$ action on \eqref{eq:71}. 
%
\end{proof}







\subsection{Local cohomology and stratifications}\label{s_lc_s}

\subsubsection{}

Let $\cF$ be a coherent sheaf on $\bX$ and $\bX_1\subset \bX $ a closed
subset. Associated to $\bX_1$ we have the local cohomology groups
$H^i_{\bX_1}(\bX,\cF)$ and the corresponding Euler characteristic
$\chi_{\bX_1}(\bX,\cF)$. The local cohomology groups fit in a long exact sequence that gives
\begin{equation}
  \label{longlocal}
  \chi(\bX,\cF) = \chi(\bX \setminus \bX_1, \cF) + \chi_{\bX_1}(\bX, \cF) \,, 
\end{equation}
provided all terms in \eqref{longlocal} are well-defined. While 
$\bX\setminus \bX_1$ is typically not proper, it can be cohomologically
proper for an action of a certain group $\bG$. In the equivariant
situation, we always assume that $\bX_1$ is invariant.

\subsubsection{}\label{s_weights_loc_coh} 

Local cohomology depends only on the neighborhood of $\bX_1$ in $\bX$ and
is particularly amenable to  computations if $\bX$ and  $\bX_1$ are smooth. If
$f_1,f_2, \dots$ are generators of the ideal of $\bX_1$ then the local
cohomology is computed by tensoring $\cF$ with the \v{C}ech complex
\begin{equation}
\cO_\bX \to \bigoplus_i  \cO_\bX \left[f_i^{-1}\right] \to  \bigoplus_{i,j}
  \cO_\bX \left[f_i^{-1},
    f_j^{-1}\right] \to \dots \,. \label{Cech}
\end{equation}
The Cech complex and the Chevalley-Eilenberg complex \eqref{ChE} are further examples of a DGAs (this time, positively
graded) that naturally appear our computations.

\subsubsection{}

If $\bX_1$ is smooth then
\begin{equation}
  \label{eq:9}
  \textup{K-theory class of \eqref{Cech}} = (-1)^{\codim \bX_1}
  \cO_{\bX_1} \otimes \det{N_{\bX/\bX_1}} \otimes \Sd N_{\bX/\bX_1} \,, 
\end{equation}
where
\[
  \Sd V = 
\bigoplus_{d\ge 0} S^d V
\]
denote the symmetric algebra of a vector space or a vector bundle $V$.

\subsubsection{}

Note that for any vector space $V$, we have the following equalities 
in the localization of $K_{GL(V)}(\pt)$
\begin{align}
  \frac{1}{\det(1-g^{-1})} & = \Sd V^* \label{SdVvee} \\
                                    & = (-1)^{\dim V} \det V \otimes
                                      \Sd V  \label{SdV}  \,. 
\end{align}
Further note the traces in the right-hand side of \eqref{SdVvee}
and \eqref{SdV} converge when
\[
  | \textup{eigenvalues of $g$} |  \gtrless 1 \,,
\]
respectively.

\subsubsection{}\label{s_oBB} 

\begin{Proposition}\label{l_lc}
  Suppose $\widetilde{\bG}$ acts on $\bX$ and 
  \[
    \bX_1 = \widetilde{\bG} \times_\bG \Attr(\bX_1^\sigma)\,, 
  \]
  where $\bG \subset \widetilde{\bG}$ is a subgroup
  containing $\sigma$ and the limits
  \[
    \pi_\bG(g) = \lim_{z\to 0} \sigma(z) g \sigma(z)^{-1} \in \bG^\sigma
  \,, 
  \]
  for all $g\in \bG$. If the action of
  $\bG^\sigma/\sigma$ on $\bX_1^\sigma$ is cohomologically proper and $N_{\bX/\bX_1}$ is $\sigma$-repelling
%
%
then
\begin{equation}
\chi_{\bX_1}(\bX, \psi(T\bX)) = \Ind^{\bG}_{\bG^\sigma} \, 
\chi(\bX_1^\sigma, \psi(T\bX_1^\sigma) \otimes
\Psi(N_{\bX/\bX_1^\sigma}))^\wedge
\,,\label{eq:5}
\end{equation}
where ${}^\wedge$ denotes the attracting lift of a distribution from
$\bG^\sigma/\sigma$. 
\end{Proposition}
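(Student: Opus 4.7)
The proof combines three ingredients from the preceding sections: the local-cohomology formula \eqref{eq:9}, the attracting-bundle pushforward of Proposition \ref{p_attr_lift}, and the induction identity of Example \ref{s_eGHY}.

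The first step is a reduction to the non-induced problem. Since $\bX_1=\widetilde{\bG}\times_\bG\Attr(\bX_1^\sigma)$, Example \ref{s_eGHY} expresses $\chi_{\bX_1}(\bX,\psi(T\bX))$ as an induction from $\bG$ of the corresponding local-cohomology distribution for $\Attr(\bX_1^\sigma)\subset\bX$ read $\bG$-equivariantly. I therefore work $\bG$-equivariantly with $\bX_1=\Attr(\bX_1^\sigma)$ and recover the outer induction at the end.

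The heart of the proof is the extraction of $\Psi(N_{\bX/\bX_1})$ from the \v{C}ech complex followed by a pushforward along the attracting affine bundle. Because $\bX_1$ is smooth and $N:=N_{\bX/\bX_1}$ is $\sigma$-repelling, formula \eqref{eq:9} together with identity \eqref{SdV} gives
\[
  \chi_{\bX_1}\bigl(\bX,\psi(T\bX)\bigr)\;=\;\chi\bigl(\bX_1,\psi(T\bX)|_{\bX_1}\otimes\Sd N^{*}\bigr),
\]
and the splitting $T\bX|_{\bX_1}=T\bX_1+N$ together with $\psi(N)\otimes\Sd N^{*}=\Psi(N)$ collapses the integrand to $\psi(T\bX_1)\otimes\Psi(N)$. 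Next, writing $p:\bX_1\to\bX_1^\sigma$ for the attracting affine bundle and $N_1:=N_{\bX_1/\bX_1^\sigma}$ for the attracting part of $T\bX|_{\bX_1^\sigma}$, one has $T\bX_1=p^{*}(T\bX_1^\sigma+N_1)$, $N=p^{*}(N|_{\bX_1^\sigma})$, and $p_{*}\cO_{\bX_1}=\Sd N_1^{*}$. The projection formula, a second application of $\psi(V)\otimes\Sd V^{*}=\Psi(V)$, and the splitting $N_{\bX/\bX_1^\sigma}=N_1\oplus N|_{\bX_1^\sigma}$ into attracting and repelling parts of the normal bundle at the fixed locus then yield
\[
  p_{*}\!\left(\psi(T\bX_1)\otimes\Psi(N)\right)\;=\;\psi(T\bX_1^\sigma)\otimes\Psi\bigl(N_{\bX/\bX_1^\sigma}\bigr).
\]
Finally, Proposition \ref{p_attr_lift} applied to the attracting fibration $(\bG,\bX_1)\to(\bG^\sigma,\bX_1^\sigma)$, whose cohomological-properness hypothesis coincides with the one assumed, converts $\chi(\bX_1,\cdot)$ into $\Ind^{\bG}_{\bG^\sigma}$ of the attracting lift ${}^\wedge$ of the corresponding distribution on $\bX_1^\sigma$; composing with the reduction of the first paragraph gives \eqref{eq:5}.

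The main technical obstacle is the synchronization of expansion regimes between the \v{C}ech side of the local-cohomology computation and the attracting lift of Proposition \ref{p_attr_lift}. The identity $(-1)^{\codim}\det N\otimes\Sd N=\Sd N^{*}$ is valid only in localized K-theory, and the two sides converge in opposite $\sigma$-directions; the hypothesis that $N_{\bX/\bX_1}$ is $\sigma$-repelling is precisely what ensures that the expansion of $\Sd N^{*}$ used in the local-cohomology step is the one picked up by the contour integral defining ${}^\wedge$, so that the algebraic manipulations above represent the same distribution on $\bG$.
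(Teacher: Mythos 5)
Your proof is correct and takes essentially the same route as the paper: the paper's own one-line proof simply says it is the combination of Proposition \ref{p_attr_lift} with the analysis of the local cohomology complex via \eqref{Cech}--\eqref{eq:9}, which is exactly what you spell out (extracting $\Psi(N_{\bX/\bX_1})$ from the \v Cech resolution, pushing forward along the attracting affine bundle to land on $\Psi(N_{\bX/\bX_1^\sigma})$, and invoking the attracting lift). Your closing remark about the $\sigma$-repelling hypothesis synchronizing the expansion regime of $\Sd N_{\bX/\bX_1}^*$ with the contour defining ${}^\wedge$ is the correct reading of what makes the combination legal.
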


\noindent
Here repelling means that it has limits to $\bX_1^\sigma$ under
the action of $\sigma(z)$ as $z \to \infty$. 

\begin{proof}
This is a combination of Proposition \ref{p_attr_lift} and the above
analysis of the local cohomology complex for $\bX_1$. 
\end{proof}




\subsubsection{}\label{s_BB}
Obviously, one can iterate this procedure for a sequence
\[
  \bX \supset \bX_1 \supset \bX_2 \supset \dots
\]
in which each $\bX_i$ is closed and
\[
  \bX_i \setminus \bigcup_{j>i} \bX_j \subset \bX \setminus \bigcup_{j>i} \bX_j
\]
is smooth.

Typical example of such stratification are Bia\l{}ynicki-Birula
stratification \cite{BB} coming from torus actions on $\bX$, as well as the
more general stratifications constructed for reductive group
actions in the GIT context in \cites{Bogo,Hess,Kempf,Ness,Rous}, see
e.g.\ Chapter 5 in \cite{VinPop} for an exposition.

\subsubsection{}

The main geometric input for this paper is contained in the elementary
observation that different $\bG$-invariant stratifications of $\bX$ will
lead to \emph{different} integral formulas for $\psi_\bG(\bX)$.

\subsubsection{Example}\label{s_exCC}

This can be seen already in the simplest example
\[
  (\bG,\bX) =  (\Ct ,\C) \,, 
\]
with the defining action.

On the one hand, for $\sigma(z) = z$ and $\bX_1 = \bX$, we saw in the
Example \ref{ex_C2} that
\[
  \psi_{\Ct}(\C) = 
 \Psi(x) \, \delta_{(1+\varepsilon)U(1)} \, .
\]
One the other hand, for $\sigma(z) = z^{-1}$ and $\bX_1 = \{0\}$, we get
from Proposition \ref{l_lc}
\[
  \psi_{\Ct}(\C) = \Psi(x) \, \delta_{(1-\varepsilon)U(1)} + \psi_{\Ct}(\Ct) \,. 
\]
Since
\[
  \psi_{\Ct}(\Ct)  = \psi(1) \, \delta_1 \,, 
\]
this is in agreement with the usual residue formula.


\subsubsection{}

Hopefully, applications related to the topic of this paper, as well as many
potential applications make a compelling case for the development of
the general cobordism theory for cohomologically proper virtually
smooth derived stacks, encompassing both positively and negatively
graded DGAs. 

\subsection{Equivariant cohomology}\label{s_limitH} 

\subsubsection{}

In this section, we will compare and contrast the constructions of
equivariant genera in K-theory and cohomology. To be able to
distinguish between the genera in the two theories, in this section,
we will stick to the following notation. We will denote by
\[
  x \in \Ct, \quad s \in \C = \Lie \Ct
\]
the elements in the corresponding groups, and by $\bpsi(x)$ and
$\psi(s)$ the values of the genera on $\C/\Ct$. We denote by
$\bG$ a connected reductive group acting on $\bX$, and by $\bpsi(\bX)$
the corresponding distribution on $\bG$. We denote by $\xi\in \fg$ an
element of $\fg = \Lie \bG$. 

Recall we assume that $\bpsi(x)$
is analytic and nonvanishing in some neighborhood of $U(1) \subset
\Ct$. The size of this neighborhood needs to be correlated with the
image in $\bG/\bG_\comp$ of those group elements at which we
want to evaluate  $\bpsi(\bX)$. Specifically, this involves the
element $q\in
\Ct_q$ which will be eventually evaluated to a prime power in the
function field context. 

In parallel, we require $\psi(s)$ to be analytic and nonvanishing on some neighborhood of
$i \R = \Lie U(1)$. We assume that at infinity $\psi(s)$ behaves so
that $\psi(\bX)$ is a well-defined linear functional on Paley-Wiener functions
$PW(\fg)$. In the case of interest to us, it suffices to assume that
the ratios $Z(x)/Z(qx)$ grow at most polynomially along $\Re x =
\const$.

\subsubsection{}

In contrast with K-theory, cohomology is a $\Z$-graded theory,
implying that for a smooth proper $\bX$ and a test function
$f(\xi)$, we have
\begin{equation}
\langle \psi(\bX), f(\xi) \rangle
\Big|_{\psi(s) \mapsto \psi(k s), f(\xi) \mapsto f(k \xi) } =
k^{\dim \bX/\bG} \langle \psi(\bX), f(\xi) \rangle\,, \quad k \in \R
\,. 
\label{eq:150}
\end{equation}
Since the argument $s$ is also an element in a certain
Lie algebra, namely the Lie algebra of the group defining the cohomology
theory, the scaling by $k$ in the RHS of \eqref{eq:150} scales
all Lie algebra elements by the same constant. This is the
infinitesimal form of raising all group elements to the
power $k\in \Z$, and thus \eqref{eq:150} may be interpreted as being an
eigenfunction of Adams operations.

Our goal in this section is to define distribution-valued genera
\begin{equation}
  \label{eq:152}
  \psi(\bX) \in PW(\fg)^* 
\end{equation}
for cohomologically  proper
derived stacks $\bX$ that are relevant to this paper, with the
same functoriality as $\bpsi(\bX)$ and with the homogeneity
\eqref{eq:150}. This will be done by a certain limit
procedure\footnote{In the context of Adams operations, this limit
  may be compared to the situation when a large power of an operator converges
  to the the projector on the maximal eigenvalue.} 
starting from $\bpsi(\bX)$. Readers who are only interested in
integral identies resulting from the geometric considerations, can
bypass the latter by taking the well-defined limit
\begin{equation}
  \label{eq:151}
  x = q^s\,, \quad q \downarrow 1 \,, 
\end{equation}
in the actual integral identities.

Specifically, the combination of our Theorems
\ref{t_f1f2} and \ref{t_L2} may be phrased as a specific contour
integral identity. Its $q\to
1$ limit, spelled out in Section \ref{s_spell_out} below, 
may be taken directly and independently of any geometric 
considerations.

\subsubsection{}

In the analytic category, indices of \emph{transversally elliptic}
operators \cite{Atiyah} naturally lead to distributions on compact
groups. For the analysis of such operators, as well as for applications to 
many other problems of geometric 
or combinatorial nature, a powerful theory of equivariant cohomology with
generalized coefficients has been developed. See the survey
\cite{Vergne} as well as \cite{BerlVerg, BravM, CPV2,  CPV,
 ParVerg} for a thin sample of further 
references.

Our treatment here is probably closest in spirit to \cite{CPV}, but
with emphasis on different generality and different tools. Like the
authors of  \cite{CPV}, 
we start on the Fourier transform side $\fg^*$. In place of de Rham
representative constructed using an action form,
we use an ample line bundle on $\bX$ as an auxiliary piece of data. 

\subsubsection{} 
As in Section \ref{s_K_an} we note that
the $\Hd_\bG(\bX)$ is finitely-generated over $\Hd_\bG(\pt)$. 
Therefore, in order to define $\psi(\bX)$,  it enough to define the push-forwards for
finitely many Chern classes of $T \bX$.

Slightly more generally, consider the push-forward
$\chi(\bX, \bbc)$, where $\bbc$ is a K-theory class of the form 
\begin{equation}
\bbc = \prod_i \bc_{k_i} (\cV_i)\,, \label{eq:166}
\end{equation}
where $\cV_i$ are vector bundles on $\bX$ and
\[
  \bc_k(\cV) = e_k(v_1-1,v_2-1,\dots) \,. 
\]
Here $e_k$ is the elementary symmetric function and $\{v_i\}$ are the
Chern roots of $\cV$.

\subsubsection{}

By definition, the Fourier transform of $\chi(\bX, \bbc)$ is given by the
multiplicities $\bbch$ in
\begin{equation}
  \label{eq:153}
\chi(\bX, \bbc)  = \sum_{\lambda} \,
  \bbch(\lambda) 
 \, 
  V^\lambda \,, 
\end{equation}
where $V^\lambda$ denotes the irreducible $\bG$-module with highest
weight $\lambda$. 
%
%
%
%
%
The virtual module in the LHS of \eqref{eq:153}
is the cohomology of a complex of finitely-generated
modules over finitely-generated
$\bG$-algebras, therefore the multiplicities $\bbch(\lambda)$ are
piecewise quasi-polynomial functions on the weight lattice.
By definition, the algebra of quasi-polynomials on a lattice is
generated by polynomials and characters. Here, only characters
of finite order appear, their order being bounded in terms of the degrees of
generators of the finitely-generated algebras above.

Integral
formulas for $\chi(\bX, \bbc)$ thus involve generating functions for these
quasi-polynomials
by the very definition of the Fourier transform. 

\subsubsection{}

The cohomology analog of \eqref{eq:166} is the class
\begin{equation}
  \label{eq:176}
  c= \prod_i c_{k_i} (\cV_i)\,, 
\end{equation}
where $c_k(\cV)$ are now ordinary Chern classes. The distribution
$\int_\bX c$, to be introduced shortly,
describes the $k\to \infty$ asymptotics
of the multiplicities in 
\begin{equation}
  \label{eq:155}
  \chi(\bX,\cL^k \otimes \bbc) = \sum_{\lambda} \,
  \bbch(\lambda,k) 
 \, 
  V^\lambda 
\end{equation}
where $\cL\in \Pic_\bG(\bX)$ is ample. Recall we assumed from the
beginning that such bundle $\cL$ exists. The ampleness assumption,
satisfied in all examples of interest to us, can
certainly be relaxed in general.

The $k\to\infty$ asymptotics will be
taken in the weak sense, that is, after pairing with a sufficiently
smooth slow-changing function. Equivalently, it may be described via 
the asymptotics of the Fourier transform near the origin.

In fact, since
\eqref{eq:155} can be computed on the cone over the corresponding
projective embedding of $\bX$, the coefficients $\bbch(\lambda,k)$
are piecewise quasi-polynomials in $(\lambda,k)$ and their weak asymptotics is
their polynomial part in regions that are unbounded in the $k$
direction.

\subsubsection{}

It is convenient to pick out this asymptotics by pairing $\chi(\bX,\bbc)$ with the
generating function
\begin{equation}
e^{t (\cL-1)} = e^{-t} \sum_{k\ge 0} \cL^k \,
\frac{t^k}{k!} \label{eq:157}
\end{equation}
and letting $t\to \infty$. 
Note that
\[
  e^{t(e^{i \phi}-1)} \approx e^{i t \phi} e^{-t \phi^2/2} 
\]
for $\phi$ near the origin of $\R$.
Analytically, pairing with \eqref{eq:157} is simply a convenient way
to introduce a Gaussian localization on both sides of the Fourier
transform, see Figure \ref{localGauss}. 

\begin{figure}[!h]
  \centering
 \includegraphics[scale=0.3]{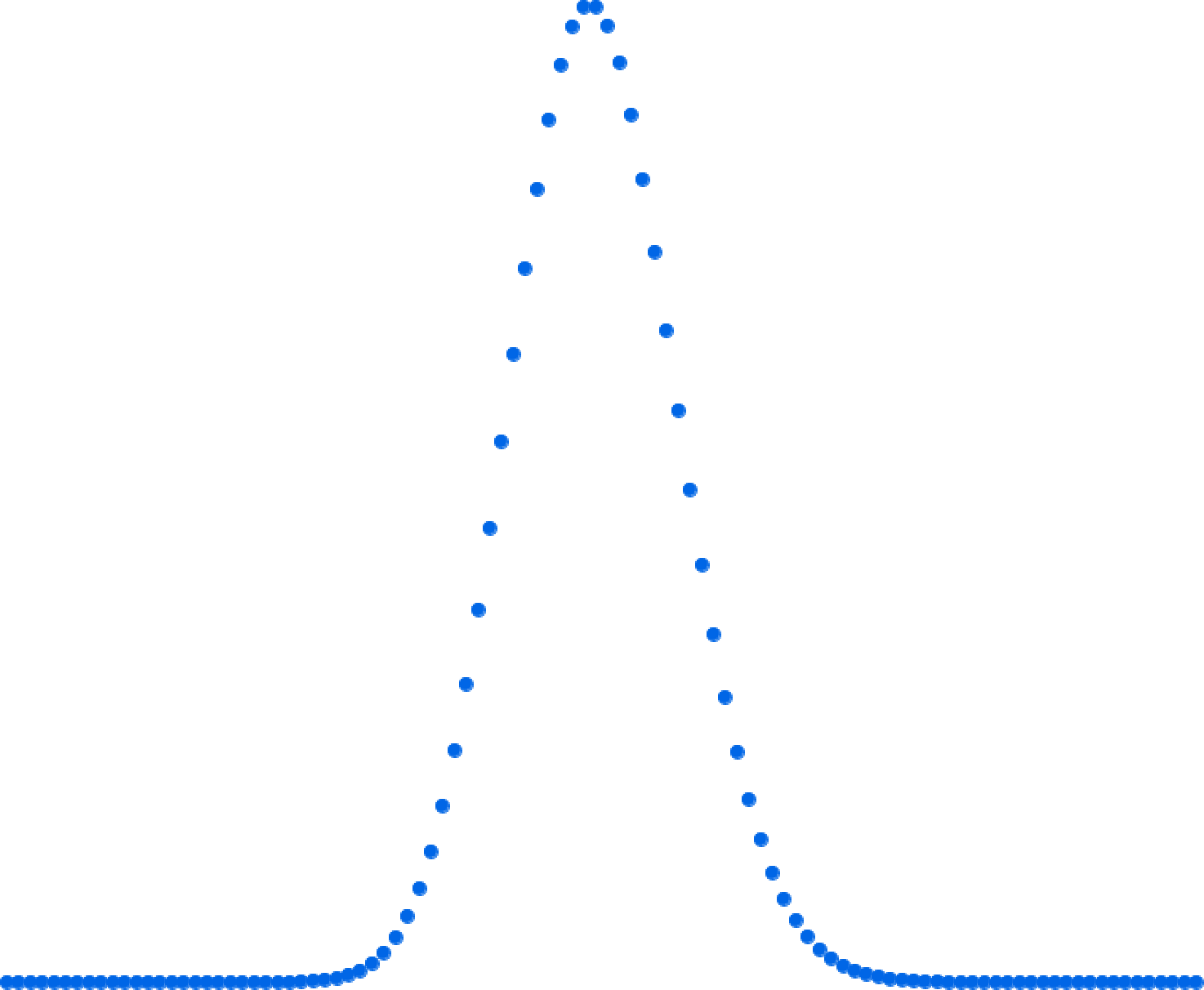}\includegraphics[scale=0.3]{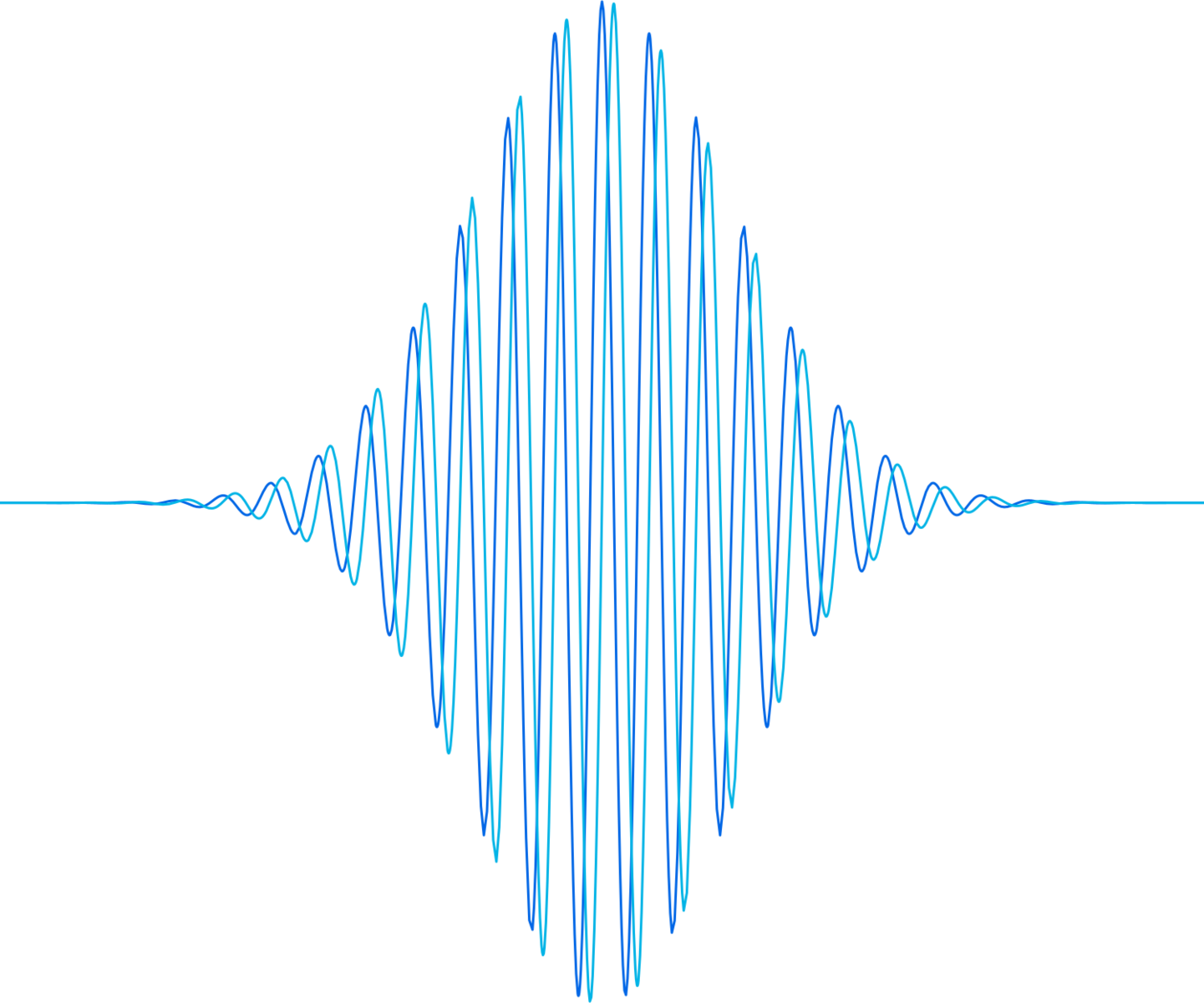}
  \caption{The Taylor coefficients of the function $e^{t(z-1)}$ and the
    plot of its real and imaginary part on the unit circle for large $t$. The former
    is the Poisson distribution with mean $t$ and standard deviation
    $\sqrt{t}$. The latter is localized on the $\frac{1}{\sqrt t}$
    scale. }
  \label{localGauss}
  \end{figure}

  \subsubsection{}

  Let $\boldsymbol{f}_t$ be a sequence of functions\footnote{
    Note that such sequence always exists. Indeed, for a
    univariate PW function $f(s)$ we can just set
    $\boldsymbol{f}_t(e^{s/t}) = \sum_{k\in \Z} g_t(s+2\pi i k t)$,
    where $g_t=f$. But even very fast growing functions can be
    approximated by such sum if we take e.g.\ 
    \[
      g(s) = f(s) \exp(- \alpha(t) \exp(\dots (\exp(-s^2))\cdots))\,, \quad \alpha(t)
    \to 0 \,. 
    \]
}
  such
that
\begin{equation}
  \label{eq:158}
  \boldsymbol{f}_t(e^{s/t}) \to 
      f(s) 
\end{equation}
on compact sets as $t\to \infty$.  We define $\int_\bX c$ by 
\begin{equation}
  \label{eq:155_}
  \left\langle \int_\bX e^{c_1(\cL)} c, f \right\rangle = \left[ t^{\vir \dim
      \bX/\bG-\sum k_i}\right]
  \left\langle \chi(\bX,e^{(\cL-1)t}  \, \bbc),  \boldsymbol{f}_t\right\rangle \,, 
\end{equation}
where bracket means we extract the corresponding term in the
asymptotic expansion. It is easy to see this is independent of $\cL$,
see e.g.\  Proposition \ref{p_int_limit} below.

Since $\bpsi(\bX)$ is the generating function for the equivariant
Chern classes of the tangent bundle, the equality \eqref{eq:155_}
implies
\begin{equation}
  \label{eq:155__}
  \left\langle \int_\bX e^{c_1(\cL)} \psi(T\bX), f \right\rangle = \left[ t^{\vir \dim
      \bX/\bG}\right]
  \left\langle \chi(\bX,e^{(\cL-1)t} \bpsi_t(T\bX)),  \boldsymbol{f}_t\right\rangle \,, 
\end{equation}
where $\bpsi_t(e^{s/t}) \to  \psi(s)$, and  where we assume that
$\psi(T\bX)$ grows at most polynomially in the imaginary direction. 

\subsubsection{Example $\bX=\bP^1$}

Consider $\bX=\bP^1$ with the standard action of $\Ct$. We denote by
$x$ and $s$ elements of $\Ct$ and $\Lie \Ct$, respectively. Since
$\bP^1$ is smooth and proper, we have
\begin{equation}
  \label{eq:83}
  \bpsi(\bP^1) = \frac{\bpsi(x)}{1-x^{-1}} +
  \frac{\bpsi(x^{-1})}{1-x}\,, \quad
  \psi(\bP^1) = \frac{\psi(s)-\psi(-s)}{s} \,, 
\end{equation}
by localization. More generally, if $\cL$ is a line bundle on $\bP^1$
with $\Ct$-weights $l_0,l_\infty\in \Z$ at the two fixed
points, then
\begin{align}
 \cL^k \bpsi(\bP^1) &= x^{k l_0}  \frac{\bpsi(x)}{1-x^{-1}} +
  x^{k l_\infty} \frac{\bpsi(x^{-1})}{1-x}\,, \notag \\ 
  e^{c_1(\cL)} \psi(\bP^1) &= \frac{e^{l_0 s}\psi(s)-e^{-l_\infty
                             s}\psi(-s)}{s} \,. \label{eq:ecL} 
\end{align}
Let us see how the formula \eqref{eq:ecL} appears from the limit
\eqref{eq:155__}.

Ampleness of $\cL$ is equivalent to $l_0 > l_\infty$. Then, by the
geometric series expansion, we have 
\[
  \widehat{\chi(\cL^k (T\bP^1)^{\otimes n})} = \delta_{[k l_\infty -n, k
  l_0 +n]}\,, 
\]
viewed as function on $\Z$. Thus in the $k\to \infty$ limit, we have
\begin{alignat}{2}
  \label{eq:160}
  \widehat{e^{c_1(\cL)}} &= 
  \delta_{[ l_\infty, 
  l_0 ]}&&=\lim_{k\to\infty} k^{-1} \widehat{\chi(\cL^k)} \Big|_{\textup{scale the
      argument by $k^{-1}$} }\,, 
  \\
  \widehat{c_1(T \bP^1)}& = \frac{\partial}{\partial l_0} -
  \frac{\partial}{\partial l_\infty}
  &&= \lim_{k\to \infty} k (\widehat{T\bP^1}-1) \,, \label{eq:160_}
\end{alignat}
where we interpret \eqref{eq:160_} as an operator acting on
\eqref{eq:160}. Upon Fourier transform, this becomes \eqref{eq:ecL}.

\subsubsection{Example $\bX=\C^1$ and $\bX=\Thom(0\to \C^1)$}
Note that if we take $\bX= \C^1 = \bP^1 \setminus \infty$, the same reasoning
as in the previous example will yield the distribution 
\[
  \widehat{e^{c_1(\cL)_{\C^1}}} = \delta_{[-\infty,l_0]} \Rightarrow
e^{c_1(\cL)_{\C^1}} = \frac{e^{l_0
    s}}{s+0} \,, 
\]
where the shift of the pole indicate the direction of the deformation
of the integration contour. In this case, the contour goes to
the region $\Re s > 0$, which is the infinitesimal analog of the
shift in Example \ref{ex_C2}.

For $\bX=\Thom(0\to \C^1)$,  we get
\[
  \widehat{\chi(\Thom,\cL^k)} = - \delta_{[k
  l_0 +1,\infty]} \,, 
\]
and thus
\[
  \widehat{e^{c_1(\cL)_{\Thom}}} = - \delta_{[l_0,\infty]} \Rightarrow
e^{c_1(\cL)_{\Thom}} = \frac{e^{l_0
    s}}{s-0} \,. 
\]
Since
\[
  \bP^1 = \C^1 + \Thom(\infty \to \bP^1)
\]
and $s$ acts on $T_\infty \bP^1$ with weight $-1$, this agrees with
the previous computation.

\subsubsection{}
In this paper, we frequently use integral formulas that express
$\bpsi(\bX)$ as integrals over certain cycles $\gamma \subset \bA$
in the maximal torus of $\bG$ with density built from the functions
$\bpsi(x)$ and $(1-x^{-1})$. From such  formulas, one can easily derive
an integral formula for $\psi(\bX)$ by making the change of variables
prescribed in the definition \eqref{eq:155__} and letting $t \to
\infty$.

Note that in this limit, one should send all Lie groups elements to
the identity at comparable rates. Since we use $q\in \Ct$ as an
important parameter in this paper, we may parametrize the limit 
in \eqref{eq:155__} by
\[
  t = \left(\ln q\right)^{-1} \,, \quad q \downarrow 1 \,.
\]
This means we replace the exponential map by the map $\xi \mapsto
q^\xi$.

\subsubsection{}

\begin{Proposition}\label{p_int_limit}
  Integral formulas for the  K-theoretic genus $\bpsi(\bX)$
  and the cohomological genus $\psi(\bX)$ are related by
  the correspondence in Table \ref{tab1}. 
\end{Proposition}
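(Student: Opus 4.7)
The plan is to derive the cohomological integral formula directly from the defining limit \eqref{eq:155__}, starting with an arbitrary contour integral representation of $\bpsi(\bX)$. By Section \ref{s_C}, after equivariant localization and possible lifts of attracting type, $\bpsi(\bX)$ (together with its analogues for classes of the form \eqref{eq:166}) admits an integral presentation
$$
\left\langle \bpsi(\bX), \boldsymbol{f}\right\rangle = \int_\gamma F(x_1,\dots,x_r;q) \,\boldsymbol{f}(x)\,\prod_i \frac{dx_i}{2\pi i\, x_i}\,,
$$
where $\gamma \subset \bA$ is a cycle close to the compact torus $\bA_\comp$ and $F$ is assembled from the building blocks $\bpsi$ and $\frac{1}{1-x^{-1}}$, together with characters of $\bA$. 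The strategy is to substitute $x_i = q^{s_i}$, set $t = (\ln q)^{-1}$, and let $q\downarrow 1$, showing that every ingredient limits term-by-term to its cohomological counterpart as in Table \ref{tab1}.

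Concretely, I would verify the correspondences one at a time. The Haar measure transforms by $\frac{dx_i}{2\pi i x_i}= \frac{ds_i}{2\pi i t}$, so the cycle $\gamma$ close to $\bA_\comp$ becomes a shifted imaginary cycle in $\fa$, contributing a factor $t^{-r}$ to the integral. Each basic K-theoretic denominator becomes
$$
\frac{1}{1-x_i^{-1}} = \frac{1}{1-q^{-s_i}} \;\sim\; \frac{t}{s_i}\qquad (q\downarrow 1),
$$
again yielding a factor $t$ per such denominator. The symbol $1$ (interpreted as the trivial weight or origin of the group law) becomes the origin $0$ in the Lie algebra, and $q$ itself becomes $1$. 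The numerator factors $\bpsi(x_i) = \bpsi(q^{s_i};q)$ converge to $\psi(s_i)$ by the very definition of the cohomological genus in Section \ref{s_limit}. Finally, the test function $\boldsymbol{f}_t(q^s)$ converges to $f(s)$ by \eqref{eq:158}. A Chern root $x^\alpha$ of a vector bundle turns into $e^{\alpha(s)}$, so characteristic classes of the form \eqref{eq:166} transform as in \eqref{eq:176}.

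It then remains to check that the resulting net power of $t$ that survives matches the prescription in \eqref{eq:155__}: each pole in the integrand contributes a factor of $t$, the measure contributes $t^{-r}$, and the balance is controlled by the first Chern character $c_1(\cL)$, whose contribution $e^{(\cL-1)t}$ in \eqref{eq:155__} is precisely what produces the Gaussian localization scale $\sqrt{t}$ and selects the coefficient of $t^{\vir \dim \bX/\bG - \sum k_i}$. Tracking these $t$-powers through localization on the fixed locus shows that the remaining finite limit is exactly the cohomological integral formula, with the substitutions of Table \ref{tab1}.

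The main obstacle is not individual term limits, which are elementary, but the uniformity required to exchange the $q\downarrow 1$ limit with (i) the contour integral and (ii) the asymptotic extraction $[t^{\vir\dim \bX/\bG - \sum k_i}]$ in \eqref{eq:155__}. The uniformity is delicate because the cycle $\gamma$ may cross singularities of $F$ and because the analytic behavior of $\bpsi$ at infinity is only controlled in a strip; here the Paley-Wiener hypothesis on $f$, the polynomial bound on $Z(x)/Z(qx)$ along $\Re x = \const$, and the Gaussian localization on the $1/\sqrt{t}$ scale (cf.\ Figure \ref{localGauss}) combine to allow dominated convergence after an appropriate rescaling of the contour near the fixed-point poles. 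Once this analytic step is secured, the proposition follows by simply reading off the limit dictionary.
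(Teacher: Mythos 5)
Your approach is essentially the same as the paper's: substitute $x = q^{s}$ (equivalently $g = q^{\xi}$), verify the dictionary of Table~\ref{tab1} term by term, and balance the powers of $t$. Your individual limits (Haar measure picking up $1/\ln q$, the factor $\Psi(x) = \psi(x)/(1-x^{-1}) \to \psi(s)/(s\ln q)$, the genus $\bpsi \to \psi$, the test function, and the $t$-power bookkeeping matching minus the virtual dimension) all agree with \eqref{eq:156} and the sentence that follows it in the paper, and you in addition flag the exchange-of-limits issue that the paper leaves implicit.

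There is, however, one step of the paper's proof that your proposal misses, and it is precisely the content of the last row of Table~\ref{tab1}: the subscript in $\log_q(\gamma)_0 \subset \fa$ is not decorative. The K-theoretic integration cycles are of the form $\gamma = q^{\eta}\,\bH_\comp/\textup{conjugation}$ for a reductive, generally \emph{disconnected}, compact group $\bH_\comp$. Under $\log_q$, every non-identity component of $\bH_\comp$ escapes to infinity as $q\downarrow 1$, because $\log_q(h) = (\log h)/(\log q)\to\infty$ for any $h$ bounded away from the identity; only $\log_q(\gamma)_0 = (\eta + \fh_\comp)/\textup{conjugation}$ survives in the limit. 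Your formulation -- ``the cycle $\gamma$ close to $\bA_\comp$ becomes a shifted imaginary cycle in $\fa$'' -- silently assumes a connected cycle over the identity component of the torus and never asks which part of $\gamma$ remains finite. This matters for the proposition as stated (and for the paper's use of it): the cycles $q^{h/2}\,\bC_{\phi,\comp}$ in Theorem~\ref{t_spectral_Psi} typically have $\bC_\phi$ disconnected (e.g.\ $\bC_\phi=S(3)$ for the subregular nilpotent of $G_2$), and the collapse to the identity component is exactly the mechanism by which the finite-characteristic spectral pieces such as \eqref{eq:67_} and \eqref{eq:68_} disappear in the cohomological limit while \eqref{eq:66} survives. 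You should add this observation to close the proof.
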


\begin{table}[!h]
  \centering
  \small
\begin{tabularx}{0.8\textwidth}
 { 
  | >{\centering\arraybackslash}X 
  | >{\centering\arraybackslash}X 
  | >{\centering\arraybackslash}X | }
 \hline
 & $\chr\F >0$ & $\chr\F=0$ \\
 \hline
  cohomology theory  & K-theory  & cohomology  \\
 \hline 
 $\cQ_q^\vee\cong $ & $(\Ct,\, \cdot \,,1)$  & $(\C,+,0)$\\
 \hline
 Haar measure & $ (2\pi i x)^{-1}  dx$& $ (2\pi i)^{-1}  d s$ \\
 \hline 
$\Psi=$  &  $\bpsi(x)/(1-x^{-1})$ & $\psi(s)/s$\\
\hline
$Z=$  &  $\Psi(1/x) \Psi(x/q)$ & $\Psi(-s) \Psi(s-1)$ \\
\hline
poles of $Z$ &  $x=1,q$  & $s=0,1$ \\
\hline
integration cycle & $\gamma \subset \bA$ & $\log_q(\gamma)_0 \subset
\fa$ \\
\hline 
\end{tabularx}
\caption{Conversion of formulas between the cases of finite and zero
  characteristic. The Haar measure $(2\pi i)^{-1}  d s$ on Lie algebras of
  compact tori is normalized
  so that $\Ker \exp$ is a unimodular lattice. The subscript
  $\log_q(\gamma)_0$ means the component that does not escape to
  infinity as $q\to 1$. The cohomology column is the limit of the 
K-theory column in the sense explained in this Section.}
\label{tab1}
\end{table}

\begin{proof}
Substitute $g=q^\xi$ in the integral formula for $\langle \bpsi(\bX),
e^{(\cL-1)t} \, \boldsymbol{f}\rangle$. By definition, this transforms
$\bpsi(x)$ and $\boldsymbol{f}(g)$ into the corresponding $\psi(s)$
and $f(\xi)$, while affecting the other terms in the formula by
\begin{equation}
  \label{eq:156}
  \frac{dx}{2\pi i x} \mapsto \ln q \, \frac{ds}{2\pi i}\,, \qquad 
  \Psi(x) \mapsto \frac{1}{\ln q} \frac{\psi(s)}{s} \,.  
\end{equation}
The difference between the
numbers of terms each kind is precisely minus the virtual dimension of
$\bX/\bG$, thus we get the correct power of $t$ as a prefactor.

The line bundle $\cL$, being a function on the spectrum of K-theory,
is represented by a regular function of the form
\[
  \cL \big|_{q^\xi} = 1 + (\ln q ) c_1(\cL)\big|_{\xi} + \dots
\]
in the integral formulas. Therefore, $e^{(\cL-1)t}\to e^{c_1(\cL)}$ as
$q\to 1$. 

The integration cycles $\gamma$ appearing in K-theoretic formulas have
the form
\[
  \gamma = q^{\eta} \, \bH_\comp \big / \textup{conjugation}\,, 
\]
where $\bH \subset \bG$ is a reductive group, not necessarily
connected, which commutes with $\eta$. Since
\[
  \textup{dist}_\bG(1, \bH_\comp \setminus \bH_{\comp,0}) >
\const > 0 \,, 
\]
the only component of $\log_q(\gamma)$ that does not escape to infinity
as $q\to 1$ is
\[
  \log_q(\gamma)_0 = \left(\eta+ \fh_\comp \right) \big/
\textup{conjugation}\,.
\]
This concludes the proof.
\end{proof} 

As a corollary of the proof, we note that in cohomology we always
integrate over a translate of a Lie algebra of a subtorus in $\bA_\comp$.

\section{The stack $\Tau$}\label{s_stack} 

\subsection{Basic properties}

\subsubsection{}
A central role in this paper will be played by the following derived quotient
stack
\begin{align}
  \label{defTau1}
  \Tau &= T^* (\bB \backslash \bG /\bB) \\
       & = T^* \cB \rd \bB \label{defTau2}\\
       & = \{ (e,\fn') , e \in \fn \cap \fn'\} / \bB \label{defTau3}\\
       & = \{ (e,\fn',\fn''), e \in \fn' \cap \fn''\} /
         \bG \label{defTau4} \\
      & = \left(T^*\cB \times_\fg T^*\cB \right) / \bG \label{defTcT} \,.
\end{align}
Here $\rd \bB$ in \eqref{defTau2} denotes the algebraic
symplectic reduction, that is, $\mu_\fb^{-1}(0)/B$, where
$\mu_\fb^{-1}(0)$ is the derived preimage of $0\in \fb^*$
under the moment map. We denote by $\fn$ the nilradical of
$\fb = \Lie \bB$ and we denote by $\fn',\fn''$ pairs of points of
$\cB$, interpreted as  
maximal nilpotent subalgebras of $\fg$, see Appendix \ref{a_Lie}.

The equality between \eqref{defTau1} and
\eqref{defTau2} is the definition of the cotangent bundle to a
quotient stack. The equivalence of \eqref{defTau2} and \eqref{defTau3} follows from
the identification of the Springer resolution
\begin{equation}
  \label{Springer}
  \xymatrix{  (e, \fn')\, \ar@{|->}[d]\ar@{^{(}->}[r]& T^*\cB \ar[d] \ar[dr]^{\mu} \\
    e\, \ar@{^{(}->}[r] & \textup{nilpotent cone} \,\ar@{^{(}->}[r] & \fg
    }
  \end{equation}
with the algebraic moment map $\mu$ for $\bG$. Here, and in
\eqref{TsB}, we use the identification
$\fg \cong \fg^*$ provided by an invariant bilinear form.

We call the stack 
 $\Tau$ the \emph{Springer stack}.

\subsubsection{}

Via the presentation \eqref{defTau1}, we view $\psi(\Tau)$ as a
distribution on $\bA \times \bA$. We will compute it
$\Ct_q$-equivariantly, where $\Ct_q$ scales the cotangent
direction with weight $q^{-1}$.

\subsubsection{}\label{s_tan_Tau} 

{}From \eqref{defTau4} we conclude that
\begin{equation}
  \label{decTau1}
  \Tau
  = \bigcup_{e/\textup{conj}} \left(\cB^e
  \times \cB^e \right) / \bC_e 
\end{equation}
where the union is over conjugacy classes of nilpotent elements of
$\fg$, $\cB^e$ is 
is the Springer fiber \eqref{Spri} of $e$, and $\bC_e= \bG^e$ is the centralizer
of $e$. Compare with the formula 
\eqref{eq:11c} below.

\begin{Lemma}\label{l_Ttau}
  We have the following equivalent expressions for the tangent bundle
  of $\cT$
\begin{align}
  \label{eq:11}
  T \cT \big|_{(e,\fn,\fn')} & = \underbrace{(1+q^{-1})
                               \fg}_{T_{(T^*\bG)}}-
                               \underbrace{(\fb+ q^{-1}
                               \fb^*)}_{\rd \bB}
                               - \underbrace{(\fb' + q^{-1}
                               {\fb'}^* )}_{\rd \bB'} \\
                             &  = \underbrace{(\fn^* - q^{-1} \fn^*)}_{T_\fn \cB^e} +
                               \underbrace{ ({\fn'}^* - q^{-1}
    {\fn'}^*)}_{T_{\fn'} \cB^e} + \underbrace{(q^{-1}\fg-
                               \fg)}_{T_e(\fg/\bG)} - 2 q^{-1} \fa
  \label{eq:11c}\\
      & = \underbrace{q^{-1} \fn -
        \fb}_{T(\fn/\bB)}  + \underbrace{\left(\fn'\right)^* -q^{-1}
        \left(\fb'\right)^*}_{\textup{$\mu$-relative tangent}} \,.\label{eq:11b}
\end{align}
\end{Lemma}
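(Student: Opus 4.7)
Each of the three displayed formulas corresponds to one of the presentations \eqref{defTau1}--\eqref{defTcT} of $\cT$, and the strategy is to derive each by applying standard rules for the virtual tangent bundle of a derived quotient stack: for a quotient stack $X/\bH$ one subtracts $\Lie\bH$; for a derived intersection $\mu^{-1}(0)$ with $\mu:X\to V^*$ one subtracts $V^*$; and any coordinate along a cotangent fibre carries weight $q^{-1}$ under the $\Ct_q$-action scaling cotangents. For \eqref{eq:11}, start from $\cT = T^*\bG \rdd (\bB\times\bB)$ implicit in \eqref{defTau1}--\eqref{defTau2}. Left trivialization identifies $T(T^*\bG)$ with $(1+q^{-1})\fg$: the base direction $\fg$ together with the cotangent fibre $\fg^*\cong\fg$, weighted by $q^{-1}$. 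The derived symplectic reduction by the first $\bB$ contributes $-\fb-q^{-1}\fb^*$, the first term from the stack quotient and the second from the derived moment-map fibre $\mu_\fb^{-1}(0)$ whose target $\fb^*$ inherits the weight $q^{-1}$ of the symplectic form; the second $\bB$ factor is handled identically, and summing yields \eqref{eq:11}.

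For \eqref{eq:11b}, use \eqref{defTau3}, recognizing $\bY=\{(e,\fn'):e\in\fn\cap\fn'\}$ as $\mu_\fb^{-1}(0)\subset T^*\cB$. The tangent to $T^*\cB$ at $(e,\fn')$ is $T_{\fn'}\cB + q^{-1}T^*_{\fn'}\cB = (\fn')^* + q^{-1}\fn'$, from which subtracting the derived constraint $q^{-1}\fb^*$ and the quotient $\fb$ gives
$$T\cT\big|_{(e,\fn,\fn')} \,=\, (\fn')^* + q^{-1}\fn' - q^{-1}\fb^* - \fb.$$
The Killing-form identifications $\fb^*\cong\fg/\fn$ and $(\fb')^*\cong\fg/\fn'$ then rearrange this into $T(\fn/\bB) + (\fn')^* - q^{-1}(\fb')^*$, with the first summand recognized as the tangent of the derived stack $\fn/\bB$ and the second as the $\mu$-relative tangent of $T^*\cB \to \fg$.

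For \eqref{eq:11c}, start from \eqref{defTcT}. The derived fibre product has tangent $(\fn^*+q^{-1}\fn) + ((\fn')^*+q^{-1}\fn') - q^{-1}\fg$, and the $\bG$-quotient subtracts an additional $\fg$. To match the displayed form, substitute $q^{-1}\fn = q^{-1}\fg - q^{-1}\fa - q^{-1}\fn^*$ (valid at the point $\fn$ as an $\bA$-module identity via $\fg=\fn+\fa+\fn^*$), and similarly at $\fn'$; the two copies of $\fa$ combine into $-2q^{-1}\fa$, the excess $q^{-1}\fg$-contributions cancel against $-q^{-1}\fg$, and the remaining pieces organize themselves into the Springer-fibre tangents $\fn^* - q^{-1}\fn^*$ and $(\fn')^* - q^{-1}(\fn')^*$ (matching \eqref{eq:142}) together with the $[\fg/\bG]$-tangent $q^{-1}\fg - \fg$ at $e$. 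The final consistency check is that on the diagonal $\fn=\fn'$ all three formulas collapse to the common expression $\fn^* - q^{-1}\fn^* + q^{-1}\fn - \fn - \fa - q^{-1}\fa$.

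The main obstacle is bookkeeping: tracking $q^{-1}$-weights on cotangent and moment-map directions, keeping $\fn$ (positive roots) and $\fn^*\cong\fn^-$ (negative roots) distinct as $\bA$-modules, and applying the decomposition $\fg=\fn+\fa+\fn^*$ only at the fixed point where it is $\bA$-equivariant. The derivation of \eqref{eq:11c} is the subtlest, because the naive fibration $\cT\to[\fg/\bG]$ with set-theoretic fibre $\cB^e\times\cB^e/\bC_e$ undercounts the virtual dimension by $r$ at each Springer fibre; the $-2q^{-1}\fa$ is precisely the derived correction coming from the two factors of the derived fibre product in \eqref{defTcT} and encodes why \eqref{decTau1} must be upgraded to the cobordism-level statement of Theorem \ref{t_proper} rather than being read off from it directly.
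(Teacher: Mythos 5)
Your proof is correct and in substance matches the paper's terse argument, which just observes that \eqref{eq:11} follows from \eqref{defTau1} and that \eqref{eq:11c} and \eqref{eq:11b} are K-theoretic restatements via the Killing-form identifications $\fb^*\cong\fg/\fn$, $\fn^*\cong\fg/\fb$, and $\fg=\fn\oplus\fa\oplus\fn^*$; you make the same calculation explicit by deriving each line from the matching presentation \eqref{defTau2}, \eqref{defTcT}, \eqref{defTau3}, which is a legitimate reorganization rather than a genuinely different route. (Your closing remark about the naive fibration ``undercounting by $r$ per Springer fibre'' is a somewhat loose gloss on the role of $-2q^{-1}\fa$ and the $[q^{-1}]^{-2r}$ prefactor in \eqref{eq:120}, but it is a side comment and plays no role in the proof of the lemma itself.)
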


\noindent
Here $\fa = \Lie \bA$ and the virtual tangent bundle
\begin{equation}
T_\fn \cB^e = \fn^* - q^{-1} \fn^*\label{eq:127}
\end{equation}
describes $\cB^e$ as the 
zero locus of the vector field $e$, see Appendix
\ref{s_App_Spring}.

\begin{proof}
  Formula \eqref{eq:11} follows directly from \eqref{defTau1},
  while the other two formulas are just restatements of
  \eqref{eq:11}. 
\end{proof}

\subsubsection{}

\noindent
Via the presentation \eqref{defTcT}, we can view $\psi(\cT)$ as a
distribution on
\[
  \bA \times \bA \times \Ct_q \times \bG \,. 
\]
The stabilizers of the nilpotent elements $e$ in the last two factors
have the form
\begin{equation}
  \bCh_e = \{  (q, q^{h/2} \bC_e) \} \subset \Ct_q \times
  \bG\label{eq:28}\,, 
\end{equation}
$h$ being the characteristic of $e$,
and the various $\bCh_e$  will correspond to the pieces of the eventual spectral
decomposition.
The integration over one of the $T^*\cB$ factors in
\eqref{decTau1} will give the corresponding spectral projectors.  





\subsection{Characteristic classes of $\Tau$}\label{s_charT} 

\subsubsection{}

First, we recall the computations of
the $\bB$-equivariant characteristic classes of
\[
  M=T^*\cB \,. 
\]
This is the same as $\bA$-equivariant characteristic
classes for a maximal torus $\bA\subset \bB$.

\subsubsection{}

We proceed by localization. The $\bA$-fixed points on $M^\bA$ are
isolated and indexed by the Weyl group $W$. We will index the fixed points so that the smallest
Bruhat cell corresponds to the longest element of the Weyl group. This
means we take
\begin{equation}
M = T^*(\bG / \bB_{-}) \,, \label{eq:76}
\end{equation}
where $\bB_{-}$ and $\bB_{+}=\bB$ is a pair of opposite Borel subgroups, and set
\[
  p_w = w \bB_{-}  \in \bG / \bB_{-} \subset M \,.
\]


\subsubsection{}

The positive roots of $\bG$ will be, by definition, those contained in
$\bB_+$. 
\[
  T_{p_w} M = w \cdot T_{p_1} M = w \left( \sum_{\alpha > 0}  (\alpha + q^{-1} / \alpha)
\right) \,. 
\]

\subsubsection{}\label{s_f1f2}

Recall that we view $\psi(\cT)$ as a distribution on $\bA \times \bA$,
where the two copies act by the left and right regular action on
$\bG$. The point $w\in \bG$ is fixed by a subtorus
\[
  \bA \cong \textup{stabilizer}(w) = \{(a,w^{-1} \cdot a)\} \subset
\bA \times \bA \,.
\]
Thus, pairing the distribution with a test function
\[
  f_1 \boxtimes f_2 \in K_{\bA \times \bA}(\pt)
\]
means integrating $f_1(a) f_2(w^{-1} a)$ over $\bA$.

Put it differently, the identification $f_2 \in K_\bA(\pt) = K_{\bB}(\pt) =
K_\bG(\bG/\bB)$ takes a $\bB$-module $V$ to a vector bundle
$\cV = \bG \times_\bB V$ over $\bG/\bB$. The torus $\bA$ acts on
its fiber 
\[
  \cV_{p_w} = w \bB \times_\bB V \cong V 
\]
by the action of $w^{-1} \cdot a$ on $V$, whence the term $f_2(w^{-1} a)$ above. Here the choice of the Borel subgroup
containing $\bA$ is not material, which is why we have dropped the subscript
in $\bB_-$.

\subsubsection{}

The tangent bundle to $\Tau$ has the following description: 
\begin{align}
  T \Tau &= T \left(M \rd \bB_+\right) \notag \\
  & = TM - \fb_+ - q^{-1} \fb_-  \,. \label{TTau} 
  \end{align}
Therefore,
\begin{align}
  \label{eq:51}
  T_{p_w} ( T^*(\bB_+ \backslash \bG / \bB_-)) 
  & = 
  \sum_{\substack{\alpha > 0 \\ w^{-1} \alpha <0 }} (\alpha^{-1} -
  \alpha)(1-q^{-1}) - (1+q^{-1}) \fa\,. 
\end{align}
and hence 
\begin{equation}
  \psi(\Tau) \big|_{p_w} = \frac{1}{\cz^r}  \prod_{\substack{\alpha > 0 \\ w^{-1} \alpha <0 }}
\frac{Z(x^\alpha)}{Z(q x^\alpha)}\,.  \label{eq:181-3}
\end{equation}
Where the function
\begin{equation}
  \label{eq:114bis}
Z(x) = \Psi(x^{-1}) \Psi(x/q)
\end{equation}
was introduced in \eqref{eq:114}. Here and in what follows we drop the
subscript in $Z_\psi$ for brevity. We also define
\begin{equation}
   \cz= \psi(1) \Psi(q^{-1}) \,.  \label{eq:23}
\end{equation}
%


\subsubsection{}\label{s_t_f1f2}

\begin{Theorem}\label{t_f1f2}
  The stack $\cT$ is cohomologically proper. 
  For a function $f_1 \boxtimes f_2$ on $\bA \times \bA$, we 
  have
  \begin{equation}
  \left(f_1 \boxtimes f_2, \psi(\cT)\right) =
\frac{1}{\cz^r} \int_{\varsigma A_\comp} d_\textup{Haar}
  \, 
   \sum_w f_1(x) \, f_2(w^{-1} x) \prod_{\substack{\alpha > 0 \\ w^{-1} \alpha <0 }}
   \frac{Z(x^\alpha)}{Z(q x^\alpha)} \,,\label{eq:3}
 \end{equation}
   for any  $\varsigma \in \bA$ such that
   \[
     \forall \alpha >0 \,, \quad |\varsigma^\alpha|  > |q|\,. 
  \]
  \end{Theorem}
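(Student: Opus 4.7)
The plan is to compute $\psi(\cT)$ as a distribution on $\bA\times\bA$ by equivariant K-theoretic localization, using the presentation $\cT = M\rd \bB_+$ with $M = T^*(\bG/\bB_-)$ from \eqref{defTau2}. The $\bA$-fixed points on $\cB = \bG/\bB_-$ are $p_w = w\bB_-$ indexed by $W$, already set up in Section \ref{s_f1f2}. I would pass from the stack $\cT$ to the smooth variety $M$ by combining: (i) K-theoretic localization on $M$, which at $p_w$ gives tangent weights $\{x^{w\alpha}, q^{-1}x^{-w\alpha}\}_{\alpha>0}$ and localization denominator $\prod_{\alpha>0}(1-x^{-w\alpha})(1-qx^{w\alpha})$; (ii) the Koszul resolution $\cO_{\mu^{-1}(0)}\simeq \Lambda^\bullet(q\fb_+)$ for the derived zero locus of the moment map, the $q$-weight being fixed by the $\Ct_q$-scaling of cotangent fibers; and (iii) the quotient-stack formula \eqref{eq:48} (equivalently \eqref{eq:49a}), which handles the $\bB_+$-quotient by dividing by $\psi(\fa)\Psi(\fn_+)$ and integrating over $\bA$. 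A test function $f_2$ on the second copy of $\bA$ corresponds via Borel--Weil to a $\bG$-equivariant class on $\cB$ whose restriction to $p_w$ carries $\bA$-weight $f_2(w^{-1}x)$, exactly as explained at the end of Section \ref{s_f1f2}.

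Collecting all contributions at $p_w$ and writing everything in terms of $\Psi(x)=\psi(x)/(1-x^{-1})$, the four factors $(1-x^{-w\alpha})$, $(1-qx^{w\alpha})$ (localization), $(1-qx^\alpha)$ (Koszul), and $(1-x^{-\alpha})$ (from $\Psi(\fn_+)^{-1}$) recombine, per positive root $\alpha$, with the $\psi$-factors coming from $\psi(TM)/[\psi(\fb_+)\psi(q^{-1}\fb_-)]$ to give, up to the overall scalar, the ratio
\begin{equation*}
\prod_{\alpha>0}\frac{\Psi(x^{w\alpha})\,\Psi(q^{-1}x^{-w\alpha})}{\Psi(x^\alpha)\,\Psi(q^{-1}x^{-\alpha})}\,.
\end{equation*}
Splitting the multiset $\{w\alpha : \alpha>0\}$ by the sign of $w\alpha$, the numerator factors indexed by $\alpha$ with $w^{-1}\alpha>0$ cancel term-by-term against the matching denominator factors, leaving precisely $\prod_{\alpha>0,\, w^{-1}\alpha<0} Z(x^\alpha)/Z(qx^\alpha)$ once one invokes $Z(y)=\Psi(y^{-1})\Psi(y/q)$ from \eqref{eq:114bis}. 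The prefactor $1/\cz^r$ arises by combining the three $r$-fold trivial-weight contributions attached to $\fa\subset\fb_+$, $q^{-1}\fa\subset q^{-1}\fb_-$, and $q\fa\subset q\fb_+$ (Koszul), which together give $\psi(1)^r\psi(q^{-1})^r/(1-q)^r = \cz^r$ in the denominator via the identity $\Psi(q^{-1})=\psi(q^{-1})/(1-q)$.

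The contour condition $|\varsigma^\alpha|>|q|$ for $\alpha>0$ is exactly what places $\varsigma\bA_\comp$ away from the polar loci $\{x^{w\alpha}=1\}$ and $\{x^{w\alpha}=q^{-1}\}$ present at each fixed point and each positive root; equivalently, $\varsigma$ lies in the attracting region for the $\Ct_q$-action on cotangent fibers ($|q|>1$) that governs cohomological properness in the sense of Sections \ref{s_Attr}--\ref{s_lc_s}. The main bookkeeping obstacle is the root-by-root simplification in the second paragraph: one must verify that the $(1-q^{\pm1}x^{\pm\alpha})$ factors and the $\psi$-factors regroup cleanly into the advertised product over roots turned negative by $w^{-1}$, with the exponents and $q^{-1}$-scalings matching the definition of $Z$. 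Once this is confirmed at each $p_w$, summing over $w$ and integrating against $f_1(x)$ over $\varsigma\bA_\comp$ yields \eqref{eq:3}.
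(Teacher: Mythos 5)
Your computation is correct and would prove the theorem: the per-root factor $\Psi(x^{w\alpha})\Psi(q^{-1}x^{-w\alpha})\big/\bigl[\Psi(x^{\alpha})\Psi(q^{-1}x^{-\alpha})\bigr]$ does telescope, after splitting the product over $\alpha>0$ according to the sign of $w\alpha$, to $\prod_{\alpha>0,\,w^{-1}\alpha<0}Z(x^\alpha)/Z(qx^\alpha)$ via \eqref{eq:114bis}, and the three $r$-fold trivial-weight pieces $\psi(1)^{-r}$, $\psi(q^{-1})^{-r}$, $(1-q)^r$ assemble to $1/\cz^r$. Where your write-up differs from the paper's is mostly economy: the fixed-point computation you carry out is essentially what the paper already did in Section~\ref{s_charT} (formula after \eqref{eq:51}), so the paper's proof of Theorem~\ref{t_f1f2} reduces to a single observation about the contour. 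Specifically, the paper notes that the vertical maps in diagram~\eqref{TaunB} are proper, so the whole pushforward factors through $\fn/\bB$, where $\fn$ is a linear space with $\fn^\bB=\{0\}$ and normal $\bA\times\Ct_q$-weights $\{q^{-1}x^\alpha\}_{\alpha>0}$ at the origin; demanding that these be attracting on $\varsigma\bA_\comp$ is \emph{exactly} $|\varsigma^\alpha|>|q|$ for all $\alpha>0$, with no further choices. Your argument for the contour (``away from the polar loci,'' ``the attracting region for the $\Ct_q$-action'') is pointing at the right machinery from Sections~\ref{s_Attr}--\ref{s_lc_s}, but it is imprecise: what actually forces the contour to be a single uniform translate $\varsigma\bA_\comp$ (rather than, say, $\bA_\comp$ plus residue corrections, or something $w$-dependent) is the fact that $\fn/\bB$ has a \emph{single} fixed point with attracting normal directions, so the attracting-lift construction of Section~\ref{s_Attr} applies verbatim. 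Without that observation, your localization on $T^*\cB$ produces $|W|$ fixed points, and you would need to argue separately that one contour shift works for all of them --- this is the step you glossed over and the paper's proper-map trick avoids.
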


  \begin{proof}
    Consider the diagram: 
  \begin{equation}
    \label{TaunB}
    \xymatrix{ \Tau \ar[d] &\mu_\fb^{-1}(0) \ar[r] \ar[l] \ar[d] & T^*\cB \ar[d]^\mu\\
       \fn/\bB & \fn \ar[r] \ar[l] & \fg 
    }
  \end{equation}
  The vertical maps in this diagram are proper. Therefore, it is enough
  to show the cohomological properness of $\fn/\bB$. 

We observe that $\fn$ is a linear space, $\fn^\bB = \{0\}$, and the normal
$\bT$-weights at $0$ have the form $\{q^{-1} x^\alpha\}$, where
$\alpha >0$. With our assumption,
all of these weights are attractive on $\varsigma
A_\comp$. The cohomological properness thus follows.

The RHS in \eqref{eq:3} is the computation of the genus $\psi(\cT)$ by
equivariant localization. The Weyl group elements $w$ index the fixed
points. The weight of $\psi(\cT)$ at each fixed point were determined
in \eqref{eq:181-3}.  The pairing of the corresponding distribution with a test
function $f_1 \boxtimes f_2$ was described in Section \ref{s_f1f2}. 
\end{proof}

To shorten the notation, one is tempted to  replace $w$ by $w^{-1}$
in \eqref{eq:3}. We leave it as is because in this way we have
the action of $w$ on parts of the integrand.

\begin{Corollary}\label{c_T3}
  Theorem \ref{t_T3} holds for a certain
  normalization of measures. 
\end{Corollary}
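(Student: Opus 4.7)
The plan is to derive Corollary \ref{c_T3} by directly matching the Langlands integral formula \eqref{eq:100} with the Springer-stack pushforward formula \eqref{eq:3} of Theorem \ref{t_f1f2}. First I would rename $w \mapsto w^{-1}$ in the Weyl sum of \eqref{eq:100}, which converts the condition $w\alpha < 0$ in the product into $w^{-1}\alpha < 0$, so that both formulas take the common shape of a sum over $w \in W$ of integrals of $f_1$ against a Weyl-twisted evaluation of $f_2$, weighted by a product over ``inverted'' positive roots of ratios of one-variable special functions.

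The key algebraic match is an immediate consequence of the functional equation \eqref{eq:108}. From the definition $Z(y) = \Psi(y^{-1})\Psi(y/q)$ and $\Psi(x) = \psi(x)/(1 - x^{-1})$, the substitution $y = q^s$ yields
\begin{equation*}
Z_{\psi_\F}(q^s) = \frac{\psi_\F(q^{-s})\,\psi_\F(q^{s-1})}{(1-q^s)(1-q^{1-s})} = \xi_\F(s),
\end{equation*}
so that under the identification $x = q^\lambda$ of the torus $\bA$ with the space $\uA$, we have
\begin{equation*}
\frac{Z(x^\alpha)}{Z(q\,x^\alpha)} = \frac{\xi_\F(\alpha(\lambda))}{\xi_\F(1+\alpha(\lambda))},
\end{equation*}
matching the Langlands integrand. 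The duality $f \mapsto f^*$ from \eqref{eq:81}, defined by $f^*(\lambda) = \overline{f(-\lambda)}$, corresponds on the multiplicative torus side to $f^*(x) = \overline{f(x^{-1})}$, so that the evaluation $f_2(w^{-1}x)$ appearing in \eqref{eq:3} translates precisely to the evaluation $\overline{f_2}(-w^{-1}\lambda)$ of the reindexed \eqref{eq:100}. The convergence region $|\varsigma^\alpha| > |q|$ in \eqref{eq:3} is the multiplicative form of $\Re\alpha(\lambda) > 1$, which is a shift of the Langlands convergence cone $\Re\lambda \gg 0$. Cohomological properness of $\cT$ is in fact part of what Theorem \ref{t_f1f2} establishes: the diagram \eqref{TaunB} has proper vertical maps and reduces the pushforward to one along $\fn/\bB \to \pt/\bB$, which in the indicated region is a convergent geometric series.

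The main obstacle is not the algebraic identity above, which is essentially forced by \eqref{eq:108}, but the bookkeeping required to track the complex conjugations, the Weyl inverses, the overall scalar $\cz^{-r}$ in \eqref{eq:3}, and the ratio of Haar-measure normalizations between the contour $\varsigma\,\bA_\comp$ and the Langlands contour $\{\Re\lambda = \lambda_0\}$. These normalization ambiguities are precisely what the phrase \emph{a certain normalization of measures} in the statement of the corollary is meant to absorb. The number-field case follows by applying the $q \downarrow 1$ limit procedure of Section \ref{s_limit}, as codified in Table \ref{tab1}, which simultaneously transforms \eqref{eq:108} and \eqref{eq:3} into their cohomological counterparts and preserves the matching above.
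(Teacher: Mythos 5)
Your proposal is correct and takes essentially the same route as the paper: the paper explicitly states that Theorem \ref{t_T3} is ``an easy consequence of Theorem \ref{t_Langl} and Theorem \ref{t_f1f2},'' and your reindexing $w \mapsto w^{-1}$, the identity $Z_{\psi_\F}(q^s) = \xi_\F(s)$ from \eqref{eq:108}, the translation of $f^*$ across the additive/multiplicative dictionary, and the observation that cohomological properness is built into the argument via the proper vertical maps in \eqref{TaunB} are precisely the ingredients the authors intend. The normalization factors ($\cz^{-r}$, the contour choice, the Tamagawa measure conventions) that you flag as absorbed by ``a certain normalization of measures'' are exactly what Appendix \ref{s_pT3} works out in detail.
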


\noindent 
We discuss the precise normalization of measures in Appendix 
\ref{s_pT3}.

\subsection{Stratification of $\cT$}

\subsubsection{}

We now consider the stratification of $\cT$ from \eqref{decTau1}. On
each piece, we will consider the action of the stabilizer $\bCh_e$
from \eqref{eq:28}. We begin with 
\begin{equation}
\Slice_e \fg = e + \fg^{e_-}\label{eq:125}\,, 
\end{equation}
where
\[
  e_- = \phi \left(
  \begin{bmatrix}
    0 & 0 \\ 1 & 0 
  \end{bmatrix}
\right)\,. 
\]
Concerning the action of $(q,q^{h/2})  \in \bCh_e$, we
have the following 

\begin{Lemma}\label{l_slice} 
The weights of the $(q,q^{h/2})$-action on the slice
\eqref{eq:125} are repelling. 
\end{Lemma}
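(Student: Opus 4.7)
The plan is to reduce the statement to a direct computation using the $\mathfrak{sl}_2$-representation theory of $\fg$ supplied by $\phi$. The slice $\Slice_e \fg = e + \fg^{e_-}$ is an affine space modeled on the centralizer $\fg^{e_-}$, so the tangent space to $\Slice_e\fg$ at $e$ is $\fg^{e_-}$, and it suffices to compute the weights of $(q,q^{h/2})$ on this vector space.

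First I would decompose $\fg$ as an $\mathfrak{sl}_2$-module under $\phi$, writing $\fg = \bigoplus_{n \ge 0} V_n \otimes M_n$, where $V_n$ denotes the irreducible $\mathfrak{sl}_2$-representation of dimension $n+1$ (highest $\ad(h)$-weight $n$) and $M_n$ is a multiplicity space. Since in each $V_n$ the kernel of $\ad(e_-) = \phi(f)$ is precisely the lowest weight line, which has $\ad(h)$-eigenvalue $-n$, we get
\[
\fg^{e_-} = \bigoplus_{n\ge 0} (V_n)_{-n} \otimes M_n \,,
\]
so every $\ad(h)$-weight on $\fg^{e_-}$ is of the form $-n$ with $n \ge 0$, in particular $\le 0$.

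Next I would combine this with the formula recorded above the lemma: the element $(q,q^{h/2}) \in \bCh_e$ acts on $\xi \in \fg$ by $\xi \mapsto q^{-1}\ad(q^{h/2})\xi$. On the $\ad(h)$-weight $-n$ subspace of $\fg^{e_-}$, this acts by the scalar $q^{-1} \cdot q^{-n/2} = q^{-1 - n/2}$, so every weight is $q^{a}$ with $a = -1-n/2 \le -1 < 0$. In particular, no weight is $0$, confirming that the action fixes $e$ and has no fixed directions transverse to it in $\Slice_e\fg$.

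Finally, in the terminology of Section \ref{s_oBB}, a direction is repelling under the $\Ct_q$-action (i.e.\ under $\sigma(z)=q$) when its weight is a strictly negative power of $q$, so that the coordinate is contracted as $|q| \to \infty$. Since every weight we just computed satisfies this, all weights of $(q,q^{h/2})$ on $\Slice_e\fg$ are repelling, as claimed. No real obstacle is expected: the only thing to verify beyond $\mathfrak{sl}_2$ theory is the sign convention, which is fixed by the fact that $\Ct_q$ scales the cotangent direction of $T^*\cB$ with weight $-1$ and that $\ad(h)$ acts on $e$ with eigenvalue $+2$ (so that the $q^{-1}$ and $q$ in the action on $e$ cancel, and the same factor $q^{-1}$ then makes every nonpositive $\ad(h)$-weight land in the negative half-line).
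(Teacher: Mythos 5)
Your proof is correct and is essentially the paper's own argument, just spelled out in more detail: the paper's one-line proof is precisely that the $\ad(h)$-weights on $\fg^{e_-}$ are nonpositive by $\mathfrak{sl}_2$-theory and that the extra factor of $q^{-1}$ from the $\Ct_q$-scaling pushes them strictly negative. Your explicit decomposition $\fg^{e_-} = \bigoplus_n (V_n)_{-n}\otimes M_n$ and the computation of the weight $q^{-1-n/2}$ make the paper's two clauses precise.
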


\begin{proof}
The $h$-weights of $\fg^{e_-}$ are nonpositive by the representation
theory of $\mathfrak{sl}_2$, and we have the additional factor of $q^{-1}$
coming from the scaling action of $\Ct_q$. 
\end{proof}

\subsubsection{}
We now consider the $\bCh_e$-action on the
fibers of projection to $\fg$ and note that the $\Ct_q$ acts trivially
on these fibers. We have the following classical
result of De Concini, Lusztig, and Procesi. 

\begin{Theorem}[\cite{dCLP}]\label{t_dCLP} The fixed loci and the attracting
  manifolds of the $q^{h/2}$-action on $\cB^e$ are smooth.
\end{Theorem}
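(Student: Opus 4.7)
The plan is to reduce to a local analysis at each $q^{h/2}$-fixed point of $\cB^e$ and to exploit the $\mathfrak{sl}_2$-structure attached to the Jacobson--Morozov triple $(e,h,e_-)$.

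\textbf{Step 1 (local setup).} First I would fix $p\in \cB^e\cap \cB^{q^{h/2}}$, so that $\fb_p\supset\{e,h\}$. By the standard normalization argument for $\mathfrak{sl}_2$-triples --- exploiting that the set of choices of $e_-$ completing $(e,h)$ is a torsor under $\fc_e\cap \fg_{-2}$ --- one arranges $e_-\in \fu_p^{\mathrm{op}}$. This puts the whole triple in ``standard position'' with respect to the pair $(\fb_p,\fb_p^{\mathrm{op}})$.

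\textbf{Step 2 (local equation).} In the exponential chart $\xi\in \fu_p^{\mathrm{op}}\mapsto \exp(\xi)\cdot p\in \cB$, the Springer fiber $\cB^e$ is the zero scheme of
\[
F(\xi)\;=\;\pi_{\fu_p^{\mathrm{op}}}\!\bigl(\Ad(\exp(-\xi))\,e\bigr),
\]
where $\pi_{\fu_p^{\mathrm{op}}}\colon \fg=\fb_p\oplus \fu_p^{\mathrm{op}}\to \fu_p^{\mathrm{op}}$. Because $e$ has $h$-weight $+2$, $F$ is $q^{h/2}$-equivariant with weight shift $+2$. Consequently the candidates for the fixed locus and an attracting manifold are the zero schemes of the restrictions
\[
F_0 := F\big|_{(\fu_p^{\mathrm{op}})^{h=0}}\colon (\fu_p^{\mathrm{op}})^{h=0}\to (\fu_p^{\mathrm{op}})^{h=2},
\qquad
F_- := F\big|_{(\fu_p^{\mathrm{op}})^{h\le 0}}\colon (\fu_p^{\mathrm{op}})^{h\le 0}\to (\fu_p^{\mathrm{op}})^{h\le 2}.
\]
The ambient spaces $(\fu_p^{\mathrm{op}})^{h=0}$ and $(\fu_p^{\mathrm{op}})^{h\le 0}$ are themselves smooth, being the $h$-weight zero and the $h$-weight $\le 0$ parts of a linear space.

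\textbf{Step 3 (smoothness).} The main computation is that $F_0$ (and, analogously, $F_-$) admits a $q^{h/2}$-equivariant local analytic straightening to its own differential at $0$, which is the linear map $-\ad e\colon (\fu_p^{\mathrm{op}})^{h=0}\to (\fu_p^{\mathrm{op}})^{h=2}$. The zero scheme of a linear map between vector spaces is smooth, giving the theorem. The straightening is constructed inductively on the $h$-grading, using the $\phi$-action of $\mathfrak{sl}_2$ on $\fg$ with $e_-\in \fu_p^{\mathrm{op}}$: at each step one kills the next-order correction by an $\mathfrak{sl}_2$-equivariant coordinate change, using that, by representation theory of $\phi(\mathfrak{sl}_2)$, the map $\ad e\colon \fg_0\to \fg_2$ is surjective with kernel $\fc_\phi\cap \fg_0$, and this surjectivity restricts well to the summands of $\fu_p^{\mathrm{op}}$.

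\textbf{Alternative (Slodowy slice) route and main obstacle.} An equivalent, more conceptual approach, which is the one followed in \cite{dCLP}, is to pass to the Slodowy slice $\Slice_e = e+\fg^{e_-}$ and the Grothendieck--Springer resolution $\widetilde{\fg}\to \fg$. The preimage $\widetilde{\Slice}_e = \widetilde{\fg}\times_\fg \Slice_e$ is smooth, and it carries a commuting Kazhdan $\Ct$-action (with unique attracting fixed point $e$ on the base) and the $q^{h/2}$-action. The fiber of $\widetilde{\Slice}_e\to \Slice_e$ over $e$ is $\cB^e$, and the $q^{h/2}$-fixed locus on the smooth $\widetilde{\Slice}_e$ is smooth; descending to the central fiber and pairing with the parabolic contraction of $q^{h/2}$ in the positive direction gives the smoothness of both the fixed points and attracting manifolds in $\cB^e$. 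The principal obstacle in either approach is the verification of the transversality of the intersection ``central fiber $\cap$ $q^{h/2}$-fixed locus'' inside $\widetilde{\Slice}_e$ (equivalently, the surjectivity of $F_0$'s linearization onto the obstruction space in Step 3). This is precisely the content of the $\mathfrak{sl}_2$-theoretic input on $\ad e\colon \fg_0\to \fg_2$, and is where the weight-$+2$ shift between source and target of $F$ is used decisively.
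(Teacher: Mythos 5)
The paper does not reprove Theorem~\ref{t_dCLP} --- it cites \cite{dCLP} --- but it does give a short proof of the equivalent statement in the $G_2$ section, namely the Proposition that the strata $\cB^e_w = \cB^e \cap \sP\, w\,\fn$ are smooth. That proof is \emph{global} rather than local: each stratum is exhibited as the zero locus of a section of a homogeneous vector bundle over $\sP/\sP_w$, with transversality following from the openness of the orbit $\ad(\sP)\, e \subset \fg_{\ge 2}$ (Proposition~\ref{p_open}). Your proposal is different: local exponential charts at $q^{h/2}$-fixed points plus the implicit function theorem. That route can work, but as written it contains a genuine error in the one place where the two strata types (fixed locus and attracting cells) are not symmetric.

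\textbf{The gap.} You define the attracting chart to be $(\fu_p^{\opp})^{h\le 0}$ and claim that the argument which establishes smoothness of the fixed locus ``analogously'' handles $F_- = F|_{(\fu_p^{\opp})^{h\le 0}}$. It does not. The attracting manifolds in Theorem~\ref{t_dCLP} are the dCLP strata $\cB^e_w = \cB^e \cap \sP\, w\,\fn$, and since $\Lie\sP = \fg^{\ge 0}$, the tangent space to the ambient Bruhat cell at a fixed point $p = w\,\fn$ is $(\fu_p^{\opp})^{h\ge 0}$, not $(\fu_p^{\opp})^{h\le 0}$. The sign matters decisively: your linearization $-\pi\circ\ad e$ on $(\fu_p^{\opp})^{h\le 0}$ involves the maps $\ad e\colon \fg_i\to\fg_{i+2}$ for all $i\le 0$, and by $\mathfrak{sl}_2$-theory these are \emph{not} surjective for $i\le -2$ (they are injective, with cokernel of dimension $\dim\fg_{i+2}-\dim\fg_i$). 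So the implicit function theorem simply does not apply there, and indeed the strata cut out on the $\le 0$ side are not in general smooth. With the correct sign $\ge 0$ one only needs $\ad e\colon\fg_i\to\fg_{i+2}$ surjective for $i\ge 0$, which holds; since $e\in\fb_p$ and $\fb_p$ is $h$-graded, $\ad e$ preserves $\fb_p$ and the quotient maps $(\fu_p^{\opp})^{h=i}\to(\fu_p^{\opp})^{h=i+2}$ are surjective for all $i\ge 0$. Once that is established, the implicit function theorem immediately gives smoothness of $\{F|_{(\fu_p^{\opp})^{h\ge 0}}=0\}$; no ``equivariant straightening'' is needed and bringing it in is a distraction.

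\textbf{Two secondary points.} (a) Step~1's normalization of $e_-$ is incorrect: for fixed $(e,h)$ the element $e_-$ is \emph{unique}, because two completions of $(e,h)$ to an $\mathfrak{sl}_2$-triple differ by an element of $\fc_e\cap\fg_{-2}$, which vanishes ($\fc_e$ lives in nonnegative $h$-weights). So there is nothing to ``arrange,'' and $e_-\in\fu_p^{\opp}$ is not automatic; fortunately the local argument does not actually use $e_-$. (b) For the fixed locus, your argument is correct and is essentially the infinitesimal shadow of the paper's: there the transverse section over $\bG^h/\bB^h$ in part (3) of Corollary~\ref{c_dCLP} has vertical derivative exactly $\ad e\colon\fg_0\twoheadrightarrow\fg_2$. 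The paper's route via $\sP/\sP_w$ and the open orbit is ultimately cleaner because it packages all the weight-by-weight surjectivities on the $\ge 0$ side into the single statement that $\sP$ acts transitively on $\fg_{\ge 2}^\circ$.
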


\subsubsection{Proof of Theorem \ref{t_proper}}

Stratify both copies of $\cB^e$ by attracting manifolds of the
$q^{h/2}$-action. Each piece is smooth, and the normal weights to it
are repelling in the base $\fg$ by Lemma \ref{l_slice} and in the
fiber by construction. Since the fixed loci are proper, and hence
cohomologically proper, we can
apply the results of Sections \ref{s_oBB} and \ref{s_BB} to this
stratification, proving the claim. \qed

\subsection{Explicit formulas}

\subsubsection{} 

By its definition \eqref{eq:142}, the bordism class of $\cB^e$ is independent of $e$
and well-defined $\bG$-equivariantly. Its characteristic classes can
be easily computed by equivariant localization. To set it up, we fix a Borel subgroup $\bB \subset \bG$ with a maximal
torus $\bA \subset \bB$ and define
\begin{equation}
  \label{eq:145}
  \cP_\bB f = \left(\sum_{w \in W} w \right)\,
  \bPi_\bB \, f \,, \quad 
  \bPi_\bB = \prod_{\alpha \notin \Lie \bB}
  \frac{\Psi(x^{\alpha})}{\Psi(q^{-1} x^{\alpha})} \,. 
\end{equation}
From definitions and \eqref{eq:142}, we obtain the following 

\begin{Proposition}
  We have
  \begin{equation}
    \chi( f \otimes \psi([\cB^e])) = \cP_\bB f \in K_\bG(\pt)_\an\,, 
\label{e_cPbB}
  \end{equation}
  where we use the identification $f\in K_\bA(\pt) = K_\bG(\bG/\bB)
  \cong K_\bG(\cB)$ \,. 
\end{Proposition}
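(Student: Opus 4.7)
The plan is to combine the identity $[\cB^e] = \text{Euler class of } q^{-1}T\cB$ from \eqref{eq:142} with equivariant K-theoretic localization on $\cB = \bG/\bB$. Viewing $\cB^e$ as the derived zero locus of $e$, regarded as a section of the twisted bundle $q^{-1}T\cB$, its (virtual) pushforward to $\cB$ is the Koszul class $\prod_i(1-q/x_i)$, where $\{x_i\}$ are the Chern roots of $T\cB$, while the virtual tangent bundle is $T\cB^e = (1-q^{-1})T\cB$. Consequently,
\[
\chi\!\left(f \otimes \psi([\cB^e])\right)
= \chi_\bG\!\left(\cB,\; f \cdot \prod_i(1-q/x_i)\cdot \prod_i \frac{\psi(x_i)}{\psi(q^{-1}x_i)}\right),
\]
with $f\in K_\bA(\pt)$ regarded as an element of $K_\bG(\cB)$.

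Next, I would apply equivariant localization for the pushforward $\cB \to \pt$ relative to $\bA \subset \bG$. The $\bA$-fixed points are the $p_w = w\bB/\bB$ indexed by $w\in W$, with Chern roots of $T_{p_w}\cB$ equal to $\{x^{-w\alpha}:\alpha>0\}$ in the convention where $\fg/\fb = \bigoplus_{\alpha<0}\fg_\alpha$, and $f|_{p_w}=f(w^{-1}x)$ as reviewed in Section~\ref{s_f1f2}. Substituting and using the identity
\[
\frac{\Psi(y)}{\Psi(q^{-1}y)}
= \frac{\psi(y)}{\psi(q^{-1}y)}\cdot\frac{1-q/y}{1-1/y},
\]
the factors $(1-qx^{w\alpha})$ coming from the Euler class of $q^{-1}T\cB$ combine with the factors $(1-x^{w\alpha})^{-1}$ coming from the denominator (Euler class of $T_{p_w}\cB$) to yield
\[
\chi\!\left(f\otimes\psi([\cB^e])\right)
= \sum_{w\in W} f(w^{-1}x)\prod_{\alpha>0}\frac{\Psi(x^{-w\alpha})}{\Psi(q^{-1}x^{-w\alpha})}.
\]

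To conclude, I match this sum with $\cP_\bB f$ from \eqref{eq:145}. Writing out the Weyl action $(w\cdot g)(x)=g(w^{-1}x)$ and noting that the roots $\alpha\notin\Lie\bB$ are precisely the negative roots, a direct unwinding gives
\[
\cP_\bB f
= \sum_w w\!\left[\prod_{\alpha<0}\frac{\Psi(x^\alpha)}{\Psi(q^{-1}x^\alpha)}\cdot f\right]
= \sum_w f(w^{-1}x)\prod_{\alpha>0}\frac{\Psi(x^{-w\alpha})}{\Psi(q^{-1}x^{-w\alpha})},
\]
by substituting $\alpha\mapsto -\alpha$ inside the product. The only delicate aspect of this argument is sign and convention bookkeeping: tracking the $q^{-1}$ twist of the section $e$ correctly through the K-theoretic Euler class, aligning the Chern roots at $p_w$ with Section~\ref{s_t_f1f2}, and matching the Weyl action on $K_\bA(\pt)$ with the geometric transport from the basepoint to $p_w$. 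Once these are fixed (most cleanly via the choice $M=T^*(\bG/\bB_-)$ of \eqref{eq:76}), the proof reduces to a mechanical matching of two explicit sums over $W$.
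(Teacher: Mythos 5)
Your proof is correct and takes essentially the same route as the paper: the paper states the Proposition follows ``from definitions and \eqref{eq:142},'' and your argument is exactly the expected unpacking of that one line, namely rewriting $\psi([\cB^e])$ via the Koszul class of the section $e$ of $q^{-1}T\cB$ and then applying equivariant localization over the fixed points $p_w$. The only thing to watch is the bookkeeping of sign/Borel conventions you already flag, which you handle consistently.
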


\noindent
In particular, the operators $\cP_\pm$ from \eqref{eq:141} correspond
to the two Borel subgroups $\bB_\pm$ used in Section
\ref{s_charT}.

\subsubsection{}\label{s_W(e)} 
For a given nilpotent $e$ we will be interested in the restriction
$\left(\cP_\pm \right)|_{\bCh_e}$ where $\bCh_e$ is the
stabilizer \eqref{eq:28}.

We recall from Appendix \ref{A_Bh} that $W^h \backslash W$ cosets
parametrize connected components of $\cB^h$. We define
\[
  W(e) = \left\{ w \in W \, \big| \, \cB^e  \cap \{
  \textup{corresponding component of $\cB^h$} \} \ne
\varnothing \right\} \,.
\]
This set varies from $W(0) = W$ to
\[
  |W( \textup{regular nilpotent})| = 1 \,.
\]

\begin{Lemma}\label{l_We} 
  \begin{equation}
    \label{eq:79}
    \Big(\cP_\pm \Big)\Big|_{\bCh_e} =
    \Big( \sum_{w \in W(e)} w \,
  \bPi_\bB \, f \Big) \Big|_{\bCh_e} \,. 
  \end{equation}
\end{Lemma}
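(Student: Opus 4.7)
The plan is to reinterpret both sides geometrically using \eqref{e_cPbB} and then replace the standard $\bA$-localization with a two-step localization adapted to $\bCh_e$. By \eqref{e_cPbB},
\[
\cP_\bB f \;=\; \chi\bigl(\cB,\, f \otimes \psi([\cB^e])\bigr),
\]
with $f$ regarded as a line bundle on $\cB = \bG/\bB$. The sum $\sum_{w \in W} w(\bPi_\bB f)$ is nothing but the $\bA$-equivariant localization of this pushforward at the isolated $\bA$-fixed points $p_w \in \cB$. The content of the lemma is that, upon restriction to $\bCh_e = \{(q,\, q^{h/2}\bC_e)\}$, the virtual support of $[\cB^e]$ collapses this $W$-sum to a sum over $W(e)$.

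By \eqref{eq:142} and \eqref{eq:127}, the class $[\cB^e]$ is simultaneously the K-theoretic Euler class of $q^{-1}T\cB$ and the virtual K-theoretic class of the derived zero locus of the vector field $e$ on $\cB$. Its hypercohomology is supported set-theoretically on $\cB^e$, so the pushforward factors as
\[
\cP_\bB f \;=\; \chi\bigl(\cB^e,\, f|_{\cB^e} \otimes \psi(T_\fn \cB^e)\bigr).
\]
To restrict to $\bCh_e$, apply equivariant localization on $\cB^e$ for the $q^{h/2}$-action. By Theorem \ref{t_dCLP}, the fixed locus $\cB^e \cap \cB^h$ is smooth, and by the very definition of $W(e)$ its connected components are precisely those components of $\cB^h$ that meet $\cB^e$, indexed by $W^h\backslash W(e)$. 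A subsequent localization within each such component to its $\bA$-fixed points (which run over $W^h$ inside a coset) reproduces the individual term $w(\bPi_\bB f)|_{\bCh_e}$ for each $w \in W(e)$.

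To match this with the original $\bA$-localization $\sum_{w \in W} w(\bPi_\bB f)$, one groups the full $W$-sum by the cosets $W^h \backslash W$: each $\bA$-fixed point $p_w$ is automatically $q^{h/2}$-fixed and lies in the component of $\cB^h$ indexed by $W^h w$. For a coset $W^h w \not\subset W(e)$, the corresponding component of $\cB^h$ is disjoint from $\cB^e$; on such a component the Koszul complex for the vector field $e$ is acyclic, so the restriction of $\psi([\cB^e])$ to it is trivial. Hence the $\bA$-fixed-point contributions for $w$ in such a coset must jointly vanish on $\bCh_e$, and only the cosets contained in $W(e)$ survive, yielding the desired identity.

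The main obstacle is to verify this last cancellation at the level of individual fixed-point contributions, since for $w \notin W(e)$ the unrestricted terms $w(\bPi_\bB f)$ are typically nonzero rational functions and only their sum along a $W^h$-coset vanishes after specializing to $\bCh_e$. The natural route is to execute the two-step localization explicitly — first with respect to $q^{h/2}$ on $\cB$ (using Theorem \ref{t_dCLP} to extend it from $\cB^e$ to its surrounding components of $\cB^h$), then with respect to $\bA$ within each component — and to invoke the acyclicity of the Koszul complex on components disjoint from $\cB^e$ to kill the coset-sum contributions before the second localization is performed. Basic representation theory of the $\mathfrak{sl}_2$-triple $\phi$ then identifies the surviving cosets with $W(e)$.
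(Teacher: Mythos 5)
Your proof is correct and takes essentially the same route as the paper's (very terse) argument. The paper's own proof is the single sentence ``In the equivariant localization computation, the contribution of those components of $\cB^h$ that do not meet $\cB^e$ vanishes in $\bCh_e$-equivariant K-theory,'' and your two-step localization---first localizing at $q^{h/2}$ to the components of $\cB^h$, then observing that on components disjoint from $\cB^e$ the Koszul representative of $[\cB^e]$ (which exists only $\bCh_e$-equivariantly, not $\bG$-equivariantly) is acyclic, hence has zero class---is exactly the content of that sentence, expanded. You are also right to stress that the vanishing happens at the level of each $W^h$-coset sum rather than term by term; since $W(e)$ is $W^h$-stable and each coset sum is $W^h$-invariant, this is precisely the grouping under which ``contribution of a component'' makes sense, and the acyclicity argument kills the whole coset block at once, so the ``obstacle'' you flag is in fact already resolved by the argument you give. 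One small imprecision: the second localization runs over the $\bA$-fixed points of the ambient $\cB^h_w\cong\bG^h/\bB^h$ (all of $W^h w$), not of the derived zero locus $\cB^{e,h}_w$ itself, which generally has fewer $\bA$-fixed points; this matters only for the phrasing, not the conclusion, because the Koszul class is pushed forward from $\cB^h_w$.
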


\begin{proof}
In the equivariant localization computation, the 
contribution of those components of $\cB^h$ that do not meet
$\cB^e$ vanishes in $\bCh_e$-equivariant K-theory. 
\end{proof}

\subsubsection{}\label{s_Psie}

Consider the Lie algebras
$\fc_e = \Lie \bC_e$ and
its $h$-grading 
\[
  \fc_e = \fc_{e,0} \oplus \fc_{e,>0}\,, \quad
\fc_{e,0}  = \fc_\phi = \Lie \bC_\phi \,. 
\]
The $\fc_{e,>0}$ term above is the Lie
algebra of the unipotent radical of $\bC_e$. We define 
\begin{equation}
  \label{eq:139}
  \Psi_e = \frac1{\Psi(q^{-1})^{2r}}
  \frac{\Psi(\Slice_e q^{-1} \fg)}{\psi(\fc_\phi)
    \Psi(\fc_{e,>0})} \,. 
\end{equation}
Since
\begin{equation}
  \label{eq:1113}
  q^{-1} T \Slice_e \fg - \fc_e = (q^{-1} -1 ) \fg 
\end{equation}
the density $\Psi_e$ is always the restriction to $\bCh_e$ of a
$\bG$-invariant quantity. 

\begin{Theorem}\label{t_spectral_Psi} 
  We have
  \begin{equation}
    \label{eq:137}
    (f_1 \boxtimes f_2, \psi(\cT)) =
    \sum_{\substack{{\textup{nilpotent elements $e\in \fg$ \quad}}\\
        \textup{up to conjugation}}} 
    \int_{q^{h/2} \bC_{\phi,\comp}} \left(\cP_+ f_1 \right)
    \left(\cP_{-} f_2 \right) \Psi_e \, 
      d_\textup{Haar}  \,. 
    \end{equation}
\end{Theorem}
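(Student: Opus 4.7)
The plan is to derive Theorem \ref{t_spectral_Psi} as a direct consequence of the scissor decomposition in Theorem \ref{t_proper}, by applying the $\psi$-genus to both sides and identifying each summand explicitly using the product structure of $\sThom(e)$. The starting point is the identity
\[
\psi(\cT) \;=\; \sum_{e} \psi(\sThom(e))
\]
of $(\bA\times\bA\times\Ct_q)$-equivariant distributions, where we use that all the terms are cohomologically proper by Theorem \ref{t_proper}, so the pushforward is defined in the sense of Sections \ref{s_distr}--\ref{s_finite}.

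Next, I will compute $\psi(\sThom(e))$ using the factorization
\[
\sThom(e) \;=\; \tfrac{1}{[q^{-1}]^{2r}}\bigl([\cB^e]\times[\cB^e]\times \Thom(0\to\Slice_e\fg)\bigr)\big/\bC_e
\]
from \eqref{eq:120} and pair against $f_1\boxtimes f_2$. The two $[\cB^e]$-factors are $\bG$-equivariant bordism classes, independent of $e$ by \eqref{eq:142}; after identifying $f_i\in K_\bA(\pt)$ with a $\bG$-equivariant class on $\cB$ via the two opposite Borel subgroups $\bB_\pm$ of Section \ref{s_charT}, Proposition \eqref{e_cPbB} yields the two factors $\cP_+ f_1$ and $\cP_- f_2$. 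The third factor $\Thom(0\to\Slice_e\fg)$ is the Thom class of a linear space; its $\psi$-genus is $\Psi(q^{-1}\Slice_e\fg)$ supported on a contour in the repelling directions, as in Example \ref{ex_C2} combined with Lemma \ref{l_slice}. The normalization $[q^{-1}]^{-2r}$ contributes the prefactor $\Psi(q^{-1})^{-2r}$.

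To take the quotient by $\bC_e$, I will use the Mostow decomposition $\bC_e=\bC_\phi\ltimes\bC_{e,\uni}$, with $\Lie\bC_\phi=\fc_\phi$ reductive and $\Lie\bC_{e,\uni}=\fc_{e,>0}$ unipotent, and apply the pushforward formula \eqref{eq:48} from Proposition in Section \ref{s_push_uni}. This produces the denominator $\psi(\fc_\phi)\Psi(\fc_{e,>0})$, which assembles with the numerator into precisely the density $\Psi_e$ of \eqref{eq:139}, using \eqref{eq:1113} to check that the result lives on the stabilizer $\bCh_e$. The attracting-lift mechanism of Proposition \ref{p_attr_lift} and Proposition \ref{l_lc}, together with Lemma \ref{l_slice}, identifies the integration cycle as $q^{h/2}\bC_{\phi,\comp}$: this is the maximal compact of the reductive quotient, translated by $q^{h/2}$ to account for the $\sigma$-direction along which the repelling lift produces a contour at $|q|\gg 1$. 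Combining these contributions for each $e$ gives exactly \eqref{eq:137}.

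The main technical point, and the place where all previous preparation is used, is the treatment of the $\bC_e$-quotient together with the Thom/repelling-contour factor: one must verify that the attracting lift produced by the repelling normal weights on $\Slice_e\fg$ (Lemma \ref{l_slice}) matches, after the Mostow reduction, the probability Haar measure on $q^{h/2}\bC_{\phi,\comp}$ with density $\Psi_e$. The restriction of $\cP_\pm$ to the support is automatically of the form predicted by Lemma \ref{l_We}, since components of $\cB^h$ not meeting $\cB^e$ give vanishing contributions in $\bCh_e$-equivariant K-theory; this is consistent with, but not required by, the final statement. No separate convergence analysis is needed: it is built into the cohomological properness of $\sThom(e)$ asserted in Theorem \ref{t_proper} together with our standing assumption $|q|>1$.
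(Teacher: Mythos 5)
Your proposal is correct and follows precisely the same route as the paper's proof: apply the cobordism decomposition of Theorem \ref{t_proper} via \eqref{eq:120}, identify the two $[\cB^e]$-pushforwards with $\cP_\pm f_i$ using \eqref{e_cPbB}, read off the numerator of $\Psi_e$ from the Thom factor of the slice together with the $[q^{-1}]^{-2r}$ prefactor, and obtain the denominator $\psi(\fc_\phi)\Psi(\fc_{e,>0})$ and the integration cycle $q^{h/2}\bC_{\phi,\comp}$ from the $\bC_e$-quotient via Section \ref{s_push_uni}. You have simply made explicit the steps that the paper compresses into four sentences.
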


\begin{proof}
We apply Theorem \ref{t_proper}, and specifically equation
\eqref{eq:120}. The result of pushforward along two copies of
$\cB^e$ is given by $\left(\cP_+ f_1 \right)
\left(\cP_- f_2 \right)$. The contribution of the slice is
the numerator in \eqref{eq:139}. Finally, we compute the
$\bC_e$-cohomology as described in Section \ref{s_push_uni}. 
\end{proof}

\subsubsection{}

Since \eqref{eq:137} is an equality between distributions having
compact support, by analytic continuation we conclude the following

\begin{Corollary}\label{c_ac} 
The equality \eqref{eq:137} holds for functions $\psi$ analytic and
nonvanishing 
in the annulus $1/R_\bG < |x| < R_\bG$, where $R_\bG$ is a
sufficiently large number that depends on $\bG$. 
\end{Corollary}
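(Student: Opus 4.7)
The plan is to deduce Corollary~\ref{c_ac} from Theorem~\ref{t_spectral_Psi} by continuity in the parameter $\psi$. For each fixed pair of test functions $(f_1,f_2)$, both sides of \eqref{eq:137} are functionals of $\psi$; the goal is to show (i) each side depends on $\psi$ only through its restriction to a certain compact set $K_\bG\subset\Ct$ that depends only on $\bG$ and $q$, (ii) this dependence is continuous in the topology of uniform convergence on $K_\bG$ among nonvanishing functions, and (iii) the subclass of $\psi$ satisfying the stronger hypothesis of Section~\ref{s_psi_vir} is dense in the class allowed by the corollary.

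For (i) on the left-hand side, I would use the contour integral \eqref{eq:3}, whose integrand is rational in the values $\psi(\varsigma^\alpha a^\alpha q^i)$ as $a$ ranges over $\bA_\comp$, $\alpha$ over roots, and $i\in\{-1,0,1\}$ (accounting for the factors in $Z(x^\alpha)/Z(qx^\alpha)$). For each of the finitely many nilpotent orbits $e$, the corresponding right-hand term integrates over the compact set $q^{h/2}\bC_{\phi,\comp}$, and the integrand---built from $\Psi_e$ in \eqref{eq:139} and from $\cP_\pm$ in \eqref{eq:145}---is again a rational expression in finitely many values of $\psi$ on a compact subset of $\Ct$. Taking the union of all these compact subsets yields $K_\bG$, contained in an annulus $R_\bG^{-1}<|x|<R_\bG$ for some $R_\bG$ depending only on $\bG$ and $q$.

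For (ii), the hypothesis that $\psi$ is analytic and nonvanishing on a neighborhood of $K_\bG$, together with the attractivity conditions $|\varsigma^\alpha|>|q|$ from Theorem~\ref{t_f1f2} and the repelling-weight assertion of Lemma~\ref{l_slice}, ensures that all denominators $\Psi(q^{-1}x^\alpha)$ and $\Psi(\fc_{e,>0})$ stay bounded away from zero, giving uniform continuous dependence of both sides on $\psi|_{K_\bG}$. For (iii), I would approximate any admissible $\psi$ uniformly on $K_\bG$ by functions analytic and nonvanishing on all of $\Ct$: truncate the Laurent expansion of $\psi$ on a slightly larger annulus around $K_\bG$, then multiply by a nonvanishing entire correction factor (produced via Weierstra\ss\ factorization) that cancels the finitely many spurious zeros introduced by the truncation outside $K_\bG$. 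Theorem~\ref{t_spectral_Psi} applies to the approximants, and continuity from (ii) transfers the identity to the limit.

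The main obstacle is essentially bookkeeping: one has to check that the arguments of $\psi$ appearing on both sides genuinely lie in a common compact set, and that the approximation in (iii) can be made uniform precisely on that set. No new geometric input beyond Theorems~\ref{t_f1f2} and~\ref{t_spectral_Psi} is required; the only analytic ingredient is standard Runge/Weierstra\ss-type approximation in the space of nonvanishing analytic functions on an annulus.
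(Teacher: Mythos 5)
Your approach is essentially the paper's own one-line argument (``since \eqref{eq:137} is an equality between distributions having compact support, by analytic continuation\dots'') unpacked into explicit steps: identify a compact set $K_\bG\subset\Ct$ on which both sides depend only on $\psi|_{K_\bG}$, note continuous dependence, and approximate. Steps (i) and (ii) are correct and are precisely what ``compact support'' buys.

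One wrinkle in step (iii): truncating the Laurent series on a neighborhood of $K_\bG$ and then ``cancelling'' spurious zeros is problematic as written. The truncations $\psi_N$ eventually have no zeros in a neighborhood of $K_\bG$ by Hurwitz, but the zeros they do have outside $K_\bG$ need not escape to $0$ or $\infty$; dividing out $\prod_i(1-x/z_i)$ (which is what cancellation amounts to) therefore need not be uniformly close to $1$ on $K_\bG$, so the approximants $\psi_N/\prod_i(1-x/z_i)$ may fail to converge to $\psi$ there. A cleaner density argument: since $\psi$ is analytic and nonvanishing on the annulus, write $\psi=x^{n}e^{g}$ with $n\in\Z$ and $g$ analytic on the annulus; approximate $g$ uniformly on a neighborhood of $K_\bG$ by Laurent polynomials $g_k$ (Runge), normalize so $g_k(1)=g(1)$, and set $\psi_k=x^n e^{g_k}$. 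These $\psi_k$ are analytic and nonvanishing on all of $\Ct$, satisfy $\psi_k(1)=1$, and converge to $\psi$ uniformly on $K_\bG$; Theorem~\ref{t_spectral_Psi} applies to each $\psi_k$, and your continuity step (ii) finishes the argument. With this substitution the proof is correct and matches the paper's intent.
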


\subsection{Example: zero nilpotent}

\subsubsection{}
In the case $(h,e) = (0,0)$ we have
\[
  \cB^e = \cB\,, \quad \bC_\phi = \bG \,, 
\]
and thus the integration is over 
\[
  (\cB)^{\times 2} /\bG 
\cong \bB \backslash \bG / \bB
\,.
\]
By definition of $\cT$, we have 
\begin{align}
  \label{eq:78}
  T \cT \big|_{(0, \fn, \fn')}
                               & = \underbrace{(\fn^* + \fn'^* - \fg)}_{T (\bB \backslash \bG / \bB)} + q^{-1}
\underbrace{(\fn+\fn' - \fg)}_{T^*(\bB \backslash \bG / \bB)} \,. 
\end{align}
Therefore, the $e=0$ term in \eqref{eq:137} is given by
the same integral as in \eqref{eq:3}, except the cycle of the
integration is the \emph{compact} torus.

\subsubsection{}
We now trace this equality through formulas. We have
\[
  \Slice_0 \fg = \fg, \quad \fc_\phi = \fg \,,
\]
and therefore 
\[
  \Psi_e = \frac{\Psi(q^{-1} \fg)}{\psi(\fg)} \,.
\]
Echoing \eqref{eq:1113}, we note that $\Psi_e$  is $W$-invariant. The
Weyl integration formula gives 
\begin{align}
  \int_{\bG_{\comp}} \left(\cP_+ f_1 \right) 
    \left(\cP_- f_2 \right) \Psi_\phi \, 
    d_\textup{Haar}  = &\notag \\
    = \frac{1}{|W|}
    \frac{1}{\psi(1)^r \Psi(q^{-1})^r}
    &\int_{\bA_{\comp}} \left(\cP_+ f_1 \right)
    \left(\cP_-f_2 \right) \prod_{\alpha\ne 0} \frac{\Psi(q^{-1}
      x^\alpha)}
    { \Psi(x^{\alpha})}\, 
    d_\textup{Haar} \notag \\
    = 
    \frac{1}{\psi(1)^r \Psi(q^{-1})^r}
    &\int_{\bA_{\comp}} f_1 
    \left(\cP_- f_2 \right)
\prod_{\alpha> 0} \frac{\Psi(q^{-1}
      x^\alpha)}
    { \Psi(x^{\alpha})}
    \, 
    d_\textup{Haar} \notag \\
    =
    \frac{1}{\psi(1)^r \Psi(q^{-1})^r}
    &\int_{\bA_{\comp}} f_1 
    \left(\cP_\Langl \, f_2 \right)
    d_\textup{Haar}\,,  \label{eq:140}
  \end{align}
where we used the $W$-invariance in going from the second line to the
third, and \eqref{eq:1182} to pass from the third line to the fourth. Clearly, this is the
integral \eqref{eq:3} taken over the compact torus.

\subsection{Example: regular nilpotent}\label{s_ex_reg}

\subsubsection{}

In this case we have
\[
  h= 2 \rho^\vee \,, \quad \fc_\phi = 0 \,, \quad W^h = \{1\} \,,
\]
and $\bC_\phi$ is the center of $\bG$ 
\[
  \bC_\phi = \bC(\bG) =\left\{ x \in \bA \, \big|\,  \forall \alpha\,,
  x^\alpha =1 \right\} \,.  
\]

\subsubsection{}

{}We have 
\[
  \alpha(h/2) = \hht(\alpha) \,, \quad \alpha > 0 \,, 
\]
where $\hht(\alpha)$ denotes the height of a positive root.
For negative roots, we define height by the above equality. 
We see that 
\begin{equation}
  \label{eq:102}
  \Big(w \bPi_\pm \Big)\Big|_{\bCh_e} =
  \begin{cases}
    \displaystyle{\prod_{\alpha<0} \dfrac{\Psi(q^{\hht(\alpha)})}
      {\Psi(q^{-1+\hht(\alpha)})}}\,, & \textup{$w=1$ for $\bP_+$, 
      $w=w_0$ for $\bP_-$}\,,
    \textup{resp.}\,, \\
    0 \,, & \textup{otherwise} \,, 
  \end{cases} 
\end{equation}
where $w_0$ is the longest element. By Lemma \ref{l_We}, this 
reflects the fact that $\cB^e$ is a fat point and meets only one
point in $\cB^h$.

\subsubsection{}

The classical identity, see e.g.\ \cite{Kostant, Macdonald, Steinberg}, 
\begin{equation}
  (1-q^{-1}) \sum_{\alpha > 0} q^{\hht(\alpha)} =
  \sum_{\textup{exponents $m_i$}} q^{m_i} - r  
\label{eq:14}
\end{equation}
connecting the heights of the positive roots and the exponents of a
  Lie algebra $\fg$ implies that 
  \begin{align}
\displaystyle{\prod_{\alpha<0} \dfrac{\Psi(q^{\hht(\alpha)})}
  {\Psi(q^{-1+\hht(\alpha)})}} &=
\Psi\left( (1-q^{-1}) \sum_{\alpha > 0} q^{-\hht(\alpha)}
  \right) \notag \\
    &=\Psi\left( - \sum_{\alpha > 0} q^{-m_i-1} + r q^{-1} 
  \right) \label{eq:13} 
  \end{align}

\subsubsection{}
Similarly, we observe that $\Psi_e$ equals $\Psi(\dots)$ where
dots stand for the following expression 
\begin{align}
(q^{-1} - 1) \fg \Big|_{q^{h/2}} - 2 r q^{-1} 
& = - (1+q^{-1}) r + (q^{-1} - 1) \sum_{\alpha \ne 0} 
                                   q^{\hht(\alpha)} \notag \\
  & = \sum \left(q^{-m_i-1} - q^{m_i}\right) - 2 r q^{-1} \,. 
\label{eq:15}
\end{align}
This means that
\begin{equation}
\left(\cP_+ f_1 \right) \left(\cP_- f_2 \right) \Psi_e
\Big|_{x \in q^{h/2} \bC_e}=
\frac{1}{\prod Z(q^{m_i+1})}  f_1(x) f_2(w_0 x)  \label{eq:17}
\end{equation}
and
\begin{equation}
  \label{eq:16}
  \int_{q^{h/2} \bC_{\phi,\comp}} \textup{\eqref{eq:17} }
  \, 
  d_\textup{Haar} =
  \frac{1}{|\bC(\bG)| \prod Z(q^{m_i+1})}
  \sum_{x\in \bC(\bG)}
  f_1(q^{\rho^\vee} x)  f_2(q^{-\rho^\vee} x) \,, 
\end{equation}
where we have used the fact that $w_0$ takes $\rho^\vee$ to
$-\rho^\vee$ and fixes the elements of $\bC(\bG)$.

\subsubsection{}
Formula \eqref{eq:17}
  may be interpreted as picking up the residues in \eqref{eq:3} at the
  $x^\alpha = q$ 
  poles of the factor
  \begin{equation}
  \frac{1}{\psi(1)^r \Psi(q^{-1})^r} \prod_{\textup{simple $\alpha>0$}}
  Z(x^\alpha) \prod \frac{dx_j}{2\pi i x_j}\,.\label{eq:26}
\end{equation}
  The simple roots $x^\alpha$ may be chosen as \'etale coordinates on
  $\bA$ and the factor $|\bC(\bG)|^{-1}$ appears as the Jacobian of
  this transformation. After that, each residue in \eqref{eq:26} equals one since
  \[
    Z(x) = \frac{\psi(1) \Psi(q^{-1})}{1-q/x} + O(|x-q|) \,.
  \]

 \section{Spectral Decomposition}

 \subsection{Symmetric dependence on $\Psi$}

 \subsubsection{}

 Since the function $\Psi(x)$ appears in Theorem \ref{t_f1f2} only
 through the function $Z(x)$, it is worthwhile to recast Theorem
 \ref{t_spectral_Psi} in a symmetric form.

 To avoid creating matching pairs of zeros and poles, we introduce
 the entire invertible function
 \begin{equation}
   \label{eq:118_Z1}
   Z^1(x) = (1-1/x)(1-q/x) Z(x) = - x^{-1} \psi(1/x) \psi(x/q) \,.
 \end{equation}
 Eventually, we will allow the function $Z$ to have zeros in the
 critical annulus, in which case $Z^1(x)$ will be an entire function
 with those zeros. In particular, in the geometric case, it will
 constitute the contribution of $H^1$ of the curve
 to the $\zeta$-function, whence the notation.

 \subsubsection{}

 We define
 \begin{equation}
   \label{eq:24}
   \cP^Z_{\pm}  = \left(\prod_{\alpha \ne 0} \psi(q^{-1}
  x^\alpha) \right) \cP_\pm  = \left(\sum_{w \in W} w \right)\,
   \prod_{\alpha \gtrless 0 } \frac{1-q x^\alpha}{1-x^{-\alpha}} Z^1(x^\alpha) 
   \,. 
\end{equation}
and
\begin{align}
Z_e &= \frac{\Psi_e}{\prod_{\alpha \ne 0} \psi(q^{-1}
      x^\alpha)^2}\notag\\
  &= (-1)^{\dim\fg} (1-q)^{2r}
  \frac{\cO_{\Slice_e q^{-1} \fg
    \big/\fc_{e,>0}}}{Z^1(\fg)}\label{eq:29} \,, 
\end{align}
where $\cO$ in the numerator of \eqref{eq:29}  denotes the character of functions, that is, the product
of $(1-w^{-1})^{-1}$ over all Chern roots $w$ of $\Slice_e q^{-1} \fg -
\fc_{e,>0}$. In deriving \eqref{eq:29} from \eqref{eq:24} we used
the fact that the adjoint representation $\fg$ is self-dual and has
trivial determinant.

\subsubsection{}
The density \eqref{eq:29} can be conveniently expressed in terms of
the multiplicity spaces $\fg_i^e$ in the decomposition
\begin{equation}
  \fg = \sum_{i\ge 0} V^i \boxtimes \fg^e_i \,,
\label{eq:21}
\end{equation}%
  of $\fg$ as a $\phi(SL(2)) \oplus \bC_\phi$-module. Here
  $\fg_i$ are the $\ad(h)$-eigenspaces and
  $V^i$ is the irreducible $\mathfrak{sl}_2$-module with highest weight
  $i$. Recall that the existence of an invariant bilinear form
  on $\fg$ implies that 
  $\fg_i^* \cong \fg_{-i}$ as $\bG^h$-modules.

  \begin{Lemma}\label{l_selfdual}
    The $\bC_\phi$-modules $\fg_i$ and $\fg_i^e$ are selfdual.
    The $\bC_\phi$-module $\fg_1\cong \fg_{-1}$ is symplectic.
    We have
    \[
      \det \bigoplus_{i \ge 0} \fg_i^e = 1 \,. 
    \]
\end{Lemma}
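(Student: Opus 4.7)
The plan is to exploit the compatibility between the invariant form $B$ on $\fg$, the decomposition $\fg = \bigoplus_{i \ge 0} V^i \boxtimes \fg_i^e$ as a $\phi(SL_2) \times \bC_\phi$-module, and the Weyl element of the $\mathfrak{sl}_2$-triple. For self-duality of $\fg_i$, I compose two $\bC_\phi$-equivariant isomorphisms: the form $B$ gives $\fg_i \cong (\fg_{-i})^*$ because $B$ pairs $\ad h$-weight spaces of opposite weight, while the $SL_2$-Weyl element $s \in \phi(SL_2)$ commutes with $\bC_\phi$ and carries $\fg_{-i}$ onto $\fg_i$. Self-duality of the multiplicity space $\fg_i^e$ then follows by dualizing the full decomposition: since each $V^i$ is self-dual as an $SL_2$-module, $\fg^* \cong \bigoplus V^i \boxtimes (\fg_i^e)^*$ has the same $\phi(SL_2)$-isotypic type as $\fg$, and uniqueness of $\bC_\phi$-multiplicity spaces forces $\fg_i^e \cong (\fg_i^e)^*$.

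For the symplectic statement, I would track the parity of the pairing. The form $B$ restricts non-degenerately to each isotypic component $V^i \boxtimes \fg_i^e$ and factors as the tensor of the unique invariant bilinear form on $V^i$ (symmetric when $i$ is even, alternating when $i$ is odd) with a $\bC_\phi$-invariant form on $\fg_i^e$. Symmetry of $B$ then forces the form on $\fg_i^e$ to be alternating whenever $i$ is odd. The identification $\fg_1 \cong \bigoplus_{i \ge 1,\, i\text{ odd}} \fg_i^e$, coming from the weight-$1$ subspaces of each $V^i$, therefore equips $\fg_1$ with a non-degenerate alternating $\bC_\phi$-invariant pairing.

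For the determinant identity, I start from the standard fact that $\det \fg = 1$ as a $\bG$-representation for any connected reductive $\bG$ (the character $\det \circ \ad$ is trivial on a maximal torus because roots occur in $\pm$ pairs, and it is conjugation-invariant). Expanding using $\dim V^i = i+1$ and the fact that $\bC_\phi$ acts trivially on each $V^i$ gives
\[
1 \;=\; \det \fg \;=\; \prod_{i \ge 0} (\det \fg_i^e)^{i+1}.
\]
For odd $i$, the previous paragraph shows that $\bC_\phi$ acts on $\fg_i^e$ through $\mathrm{Sp}(\fg_i^e)$, so $\det \fg_i^e = 1$ and those factors drop out. What remains is $\prod_{i\text{ even}} (\det \fg_i^e)^{i+1}$ with $i+1$ odd in each factor; since $\bC_\phi$ acts orthogonally on even-$i$ pieces, $\det \fg_i^e$ has order dividing $2$, so raising to an odd power leaves it unchanged. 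This reduces the relation to $\prod_{i\text{ even}} \det \fg_i^e = 1$, which combined with the odd-$i$ triviality gives $\det \bigoplus_{i \ge 0} \fg_i^e = 1$.

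The main obstacle is purely bookkeeping around parities and signs; the conceptual input is just the Weyl element $s$ together with the standard fact that invariant forms on irreducible $SL_2$-modules alternate in symmetry type with the highest weight, plus the triviality of $\det \ad$ for connected reductive groups.
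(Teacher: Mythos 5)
Your argument is correct, and it reaches the same conclusion by a somewhat different route from the paper's. For the self-duality of $\fg_i^e$ you, like the paper, dualize the full $\phi(SL_2)\times\bC_\phi$-decomposition \eqref{eq:21} and invoke uniqueness of multiplicity spaces. For the self-duality of $\fg_i$ the paper simply observes $\fg_i=\bigoplus_{j\in i+2\Z_{\ge 0}}\fg_j^e$ (equation \eqref{eq:22}) and sums, whereas you compose the pairing $\fg_i\cong\fg_{-i}^*$ with the $SL_2$-Weyl element; both are fine. The larger divergence is in the last two claims. The paper exhibits the symplectic form on $\fg_{-1}$ directly as $(\xi,\xi')\mapsto([\xi,\xi'],e)$ and checks nondegeneracy by hand, while you deduce it structurally by factoring the invariant form on each isotypic block $V^i\boxtimes\fg_i^e$ and tracking the symmetry type of the $V^i$-factor; this is valid and in fact yields the sharper observation that \emph{every} odd multiplicity space $\fg_i^e$ is symplectic, not just $\fg_1$ as a whole. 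For the determinant identity the paper uses \eqref{eq:22} to write $\bigoplus_{i\ge 0}\fg_i^e=\fg_0\oplus\fg_1$ and then kills each factor separately ($\fg_1$ symplectic, $\fg_0=\Lie\bG^h$ reductive), whereas you start from $\det\fg=1$, expand it as $\prod_i(\det\fg_i^e)^{i+1}$, and use your parity result plus the order-two observation for orthogonal determinants. Both are correct; your route has the small advantage of not needing connectedness of $\bG^h$ but the overhead of the parity bookkeeping, and the paper's route is a one-liner once \eqref{eq:22} is in hand.
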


\begin{proof}
  From self-duality of $\fg$ and $V^i$ in \eqref{eq:21}, we conclude the
  self-duality of $\fg^e_i$. Since
  \begin{equation}
  \fg_i = \bigoplus_{j \in i +
    2\Z_{\ge_0}} \fg_j^e \,,
\label{eq:22}
\end{equation}
 it follows these modules are similarly self-dual.

  The module $\fg_{-1}$ has a skew-symmetric form
  \[
    (\xi,\xi') \mapsto ([\xi,\xi'],e) \,,
  \]
  which is nondegenerate because $\ad(e)$ has no kernel in
  $\fg_{-1}$.

  Finally, from \eqref{eq:22} we conclude
  \[
    \bigoplus_{i \ge 0} \fg_i^e = \fg_0\oplus \fg_1 \,.
  \]
  We have $\det \fg_1 = 1$ because $\fg_1$ is symplectic, and
  $\det \fg_0 = 1$ because $\fg_0$ is the Lie algebra of a reductive
  group $\bG^h$. 
\end{proof}

\subsubsection{}
Theorem \ref{t_spectral_Psi} can thus be restated as follows. 

\begin{Proposition}\label{p_spectral_Z} 
  We have
  \begin{equation}
    \label{eq:137bis}
    (f_1 \boxtimes f_2, \psi(\cT)) =
    \sum_{e} 
    \int_{q^{h/2} \bC_{\phi,\comp}} \left(\cP^Z_+ f_1 \right)
    \left(\cP_{-}^Z f_2 \right) Z_e \, 
      d_\textup{Haar}  \,, 
    \end{equation}
    where $\cP^Z_\pm$ is given by \eqref{eq:24} and
    \begin{equation}
      \label{eq:30}
      Z_e \big|_{q^{h/2} c} = q^{-\frac{\dim \fg + \dim \fg^e}{2}} \frac{(1-q)^{2r}}{Z^1(\fg)}
      \frac{\prod_{i>0} \det_{\fg_i^e} (1-q^{-i/2} c )\phantom{\,\,\,\,\,}}
      {\prod_{i\ge 0} \det_{\fg_i^e}(1-q^{-i/2-1} c)} \,, \quad c \in \bC_{\phi} \,. 
    \end{equation}
\end{Proposition}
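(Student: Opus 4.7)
The plan is to derive Proposition \ref{p_spectral_Z} from Theorem \ref{t_spectral_Psi} in two stages: first a purely formal substitution that introduces $\cP^Z_\pm$ and produces $Z_e$ as the resulting density, and then a K-theoretic computation of $Z_e$ in terms of the $\mathfrak{sl}_2$-decomposition $\fg=\bigoplus_i V^i\otimes \fg^e_i$.

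First I would start from equation \eqref{eq:137}, substitute the relation $\cP_\pm = \cP^Z_\pm\,/\prod_{\alpha\ne 0}\psi(q^{-1}x^\alpha)$ coming from the definition \eqref{eq:24}, and collect the two copies of $\prod_{\alpha\ne 0}\psi(q^{-1}x^\alpha)^{-1}$ into the density together with $\Psi_e$. This directly produces the first equality in \eqref{eq:29} (taken as a definition of $Z_e$) and establishes \eqref{eq:137bis}. Next, to obtain the compact form on the second line of \eqref{eq:29}, I would write $\Psi(V)=\psi(V)\,\cO_V$ everywhere, split $Z_e$ into a $\psi$-factor and an $\cO$-factor, and verify the key K-theoretic identity
\[
q^{-1}\fg + \fg^e \;=\; \fg + q^{-1}\fg^{e_-}\,, \qquad \text{as }\bC_\phi\text{-characters on }q^{h/2}\bC_\phi\,.
\]
At the character level this follows from $\fg|_{q^{h/2}c}=\sum_i [i+1]_q\,\fg^e_i$ with $[i+1]_q=q^{-i/2}+\cdots+q^{i/2}$, together with $\fg^e|_{q^{h/2}c}=\sum_i q^{i/2}\fg^e_i$ and $\fg^{e_-}|_{q^{h/2}c}=\sum_i q^{-i/2}\fg^e_i$. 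Combined with the fact that $\fg$ is self-dual with trivial determinant — which gives $Z^1(\fg)=(-1)^{\dim\fg}\psi(\fg)\psi(q^{-1}\fg)$ directly from the definition \eqref{eq:118_Z1} — this identity collapses the $\psi$-quotient to $(-1)^{\dim\fg}(1-q)^{2r}/Z^1(\fg)$, after tracking the $(1-q)^{2r}$ coming from $\Psi(q^{-1})^{-2r}$.

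For the explicit formula \eqref{eq:30}, I would unravel $\cO_{\Slice_e q^{-1}\fg - \fc_{e,>0}}$ at the point $q^{h/2}c$: the Chern roots of $\Slice_e q^{-1}\fg$ are $q^{-i/2-1}$ times the weights of $c$ on $\fg^e_i$ for $i\ge 0$, while those of $\fc_{e,>0}$ are $q^{i/2}$ times the same for $i>0$. This gives
\[
\cO_{\Slice_e q^{-1}\fg - \fc_{e,>0}}(q^{h/2}c)
 = \frac{\prod_{i>0}\det_{\fg^e_i}(1-q^{i/2}c^{-1})}
        {\prod_{i\ge 0}\det_{\fg^e_i}(1-q^{i/2+1}c^{-1})}\,.
\]
Using the self-duality of each $\bC_\phi$-module $\fg^e_i$ from Lemma \ref{l_selfdual}, one can replace every $c^{-1}$ by $c$. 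Then applying the identity $\det(1-xc|_V)=(-x)^{\dim V}\det(c|_V)\det(1-x^{-1}c^{-1}|_V)$ to the denominator and using both $\prod_{i\ge 0}\det(c|_{\fg^e_i})=1$ (Lemma \ref{l_selfdual}) and $\sum_{i}i\dim\fg^e_i = \dim\fg-\dim\fg^e$ converts all $q^{i/2+1}$-factors into $q^{-i/2-1}$-factors at the cost of a prefactor $(-1)^{\dim\fg^e} q^{(\dim\fg+\dim\fg^e)/2}$. Finally, because nilpotent orbits are symplectic, $\dim\fg-\dim\fg^e$ is even, so the sign $(-1)^{\dim\fg^e}$ combines with the $(-1)^{\dim\fg}$ from the second line of \eqref{eq:29} to give $+1$, and the formula \eqref{eq:30} drops out.

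The main obstacle will be the $\psi$-bookkeeping in the second stage: the prefactor $\prod_{\alpha\ne 0}\psi(q^{-1}x^\alpha)^{-2}$ is expressed in terms of roots of $\bG$, whereas $\Psi_e$ is naturally expressed in terms of the $\mathfrak{sl}_2$-decomposition, and reconciling these via the identity $q^{-1}\fg+\fg^e=\fg+q^{-1}\fg^{e_-}$ is really the crux of the argument. Once that identity is in hand, everything else is formal linear algebra governed by Lemma \ref{l_selfdual} and parity considerations for symplectic nilpotent orbits.
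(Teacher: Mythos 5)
Your proof is correct and follows essentially the same route as the paper, using the same ingredients (Lemma \ref{l_selfdual}, $\prod_{i\ge 0}\det c|_{\fg^e_i}=1$, and the parity of $\dim\fg-\dim\fg^e$) to pass from $c^{-1}$ to $c$ and from $q^{i/2+1}$ to $q^{-i/2-1}$; your ``key identity'' $q^{-1}\fg+\fg^e=\fg+q^{-1}\fg^{e_-}$ is exactly \eqref{eq:1113}, which the paper takes as given. The main difference is that the paper treats the passage from the first to the second line of \eqref{eq:29} as a remark preceding the proposition and proves only \eqref{eq:30}, whereas you spell out both steps; the content is the same.
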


\begin{proof}
  We compute
  \begin{align}
    (-1)^{\dim\fg} \tr_{\cO_{\Slice_e q^{-1} \fg
  / \fc_{e,>0}}} q^{h/2} c & =
                                  (-1)^{\dim\fg}
 \frac{\prod_{i>0} \det_{\fg_i^e} (1-q^{-i/2} c^{-1} )\phantom{\,\,\,\,\,}}
                                  {\prod_{i\ge 0}
                             \det_{\fg_i^e}(1-q^{+i/2+1} c^{-1})}
    \notag \\
    & =
        q^{-\frac{\dim \fg + \dim \fg^e}{2}}                           
 \frac{\prod_{i>0} \det_{\fg_i^e} (1-q^{-i/2} c )\phantom{\,\,\,\,\,}}
                                  {\prod_{i\ge 0}
      \det_{\fg_i^e}(1-q^{-i/2-1} c)} \,, 
        \label{eq:32}
  \end{align}
  where we have used the self-duality of $\fg_i^e$ in
  making the $c\mapsto c^{-1}$ change in the numerator, the
  triviality of the determinant of $c$ in $\bigoplus_{i\ge 0} \fg_i^e$,
  as we as the observations that
  \[
    \dim \fg - \dim \fg^e = \dim \textup{Orbit}(e) \in 2 \Z \,.
  \]
\end{proof}

\begin{Corollary}\label{cZZ} 
All integrals in \eqref{eq:137bis} depend on $\psi(x)$ only through
the combination
\[
  \frac{Z^1(x^{-1})}{Z^1(x)} = q x^2 \frac{Z^1(qx)}{Z^1(x)} \,.
\] 
\end{Corollary}
\begin{proof}
The terms in the product $Z^1(\fg)$ in the denominator of
\eqref{eq:30} are in bijection with the $Z^1(x^\alpha)$ factors in
\eqref{eq:24} up to the action of the Weyl group $W$. For any
$w,w'\in W$, the sum
$$
\sum_{w\cdot \alpha >0} x^{\alpha} + \sum_{w' \cdot \alpha >0  }
x^{\alpha} - \sum_{\alpha \ne 0} x^{\alpha}
$$
is a linear combination of terms of the form $x^{\beta} - x^{-\beta}$,
whence the conclusion. 
\end{proof}

\subsection{Hermitian form}

\subsubsection{}
 In equivariant K-theory there is a natural involution 
 \[
   \cV^* = \Hom_\C(\cV,\C)\,, 
 \]
 where $\C$ is the trivial bundle with the trivial action of all
 groups involved.
We extend it to $K_\bG(\bX) \otimes_\Z \C$ anti-linearly. 
Then, for instance, for
\[
  f \in K_\bG(\pt) \otimes \C \subset \C[\bG]
 \]
 we have
 \[
   f^*(g) = \bar{f}(g^{-1}) \,. 
 \]
 By definition, we set
 \begin{equation}
   \label{eq:20}
  \|f\|^2_\Psi = (f \boxtimes f^*, \psi(\cT)) \,. 
 \end{equation}

 \subsubsection{}

 In this subsection, we will assume that 
 \begin{align}
   q &>1 \,, \notag \\
   Z(x)&> 0\,, \quad x>q \,,  \label{eq:27} \\
   Z^1(x)&\ne 0\,,\quad  x \in \R \setminus \{1 < |x| < q\} \label{eq:27bis}
         \,. 
 \end{align}
 By the functional equation, this implies that $Z(x)>0$
 for $x \in (-1,1) \setminus \{0\}$. 
 Recall that, so far, we assumed $\psi(x)$ to be nonvanishing. However,
 starting in Section \ref{s_zeros}, we will relax this condition to
 having no zeros in the critical annulus/strip. We reflect this
 weakening of assumptions in \eqref{eq:27bis}. Note that by the functional
 equation, \eqref{eq:27} implies $Z(x)>0$ for $x\in (0,1)$. Also,
 \eqref{eq:27}
 is equivalent to 
 $Z^1(x)> 0$ for $x\in \R_{>0} \setminus (1,q)$.

 \subsubsection{}
 We have the following

 \begin{Proposition}\label{p_even} 
   With the assumptions \eqref{eq:27} we have
   \[
     \cP_{-}^Z f^* \Big|_{q^{h/2} \bC_{\phi,\comp}} =
   \overline{\cP_{+}^Z f} \Big|_{q^{h/2} \bC_{\phi,\comp}}
   \]
 and $Z_e \ge 0$ on $q^{h/2} \bC_{\phi,\comp}$. 
\end{Proposition}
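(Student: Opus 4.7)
The plan is to prove the two assertions of the Proposition separately; both rest on the real-coefficient structure imposed by hypotheses (1)--(2), together with the representation-theoretic data of Lemma~\ref{l_selfdual}.

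\textbf{Conjugation identity.} First, hypothesis (2) implies $Z^1(x)=-x^{-1}\psi(1/x)\psi(x/q)$ is real-valued on $\R_{>0}$, whence $\overline{Z^1(z)}=Z^1(\bar z)$ by analytic continuation, and consequently the rational summand $\bPi^Z_\pm(x)=\prod_{\alpha\gtrless 0}\frac{1-qx^\alpha}{1-x^{-\alpha}}Z^1(x^\alpha)$ of \eqref{eq:24} satisfies $\overline{\bPi^Z_\pm(y)}=\bPi^Z_\pm(\bar y)$ for $y\in\bA$. Combining this with the symmetry $\bPi^Z_-(y)=\bPi^Z_+(y^{-1})$, the definition $f^*(x)=\overline{f(\bar x^{-1})}$ from Section~\ref{s_distr}, and the reality of the Weyl action on $\bA_\comp$, direct expansion at $x_0=q^{h/2}c\in q^{h/2}\bC_{\phi,\comp}$ yields
\[
\cP^Z_- f^*(x_0)=\sum_{w\in W}\bPi^Z_+\!\bigl(w^{-1}(q^{-h/2}c^{-1})\bigr)\,\overline{f\!\bigl(w^{-1}(q^{-h/2}c)\bigr)},
\]
\[
\overline{\cP^Z_+ f(x_0)}=\sum_{w\in W}\bPi^Z_+\!\bigl(w^{-1}(q^{h/2}c^{-1})\bigr)\,\overline{f\!\bigl(w^{-1}(q^{h/2}c)\bigr)}.
\]
These two sums match term by term under the substitution $w\mapsto\tilde w\,w$ provided one exhibits a Weyl element $\tilde w\in W$ with $\tilde w\,h=-h$ and $\tilde w|_{\bA\cap\bC_\phi}=\mathrm{id}$. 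Such $\tilde w$ is produced by the Jacobson--Morozov involution $\sigma_\phi=\phi\!\bigl(\begin{smallmatrix}0&1\\-1&0\end{smallmatrix}\bigr)\in\phi(SL_2)$, which centralizes $\bC_\phi$ and inverts $h$. Choosing $\bA$ to contain the maximal torus $\bA\cap\bC_\phi$ of $\bC_\phi$ and passing to the Levi $\bL=Z_\bG(\bA\cap\bC_\phi)\supset\bA,\phi(SL_2)$, the tori $\bA$ and $\sigma_\phi\bA\sigma_\phi^{-1}$ both lie in $Z_\bL(h)$ and are therefore conjugate by an element $g\in Z_\bL(h)$; the product $g\sigma_\phi\in N_\bL(\bA)$ still sends $h\mapsto-h$ and acts trivially on $\bA\cap\bC_\phi$, providing the required $\tilde w\in W(\bL,\bA)\subset W$.

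\textbf{Non-negativity of $Z_e$.} By Lemma~\ref{l_selfdual}, for each $c\in\bC_{\phi,\comp}$ the eigenvalues of $c$ on $\fg_i^e$ form a multiset closed under $\lambda\leftrightarrow\lambda^{-1}=\bar\lambda$. Inserting this into \eqref{eq:30}: the prefactor $(1-q)^{2r}q^{-(\dim\fg+\dim\fg^e)/2}$ is positive for $q>1$; each denominator determinant $\det_{\fg_i^e}(1-q^{-i/2-1}c)$ factors as a product of conjugate-pair moduli $|1-q^{-i/2-1}\lambda|^2$ and strictly positive self-conjugate factors $1\mp q^{-i/2-1}$, and is therefore strictly positive; each numerator determinant $\det_{\fg_i^e}(1-q^{-i/2}c)$ is a non-negative product of the same type; and $Z^1(\fg)$, decomposed via $\fg=\bigoplus_i V^i\otimes\fg_i^e$, is a product of conjugate-pair moduli $|Z^1(q^{(i-2j)/2}\lambda)|^2\ge 0$ together with real factors $Z^1(\pm q^{(i-2j)/2})$ whose signs are controlled by hypothesis (2) and the self-duality of $\fg_i$ (pairing the self-conjugate eigenvalues into an even count). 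Multiplying all factors produces $Z_e\ge 0$.

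The principal obstacle is the construction of $\tilde w$ in the first part: while morally $\tilde w$ is the image of $\sigma_\phi$ in the Weyl group, rigorously it requires the Levi reduction outlined above together with the $\phi(SL_2)$-theory. Once $\tilde w$ is in hand, both assertions reduce to direct computations with \eqref{eq:24} and \eqref{eq:30}.
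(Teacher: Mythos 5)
Your approach to the conjugation identity differs from the paper's but is sound. The paper notes that $\cP_\pm^Z f$ are conjugation-invariant functions on $\bG$ (being pushforwards in $K_\bG(\pt)_\an$), so it suffices that $\overline{x}$ and $x^{-1}$ are $\bG$-conjugate for $x=q^{h/2}c$; this is a one-line argument via $\overline{c}\sim c^{-1}$ in $\bC_{\phi,\comp}$ and $h \sim -h$ under $\phi(SL_2)$. You instead work directly with the explicit Weyl sum \eqref{eq:24} and construct an explicit $\tilde w\in W$ with $\tilde w h=-h$ fixing $\bA\cap\bC_\phi$ pointwise; your Levi reduction making rigorous the "image of $\sigma_\phi$ in $W$" is correct (and it implicitly requires $c$ in a maximal torus of $\bC_{\phi,\comp}$ contained in $\bA_\comp$, after which conjugation invariance of both sides extends the identity to all of $q^{h/2}\bC_{\phi,\comp}$ — worth stating explicitly). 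The trade is more bookkeeping for less reliance on the geometric interpretation of $\cP_\pm^Z$.

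The non-negativity argument has a genuine gap. You claim the "real factors $Z^1(\pm q^{(i-2j)/2})$" have their "signs controlled by hypothesis (2) and the self-duality of $\fg_i$ (pairing the self-conjugate eigenvalues into an even count)." Self-duality of a representation forces equal multiplicities for $\lambda$ and $\lambda^{-1}$, which is vacuous at the self-conjugate eigenvalues $\lambda=\pm1$; it does \emph{not} give even multiplicity there. The paper's argument needs two separate additional inputs: (a) the \emph{symplectic} (not merely self-dual) structure on $\fg_1$ from Lemma~\ref{l_selfdual}, which forces the $\pm1$-eigenspaces of $c$ in $\fg_1$ to be even-dimensional, thereby handling the factors $Z^1(\pm q^{1/2})$ whose sign hypothesis~(2) leaves uncontrolled inside the critical annulus; and (b) the observation that the remaining factors $Z^1(-q^{i/2})$, $|i|\ge2$, all have a common sign but collectively appear an even number of times because negative real weights of any $g\in\bG$ on $\fg$ come in root pairs $\{\alpha,-\alpha\}$. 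Without (a) and (b) the assertion $Z_e\ge0$ is not established; your "self-duality gives even count" step is where the proof breaks.
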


\begin{proof}
Since the function $Z(x)$ is single-valued and real on a part of the
real axis, it follows that it is real, that is, $\overline{Z(x)} =
Z(\overline{x})$.

Consider an element of the form
\[
  x = q^{h/2} c \,, \quad c \in \bC_{\phi,\comp} \,, 
\]
and note that
\[
  \overline{x} =  q^{h/2} \overline{c} \sim q^{h/2} c^{-1} \sim 
q^{-h/2} c^{-1} = x^{-1} \,, 
\]
where similarity means being conjugate in $\bG$. Indeed,
$\overline{c}\sim c^{-1}$ inside a maximal compact subgroup of
$\bG^h$, while $h$ is conjugate to
$-h$ by the image of $\phi: SL(2)\to \bG^c$. 
Since $\cP_{+}^Z f$ is conjugation-invariant, we conclude 
\[
  \overline{\cP_{+}^Z f} \Big|_{x=q^{h/2}c }= \cP_{+}^Z \overline{f} \Big|_{x^{-1}} =
\cP_{-}^Z f^* \Big|_{x} \,.
\]

We now consider $Z_e$. Since eigenvalues of $c$ lie on the unit circle
in complex conjugate pairs, we observe that all terms in \eqref{eq:30}
are positive, with the possible exception of $Z^1(\fg)$.

The
representations $\fg_i$ being self-dual, its $c$-weights appear either
in complex conjugate pairs or are equal to $\pm 1$. We are thus left
with the contributions of $Z^1(\pm q^{i/2})$.

By our assumption, $Z^1(q^{i/2})>0$, except possibly $Z^1(q^{1/2})$.
However, the
representation $\fg_1$ being symplectic by Lemma \ref{l_selfdual}, the
$q^{1/2}$-weights in $\fg_1$ appears with even multiplicity, hence
its contribution is positive. 

Now consider the contribution of $Z^1(- q^{i/2})$. By \eqref{eq:27bis} all of these
numbers are of the same sign, except possibly $Z^1(- q^{1/2})$, which
appears with even multiplicity in the symplectic representation
$\fg_1$. The nontrivial weights in the adjoint
representation appear in pairs given by the opposite roots. In
particular, the total number of negative real weights in $\fg$ is even for
any element of $\bG$. This completes the proof. 
\end{proof}

\noindent
As a corollary of the proof, we note the following

\begin{Corollary}
In the statement of Proposition \ref{p_even}, it suffices to assume
that $Z(x)$ is real for $x>q$ and that $Z(x)^{\rk \bG} > 0$ for
$x>q$. 
\end{Corollary}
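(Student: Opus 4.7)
The corollary has genuine content only when $\rk \bG$ is even and the sign $\sigma := \mathrm{sign}\, Z(x)$ on $(q,\infty)$ is negative; otherwise the hypotheses coincide with those of Proposition~\ref{p_even}. My plan is to revisit the final sign analysis in the proof of that proposition and isolate the unique step where positivity of $Z$ on $(q,\infty)$ is used, showing that only $Z^{\rk \bG}>0$ is actually needed there. Meromorphy together with reality of $Z$ on $(q,\infty)$ still yields $\overline{Z(x)} = Z(\bar x)$ on all of $\C$, so the identity $\overline{\cP_+^Z f} = \cP_-^Z f^*$ on $q^{h/2}\bC_{\phi,\comp}$ and the positivity of the determinantal prefactor in \eqref{eq:30} survive verbatim. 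The entire task reduces to showing that $Z^1(\fg)\big|_{q^{h/2}c} > 0$ for every $c \in \bC_{\phi,\comp}$.

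Next I would partition the weights of $x = q^{h/2} c$ acting on $\fg$. Complex non-real $c$-eigenvalues occur in conjugate pairs by self-duality of $\fg_i$ as a $\bC_\phi$-module (Lemma~\ref{l_selfdual}), and contribute factors $|Z^1(\cdot)|^2 \ge 0$. For real $c$-eigenvalues, the isomorphism $\fg_i \cong \fg_{-i}^*$ produces matched multiplicities on $\fg_i$ and $\fg_{-i}$, giving $x$-weight pairs $(q^{i/2}, q^{-i/2})$ and $(-q^{i/2}, -q^{-i/2})$ of equal multiplicity. For $|i|\ge 2$ the functional equation $Z(q/x) = Z(x)$ forces $Z^1(q^{i/2})$ and $Z^1(q^{-i/2})$ to share the sign $\sigma$, so their combined contribution is a perfect square; opposite-root pairing combined with continuity of $Z^1$ on $(-\infty,-1]$ (which persists under the implicit hypothesis that $\psi$ vanishes only inside the critical annulus) handles the negative real case analogously. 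Weights $\pm q^{\pm 1/2}$ from $\fg_{\pm 1}$ appear with even multiplicity by the symplectic structure, and hence contribute $\ge 0$ regardless of the sign of $Z^1$ at those points.

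The only contribution whose sign really demands the new hypothesis is the zero weight $y=1$, whose multiplicity is $m_0 = \dim(\fg_0)^{c,+1} = \dim \Lie \bG^{\{h,c\}}$. Since $\bG^{\{h,c\}}$ is a reductive algebraic group containing a maximal torus of $\bG$, it has rank exactly $\rk \bG$, whence $m_0 \equiv \rk \bG \pmod 2$. Continuity of $Z^1$ from $x=q$ through $(q,\infty)$, combined with the equality $Z^1(1) = q\,Z^1(q)$, gives $\mathrm{sign}\, Z^1(1) = \sigma$; therefore the zero-weight contribution equals $\sigma^{m_0} = \sigma^{\rk \bG} = +1$ by hypothesis. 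The case $y=-1$ uses that such weights on $\fg$ arise only from opposite-root pairs, forcing even multiplicity, exactly as in the original proof. Multiplying all contributions yields $Z^1(\fg) > 0$, and hence $Z_e \ge 0$.

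The key obstacle, and the place where the rank-weighted hypothesis is truly needed, is the parity identification $m_0 \equiv \rk \bG \pmod 2$ coupled with the absorption $\sigma^{\rk \bG} = +1$. Everything else follows from the symmetries (self-duality, opposite-root involution, symplectic structure on $\fg_{\pm 1}$) already exploited in the proof of Proposition~\ref{p_even}.
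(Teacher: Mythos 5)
Your proposal is correct and takes essentially the same route as the paper, which compresses the whole argument into the single observation that the number of positive real eigenvalues of any element of $\bG$ in the adjoint representation is $\equiv \rk\bG \pmod 2$; your decomposition into the zero weight (multiplicity $m_0\equiv\rk\bG\pmod 2$) plus the remaining positive reals (which pair up or have even multiplicity) is the same count made granular, and your reduction of everything to the sign of $Z^1(\fg)$ and the treatment of the Hermitian-symmetry half of the proposition are both accurate. One small inaccuracy worth noting: the functional equation $Z(q/x)=Z(x)$ pairs $q^{i/2}$ with $q^{1-i/2}$, not with $q^{-i/2}$, so it does not by itself force $Z^1(q^{i/2})$ and $Z^1(q^{-i/2})$ to share the sign $\sigma$; what you actually need (and implicitly invoke) is that $Z^1$ is real and nonvanishing on $\R_{>0}\setminus(1,q)$, which together with the relation $Z^1(q/x)/Z^1(x)=x^2/q>0$ gives constant sign $\sigma$ on all of $(0,1]\cup[q,\infty)$ — in particular at both $q^{i/2}$ and $q^{-i/2}$ for $i\ge 2$. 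The same caveat applies to the phrase ``continuity of $Z^1$ on $(-\infty,-1]$'': continuity is automatic, and the zeros of $Z^1$ may sit in $(-q,-1)$, so what one really uses is nonvanishing on $(-\infty,-q]\cup[-1,0)$ together with the sign-preserving relation at $x$ and $q/x$, plus the even multiplicity of negative real weights. Neither gap is fatal — both are patched by the same nonvanishing hypothesis carried over from Proposition \ref{p_even} — but the attribution to the functional equation alone is misleading.
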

\begin{proof}
Indeed, the number of positive real eigenvalues in the adjoint
representation has the same parity as $\rk \bG$. 
\end{proof}

 \subsubsection{}

 \begin{Corollary}
   With the assumptions \eqref{eq:27} we have
   \begin{equation}
    \label{eq:137bisbis}
    \|f \|_\Psi^2 =
    \sum_{e} 
    \int_{q^{h/2} \bC_{\phi,\comp}} \left| \cP_+ f  \right|^2 
   \left| \Psi_e \right|  \, 
      d_\textup{Haar}  \,. 
    \end{equation}
  \end{Corollary}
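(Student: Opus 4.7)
The plan is to specialize the spectral formula of Proposition \ref{p_spectral_Z} (equivalently Theorem \ref{t_spectral_Psi}) to the diagonal $f_1 = f$, $f_2 = f^*$, and then to invoke the two conclusions of Proposition \ref{p_even} to rewrite the integrand in manifestly Hermitian form. Since both of the substantive inputs have already been proved, this is essentially an algebraic repackaging.

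From the definition $\|f\|_\Psi^2 = (f \boxtimes f^*, \psi(\cT))$ and Proposition \ref{p_spectral_Z} applied with $f_1 = f$, $f_2 = f^*$, we get
$$
\|f\|_\Psi^2 = \sum_e \int_{q^{h/2} \bC_{\phi,\comp}} (\cP_+^Z f)(\cP_-^Z f^*)\, Z_e\, d_\textup{Haar}.
$$
Under the hypotheses \eqref{eq:27}, the first conclusion of Proposition \ref{p_even} gives $\cP_-^Z f^* = \overline{\cP_+^Z f}$ on $q^{h/2} \bC_{\phi,\comp}$, so the product $(\cP_+^Z f)(\cP_-^Z f^*)$ is exactly $|\cP_+^Z f|^2$.

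Next, I would use the definitions to transfer the normalization from the $Z$-version back to the $\Psi$-version. Writing $D = \prod_{\alpha \ne 0} \psi(q^{-1} x^\alpha)$, the definition \eqref{eq:24} gives $\cP_\pm^Z = D \cdot \cP_\pm$ (here $D$ is $W$-invariant, so it commutes with the symmetrization in $\cP_\pm$), and the definition \eqref{eq:29} gives $\Psi_e = D^2\, Z_e$. Hence
$$
|\cP_+^Z f|^2\, Z_e = |D|^2\, |\cP_+ f|^2\, Z_e .
$$
The second conclusion of Proposition \ref{p_even} asserts $Z_e \ge 0$ on the integration cycle, so $|\Psi_e| = |D|^2\, Z_e$, and therefore
$$
(\cP_+^Z f)(\cP_-^Z f^*)\, Z_e = |\cP_+ f|^2 \, |\Psi_e|
$$
pointwise on $q^{h/2} \bC_{\phi,\comp}$. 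Substituting into the spectral formula yields \eqref{eq:137bisbis}.

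The main obstacle has already been dealt with in Proposition \ref{p_even}: the symmetry $\cP_-^Z f^* = \overline{\cP_+^Z f}$ on the shifted compact torus and the sign analysis showing $Z_e \ge 0$ are the real content. Once those are in hand, the corollary is a one-line algebraic manipulation of the prefactor $D$, followed by termwise integration and summation over nilpotent orbits.
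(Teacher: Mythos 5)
Your proof is correct and is essentially the paper's own argument, spelled out in slightly more detail: the paper's one-line observation $Z_e = |Z_e| = |\Psi_e|\big/\prod_{\alpha\ne 0}\psi(q^{-1}x^\alpha)\overline{\psi(q^{-1}x^\alpha)}$ is exactly your chain $\Psi_e = D^2 Z_e$, $Z_e\ge 0$, $|\Psi_e| = |D|^2 Z_e$, and the rest (specializing \eqref{eq:137bis} to $f_1=f$, $f_2=f^*$ and invoking the Hermitian symmetry $\cP_-^Z f^* = \overline{\cP_+^Z f}$ from Proposition \ref{p_even}) matches step for step.
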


  \begin{proof}
    We have
    \[
      Z_e = |Z_e| = \frac{|\Psi_e|}{\prod_{\alpha \ne 0} \psi(q^{-1}
      x^\alpha) \overline{\psi(q^{-1}
        x^\alpha)}} \,, 
    \]
    whence the conclusion. 
  \end{proof}






\subsection{Zeros is the critical annulus}\label{s_zeros}

\subsubsection{}

Our constructions so far assumed that the function $\psi(x)$ has no zeros
or poles in the whole complex plane, with the extension by analytic
continuations to functions holomorphic in a sufficiently large annulus
as in Corollary \ref{c_ac}.

Our next goal is to extend this result to functions having zeros in
the annulus $q^{-1} < |x| < 1$. For the function $Z(x)$ this
means a pair of zeros in the critical annulus.

If the function $\psi(x)$ has zeros then $\psi(\cT)$ may be written as
product of two distributions, equivalently, the Fourier transform of the
convolution of two distributions. We will check that this product
is well-defined and given by the same formulas as before if
all zeros of $\psi(x)$ lie in the annulus $q^{-1} < |x| < 1$.

\subsubsection{}

By construction, the tangent bundle $T\cT$ is the cohomology of the
complex
\begin{equation}
  \label{eq:179}
  T\cT = \Big(
\fb[1] \xrightarrow{\quad \textup{action} \quad}  TM \xrightarrow{\quad d\mu \quad}
q^{-1} \fb^* [-1] \Big)\,,
\end{equation}
where $M=T^*\cB$ and $\mu$ is the moment map.
Note that $\fb$ and $q^{-1} \fb^*$ are trivial bundles on $M$
with a nontrivial action of $q$ and $\bA$. In other words, they are
pulled back from $\pt/(\Ct_q \times \bA)$. 

Assuming $\psi(x)$ is nonvanishing outside of the critical strip, we
will first define $\psi(\fb+q^{-1} \fb^*)^{-1}$ as a distribution, 
then 
show that the product 
\begin{equation}
  \label{eq:180}
  \psi(T\cT) \eqdef \psi(\fb+q^{-1} \fb^*)^{-1}  \, \psi(TM) 
\end{equation}
of two distributions is well-defined, and finally show that is given by the same
formulas as before.

\subsubsection{}

Given $a\in \bA$, we define
\[
  \fb = \fb_{<} \oplus \fb_{\ge} 
\]
where $\fb_{<}$ is spanned by eigenvectors of $\ad(a)$ that are
less than $1$ in absolute value.  We begin with the following

\begin{Lemma}\label{lbm} Let $x$ be a point fixed by $a$ and $q$. If 
 $|a^\alpha| \ge  1$ for all roots in $\fb$ or if $x$ lies over a point $e \in \fn$
 and $a \in q^{h/2} \bC_{\phi,\comp}$, then $\fb_{<}$
  intersects trivially the stabilizer of $x$. 
\end{Lemma}

\begin{proof}
  If $|a^\alpha| \ge 1$ for all roots then $\fb_{<}=0$ and there is nothing to check. 
Since the stabilizer of $e$ in $\fg$ is contained in $\fp(h)$, we
conclude  that it intersects $\fb_{<}$ trivially. Therefore, the
stabilizer of $x$ intersects $\fb_{<}$ trivially and the result
follows. 
\end{proof}

\noindent 
Using Lemma
\ref{lbm}, we can write
\begin{equation}
  \label{eq:179-2}
  T\cT = \Big(
\fb[1]_{\ge}  \xrightarrow{\quad \textup{action} \quad}  T_{\ge} M \xrightarrow{\quad d\mu \quad}
q^{-1} (\fb_{\ge})^* [-1]\Big)\,,
\end{equation}
where
\[ 
T_{\ge} M = \Ker( T M \to 
q^{-1} (\fb_{<})^{*} ) / \fb_< \,,
\]
is a vector bundle on the locus where the conclusion of the Lemma
is satisfied. This means the factorization 
\begin{equation}
  \label{eq:179-3}
\psi(TM) = \psi(\fb_<+ q^{-1} (\fb_{<})^{*} ) \psi(T_{\ge} M) 
\end{equation}
where both factors are analytic K-theory classes. 

\subsubsection{}

\begin{Proposition}\label{p_convolve}
Suppose that $q^{-1} < |x_0| < 1$ for all $x_0$ such that
$\psi(x_0)=0$. Then the distribution $\psi(\fb+q^{-1} \fb^*)^{-1}$ and the product
\eqref{eq:180} are well-defined and given by the same formulas
as in the case when $\psi(x)$ is nowhere vanishing. 
\end{Proposition}

\noindent
In number field part of the proof, we will make the simplifying assumption that
$(\psi(s)\psi(-1-s))^{\pm 1}$ grows at most exponentially in the
imaginary direction. This assumption is easily seen to hold for all
number fields, see e.g.\ Chapter XIII.5 in \cite{Lang}. In fact, the
exponential terms in this asymptotics come from the contribution of
the Archimedean places. Faster growing
  functions may be handled by modifying the
  cutoff function $e^{\varepsilon s^2}$ in the proof below. 

\begin{proof}
We discuss the K-theory case, that is, the function field case
first.  Consider the action of $\bA$ on the vector
space $\fb$. This action is cohomologically proper, and the
Euler characteristic of any coherent sheaf on $\fb$ is a well-defined
Laurent series on $\bA$ which converges for all $a\in \bA$ such that
$|a^\alpha|>1$ for all positive roots $\alpha$. By construction, this
means that the distribution $\psi(TM)$ is given by a contour integral against
a convergent series in the same region. We define
the distribution
\[ 
\psi(\fb+q^{-1} \fb^*)^{-1} =  \prod_{\alpha >0} \psi(a^{\alpha})
\psi(q^{-1} a^{\alpha})
\]
using the Laurent
series expansion of the function $\psi(x)^{-1} \psi(q^{-1} x^{-1})^{-1}$ in the region
$|x| > 1$, where it is regular. Here we use our hypothesis about the
zeros of $\psi(x)$. Since the product of two convergent
series is well-defined, the distribution \eqref{eq:180} is
well-defined, and given by the same formula \eqref{eq:3}.

Now consider the pieces in the decomposition \eqref{eq:116_Thom}. On
each piece, using Lemma \ref{lbm}, we may replace the complex \eqref{eq:179} by a smaller
complex \eqref{eq:179-2}. By \eqref{eq:179-3}, this replacement means a repeated
application of the identity
\[
  \psi(x)^{-1} \psi(x) =1 \,.
\]
In the language of convolutions, this identity means
\begin{itemize}
\item the convolution with the coefficients of \emph{any} Laurent
series expansion of $\psi(x)^{-1}$, convergent or not,  is well-defined on the image of the
convolution with the Fourier transform of $\psi(x)$,
\item the composition of the two convolutions is the identity.
\end{itemize}
For the
smaller complex \eqref{eq:179-2}, the eigenvalues of $a$ in $\fb_{\ge 0}$ are all
$\ge 1$ in the absolute value. This makes the 
distribution $\psi(\fb_{\ge 0}+q^{-1} (\fb_{\ge 0})^*)^{-1}$ and its 
product with $\psi(T_\ge M)$ well-defined, as before.
This concludes the proof in the K-theory setting.

In the equivariant cohomology, that is, number field case, the
argument is the same with the following modification. We define the
distribution $\psi(s)^{-1} \psi(-1-s)^{-1}$ by its Fourier transform
on the line  $\Re s = s_0 > 0$. Since
we allow this function to grow exponentially in the imaginary
direction, this may not
be well-defined for arbitrary smooth test functions. 

To regularize,
we will multiply $\psi(s)^{-1} \psi(-1-s)^{-1}$  by $e^{\varepsilon
  s^2}$. After Fourier transform, this is equivalent to preconvolving
the test functions with an arbitrarily sharp Gaussian density. 
We can first prove the equality of convolution operators acting
on such restricted set of test functions and then let $\varepsilon
\downarrow 0$. Recall that by Corollary \ref{cZZ}, 
the functions $\psi(x)$ appear in the the final formulas  in
combinations $Z_\psi(s)/Z_\psi(s\pm 1)$ and that these grow
at most polynomially in the imaginary direction. Thus the
$\varepsilon \downarrow 0$  limit is well-defined for any
Paley-Winer test function, and this concludes the proof. 
\end{proof}

\begin{Corollary}\label{c_crit} 
The equality \eqref{eq:137bisbis} holds for functions $\psi$ analytic 
in the annulus $1/R_\bG < |x| < R_\bG$ and nonvanishing outside
of the critical strip $|q|^{-1} < | x |< 1$, where $R_\bG$ is a
sufficiently large number that depends on $\bG$. 
\end{Corollary}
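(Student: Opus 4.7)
The plan is to reduce Corollary \ref{c_crit} to Corollary \ref{c_ac} by inserting, one at a time, the complexes $\Omd_c \cT$ of \eqref{eq:105} for each zero $c$ of $\psi$ lying in the critical annulus $|q|^{-1}<|c|<1$. To set this up, factor $\psi(x) = \psi_0(x) \prod_{i=1}^N (1-c_i/x)$, where $\psi_0$ is analytic and nonvanishing in the whole annulus $1/R_\bG<|x|<R_\bG$ and the $c_i$ are the zeros of $\psi$ in the critical annulus listed with multiplicity. Since Corollary \ref{c_ac} applies to $\psi_0$, it suffices to show that replacing $\psi_0$ by $\psi_0 \cdot (1-c/x)$ preserves \eqref{eq:137bisbis} for a single $c$ in the critical annulus.

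The geometric content of this replacement is the insertion of $\Omd_c \cT$ into the integrand defining $\psi_0(T\cT)$: for a smooth proper scheme this would be $\sum_i(-1)^i \Lambda^i(c\otimes T^*\cT)$, but on the derived stack $\cT$ one uses the local-cohomology complex \eqref{eq:105} built from the sequence \eqref{eq:106}. The essential convergence statement is then delivered by Lemma \ref{l_free} together with Proposition \ref{p_check_attr}: on any point whose stabilizer meets $\fb_<$ trivially --- which includes the attracting contour $\varsigma\bA_\comp$ of Theorem \ref{t_f1f2} (where $\fb_<=0$) and every stratum $q^{h/2}\bC_{\phi,\comp}$ entering Proposition \ref{p_spectral_Z} --- the character of $\Omd_c \cT$ converges to the geometric-series expansion of the rational function in \eqref{eq:103}, whose fiberwise interpretation is precisely the product of the factors $(1-c/x_i)$ over the Chern roots $x_i$ of $T\cT$.

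Given this, the rest of the argument follows the scheme already established in the proofs of Theorem \ref{t_proper} and Proposition \ref{p_spectral_Z}. The scissor relation \eqref{longlocal} and the stratification of $\cT$ by nilpotent orbits are compatible with tensoring by $\Omd_c \cT$: local cohomology along closed strata commutes with tensoring by a perfect complex, and the attracting/repelling analysis of Proposition \ref{l_lc} is unchanged since $\Omd_c \cT$ does not modify the tangent weights used to check repulsivity. Consequently each term in the decomposition is multiplied by the expected $(1-c/x)$ factors, and the Hermitian squaring on the right-hand side of \eqref{eq:137bisbis} produces the factor $|1-c/x|^2$ on each stratum, matching exactly the change $\psi_0 \mapsto \psi_0 \cdot (1-c/x)$ on the left-hand side.

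The main obstacle is the convergence statement hidden in Lemma \ref{l_free}: one must know that inserting $\Omd_c \cT$ does not destroy convergence of the trace over the non-compact fibers of $\cT$. This is exactly where the restriction to the critical annulus is used, since the hypothesis $|q|^{-1}<|c|<1$ forces the new weights contributed by $\Sd(c\otimes \fb_{\ge}^*)$ to lie strictly inside the unit disk and those of $\Sd(qc\otimes \fb_{\ge})$ strictly outside --- precisely the condition for the spectral sequence from \eqref{eq:107} to have convergent Euler characteristic. Once this point is secured, iterating over the finitely many $c_i$ yields \eqref{eq:137bisbis} for all $\psi$ of the form $\psi_0\prod(1-c_i/x)$, and a straightforward density/analytic-continuation argument in the parameters $c_i$ extends the identity to the full class of admissible $\psi$ described in the corollary.
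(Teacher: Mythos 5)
Your proposal follows essentially the same route as the paper: write the admissible $\psi$ as a nonvanishing function times finitely many factors $(1-c_i/x)$ with $c_i$ in the critical annulus, realize each such factor geometrically as the insertion of the local-cohomology complex $\Omd_{c_i}\cT$ from \eqref{eq:105}--\eqref{eq:106}, and use Lemma \ref{l_free} together with Proposition \ref{p_check_attr} to confirm that the insertion does not spoil cohomological properness either on the contour $\varsigma\bA_\comp$ of Theorem \ref{t_f1f2} or on the strata $q^{h/2}\bC_{\phi,\comp}$ in Theorem \ref{t_proper}; the paper's two-line proof is exactly this.

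Two small remarks. First, the closing ``density/analytic-continuation in the $c_i$'' step is superfluous: analyticity on the closed annulus forces only finitely many zeros in the critical strip, so the factorization $\psi=\psi_0\prod_i(1-c_i/x)$ already exhausts every admissible $\psi$. Second, the sentence about the Hermitian squaring producing a factor $|1-c/x|^2$ is an unnecessary heuristic --- what is actually used is that once Proposition \ref{p_spectral_Z} holds for the modified genus, the passage to \eqref{eq:137bisbis} via Proposition \ref{p_even} only needs the reality and positivity hypotheses \eqref{eq:27}--\eqref{eq:27bis}, which were already phrased to allow zeros in the critical strip; no separate bookkeeping of the squared factors is required.
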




\subsection{Proof of Theorems  \ref{t_L2} and \ref{t_spectral}}
\label{s_proof} 

\subsubsection{Proof of Theorem \ref{t_L2}}

We first consider the K-theory case. The difference between
the statement of Corollary
\ref{c_crit} and \eqref{eq:19} is the domain of the integration,
namely $\bbSt_e$ versus $\bbS_e$. We recall that $\cP_+ f$ is
the restriction to $\bbSt_e$ of a conjugation-invariant function
on $\bG$. Similarly, as observed in Section \ref{s_Psie},
the density $\Psi_e$ is a restriction of a conjugation-invariant
function. Thus the integrand in \eqref{eq:137bisbis} is
pulled back from $\bbS_e$, while the Haar measure on $\bbS_e$
is pushed forward from $\bbSt_e$. This proves the equality \eqref{eq:19}
in the K-theory case.

\subsubsection{}

The cohomology case follows from the K-theory by taking
a limit as explained in Section \ref{s_limitH}. While this concludes
the proof, it may be instructive to see explicit formulas below. 

\subsubsection{} \label{s_spell_out}

Since, in this paper, we focus on the more difficult K-theoretic
computations, it may be useful to spell out the formulas one obtains
in the specialization to cohomology. 

The combination of Theorems \ref{t_f1f2} and \ref{t_L2} may be
phrased as the following equality of integrals involving general
functions $\psi(s)$ and $f(s)$, analytic in a sufficiently wide
neighborhood of the imaginary axis. We define 
\[
  \Psi(s) = \frac{\psi(s)}{s} \,, \quad Z_\psi(s) = \Psi(-s) \Psi(s-1) 
\]
and denote
\[
  \cz_\psi = - \psi(0) \psi(-1) \,. 
\]
We require that: 
\begin{enumerate}
\item[(1)] $\psi(s)$ is
  nonvanishing outside the vertical strip $-1 < \Re s <0$\,,
\item[(2)] $Z_\psi(s)$ is real and positive for $s>1$\,,
  \item[(3)] The integrals on both sides of \eqref{eq:84}
    converge. For
    example, this can be guaranteed by $f$ being a Paley-Wiener
    function and $\psi(s\pm 1)/\psi(s)$ growing at most polynomially
    in the imaginary direction. See also Remark
    \ref{sec:remark_growth}. 
\end{enumerate}

In this setting, the combination of
Theorems \ref{t_f1f2} and \ref{t_L2} implies the following equality: 
\begin{multline}
  \label{eq:84}
\frac{1}{\cz_\psi^r} \int_{\Re x = x_0 > \rho^\vee} 
  \, 
   \sum_w f(x) \, \overline{f}(-w^{-1} x) \prod_{\substack{\alpha > 0 \\ w^{-1} \alpha <0 }}
   \frac{Z_\psi(\alpha(x))}{Z_\psi(1+\alpha(x))}  \, dx =\\
   \\ =
   \sum_{\phi} \int_{\frac12 \phi(h) + \Lie \bC_{\phi,\comp}} \left|\cP_+ f
   \right|^2 \, |\Psi_e| \,dx \,. 
\end{multline}
Here the summation is over the homomorphisms
\[
  \phi : \mathfrak{sl}_2 \to \fg \,,
\]
considered up to conjugation, and uses the image of the standard basis
\[
  h =  \begin{pmatrix}
    1 & 0\\
    0 & - 1
  \end{pmatrix} \,, \quad
  e = \begin{pmatrix}
    0 &1\\
    0 & 0
  \end{pmatrix} \quad
  e_- = \begin{pmatrix}
    0 &0\\
    1 & 0
  \end{pmatrix}\,, 
  \]
  in $\fg$. The the integration is over a
shift by
$\tfrac 12 \phi(h)$ 
of the Lie algebra of the maximal compact subgroup of the centralizer
$\bC_\phi$ of $\phi$ in $\bG$.

The spectral projectors and the spectral measure in \eqref{eq:84} are
defined by
\[
  \cP_+ f = \left( \sum_{w\in W} w \right) \left(\prod_{\alpha<0}
  \frac{\Psi(\alpha(x))}{\Psi(\alpha(x)-1)} \right) f
\]
and
\[
  \Psi_e = \psi(-1)^{-2r} \frac{\prod_{s\in \fg^{e_-}} \Psi (s-1)}
{\prod_{s\in \fg^{\phi}} \psi (s) \prod_{s\in \fg^e/\fg^{\phi}} \Psi (s)} \,,
\]
where the notation means that we take the product over the
weights $s \in \left(\Lie \bC_\phi\right)^*$ of the corresponding
representations. 


\subsubsection{Proof of Theorem \ref{t_spectral}}

Theorem \ref{t_L2} produces an injection from LHS of
\eqref{e_spectral} to the RHS. To prove the surjectivity, it is
enough to show the map has a dense image. Since it is a
map of $\sH$-modules it is enough to show it is nonzero on
each piece of the spectral decomposition. The latter claim
follows from the geometric definition of $\cP_+$ as integration
over the Spinger fiber and the classical fact that
$\left(\cB^{e,h}\right)^g$
is nonempty for any $g\in \bC_{\phi,\comp}$.

Indeed, any such $g$ is semisimple, as hence the Lie algebra
$\fg^g$, which contains $h$ and $e$, is reductive.
Borel subalgebras of $\fg^g$ parametrize
a component of  $\cB^g$. Since
$h$ and $e$ generate a solvable Lie algebra, they belong to
some Borel subalgebra of $\fg^g$, which is thus fixed by all three
elements.

\section{Example: Subregular in $G_2$}
\label{sectG2}

\subsection{Generalities on the dCLP stratification}

\subsubsection{}
The goal of this section is to present a discussion of a
particularly illustrative example --- that of a subregular nilpotent
in type $G_2$. From the contour deformation point of view, it
was dealt with in fine detail in \cite{L} and thus played a very
important role in the development of the subject.

The geometry of all subregular Springer fibers being well-understood,
see for instance \cite{Slodowy}, the only novelty in this section is in how this geometry
matches Langlands' computations. 
We begin with a few general reminders from \cite{dCLP} before
specializing the discussion to the specific example. For the
convenience of the reader, we remind the proofs. 

\subsubsection{}

Let $e$ and $h$ denote a fixed nilpotent element of $\fg$ and its
characteristic. Consider the grading of $\fg$ by the eigenvalues of
$\ad(h)$
\[
  \fg = \bigoplus \fg_{i} \supset \bigoplus_{i\ge 0} \fg_{i} = \fp\,,
\]
and the corresponding parabolic subalgebra $\fp$. We denote by
$\sP\subset \bG$ the parabolic subgroup with Lie algebra $\fp$.
For instance,
\[
  e = 0 \Leftrightarrow \fp = \fg \,.
\]
One notes that $\fp$ and $\sP$ 
are canonically associated to $e$. 

\subsubsection{} 
We have  $e\in \fg_2$ and $\ad(\sP) \, e \subset \fg_{\ge 2}$. 
The following proposition is classical.

\begin{Proposition}\label{p_open} 
We have the following diagram
\begin{equation}
  \label{eq:22_}
  \raisebox{30pt}{\xymatrix{
    \fg^\circ_{\ge 2} \overset{\textup{\tiny def}}= \ad(\sP) \, e \, \,   \ar@{^{(}->}[r] \ar[d] &
    \fg_{\ge 2} \ar[d]^{\sigma(0)} 
    \\
    \fg^\circ_{2} \overset{\textup{\tiny def}}= \ad(\bG^h) \, e  \, \,  \ar@{^{(}->}[r] & \fg_{
      2}}}\,, 
\end{equation}
in which the horizontal maps are open embeddings and
\begin{equation}
  \sigma(q) \cdot x =
  q^{-1} \ad(q^{h/2}) \, x \,.\label{eq:141_sigma}
\end{equation}
The fibers of $\sigma(0)$ are orbits of the unipotent radical
$\sP_\uni$  isomorphic to 
\begin{equation}
\fg_{\ge
  1}\big/\fc_{e,>0} \cong \fg_{>2} \,.\label{eq:27_}
\end{equation}
\end{Proposition}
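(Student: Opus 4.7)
The plan is to reduce all three assertions to standard $\mathfrak{sl}_2$-representation theory applied to $\fg$ as a $\phi(\mathfrak{sl}_2)$-module. The key input is the classical fact that $\ad(e)\colon \fg_i \to \fg_{i+2}$ is surjective for $i \ge -1$: on each irreducible summand $V(n)\subset \fg$, the map $e\colon V(n)_i\to V(n)_{i+2}$ can only fail to surject when $i+2 = -n$, and $-n\le 0<1\le i+2$ rules this out for $i\ge -1$.

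For openness of the horizontal maps, I would compute the tangent space to $\ad(\sP)\cdot e$ at $e$: it is the image of the differential $\fp\to \fg_{\ge 2}$, $\xi\mapsto [\xi,e]$, which equals $[\fp,e] = \bigoplus_{i\ge 0}[\fg_i,e] = \bigoplus_{i\ge 0}\fg_{i+2} = \fg_{\ge 2}$ by the surjectivity above. Hence the orbit map is submersive at $e$ and $\fg^\circ_{\ge 2}$ is open. An identical argument using $\ad(e)\colon \fg_0\to \fg_2$ handles the lower row.

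The description of $\sigma(0)$ is a direct computation: writing $x=\sum_{i\ge 2} x_i$ with $x_i\in \fg_i$, one has $\sigma(q)\cdot x = \sum_{i\ge 2} q^{i/2-1} x_i \to x_2$ as $q\to 0$, so $\sigma(0)$ is the projection onto the $\fg_2$-component, with kernel $\fg_{>2}$. That this map sends $\fg^\circ_{\ge 2}$ into $\fg^\circ_2$ follows from the Levi decomposition $\sP = \bG^h\ltimes \sP_\uni$, with $\Lie\sP_\uni = \fg_{>0}$: for $p = p_0 p_u$ one has $\ad(p_u) e \in e+\fg_{>2}$ since $\ad(\fg_{>0})e\subset \fg_{>2}$, and $\ad(p_0)$ preserves the grading, so the $\fg_2$-component of $\ad(p)e$ is $\ad(p_0) e\in \fg^\circ_2$.

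The same computation shows that the fiber of $\sigma(0)|_{\fg^\circ_{\ge 2}}$ over $\ad(p_0)e$ contains the $\sP_\uni$-orbit $\ad(p_0)\cdot \sP_\uni\cdot e = \sP_\uni\cdot \ad(p_0)e$ (using normality of $\sP_\uni$ in $\sP$). Its stabilizer has Lie algebra $\fc_e\cap \fg_{\ge 1} = \fc_{e,>0}$, and $\ad(e)\colon \fg_{\ge 1}\to \fg_{>2}$ is surjective with kernel $\fc_{e,>0}$, so the orbit has dimension $\dim\fg_{\ge 1}/\fc_{e,>0} = \dim\fg_{>2}$. The main (mild) obstacle is to conclude that this orbit fills out the entire affine fiber $e+\fg_{>2}$: this follows because unipotent orbits on affine varieties are always closed, so the orbit is a closed irreducible subvariety of full dimension in $e+\fg_{>2}$, hence equals it; the identification $\fg_{\ge 1}/\fc_{e,>0}\cong \fg_{>2}$ is then realized by $\ad(e)$.
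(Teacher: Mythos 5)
Your proof is correct, and the key input---surjectivity of $\ad(e)\colon\fg_i\to\fg_{i+2}$ for $i\ge 0$ via $\mathfrak{sl}_2$ representation theory---is the same as in the paper; from it both you and the paper deduce openness of the horizontal maps and the isomorphism $\fg_{\ge 1}/\fc_{e,>0}\cong\fg_{>2}$. Where you diverge is in identifying the fibers of $\sigma(0)$ with $\sP_\uni$-orbits. The paper argues via a contracting-flow observation: $\sigma(q)$ permutes $\sP_\uni$-orbits (because $\ad(q^{h/2})$ normalizes $\sP_\uni$ and the $q^{-1}$-scaling commutes with everything) and contracts $\fg_{\ge 2}$ to $\fg_2$ as $q\to 0$, so the fiber of $\sigma(0)$ through a given orbit is that orbit. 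You instead pin down the orbit by a dimension count (the stabilizer of $e$ in $\fg_{\ge 1}$ is $\fc_{e,>0}$, so the orbit has dimension $\dim\fg_{>2}$), combined with the Kostant--Rosenlicht theorem that orbits of a unipotent group acting on an affine variety are closed, forcing the orbit $\sP_\uni\cdot y$ to equal the affine space $y+\fg_{>2}$. Both routes are legitimate; the paper's is more dynamical and perhaps slicker, while yours is more hands-on and also yields the slightly stronger statement that the full set-theoretic fiber $y+\fg_{>2}\subset\fg_{\ge 2}$ over any $y\in\fg^\circ_2$ is already contained in $\fg^\circ_{\ge 2}$.
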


\noindent Note that by construction 
\begin{equation}
  \label{eq:58_}
  \fg^\circ_{2}  \cong \bG^h / \bC_\phi \,. 
\end{equation}

\begin{proof}

  Decomposing $\fg$ with respect to the adjoint action of
  $\phi(\mathfrak{sl}_2)$, we see that the maps 
  \[
    \fg_{
    i} \xrightarrow{\quad \ad(e) \quad} \fg_{i+2} \,,
  \quad i  \ge 0 \,,
  \]
  are surjective. This shows that the horizontal maps in \eqref{eq:22_}
  are open embeddings and also proves the isomorphism \eqref{eq:27_}.
  Since the action of $\sigma(q)$ preserves the
  $\sP_\uni$-orbits and contracts $\fg_{\ge 2}$ to $\fg_2$ as
  $q\to 0$, the remaining claims follow. 



\end{proof}

\subsubsection{}

The orbits of $\sP$ on $\cB$ are given by the Bruhat decomposition
\begin{equation}
\cB  = \bigsqcup_{w \in W^h\backslash W} \sP\, w \, 
\fn \,,  \label{Bruhat}
\end{equation}
where it is convenient to take representatives of minimal length as in
Appendix \ref{A_Bh}, and where we assume that $h\in \Lie \bA$ as
in Appendix \ref{s_Lie_def}.
These Bruhat cells are the attracting manifolds for the $q^h$-action on
$\cB$. By definition, the dCLP strata
\begin{equation}
  \label{eq:25}
  \cB^e_w = \cB^e \cap \sP \, w \, \fn 
\end{equation}
are the intersections of the Spinger fibers 
with the $\sP$-orbits \eqref{Bruhat}.

\begin{Proposition}[\cite{dCLP}]
The strata \eqref{eq:25} are smooth. 
\end{Proposition}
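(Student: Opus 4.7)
The plan is to identify each dCLP stratum $\cB^e_w$ with a quotient of a smooth variety by a free group action, leveraging the parabolic slice description in Proposition \ref{p_open}. I parametrize the Bruhat cell as $C_w = \sP\cdot w\fn \cong \sP/\sP^w$ with $\sP^w = \sP \cap w\bB w^{-1}$; a coset $x\sP^w$ corresponds to the maximal nilpotent subalgebra $\Ad(x)\Ad(w)\fn$, so it lies in $\cB^e$ iff $\Ad(x^{-1})e \in \Ad(w)\fn$. Because $e \in \fg_2$ and $\sP$ preserves the $h$-filtration $\fg_{\geq \cdot}$, the element $\mu(x) := \Ad(x^{-1})e$ always lies in $\fg^\circ_{\geq 2} = \Ad(\sP)\cdot e$, and the membership condition becomes
$$
\mu(x) \in V_w := \Ad(w)\fn \cap \fg_{\geq 2},
$$
a linear subspace of $\fg_{\geq 2}$.

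By Proposition \ref{p_open}, the map $\mu\colon \sP \to \fg^\circ_{\geq 2}$ exhibits $\sP$ as a principal $\bC_e$-bundle (the stabilizer $\bC_e$ of $e$ automatically lies in $\sP$ by the Jacobson--Morozov grading). Since $V_w \cap \fg^\circ_{\geq 2}$ is open in the smooth linear subspace $V_w$, the preimage $Y_w := \mu^{-1}(V_w \cap \fg^\circ_{\geq 2}) \subset \sP$ is smooth. To descend to $\cB^e_w$, I observe that for $y \in \sP^w$ one has $\mu(xy) = \Ad(y^{-1})\mu(x)$, and $\Ad(y^{-1})$ preserves $V_w$: the inclusion $y \in \sP$ preserves $\fg_{\geq 2}$, while $w^{-1}yw \in \bB$ normalizes $\fn$. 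Since $\sP^w$ acts freely on $\sP$ by right translation, the geometric quotient
$$
\cB^e_w = Y_w/\sP^w
$$
is smooth.

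The essential geometric input is already packaged in Proposition \ref{p_open}, so the remaining work is bookkeeping and I do not anticipate a substantive obstacle. The one place that calls for care is the scheme-theoretic identification above --- specifically, checking that the set-theoretic condition $\Ad(x^{-1})e \in \Ad(w)\fn$ is cut out exactly by the linear equations defining $V_w$ (which uses that $\mu(x)$ automatically lies in $\fg_{\geq 2}$, so that components of $\Ad(w)\fn$ in degrees $<2$ impose no additional constraints), and that the $\sP^w$-action on $Y_w$ is indeed the one whose quotient recovers $\cB^e_w$ as a reduced subscheme of $C_w$.
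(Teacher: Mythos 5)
Your proof is correct and is essentially the same argument as the paper's, phrased in slightly different language: where the paper forms the homogeneous vector bundle over $\sP/\sP_w$ with fiber $\fg_{\ge 2}\big/(\fg_{\ge 2}\cap w\fn)$ and exhibits $\cB^e_w$ as the transverse zero locus of the section $g\mapsto \ad(g^{-1})e$, with transversality coming from the open $\sP$-orbit in $\fg_{\ge 2}$ (Proposition \ref{p_open}), you lift to $\sP$, observe that the orbit map $\mu:\sP\to\fg^\circ_{\ge 2}$ is a smooth submersion, take the smooth preimage of the locally closed $V_w\cap\fg^\circ_{\ge 2}$, and descend along the free right $\sP_w$-action. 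Both versions rest on exactly the same geometric input, namely Proposition \ref{p_open}, and differ only in whether one works directly on the Bruhat cell or first passes through $\sP$.
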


\begin{proof}
  The Bruhat cells in \eqref{Bruhat} have the form $\sP/\sP_w$, where
  $\sP_w = \sP \cap w \bB w^{-1}$. The stabilizer $\sP_w$ acts in
  \[
    \fg_{\ge 2} \big/ \left(\fg_{\ge 2} \cap w \fn \right) 
  \]
  and this defines a homogeneous vector bundle over $\sP/\sP_w$ with a
  section
  \[
    \sP/\sP_w \owns g \mapsto \ad(g^{-1}) e \bmod
  \left( w \fn \right)_ {\ge 2} \,.
  \]
  This section is transverse because $\sP$ acts on $\fg_{\ge 2}$ with an open orbit and
  its zero locus is precisely the stratum \eqref{eq:25}. 
\end{proof}

\subsubsection{}

Recall from  Appendix \ref{A_Bh} that 
\begin{equation}
  \cB^h = \bigsqcup_{w\in W^h\backslash W}  \cB^h_w\,, \quad
  \cB^h_w = \bG^h \, w \, \fn  \cong \bG^h/\bB^h \label{eq:85_} \,, 
\end{equation}
and recall the subset $W(e)$ introduced in Section \ref{s_W(e)}.

\begin{Corollary}\label{c_dCLP} 
  We have: 
  \begin{enumerate}
  \item[(1)] the fixed loci $\cB^{h,e}=\bigsqcup \cB^{h,e}_w$ are smooth;
  \item[(2)] $ \cB^{e}_w \ne \varnothing \Leftrightarrow \cB^{h,e}_w \ne \varnothing \Leftrightarrow
    w \in W(e)$;
    \item[(3)] the manifold $\cB^{h,e}_w$ is cut out inside
      $\cB^{h}_w\cong \bG^h/\bB^h$ by a transverse section 
      \[
        \bG^h/\bB^h  \owns g \mapsto \ad(g^{-1}) e \bmod \left( w \fn \right)_2
  \]
  of a homogeneous vector bundle;
\item[(4)] if  $\cB^{h,e}_w\ne \varnothing$ then
  \begin{equation}
  \dim \cB^{h,e}_w = \dim \bG^h/\bB^h -
  \dim \fg_2 \big/\left( w \fn \right)_2\label{eq:35} \,, 
 \end{equation}
 and $\cB^{h,e}_w= \varnothing$ when this number is negative. 
  \end{enumerate}
\end{Corollary}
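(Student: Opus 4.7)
The plan is to adapt the transversality argument used just before the corollary to prove smoothness of the DCLP strata $\cB^e_w$, but with the parabolic $\sP$ replaced by its Levi $\bG^h$ and $\sP_w$ replaced by $\bB^h = \bG^h \cap w\bB w^{-1}$. Throughout I take $w$ of minimal length in its $W^h$-coset and work in the chart $\cB^h_w = \bG^h w\fn \cong \bG^h/\bB^h$ furnished by \eqref{eq:85_}.

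First I would describe the scheme-theoretic cut. A point $g\bB^h \in \bG^h/\bB^h$ lies in $\cB^{h,e}_w$ iff $\ad(g^{-1})e \in w\fn$. For $g \in \bG^h$ the element $\ad(g^{-1})e$ retains $\ad(h)$-eigenvalue $2$, so the condition localises to its degree-$2$ piece $\ad(g^{-1})e \in (w\fn)_2$. Hence $\cB^{h,e}_w$ is the zero locus of the section
$$
s_w \colon \bG^h/\bB^h \to \bG^h \times_{\bB^h} \fg_2\big/(w\fn)_2, \qquad g\bB^h \mapsto \ad(g^{-1})e \bmod (w\fn)_2,
$$
of the indicated homogeneous vector bundle, giving (3).

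Next I would check transversality. By Proposition \ref{p_open} together with \eqref{eq:58_}, the $\bG^h$-orbit $\fg^\circ_2 = \ad(\bG^h)\cdot e$ is open in $\fg_2$, so the orbit map $\bG^h \to \fg_2$, $g \mapsto \ad(g^{-1})e$, is submersive at every point. Composing with the linear projection $\fg_2 \to \fg_2/(w\fn)_2$ preserves surjectivity on tangent spaces, which is precisely transversality of $s_w$ to the zero section. From this I immediately obtain smoothness (1) and the dimension formula \eqref{eq:35} whenever $\cB^{h,e}_w \ne \varnothing$; conversely, a transverse section on a base of too small dimension cannot vanish, so \eqref{eq:35} forces emptiness whenever the RHS is negative, completing (4).

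Finally, for the equivalences in (2), the very definition of $W(e)$ in Section \ref{s_W(e)} gives $w \in W(e) \Leftrightarrow \cB^e \cap \cB^h_w \ne \varnothing \Leftrightarrow \cB^{h,e}_w \ne \varnothing$ for free. To prove $\cB^e_w \ne \varnothing \Leftrightarrow \cB^{h,e}_w \ne \varnothing$, I invoke the one-parameter subgroup $\sigma(q)$ from \eqref{eq:141_sigma}: it fixes $e$, hence preserves $\cB^e$, and by Proposition \ref{p_open} it contracts the Bruhat cell $\sP w\fn$ onto its $h$-fixed locus $\bG^h w\fn = \cB^h_w$ as $q \to 0$. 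Any $x \in \cB^e_w$ therefore has a well-defined $q \to 0$ limit lying in $\cB^{h,e}_w$, giving one direction; the reverse $\cB^{h,e}_w \subset \cB^e_w$ is tautological. The only delicate point, and the main potential obstacle, is matching conventions so that the $\sigma(q)$-limit lands on the component of $\cB^h$ indexed by the same minimal-length $w$; this is pure bookkeeping using Appendix \ref{A_Bh} and requires no new idea beyond the transversality argument above.
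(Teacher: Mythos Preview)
Your proof is correct and follows the natural line the paper leaves implicit: the Corollary has no separate proof in the paper, being treated as an immediate restriction to the $h$-fixed part of the transversality argument in the preceding Proposition. Your elaboration of (3), (1), (4) via the open $\bG^h$-orbit on $\fg_2$ is exactly that restriction, and your contraction argument for (2) is the intended one (the paper notes just before \eqref{eq:25} that the Bruhat cells are the attracting manifolds of the $q^h$-action, which is the fact you need; your citation of Proposition~\ref{p_open} for this step is slightly off, since that Proposition concerns the action on $\fg$ rather than on $\cB$, but the content is the same).
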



\subsubsection{}
The dimension formula \eqref{eq:35} also follows from the
identification
\begin{equation}
  \label{eq:31}
  \cB^{h,e}_w  / \bC_\phi = \left( w \fn \right)_2^\circ / \bB^h \,, 
\end{equation}
because
\[
  \dim \bC_\phi = \dim \bG^h - \dim \fg_2 \,.
\]
Similarly, we have
\begin{align}
  \dim \cB^{e}_w &= \dim \sP/\sP_w - \dim \fg_{\ge 2} \big/\left( w \fn
  \right)_{\ge 2} \notag \\ 
&= \dim \fg_{[0,1]}  - \dim \left( w \fb
  \right)_{[0,1]} \label{eq:36}
  \,, 
\end{align}
where the subscripts refer to the ranges of the grading.

\subsection{Nilpotent elements in type $G_2$}\label{s_g2nt}

\subsubsection{}

In the rest of this section, $\fg$ denotes the simple complex Lie algebra of type
$G_2$ and $\bG$ the corresponding simple complex Lie group (which is
both $1$-connected and adjoint). 
Figure \ref{f_g2} explains how we label the roots of $\alpha_i$ of
$\fg$ so that
\begin{align*}
  \{\alpha_1, \alpha_6\} & = \textup{simple positive roots} \,,\\
   \{\alpha_1, \alpha_3, \alpha_5 \} & = \textup{short positive roots}\,,
  \\
  \{\alpha_2, \alpha_4, \alpha_6 \} & = \textup{long positive roots}
                                      \,. 
\end{align*}
We identify $\mathfrak{h}^\vee \owns \alpha_i$ with its dual so that
$\|\alpha_i\|^2 \in \{2,6\}$. Then
\[
  \{\alpha_3,\alpha_4^*\}\,, \quad \alpha_4^*=\tfrac13
\alpha_4\,, 
\]
are the fundamental coroots.
\begin{figure}[!h]
  \centering
 \includegraphics[scale=0.42]{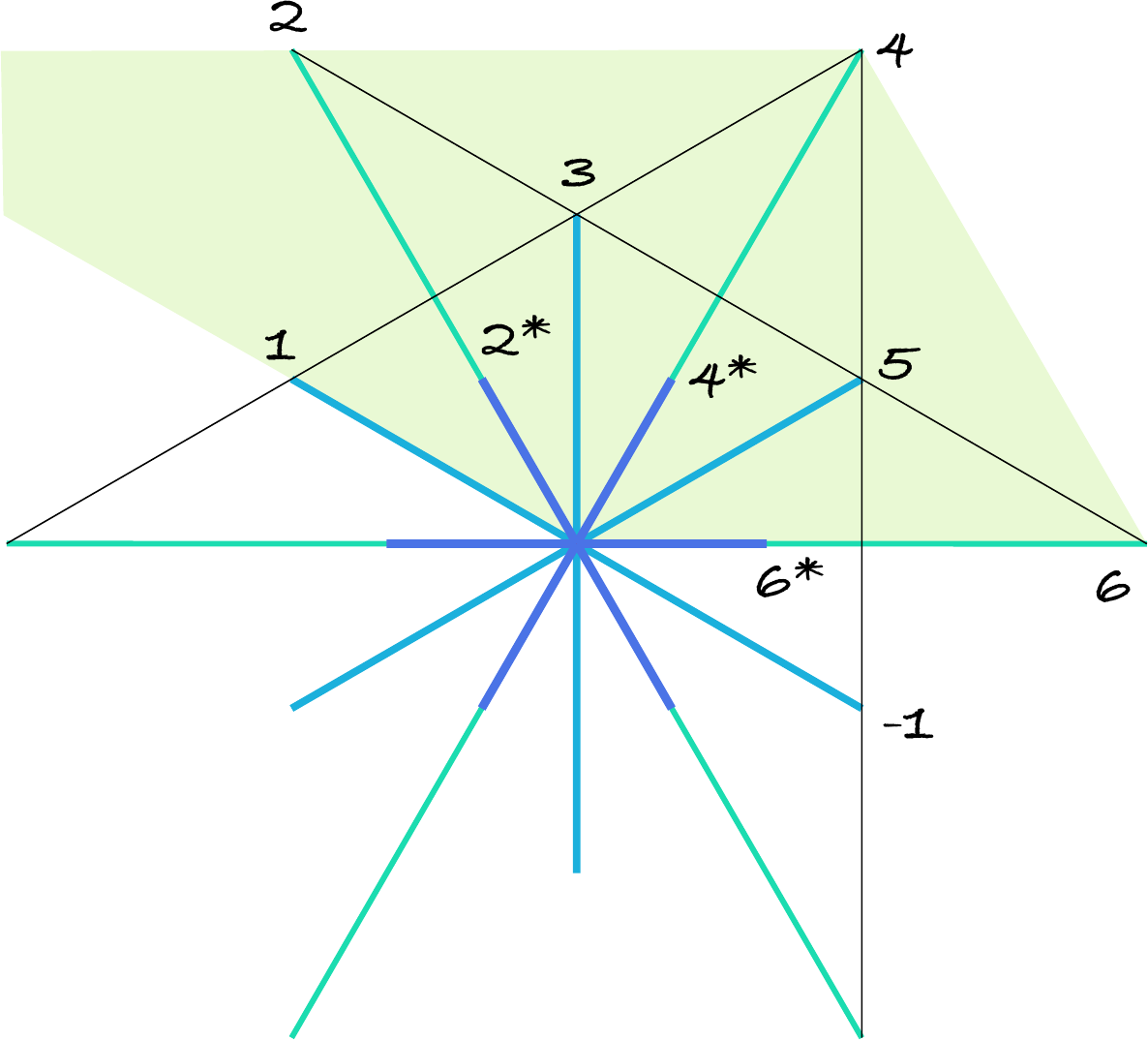}
  \caption{The positive roots $\alpha_i$ of $\mathfrak{g}$ are labelled by
     $\{1,\dots,6\}$. We normalize them so that $\|\alpha_i\|^2 \in
     \{2,6\}$. With this normalization, $3$ and $4^*$ are the
     fundamental coroots. The positive cone is shaded. The black lines
   is where the characteristics $\{2\alpha_2^*, 2\alpha_4^*,
   2\alpha_6^*\}$ take value 2.}
  \label{f_g2}
  \end{figure}

\subsubsection{}

Recall that the nilpotent elements in $\fg$ are classified as
follows, see \cite{Coll}. First, there are those which are not
distinguished, that is, belong to a proper Levi subalgebra. This means
they are either 0 or conjugate to a root vector. The corresponding
characteristic $h$ is thus either 0 or an element in the dual roots
system.

In addition the the principal nilpotent, the Lie algebra $\fg$ has
another distinguished class with the characteristic conjugate to
\begin{equation}
  \label{h_dist}
  h= 2 \alpha_4^* = \tfrac{2}{3} \alpha_4 \,. 
  \end{equation}
  This element in plotted in red in Figure \ref{f_example_g2_I}.
  For the rest of this section, we fix this $h$. It gives 
  \[
    \fg_2 = \fn_2 = \textup{span}(e_2,e_3,e_5,e_6)\,.
  \]
These are the roots that line on one of the black lines in Figure
\ref{f_g2}.

\subsubsection{}

We have
\[
  \bG^h \cong GL(2)
\]
where the $SL(2)$ subgroup corresponds to the root $\alpha_1$ and the
center corresponds to the coroot $h/2$.
%
%
As a $\bG^h$-module, 
\[
  \fg_2  \cong {\det}^{-1} \otimes S^3 \C^2 \,, 
  \]
  where $\C^2$ is the defining representation of $GL(2)$. The
  weights of this module can be seen in Figure \ref{f_example_g2_I}. 

  Since the center of
  $GL(2)$ acts with weight $1$ in this representation, 
  the action of $\bG^h$ on $ \fg_2$ reduces
  to action of $PSL(2)$ on
  \[
    \bP\left(  S^3 \C^2 \right) = S^3 \bP^1 \,,
  \]
  that is, the action on unordered triples of points on $\bP^1$. This
  action has an open orbit corresponding to triples of disjoint
  points, with stabilizer isomorphic to $S(3)$.
  
  The preimage of this open orbit in $ \fg_2$ is $\fg_2^\circ$.
  It corresponds to cubic polynomials in $S^3 \C^2$ with distinct
  roots. For $e\in \fg_2^\circ$, the stabilizer is $S(3)$ and
  the inclusion of the stabilizer
  \begin{equation}
 S(3) = \bC_\phi  \xrightarrow{\quad \iota \quad} \bG^h  =
 GL(2) \label{eq:39}
 \end{equation}
 is the irreducible 2-dimensional representation of $S(3)$.

\subsubsection{}

Since $\fg_2 = \fn_2$, we have $\fg_2/\fn_2 =0$ and hence
\[
  \cB^{e,h}_1 = \bG^h / \bB^h \cong \bP^1 \,.
\]
For other elements in $W$, we have 
\begin{alignat*}{1}
  \left(s_6 \fn\right)_2 &= \textup{span}(e_2,e_3,e_5)\,, 
  \\
  \left(s_2 \fn\right)_2 &= \textup{span}(e_5,e_6)\,,
\end{alignat*}
et cetera. In general, the dimension of $(w \fn)_2$ for various $w$
correspond to the number of positive roots $\alpha$ on lines $\alpha(w^{-1} h)
= 2$. Three of these lines are plotted in Figure \ref{f_g2}, they 
contain 4, 3, and 2 roots, respectively. 
Thus counting dimensions as in Corollary \ref{c_dCLP}, we see that
\[
  \cB^{e,h}_w = \varnothing \,, \quad w \notin \{1,s_6\} \,, 
\]
while 
\[
  \cB^{e,h}_{s_6} = \textup{3 points} \,. 
\]
Indeed, 
\begin{align*}
  \bP( (s_6 \fn)_2^\circ )&\cong \textup{distinct triples in $S^3 \bP^1$
    containing $\infty \in \bP^1$} \\
   & \cong S^2 \A^1 \,, 
\end{align*}
and thus
\[
  \cB^{e,h}_{s_6} / S(3)  \cong (s_6 \fn)_2^\circ \big/ \bB^h  \cong
\pt / S(2)
\,, 
\]
where we used the identification \eqref{eq:31}.

\subsubsection{}

Computing the dimensions, we see that $\cB^e_1 = \bP^1$ while
\[
  \cB^e_{s_6} = \textup{3 copies of $\A^1$} \,.
\]
Altogether, the Spinger fiber $\cB^e$ is the configuration of
four $\bP^1$'s, in agreement with the general
theory \cite{Slodowy}.  See 
Figure \ref{fig_Be}, where the
arrows indicate the directions of
attraction of $q^h$ as $q\to 0$.
\begin{figure}[!h]
  \centering
 \includegraphics[scale=0.5]{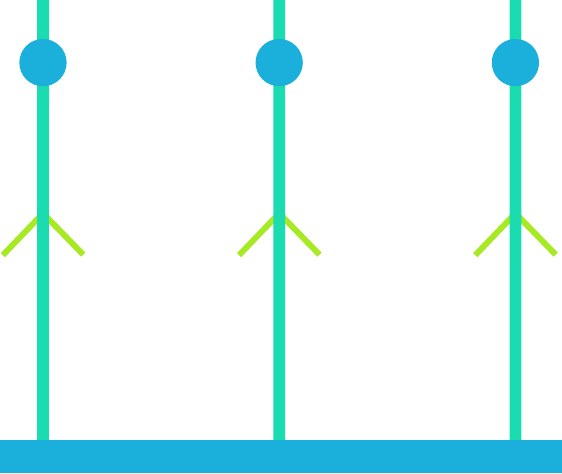}
  \caption{The subregular Springer fiber of type $G_2$ is the
    configuration of $\bP^1$'s whose dual graph is the Dynkin diagram of
    type $D_4$. The $h$-fixed locus, plotted in thick blue,
    is the union of $\cB_e^1\cong \bP^1$ and one point in each of
    the other components. These 3 components
 are permuted by $\bC_\phi\cong S(3)$.}
\label{fig_Be}
\end{figure}

\subsection{Spectral projectors and spectral measure}

\subsubsection{}

Our next goal is to make explicit the spectral projectors and spectral
measure for $e$. We begin with choosing the coordinates $(x_1,x_2)$ on $\bA$ so that
\begin{equation}
\bB^h = \left\{
  \begin{pmatrix}
    x_1 & * \\
    0 & x_2 
  \end{pmatrix} \right\} \subset GL(2) = \bG^h \,.
\label{eq:73}
\end{equation}
The adjoint weights of $\bA$ are displayed in Figure \ref{f_example_g2_I}. 
The Weyl group $W^h\cong S(2)$ acts permuting $x_1$ and $x_2$.

\begin{figure}[!h]
  \centering
  \def\svgwidth{7.7cm}\input{g2_example1.pdf_tex}
 \caption{The characteristic $h$, the adjoint weights of $\bA$, and
   the positive cone. The dotted lines are where $h$ takes values $\pm
 2$.}
\label{f_example_g2_I}
\end{figure}

\subsubsection{}

The map \eqref{eq:39} satisfies 
 \begin{equation}
 \iota((12)) \sim \begin{pmatrix}
    1 \\
    & -1 
  \end{pmatrix}\,, \quad \iota((123)) \sim \begin{pmatrix}
    \nu \\
    & \nu^2 
  \end{pmatrix} \,, \quad \nu=e^{2\pi i/3} \,.
\label{eq:77}
\end{equation}
Comparing the characters we conclude that the decomposition
\eqref{eq:21} takes the form
\begin{equation}
  \label{eq:40}
  \fg = V^2 \boxtimes (1 + \C^2) + V^4 \boxtimes \sgn
\end{equation}
where $1$ and $\sgn$ are the two characters of $S(3)$, while
$\C^2$ is the irreducible 2-dimensional representation. 
In particular,
\begin{align}
  \Slice_e q^{-1} \fg &= q^{-2} (1 + \C^2) + q^{-3} \sgn\,,  \notag \\
  \fc_e &= q (1 + \C^2) + q^2 \sgn \,, 
  \label{eq:58}
\end{align}
as $\bCh_e$-modules. We conclude
\begin{equation}
  \label{eq:59}
  \Psi_e \Big|_{q^{h/2} S(3)} = \Psi\Big((q^{-2}-q) (\C^2 +1) + (q^{-3}-q^2) \sgn -4
  q^{-1}\Big)\,. 
\end{equation}

\subsubsection{}
The spectral projectors $\cP_\pm$ are given by integration along the
Spinger fibers as in 
\eqref{e_cPbB}. They take a function $f(x_1,x_2)$ on 
the torus $\bA$, viewed as K-theory class on $\cB_e$
to a conjugation-invariant
function on $S(3)$. The difference between $\cP_\pm$ is in the
identification of $K_\bG(\cB)$ with functions on the torus $\cA$.
Since opposite Borel subgroups are used for this identification, we
have
\[
  \cP_+ = \cP_- \circ w_0 \,, \quad w_0 (x_1,x_2) = (x_1^{-1},x_2^{-1}) \,.
\]
Here $w_0$ is the longest element of the Weyl group.

By stratification and equivariant localization, the integral over
$\cB^e$ is reduced to integrals over $\cB^{h,e}_w$ for $w=1,s_6$,
giving the decomposition
\[
  \cP_+ = \cP_{+,1} + \cP_{+,s_6} \,.
\]
We deal with them separately, starting with $w=1$.

\subsubsection{}

Restricted to $\cB^h_1$, the
virtual tangent bundle to $\cB_e$ is induced from the $\bB^h$-module
\begin{align}
(1-q^{-1}) \fn^\vee &= (1-q^{-1}) (\fg_{-4}+\fg_{-2}) + (1-q^{-1})
                      x_2/x_1 \label{virTnchar}\\
                    &=\underbrace{(1-q^{-1}) (\fg_{-4}+\fg_{-2}) + q^{-1} (2-\fg_0)
                      }_ {\textup{trivial bundle $V_0$}}+
                      \underbrace{x_2/x_1 + q^{-1} x_1/x_2}_{T^*
    \bP^1} \,.  \notag
\end{align}
We observe that last two terms give the cotangent bundle to
$\bB^{h,e}_1 \cong \bP^1$, while the remaining terms, being
$\bG^h$-modules, induce a trivial virtual bundle $V_0$ over $\bP^1$ with
the corresponding action. Computing their $S(3)$-characters, we
conclude
\[
  V_0 = -q^{-3} \sgn -q^{-2} (1+\C^2) + 2 q^{-1} \,. 
\]
Therefore 
\[
  \cP_{+,1} \, f  \Big|_{q^{h/2} S(3)}
= \Psi(V_0)  \, \DD  f  \Big|_{q^{h/2} S(3)}
\]
where
\begin{align}
  \DD  f(x_1,x_2) &= \chi( f \otimes \psi(T^*\bP^1)) \notag \\
  &= (1 + (12)) Z(x_1/x_2) f(x_1,x_2) \,.  \label{eq:64}
\end{align}
The result in \eqref{eq:64} is interpreted
geometrically as an element of $K_{GL(2)}(\pt)$, which we subsequently
restrict to $q^{h/2} S(3)$.

Collecting all terms above, we obtain 
\begin{equation}
  \Psi_e \left(\cP_{+,1} f_1\right) \left( \cP_{-,1} f_2 \right) \Big|_{q^{h/2} S(3)}=
\left.  \frac{\DD f_1(x_1,x_2) \, \DD  f_2(x_1^{-1},x_2^{-1})}
{Z(q^2(1+\C^2)+q^3\sgn)} \right|_{q^{h/2} S(3)} \,. 
  \label{eq:61}
\end{equation}

\subsubsection{}

Note that because of the pole of $Z(x_1/x_2)$ at $x_1=x_2$,
the value of \eqref{eq:64} at $q^{h/2}$, that is, at
$(x_1,x_2)=(q,q)$, involves the derivatives of $f$ at this point,
as well as the subleading term in the expansion of $Z$ at the pole.

Concretely, if
\[
  Z(x) = \frac{z_{-1}}{1-x^{-1}} + z_0 + \dots
\]
then
\[
  \DD f(x_1,x_2) \Big|_{x_1=x_2} = (z_{-1} + 2 z_0) f(x_1,x_1) +
z_{-1} \left.\left(x_1 \frac{\partial}{\partial x_1} - x_2 \frac{\partial}{\partial
      x_2} \right) f \right|_{x_1=x_2} \,.
\]

\subsubsection{}

We now consider $\cB^{h,e}_{s_6}$, which consists of $3$ points
permuted by $S(3)$. Since this is a permutation representation, the
character of $(123)$ vanishes, the character of $(12)$ 
equals the contribution of unique point fixed by this transposition,
while the character of identity is 3 times the contribution of a
single point. 

As a concrete nilpotent element $e$ we may take
\[
  e = e_2 + e_5 \,,
\]
where the labeling of the roots is an in Figure \ref{f_g2}, the
normalization of the root vectors being immaterial. This
belongs to $\fg_2^\circ$, is fixed by
\begin{equation}
  \bA^e=  \left\{
  \begin{pmatrix}
    \varepsilon & 0 \\
    0 & 1
  \end{pmatrix} \right\} \,, \quad \varepsilon = \pm 1\,, 
\label{eq:73_}
\end{equation}
and is contained in $s_6 \fn$, see Figure \ref{f_example_g2_II}. 

\begin{figure}[!h]
  \centering
  \def\svgwidth{7.7cm}\input{g2_example2.pdf_tex}
  \caption{Same as in Figure \ref{f_example_g2_I} but now $s_6 \fn
    \in \cB^e$ is
    shaded.}
\label{f_example_g2_II}
\end{figure}

The reflection $s_6$ acts by
\[
  s_6 \cdot (x_1,x_2) = (x_1,x_1/x_2) 
\]
and we have to apply $s_6$ to both the K-theory class $f$ and to 
the character of the virtual normal bundle \eqref{virTnchar} to compute
the contribution of the point $s_6 \fn \in \cB^{e,h}_{s_6}$.
We obtain
\begin{align}
(1-q^{-1}) T \cB \Big|_{s_6 \fn} &= 2 (q^{-1} - q^{-2}) + (q-q^{-3}) \varepsilon
                                   \,. \notag \\ 
 & = V_0 \Big|_{S(2)} + \varepsilon(q+q^{-2}) \,. \label{eq:72}
\end{align}
Therefore
\begin{alignat}{2}
\cP_{+,s_6} &f \Big|_{q^{h/2}} && = 3 \Psi(V_0) Z(q^2)  f(q,1) \,, \label{eq:65_1}
  \\
\cP_{+,s_6} &f \Big|_{q^{h/2}(12)} && =  \Psi(V_0) Z(-q^2) f(-q,-1) \,, \\
\cP_{+,s_6} &f \Big|_{q^{h/2}(123)} && = 0 \,. \label{eq:65}
\end{alignat}

\subsubsection{}
We now have all the data needed to express the contribution of the
subregular nilpotent $e$ to the spectrum. The contributions consist
of 3 isolated points corresponding to the conjugacy classes of
$S(3)$.

The identity $1\in S(3)$ corresponds to the point
\[
  W q^{h/2} = W(q,q) \in \bA/W
= \spec \sH
\]
and contributes
\begin{equation}
\| \delta_{W(q,q)} f \|_\Psi^2 = \frac{1}{6} \frac{1}{Z(q^2)^3 Z(q^3)}
\left| \DD f \Big|_{(q,q)} + 3 Z(q^2) f(q,1) \right|^2
\,, \label{eq:66}
\end{equation}
to the spectrum, where $\delta_{W(q,q)}$ is the spectral projector. 
The prefactor $\frac{1}{6}$ comes from the Haar measure on $S(3)$ and 
the difference $3 Z(q^2) f(q,1)$ between \eqref{eq:66} and
\eqref{eq:61} is the contribution of \eqref{eq:65_1} \,.

The transposition $(12) \in S(3)$ corresponds to the point $(-q,q) \in
\spec \sH$ and it contributes
\begin{equation}
\| \delta_{W(-q,q)} f \|_\Psi^2 = \frac{1}{2} \frac{
\left| Z(-1)(f(-q,q)+f(q,-q)) + Z(-q^2) f(-q,-1) \right|^2}
{Z(q^2)^2 Z(-q^2) Z(-q^3)} \,.
\label{eq:67_}
\end{equation}
Finally, the cycle $(123)$ corresponds to the point $(q \nu, q \nu^2)$
where $\nu=e^{2\pi i/3}$, and it contributes
\begin{equation}
\| \delta_{W(q\nu,q\nu^2)} f \|_\Psi^2 = \frac{1}{3} \frac{
\left| Z(\nu^{-1}) f(q\nu,q\nu^2)+Z(\nu) f(q\nu^2,q\nu) \right|^2}
{Z(q^2) Z(q^2 \nu) Z(q^2 \nu^2) Z(q^3)} \,. 
 \label{eq:68_}
\end{equation}

\subsubsection{}

In the number field situation, that is, in cohomology, only the point
\eqref{eq:66} survives and becomes the point 
\[
  \tfrac12 h = \alpha^*_4=(1,1) \in \Lie \bA/ W\,.
\]
It is instructive to compare the results
with Landlands' computation.

The additive analog of the operator \eqref{eq:64} is the
operator
\begin{equation}
  \label{eq:80}
  \DDa f = (1+(12)) \, \xi(x_1- x_2) f(x_1,x_2) \,. 
\end{equation}
where
\[
  \xi(x) = - \frac{1}{x} + a + \dots \,,
\]
in the notations introduces at the bottom of page 186 in \cite{L}. This
means that
\[
  \DDa f \Big|_{x_1=x_2} = - (\partial_1 - \partial_2) f
    (x_1,x_1) + 2 a f(x_1,x_1) \,. 
    \]
    Therefore
\begin{equation}
\| \delta_{W(1,1)} f \|_\Psi^2 = \frac{1}{6} \frac{
\left| (\partial_1 - \partial_2) f(1,1) - 2a f(1,1)- 3 \xi(2) f(1,0)
\right|^2}
{\xi(2)^3 \xi(3)}
\,, \label{eq:69_}
\end{equation}
where we have changed the sign under the absolute value sign
to ease the comparison with the matrix in Figure \ref{f_matrix}. 

\begin{figure}[!h]
 \centering
  \includegraphics[scale=0.38]{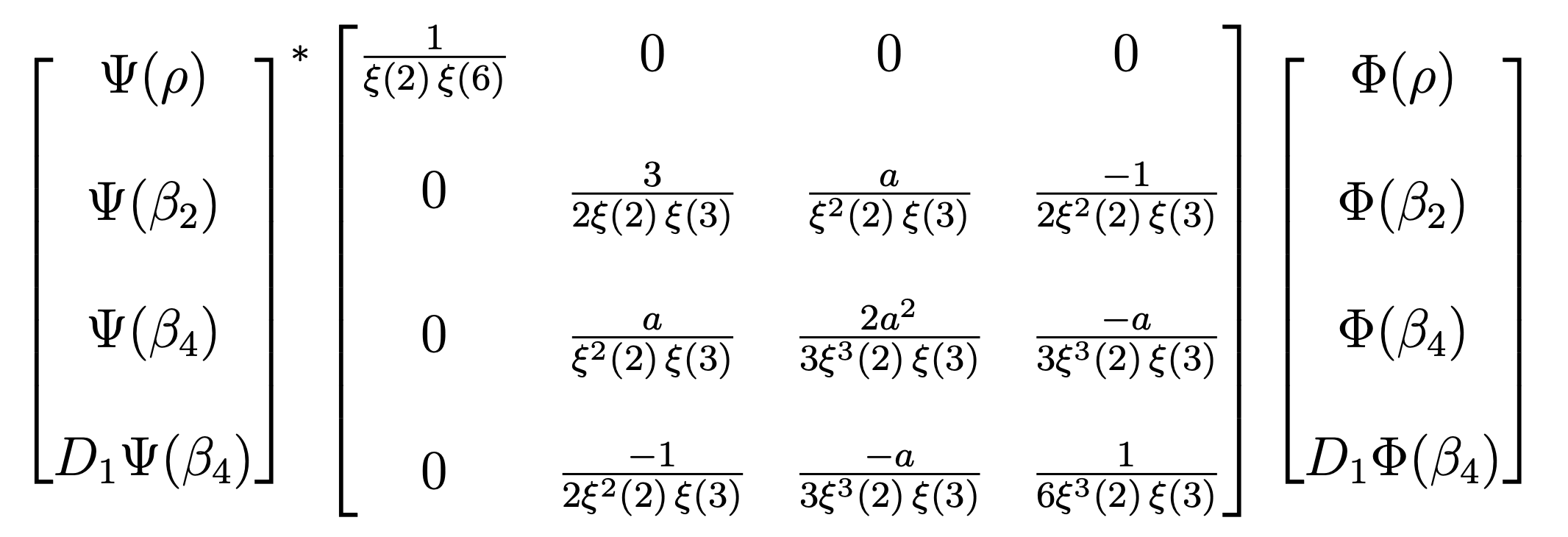}
  \caption{The spectral projector for the discrete spectrum of
    $G_2$ reproduced from page 195 of \cite{L}. The
    point $\rho$, which corresponds to $\rho^\vee$ in our notations,
    is the contribution of the regular nilpotent considered in Section
\ref{s_ex_reg}.}
\label{f_matrix}
\end{figure}

To match the coordinates with Langland's coordinates, we note 
that Langlands' roots $\beta_i$ are our dual roots $\alpha_i^*$,
see the illustration on on page
181 of \cite{L}. In particular, 
\[
  \beta_4 = (1,1) \,, \quad \beta_2 = s_6 \beta_4 = (1,0) \,,
\]
and 
\[
  \beta_1 = 2 \beta_2 - \beta_4 = (1,-1) \,. 
\]
Since the derivative $D_1$ used by Langlands is the derivative in the
direction of $\beta_1$, we conclude
\[
  D_1 =  \partial_1 - \partial_2 \,,
\]
completing the match.

\appendix 

\section{Appendix}

\subsection{Group cohomology}\label{a_coh}

\subsubsection{}

The cohomology of a complex algebraic group $\bG$ with
coefficients in a representation $V$ is defined by
\[
  H^i(\bG,V) = \Ext^i_\bG (\C, V) \,,
\]
where $\C$ is trivial representation and Ext's are computed in the
category of algebraic $\bG$-modules. These are the derived functors of $\bG$-invariants
\[
  H^0(\bG, V) = V^\bG \,.
\]
They may be computed using the usual
cochain complex, see e.g.\ \cite{FF}, in which the terms are given by
morphisms $\bG^{\times n} \to V$ of algebraic varieties.

\subsubsection{}
We define
\begin{equation}
\chi(\bG,V) = \sum_i (-1)^i H^i(\bG,V)  \,.\label{eq:6}
\end{equation}
This is linear in the $K_\bG(\pt)$-class of $V$ by the long exact
sequence of cohomology.

\subsubsection{}
For a group extension
\[
  1 \to \bG_1 \to \bG \to \bG_2 \to 1 \,, 
\]
we have 
\begin{equation}
  \label{eq:7}
  \chi(\bG,V) = \chi(\bG_2, \chi(\bG_1,V) ) 
\end{equation}
by the spectral sequence for derived functors of 
\[
  V^\bG = \left( V^{\bG_1} \right)^{\bG_2} \,.
\]
Formula \eqref{eq:7} reduces the computation of $\chi(\bG,V)$ to that for
the unipotent radical and the reductive quotient in \eqref{Most}.

\subsubsection{}
We have (essentially by definition) 
\[
  \textup{$\bG$ is reductive} \, \Leftrightarrow H^{>0}(\bG,
\, \cdot \,) = 0 \,, 
\]
and if $\bU$ is a unipotent group then
\begin{equation}
\Hd(\bU,V) = \Hd(\fu, V)\,, \quad \fu= \Lie \bU\,, \label{eq:8}
\end{equation}
as a consequence of the Hochschild-Mostow spectral sequence, see
\cite{FF}, Section 1.\S 3.

\subsubsection{}
We recall that Lie algebra cohomology in \eqref{eq:8} is succinctly
computed by the Chevalley-Eilenberg complex
\begin{equation}
\Hd(\fu, V)  = \Hd\left(V \to V \otimes \fu^\vee \to V \otimes
  \Lambda^2 \fu^\vee \to \dots \right) \,.\label{ChE}
\end{equation}

\subsection{Cohomologically proper actions}\label{a_proper}

\subsubsection{}
Consider an action-preserving map
\begin{equation}
  (\bG_1, \bX_1)  \xrightarrow{\quad (f_\bG,f_\bX) \quad} 
    (\bG_2, \bX_2) \label{fXY}
\end{equation}
with $f_\bG$ surjective. It is cohomologically proper if 
$
\Rd(f_\bG,f_\bX)_* \cF
$
is a bounded complex of coherent sheaves for any $\cF\in
\Coh_{\bG_1}(\bX_1)$.
Here
\[
  R^0(f_\bG,f_\bX)_* \cF  =\left(f_{\bX,*}\cF\right)^{\ker f_\bG} \,. 
\]
Obviously, cohomological proper maps are closed under composition and
a proper map is cohomologically proper.

\subsubsection{} 
By definition, an action of an algebraic group $\bG$ on a scheme $\bX$
is cohomologically proper if the canonical map 
\[
  (\bG, \bX) \to (\{1\},\pt) 
\]
is cohomologically proper. In this case, we also say that the
quotient stack $\bX/\bG$ is cohomologically proper.

\subsubsection{}

A 
nontrivial example of a cohomologically proper map is given by the
following

\begin{Proposition}\label{p_proper}
  If the map $f_\bX$ in \eqref{fXY} is affine, $\ker f_\bG$ is reductive, and
  \[
    (f_{\bX,*} \cO_{\bX_1})^{\ker f_\bG}= \cO_{\bX_2} 
  \]
  then \eqref{fXY} is cohomologically proper
\end{Proposition}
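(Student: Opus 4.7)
The plan is to reduce the assertion to two well-known facts: the exactness of pushforward along an affine map, and the exactness of invariants under a reductive group. First I would observe that since $f_\bX$ is affine, $f_{\bX,*}$ is exact on quasi-coherent sheaves, so $R^i f_{\bX,*} \cF = 0$ for $i>0$, and $f_{\bX,*}\cF$ is a quasi-coherent sheaf on $\bX_2$ which is moreover coherent as a module over $\cA = f_{\bX,*}\cO_{\bX_1}$ (because $\cF$ is coherent on $\bX_1 = \Spec_{\bX_2}\cA$).

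Next, the hypothesis that $\ker f_\bG$ is reductive means that the functor $(\,\cdot\,)^{\ker f_\bG}$ on the category of algebraic $\ker f_\bG$-modules is exact, hence its derived functors vanish in positive degrees. Combining the two, via the Grothendieck spectral sequence for the composition, one obtains
$$
R^i (f_\bG, f_\bX)_* \cF = 0 \quad \text{for } i>0,
\qquad R^0 (f_\bG, f_\bX)_* \cF = (f_{\bX,*}\cF)^{\ker f_\bG},
$$
so it remains to verify that the degree-zero piece is coherent on $\bX_2$.

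The coherence is where the normality condition $(f_{\bX,*}\cO_{\bX_1})^{\ker f_\bG}=\cO_{\bX_2}$ enters. Working locally, I would pick an affine open $U=\Spec R\subset \bX_2$ so that $f_\bX^{-1}(U)=\Spec A$ is a $\ker f_\bG$-scheme with $A^{\ker f_\bG}=R$, and $f_{\bX,*}\cF|_U$ corresponds to a finitely generated $A$-module $M$ equipped with a compatible $\ker f_\bG$-action. I would then invoke the classical finite-generation theorem of invariant theory for reductive groups: if $H$ is reductive, $A$ is an $H$-algebra, and $M$ is a finitely generated $A$-module with compatible $H$-action, then $M^H$ is a finitely generated $A^H$-module. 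Applied here with $H=\ker f_\bG$, this gives coherence of $M^{\ker f_\bG}$ over $R$, hence of the global sheaf $(f_{\bX,*}\cF)^{\ker f_\bG}$.

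The only substantive step is the last one; the vanishing of the higher derived functors is formal. The hypothesis $(f_{\bX,*}\cO_{\bX_1})^{\ker f_\bG}=\cO_{\bX_2}$ is precisely what realizes $f_\bX$ locally as an affine GIT quotient by the reductive group $\ker f_\bG$, which is the setting where the classical finite-generation result applies.
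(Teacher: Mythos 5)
Your proof is correct and reaches the same end, but the route through the key finiteness input is genuinely different from the paper's. You invoke the classical Hilbert-type theorem directly — for reductive $H$, finitely generated $\C$-algebra $A$ with $H$-action, and finitely generated equivariant $A$-module $M$, the invariants $M^H$ are a finitely generated $A^H$-module — after first reducing to the local affine picture $A^H = R$. The paper instead reduces to a statement about finite dimensionality ($\dim \cA^\bG < \infty \Leftrightarrow \dim (\cA\otimes V)^\bG < \infty$ for all finite-dimensional $V$) and proves it via the Grosshans/Vinberg--Popov theorem that $\cA^{\bU}$ is a finitely generated algebra for a maximal unipotent $\bU \subset \bG$; the multiplicity spaces $\cA^{\bU}_\lambda$ then all inherit finite dimensionality from the degree-zero piece by the graded structure. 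Both arguments pivot on the same combination of formal exactness (affine pushforward, reductive invariants, Grothendieck spectral sequence) plus an invariant-theoretic finiteness theorem. What your version buys is a cleaner handling of a general base $\bX_2$ (you localize explicitly, rather than implicitly reducing to $\bX_2=\pt$); what the paper's version buys is a route through the highest-weight grading that makes visible where the reductivity of $\ker f_\bG$ is really used, and that input — finite generation of $\cA^\bU$ — is in fact a stronger statement from which your classical module-finiteness theorem also follows.
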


\begin{proof}
It suffices to prove that for a finitely generated algebra $\cA$ with
an action of a reductive group $\bG$
\[
  \dim \cA^{\bG} < \infty
\Leftrightarrow \forall V\,, \dim (\cA \otimes V) ^{\bG} < \infty
\]
where $V$ ranges over finite-dimensional $\bG$-modules.

Highest weights vectors of irreducible $\bG$-submodules in $\cA$
span the graded algebra
\begin{equation}
  \label{grA}
  \cA^\bU = \bigoplus_{\lambda \in \,  \textup{dominant weights}}
  \cA^\bU_\lambda \,,
\end{equation}
where $\bU \subset \bG$ is a
maximal unipotent subgroup. This algebra is finitely generated by
Theorem 3.13 in \cite{VinPop}. Therefore
\[
  \dim \cA^{\bU}_0 < \infty
\Leftrightarrow \forall \lambda \,, \dim \cA_\lambda^{\bU} < \infty\,,
\]
as was to be shown. 
\end{proof}

\subsubsection{}

\begin{Proposition}
  The conclusion of Proposition \ref{p_proper} also holds
  for $\ker f_\bG=\Ga$. 
\end{Proposition}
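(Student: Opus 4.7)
The plan is to exploit the cohomological dimension $1$ of $\Ga$ for boundedness, and to reduce coherence to a reductive setting via the order filtration and a Rees-algebra construction, so that Proposition \ref{p_proper} itself can be reapplied to the reductive factor of a solvable extension.

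\textbf{Step 1 (boundedness).} Since $\Ga$ is unipotent of dimension $1$ in characteristic zero, the Hochschild--Mostow identification \eqref{eq:8} together with the Chevalley--Eilenberg complex \eqref{ChE} for the one-dimensional abelian Lie algebra $\Lie \Ga$ reduces $H^\bullet(\Ga,V)$ to the two-term complex $V \xrightarrow{D} V$, so $H^i(\Ga,V)=0$ for $i\ge 2$. Consequently $\Rd(f_\bG,f_\bX)_*\cF$ automatically has amplitude $[0,1]$, and only the coherence of $R^0$ and $R^1$ remains to be established.

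\textbf{Step 2 (affine reduction).} Because coherence is local on $\bX_2$ and $f_\bX$ is affine, I would replace $\bX_2$ by an affine open $\Spec \cA_2$ and $\bX_1$ by $\Spec \cA$, with $\cA^\Ga = \cA_2$ a finitely generated $\C$-algebra by the hypothesis $(f_{\bX,*}\cO_{\bX_1})^\Ga = \cO_{\bX_2}$. A $\bG_1$-equivariant coherent sheaf becomes a finitely generated $\cA$-module $M$ with a locally nilpotent operator $D_M \colon M\to M$ satisfying the Leibniz rule with the derivation $D\colon\cA\to\cA$ generating the $\Ga$-action. The task becomes: show that $\ker D_M = H^0(\Ga,M)$ and $\coker D_M = H^1(\Ga,M)$ are finitely generated $\cA_2$-modules.

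\textbf{Step 3 (solvable extension via Rees).} I would enlarge the picture to an action of the Borel $\bB_2 = \Ga \rtimes \Gm$, with $\Gm$ acting on $\Ga$ by scaling. Concretely, the order filtrations
\begin{equation*}
F^n(\cA) = \ker D^{n+1}, \qquad F^n(M) = \ker D_M^{n+1}
\end{equation*}
exhaust $\cA$ and $M$ by local nilpotence, and the Rees algebras
\begin{equation*}
\Rees(\cA) = \bigoplus_{n\ge 0} F^n(\cA)\cdot \tau^n, \qquad \Rees(M) = \bigoplus_{n\ge 0} F^n(M)\cdot \tau^n
\end{equation*}
carry commuting $\Ga$- and $\Gm$-actions, assembling into a $\bB_2$-action. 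Specialization at $\tau = 1$ recovers $(\cA, M)$, while the $\tau = 0$ fiber is the associated graded, on which the $\Gm$-grading encodes the $D$-order. The reductive subgroup $\Gm \subset \bB_2$ now lets one invoke Proposition \ref{p_proper} to extract finite generation of $\Gm$-equivariant invariants of $\Rees(M)$ over $\Rees(\cA)^{\Gm} = \cA_2$. Together with the two-term Chevalley--Eilenberg presentation of $\Ga$-cohomology, specializing $\tau\to 1$ transfers these finiteness statements to $\ker D_M$ and $\coker D_M$ themselves.

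\textbf{Principal obstacle.} The cokernel $H^1(\Ga,M) = \coker D_M$ has no analog in the reductive case treated in Proposition \ref{p_proper}, and its coherence is the hardest point: it requires that the specialization $\Rees(M)|_{\tau=1}$ preserves not just $\cA_2$-finite generation of $M^\Ga$ but also of $M/D_M M$. A dévissage using the short exact sequences
\begin{equation*}
0 \to \ker D_M \to M \xrightarrow{\;D_M\;} \Img D_M \to 0, \qquad 0 \to \Img D_M \to M \to \coker D_M \to 0
\end{equation*}
together with Noetherianity of $\cA$ reduces the problem to a statement about the behaviour of $D_M$ on successive filtration quotients $F^n(M)/F^{n-1}(M)$, which inherit a $\Ga$-trivial $\cA_2$-module structure controlled by the $\Gm$-weight---and this is exactly the point where the reductive case of Proposition \ref{p_proper} can be applied to close the induction on filtration level.
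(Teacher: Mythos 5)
Your proposal is much heavier than what is needed, and the part you yourself flag as the \emph{Principal obstacle} is not actually resolved. The paper's argument is two lines and uses none of the Rees-algebra / Borel-extension machinery. One observes that an algebraic $\Ga$-module is a union $\cA = \bigcup_n \cA_n$ of finite-dimensional submodules on which the generator $D$ of $\Lie\Ga$ acts nilpotently, so that $\dim\ker(D|_{\cA_n}) = \dim\coker(D|_{\cA_n})$ for each $n$; passing to the limit gives
$\dim H^1(\Ga,\cA) \le \lim_n \dim H^1(\Ga,\cA_n) = \lim_n \dim H^0(\Ga,\cA_n) = \dim H^0(\Ga,\cA)$,
so finiteness of $H^0$ forces finiteness of $H^1$. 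Your Step~1 (amplitude in $[0,1]$) is correct and matches the implicit bound in the paper, and Step~2 is a standard reduction, but from that point the proof could simply have invoked this inequality. The key elementary fact---equality of kernel and cokernel dimensions for a nilpotent operator on a finite-dimensional space---does not appear anywhere in your argument, and it is essentially the whole content of the paper's proof.

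Step~3 has concrete gaps. (a) Finite generation of $\Rees(\cA)$ for the $D$-order filtration is asserted but not argued; it is not automatic for an arbitrary finitely generated $\cA$. (b) $\Gm\subset\bB_2=\Ga\rtimes\Gm$ is not a normal subgroup, so you cannot literally set up Proposition~\ref{p_proper} with $\ker f_\bG=\Gm$ inside the $\bB_2$-picture; if you instead apply it to the bare $\Gm$-action on $\Spec\Rees(\cA)$, the conclusion controls $\Gm$-invariants of $\Rees(M)$, i.e.\ the degree-zero piece $F^0(M)=\ker D_M$, which handles the kernel but says nothing about the cokernel, since $\coker D_M$ lives at $\tau=1$ and is not a graded piece of $\Rees(M)$. (c) Your final paragraph acknowledges precisely this---that $\coker D_M$ has no reductive analogue---and then declares that a d\'evissage and induction on filtration level \emph{can} be made to close, without producing the inductive step or explaining how the filtration quotients $F^n(M)/F^{n-1}(M)$ (on which $D_M$ vanishes) give any information about where $D_M$ fails to be surjective. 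As written, the proposal is a plausible-looking program, not a proof, and the direct approach above bypasses the entire difficulty.
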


\begin{proof}
 Suffices to prove that 
 \[
   \dim H^1(\Ga,\cA) \le \dim H^0(\Ga,\cA) \,,
  \]
  for any algebraic module $\cA$, that is, any module of the form
  \[
    \cA = \bigcup \cA_n\,, \quad \dim \cA_n < \infty \,. 
  \]
  Here $\cA_n$ is a finite-dimensional vector space with a nilpotent
  operator, for which $H^0$ and $H^1$ are the kernel and the cokernel,
respectively. Therefore, 
\[
  \dim H^1(\Ga,\cA)  \le \lim \dim H^1(\Ga,\cA_n) =
  \lim \dim H^0(\Ga,\cA_n)  = \dim H^0(\Ga,\cA)  \,.
  \]
\end{proof}

\begin{Remark}
For a general unipotent group, the conclusion of
Proposition \ref{p_proper} fails, which has to do with the fact that
a quotient of an affine variety by a unipotent group is, in general,
only quasiaffine. As a simplest example, one can take
the maps
\begin{equation}
  \label{eq:94}
  \xymatrix{
    U \backslash SL(2,\C) /U \ar[rr] \ar[dr] && U \backslash (\C^2 \setminus
    \{0\}) \ar[dl]\\
    & \C }\,, 
\end{equation}
where $U$ is a maximal unipotent subgroup. 
\end{Remark}

\subsection{Flag varieties and Springer fibers}\label{a_Lie} 

\subsubsection{}\label{s_Lie_def}

Let $\bG$ be a complex semisimple Lie group with $\Lie(\bG)=\fg$. Choose a
Borel subgroup $\bB
\subset \bG$. We have 
\[
  \bB = \bA \ltimes \bU
\]
where $\bU$ is the unipotent radical and $\bA$ a maximal torus. We denote
\[
  \fb = \Lie (\bB)\,, \quad \fn = \Lie \bU \,.
\]
We denote by $r$ the rank of $\fg$\,. 

\subsubsection{} 

The flag variety
\[
  \cB \overset{\textup{\tiny def}}= \bG/\bB  = \{ \fb' \subset \fg \} = \{ \fn' \subset \fg \}
\]
may be seen as parametrizing Borel subalgebras $\fb' \subset
\fg$, or the corresponding nilpotent subalgebras $\fn' = [ \fb',
\fb']$, 
with the adjoint action of $\bG$. Further, 
\begin{equation}
T^* \cB = \{(e,\fn'), e \in \fn'\} \,. \label{TsB}
\end{equation}
Here $e\in \fg$ is a nilpotent element. The map
\begin{equation}
\mu: T^*\cB \owns (e,\fn') \mapsto e \in \fg\label{eq:144}
\end{equation}
becomes the moment map upon the identification
$\fg \cong \fg^*$ given by an invariant bilinear form. We
will always treat this map $\Ct_q$-equivariantly,
where $\Ct_q$ acts by $e \mapsto q^{-1} e$. 

\subsubsection{}\label{A_Bh}

For any semisimple $h\in \fg$ we have 
\begin{equation}
\cB^h = \bigsqcup_{w\in W^h\backslash W} \bG^h \, w \, \fn
\,, \label{eq:85}
\end{equation}
where $w$ ranges over coset representatives of minimal length, that
is, such that
\[
  w^{-1} \bB^h w \subset \bB \,.
\]
Each component in \eqref{eq:85} parametrizes $h$-stable maximal
nilpotent subalgebras $\fn'$ with the same dimensions of the graded
components as $w \cdot \fn$.

\subsubsection{}\label{s_App_Spring}

If $e\in \fg$ is a nilpotent element then
\begin{equation}
\cB^e = \mu^{-1}(e)\label{Spri}
\end{equation}
is the Springer fiber of $e$. Being the zero locus of a vector field,
it has a natural derived scheme structure with the
virtual tangent bundle $(1-q^{-1}) T\cB$. 

\subsection{Normalization of measures}\label{s_pT3}

\subsubsection{}

While the spectral decomposition is obviously insensitive to the
precise normalization of the measures, it is worth mentioning
briefly the different choices that contribute to the normalization in 
Langlands' formula \eqref{eq:99}.

\subsubsection{}

The Haar measure on $\bG(\A)$ is the pushforward
of the measure
\begin{equation}
d_{\textup{Haar}}(k) \times |a|^{-2\rho} d_{\textup{Haar}}(a)
\times d_{\textup{Haar}}(u)\label{eq:164}
\end{equation}
under the Iwasawa factorization map 
\[
  \bK \times \bA \times \bU  \to \bG \,. 
\]
There is a canonical way to normalize
the first and the last factor in \eqref{eq:164} by 
\begin{equation}
\vol(\bK) = \vol(\bU(\A)/\bU(\F)) = 1 \,. \label{eq:165}
\end{equation}
Instead of the Iwasawa factorization, one can fix a measure
on the Borel subgroup $\bB = \bA \bU$ and normalize the volume
of the flag manifold 
$\vol(\bG(\A)/\bB(\A)) =1$. This is a more natural and a more
uniform normalization when working with Eisenstein series and
constant terms.

\subsubsection{}
Because the group $\bU$ is unipotent, the measure $d_{\textup{Haar}}(u)$
normalized as in \eqref{eq:165} equals the 
the Tamagawa measure
\[
  d_{\textup{Haar}}(u) = d_{\tama}(u) \,. 
\]
Recall that e.g.\ for the additive
  group of a number
  field we have
  \[
    \A / \F = \prod_{\textup{finite $p$}} \cO_p \times \left(\left. 
    \prod_{p| \infty} F_p \right/ \cO_\F \right)
  \]
  and the Tamagawa measure of $\A / \F$ equals the square root of the ratio of
  the absolute value of discriminant (from the infinite places) and norms of the
  differents (from the finite places), hence $=1$. It is through the
  relation
  \[
    \prod_{\textup{finite $p$}} \vol(\cO_p) = \left|\textup{discr} \cO_F
  \right|^{-1/2} \,, 
  \]
  that the discriminant factor in completed zeta functions appears as
  product of local contribution in \eqref{eq:99}.

\subsubsection{} 

For $\bA$ factor in \eqref{eq:164}, we take the Tamagawa measure. Since $\bA \cong GL(1)^r$,
it suffices to discuss the multiplicative group $GL(1)$. Since
\[
  H^0(GL(1,\overline{F}),\cO^\times)/\overline{F}^\times = \Z \,, \quad
H^1(GL(1,\overline{F}),\cO^\times) = 0\,,
\]
where $\Z$ has the trivial Galois group action, we have \cite{Ono} 
\[
  d_{GL(1),\tama}(x)= \prod_{\textup{finite $p$}}
\frac{1}{1- p^{-1}}  \frac{dx}{\,\,\, |x|_p} \times 
    \prod_{p| \infty} \frac{dx}{\,\, \,|x|_p} \,. 
    \]

\subsubsection{} 

Since we consider  only unramified characters of $\bA$, we
effectively  work with the pushforward under the norm
map
\begin{align}
d_{\cQ,\tama} &= \left(GL(1,\A) \xrightarrow{\quad \|\, \cdot \, \|
    \quad} \cQ \right)_* d_{GL(1),\tama} \notag \\ 
  &= \big( \Res_{s=1} \xi(s) ds \big)  \, \frac{dq}{q} \,. \label{eq:162}
\end{align}
Here the second line follows the presentation of $\xi(s)$ as an
integral with respect to $GL(1,\A)$ as in Tate's thesis \cite{Tate}. In the
case when $\cQ$ is discrete, the measure $\frac{dq}{q}$ is the counting
measure.

\subsubsection{}

By the formula \eqref{eq:108}, we have
 \begin{equation}
  \Res_{s=1} \xi(s) ds  = \begin{cases}
  \frac{1}{\ln q} \, \cz \,,
  &\chr \F > 0 \\ 
 \cz\,, & \chr \F = 0 \,, 
\end{cases}\label{eq:108_res}
\end{equation}
where the constant $\cz$ was defined in \eqref{eq:23}. 
Thus the natural measure on the dual group 
\[
  \cQ^\vee = \C \left/ \frac{2\pi i}{\ln q} \, \Z \right. \,, 
\]
where $q=1$ in the number field case in \eqref{eq:116}, is
\begin{equation}
d_{\cQ^\vee,\tama} (s) = \frac{1}{2\pi i \cz} 
ds \,, \label{eq:163}
\end{equation}
in all cases. Note, in particular, this is correctly normalized in the
$q\to 1$ limit.

This is the how the Haar measure \eqref{eq:3} should be normalized
to reproduce actual $L^2$-norms in \eqref{eq:99}.

\subsubsection{}

In \cite{Lvol}, Langlands compares the above normalization of the
Haar measure on $\bG(\A)$ with the Tamagawa measure for $\bG(\A)$.
This amounts to computing the Tamagawa volumes of the flag
manifolds.

In the number field case, for the nonarchimedian places,
point counts in
flag manifolds give 
\[
  \prod_{p \, \nmid \, \infty} \vol_{\tama} (\bG(\F_p)/\bB(\F_p)) = \left|\textup{discr} \cO_F
\right|^{-\frac12 \dim \bG/\bB} \prod_{i=1}^r \zeta_\F(m_i+1)^{-1}\,,
\]
where
$m_i$ are the exponents of $\bG$, with a similar answer in the
function field case. One should bear in mind that $H^1(\bG/\bB,
\cO^\times)=\Z^r$ with a trivial Galois action, which multiplies 
the point counts by $(1-p^{-1})^r$.

The Archimedean contribution, concretely the
volume of the flag manifold computed on the big Bruhat cell,
Langlands interprets as a particular value of the Gindikin-Karpelevich
formula. Altogether, the proportionality constant is the residue
at the regular nilpotent discussed in Section \ref{s_ex_reg}. Since
the residue of the Eisenstein series at that point is a constant
function on $\bG$, Langlands computes the Tamagawa volume
of $\bG(\F) \backslash \bG(\A)$.

\begin{bibdiv}
	\begin{biblist}

\bib{Atiyah}{book}{
   author={Atiyah, Michael Francis},
   title={Elliptic operators and compact groups},
   series={Lecture Notes in Mathematics, Vol. 401},
   publisher={Springer-Verlag, Berlin-New York},
   date={1974},
   pages={ii+93},
}


\bib{BerlVerg}{article}{
   author={Berline, Nicole},
   author={Vergne, Mich\`ele},
   title={The Chern character of a transversally elliptic symbol and the
   equivariant index},
   journal={Invent. Math.},
   volume={124},
   date={1996},
   number={1-3},
   pages={11--49},
}
  
\bib{BB}{article}{
   author={Bia\l{}ynicki-Birula, A.},
   title={Some theorems on actions of algebraic groups},
   journal={Ann. of Math. (2)},
   volume={98},
   date={1973},
   pages={480--497}
}

\bib{Bogo}{article}{
   author={Bogomolov, F. A.},
   title={Holomorphic tensors and vector bundles on projective manifolds},
   journal={Izv. Akad. Nauk SSSR Ser. Mat.},
   volume={42},
   date={1978},
   number={6},
}

\bib{BravM}{article}{
   author={Braverman, Maxim},
   title={Index theorem for equivariant Dirac operators on noncompact
   manifolds},
   journal={$K$-Theory},
   volume={27},
   date={2002},
   number={1},
   pages={61--101},
}

\bib{Coll}{book}{
   author={Collingwood, David H.},
   author={McGovern, William M.},
   title={Nilpotent orbits in semisimple Lie algebras},
   series={Van Nostrand Reinhold Mathematics Series},
   publisher={Van Nostrand Reinhold Co., New York},
   date={1993},
   pages={xiv+186},
}

\bib{dCLP}{article}{
   author={De Concini, C.},
   author={Lusztig, G.},
   author={Procesi, C.},
   title={Homology of the zero-set of a nilpotent vector field on a flag
   manifold},
   journal={J. Amer. Math. Soc.},
   volume={1},
   date={1988},
   number={1},
   pages={15--34},
}

\bib{CPV2}{article}{
   author={De Concini, C.},
   author={Procesi, C.},
   author={Vergne, M.},
   title={The infinitesimal index},
   journal={J. Inst. Math. Jussieu},
   volume={12},
   date={2013},
   number={2},
   pages={297--334},
}

\bib{CPV}{article}{
   author={De Concini, C.},
   author={Procesi, C.},
   author={Vergne, M.},
   title={Box splines and the equivariant index theorem},
   journal={J. Inst. Math. Jussieu},
   volume={12},
   date={2013},
   number={3},
   pages={503--544},
}


\bib{FF}{article}{
   author={Feigin, B. L.},
   author={Fuchs, D. B.},
   title={Cohomologies of Lie groups and Lie algebras},
   conference={
      title={Lie groups and Lie algebras, II},
   },
   book={
      series={Encyclopaedia Math. Sci.},
      volume={21},
      publisher={Springer, Berlin},
   },
   date={2000},
   pages={125--223},
}

\bib{DHL}{article}{
   author={Halpern-Leistner, Daniel},
   title={The derived category of a GIT quotient},
   journal={J. Amer. Math. Soc.},
   volume={28},
   date={2015},
   number={3},
   pages={871--912},
}

\bibitem{DHLloc} D.~Halpern-Leistner,
  \emph{A categorification of the Atiyah-Bott localization formula},
  available from \texttt{math.cornell.edu/~danielhl}. 

\bibitem{DHO} Marcelo De Martino, Volker Heiermann, Eric Opdam,
\emph{On the unramified spherical automorphic spectrum}, 
\texttt{arXiv:1512.08566}

\bibitem{DHO2} Marcelo De Martino, Volker Heiermann, Eric Opdam,
\emph{Residue distributions, iterated residues, and the spherical automorphic spectrum}, 
\texttt{arXiv:2207.06773}

\bib{Hess}{article}{
   author={Hesselink, Wim H.},
   title={Uniform instability in reductive groups},
   journal={J. Reine Angew. Math.},
   volume={303(304)},
   date={1978},
   pages={74--96},
}

\bib{KL}{article}{
   author={Kazhdan, David},
   author={Lusztig, George},
   title={Proof of the Deligne-Langlands conjecture for Hecke algebras},
   journal={Invent. Math.},
   volume={87},
   date={1987},
   number={1},
   pages={153--215},
}

\bibitem{KO} D.~Kazhdan and A.~Okounkov, 
\emph{L-function genera and applications}, 
\texttt{arXiv:2311.17747} 

\bibitem{KO2} \bysame 
\emph{Unramified Langlands spectral decomposition for function fields}, 
in preparation.


\bib{Kempf}{article}{
   author={Kempf, George R.},
   title={Instability in invariant theory},
   journal={Ann. of Math. (2)},
   volume={108},
   date={1978},
   number={2},
   pages={299--316},
}

\bib{Kostant}{article}{
   author={Kostant, Bertram},
   title={The principal three-dimensional subgroup and the Betti numbers of
   a complex simple Lie group},
   journal={Amer. J. Math.},
   volume={81},
   date={1959},
   pages={973--1032},
}

\bib{Labesse}{article}{
   author={Labesse, Jean-Pierre},
   title={The Langlands spectral decomposition},
   conference={
      title={The genesis of the Langlands Program},
   },
   book={
      series={London Math. Soc. Lecture Note Ser.},
      volume={467},
      publisher={Cambridge Univ. Press, Cambridge},
   },
   date={2021},
   pages={176--214},
}

\bib{Lang}{book}{
   author={Lang, Serge},
   title={Algebraic number theory},
   series={Graduate Texts in Mathematics},
   volume={110},
   edition={2},
   publisher={Springer-Verlag, New York},
   date={1994},
}

\bib{Lvol}{article}{
   author={Langlands, R. P.},
   title={The volume of the fundamental domain for some arithmetical
   subgroups of Chevalley groups},
   conference={
      title={Algebraic Groups and Discontinuous Subgroups},
      address={Proc. Sympos. Pure Math., Boulder, Colo.},
      date={1965},
   },
   book={
      publisher={Amer. Math. Soc., Providence, R.I.},
   },
   date={1966},
   pages={143--148},
}

\bib{L}{book}{
   author={Langlands, Robert P.},
   title={On the functional equations satisfied by Eisenstein series},
   series={Lecture Notes in Mathematics, Vol. 544},
   publisher={Springer-Verlag, Berlin-New York},
   date={1976},
}

\bib{Lmarch}{article}{
   author={Langlands, R. P.},
   title={Automorphic representations, Shimura varieties, and motives. Ein
   M\"{a}rchen},
   conference={
      title={Automorphic forms, representations and $L$-functions},
      address={Proc. Sympos. Pure Math., Oregon State Univ., Corvallis,
      Ore.},
      date={1977},
   },
   book={
      series={Proc. Sympos. Pure Math.},
      volume={XXXIII},
      publisher={Amer. Math. Soc., Providence, RI},
   },
   date={1979},
   pages={205--246},
}

\bib{Macdonald}{article}{
   author={Macdonald, I. G.},
   title={The Poincar\'{e} series of a Coxeter group},
   journal={Math. Ann.},
   volume={199},
   date={1972},
   pages={161--174},
}

\bib{Merk}{article}{
   author={Merkurjev, Alexander S.},
   title={Equivariant $K$-theory},
   conference={
      title={Handbook of $K$-theory. Vol. 1, 2},
   },
   book={
      publisher={Springer, Berlin},
   },
   date={2005},
   pages={925--954}
}

\bib{Moe1}{article}{
   author={M\oe glin, C.},
   author={Waldspurger, J.-L.},
   title={Le spectre r\'{e}siduel de ${\rm GL}(n)$},
   journal={Ann. Sci. \'{E}cole Norm. Sup. (4)},
   volume={22},
   date={1989},
   number={4},
   pages={605--674},
}

\bib{Moe2}{article}{
   author={M\oe glin, C.},
   title={Orbites unipotentes et spectre discret non ramifi\'{e}: le cas des
   groupes classiques d\'{e}ploy\'{e}s},
   journal={Compositio Math.},
   volume={77},
   date={1991},
   number={1},
   pages={1--54},
}

\bib{MW}{book}{
   author={M\oe glin, C.},
   author={Waldspurger, J.-L.},
   title={Spectral decomposition and Eisenstein series},
   series={Cambridge Tracts in Mathematics},
   volume={113},
   note={Une paraphrase de l'\'{E}criture [A paraphrase of Scripture]},
   publisher={Cambridge University Press, Cambridge},
   date={1995},
}

\bib{Mostow}{article}{
   author={Mostow, G. D.},
   title={Fully reducible subgroups of algebraic groups},
   journal={Amer.\ J.\ Math.},
   volume={78},
   date={1956},
   pages={200--221},
}

\bib{Ness}{article}{
   author={Ness, Linda},
   title={A stratification of the null cone via the moment map},
   note={With an appendix by David Mumford},
   journal={Amer. J. Math.},
   volume={106},
   date={1984},
   number={6},
   pages={1281--1329},
}

\bib{Ono}{article}{
   author={Ono, Takashi},
   title={On the Tamagawa number of algebraic tori},
   journal={Ann. of Math. (2)},
   volume={78},
   date={1963},
   pages={47--73},
}

\bib{ParVerg}{article}{
   author={Paradan, Paul-\'{E}mile},
   author={Vergne, Mich\`ele},
   title={Index of transversally elliptic operators},
   journal={Ast\'{e}risque},
   number={328},
   date={2009},
   pages={297--338 (2010)},
}

\bib{Prasad}{article}{
   author={Prasad, Amritanshu},
   title={Almost unramified discrete spectrum for split groups over $\bold
   F_q(t)$},
   journal={Duke Math. J.},
   volume={113},
   date={2002},
   number={2},
   pages={237--257},
}

\bib{Rous}{article}{
   author={Rousseau, Guy},
   title={Immeubles sph\'{e}riques et th\'{e}orie des invariants},
   journal={C. R. Acad. Sci. Paris S\'{e}r. A-B},
   volume={286},
   date={1978},
   number={5},
   pages={A247--A250},
}

\bib{Slodowy}{book}{
   author={Slodowy, Peter},
   title={Simple singularities and simple algebraic groups},
   series={Lecture Notes in Mathematics},
   volume={815},
   publisher={Springer, Berlin},
   date={1980},
   pages={x+175},
}

\bib{Steinberg}{article}{
   author={Steinberg, Robert},
   title={Finite reflection groups},
   journal={Trans. Amer. Math. Soc.},
   volume={91},
   date={1959},
   pages={493--504},
}

\bib{Takh}{article}{
   author={Takhtajan, L. A.},
   title={Etudes of the resolvent},
   language={Russian, with Russian summary},
   journal={Uspekhi Mat. Nauk},
   volume={75},
   date={2020},
   number={1(451)},
   pages={155--194},
   translation={
      journal={Russian Math. Surveys},
      volume={75},
      date={2020},
      number={1},
      pages={147--186},
   },
}

\bib{Tate}{article}{
   author={Tate, J. T.},
   title={Fourier analysis in number fields, and Hecke's zeta-functions},
   conference={
      title={Algebraic Number Theory (Proc. Instructional Conf., Brighton,
      1965)},
   },
   book={
      publisher={Thompson, Washington, D.C.},
   },
   date={1967},
   pages={305--347},
}

\bib{Vergne}{article}{
   author={Vergne, Mich\`ele},
   title={Applications of equivariant cohomology},
   conference={
      title={International Congress of Mathematicians. Vol. I},
   },
   book={
      publisher={Eur. Math. Soc., Z\"{u}rich},
   },
   date={2007},
   pages={635--664},
}

\bib{VinPop}{article}{
   author={Vinberg, E. B.},
   author={Popov, V. L.},
   title={Invariant theory},
   book={
      series={Itogi Nauki i Tekhniki},
      publisher={Akad. Nauk SSSR, Vsesoyuz. Inst. Nauchn. i Tekhn. Inform.,
   Moscow},
   },
   date={1989},
   pages={137--314, 315},
}

	\end{biblist}
\end{bibdiv}

\end{document}